\providecommand{\dint}{\displaystyle\int}
\providecommand{\func}[1]{\operatorname{#1}}
\newtheorem{theorem}{Theorem}
\newtheorem{assumption}[theorem]{Assumption}
\newtheorem{claim}[theorem]{Claim}
\newtheorem{conclusion}[theorem]{Conclusion}
\newtheorem{conjecture}[theorem]{Conjecture}
\newtheorem{corollary}[theorem]{Corollary}
\newtheorem{definition}[theorem]{Definition}
\newtheorem{example}[theorem]{Example}
\let\olddefinition\definition
\let\endolddefinition\enddefinition
\renewenvironment{definition}{\olddefinition\normalfont}{\endolddefinition}
\let\oldassumption\assumption
\let\endoldassumption\endassumption
\let\oldexample\example
\let\endoldexample\endexample
\renewenvironment{example}{\oldexample\normalfont}{\endoldexample}
\newtheorem{lemma}[theorem]{Lemma}
\newtheorem{proposition}[theorem]{Proposition}
\newenvironment{remark}{  \refstepcounter{theorem}  \par\medskip\noindent\textbf{Remark~\thetheorem.}\rmfamily\ }{\par\medskip}
\newenvironment{proof}[1][Proof]{\noindent\textbf{#1.} }{\ \rule{0.5em}{0.5em}}
\renewcommand{\dint}{\displaystyle\int}
\begin{document}

\author{{\large Vilimir Yordanov}$\thanks{{\footnotesize Technical
University Vienna, Financial and Actuarial Mathematics and Vienna University
of Economics and Business, Vienna Graduate School of Finance, e-mail:
vilimir.yordanov@tuwien.ac.at and vilimir.yordanov@vgsf.ac.at. Non-academic
e-mail: villyjord@gmail.com.}}$ $\thanks{{\footnotesize Acknowledgment: The
theorem in Section 3 was initially formulated as a conjecture and was
brought to the attention of the author as an open problem by Prof. Zvetan
Ignatov, Faculty of Economics and Business Administration, Sofia University.
For the exact chronology, see the declaration at the end of this paper.
Prof. Zvetan Ignatov passed away on January 14, 2024. V.Y. dedicates the
paper to his memory. The paper is distributed in accordance with the ICMJE
authorship guidelines. The approach to solving the problem, its
implementation, the writing of the paper, and any errors or omissions are
solely the responsibility of the author. He is grateful to Stefan Gerhold
for helpful comments and to Jordan Stoyanov for valuable general suggestions
on the notation.}}$}
\date{{\normalsize June, 2026}}
\title{{\Large On Iterated Lorenz Curves with Applications: The Multivariate
Case }}
\maketitle

\begin{abstract}
It is well known that a Lorenz curve, derived from the distribution function
of a random variable, can itself be viewed as a probability distribution
function of a new random variable \cite{[2]}. In a previous work of ours 
\cite{[3]}, we proved the surprising result that a sequence of consecutive
iterations of this map leads to a non-corner case convergence, independent
of the initial random variable. Namely, the limiting distribution follows a
power-law distribution. In this paper, we generalize our result to the
multivariate setting. We do so using Arnold's type definition \cite{[2]} of
a Lorenz curve, which offers the greatest parsimony among its counterparts.
The situation becomes more complex in higher dimensions as the map affects
not only the marginals but also their dependence structure. Nevertheless, we
prove the equally surprising result that under reasonable restrictions, the
marginals again converge uniformly to a power-law distribution, with an
exponent equal to the golden section. Furthermore, they become independent
in the limit. To emphasize the multifaceted nature of the problem and
broaden the scope of potential applications, our approach utilizes a variety
of mathematical tools, extending beyond very specialized methods.

\ \ \ \ \ \ \ \ \ \ \ \ \ \ \ \ \ \ \ \ \ \ \ \ \ \ \ \ \ \ \ \ \ \ \ \ \ \
\ 

\textbf{Keywords:}{\small \ Lorenz curve, iteration, contraction mapping,
golden section, copula}
\end{abstract}

\section{\protect\Large Introduction}

\bigskip The classical \textit{Lorenz curve} finds numerous applications in
applied statistics \cite{[2]}, stochastic orders \cite{[1]}, income
inequality \cite{[10]}, risk analysis \cite{[5]}, portfolio theory \cite%
{[43]}, etc. While this is well established in the univariate case, the
situation is different in the multivariate setting, where applications are
rare and lack a uniform framework. This is not due to a scarcity of
opportunities or a diminished need---after all, random vectors are prevalent
in many areas---but rather to the technical challenges inherent in higher
dimensions. The obstacles arise from the outset, as there is no universally
accepted definition of a multivariate \textit{Lorenz curve}. Three are known
in the literature. Historically, the first was introduced by Taguchi, as
described in \cite{[7]} and \cite{[8]}. This was followed by Arnold's
definition in \cite{[4]}. Later, the most elaborate formulation emerged in 
\cite{[9]}, where the concept of the \textit{Lorenz zonoid} was developed. A
detailed discussion of these three versions can be found in \cite{[2]}, with
a more recent technical elaboration in \cite{[6]}.

The concept of the \textit{Lorenz zonoid} is advantageous in that it avoids
difficulties such as defining and inverting a multivariate function, as well
as the need for \textit{order statistics} and their associated machinery.
Nevertheless, working with it remains technically involved and, to a large
extent, non-parsimonious---an important drawback for practical applications.
Interestingly, the simplest form of the multivariate \textit{Lorenz curve},
based on Arnold's definition, remains largely undeveloped, with no major
works dedicated to it \cite{[6]}. We believe that significant insights into
the structure and potential applications of this formulation have yet to be
fully explored.

We demonstrate this in the context of our previous work \cite{[3]}, with the
present paper serving as a generalization of the results established there.
As elaborated in \cite{[14]} and \cite{[4]}, the distribution function
(d.f.) of an arbitrary univariate random variable with a finite first moment
can be reconstructed from its mean and the \textit{Lorenz curve} it
generates. However, the \textit{Lorenz curve} itself can be viewed as a d.f.
of a new random variable. By focusing on this new variable and its
stochastic properties, we can characterize the parent distribution in a
potentially more convenient way \cite{[4]}, since the derived random
variable is, by construction, guaranteed to have finite moments.
Additionally, due to general \textit{Lorenz curve} properties, its d.f. is
convex and defined on the domain $[0,1]$, allowing for finer numerical
calculations and facilitating the application of powerful convex analysis
tools. Rather than following the classical approach of estimating the
moments of this derived random variable, we can take an alternative
perspective: deriving its \textit{Lorenz curve} and treating it as the d.f.
of yet another new random variable. This iterative process naturally
unfolds. In \cite{[3]}, we proved that the limiting d.f. corresponds to a
random variable following a power-law distribution with an exponent equal to
the golden section. This was a surprising result, as it holds regardless of
the parent distribution. The remarkable presence of the golden section in
this context---manifesting within the \textit{Lorenz curve} framework---was
an unexpected and graceful discovery. Given this, it is natural to seek a
similar iterative tool for characterizing multivariate distributions. We
prove that under reasonable restrictions, an analogous result holds in the
multivariate case when Arnold's definition of the \textit{Lorenz curve} is
used in the iteration process.

The paper presents key mathematical results, reserving detailed applications
and further theoretical analysis for future work. This work is organized as
follows: \textit{Section 2} defines the bivariate \textit{Lorenz curve} and
the iterative map under consideration. \textit{Section 3} establishes the
main theorem, the proof of which is divided into several parts for clarity
and relies on the appendices. Following this, \textit{Section 4} analyzes
special cases that arise from initial distributions coinciding with the 
\textit{Fr\'{e}chet-Hoeffding} \textit{bounds}. \textit{Section 5 }%
generalizes the results to a multivariate setting\textit{. Section 6}
provides applications in quantitative finance with a special focus on
dependence modeling, risk, and portfolio theory. Machine learning
implications are also discussed. Finally, the appendices provide the
extensive technical details that support the proofs.

\section{\protect\Large Two-dimensional Lorenz curve iteration }

We present our results using a main theorem with supporting lemmas, claims,
and corollaries. This section begins with several key definitions and
observations before formulating the main theorem.

\begin{definition}
\label{main-def} Let $X=(X_{1},X_{2})$ be a non-negative random variable
with a distribution function $F_{12}(x_{1},x_{2})$ such that $%
0<E(X_{i})<+\infty $ for $i=1,2$ and $0<E(X_{1}X_{2})<+\infty $. Denoting by 
$F_{1}$ and $F_{2}$ the marginal distribution functions of $X_{1}$ and $%
X_{2} $, respectively, we define the bivariate \textit{Lorenz curve} as 
\begin{equation}
L_{F_{12}}(x_{1},x_{2})=\frac{\dint\nolimits_{-\infty
}^{s_{1}}\dint\nolimits_{-\infty }^{s_{2}}u_{1}u_{2}dF_{12}(u_{1},u_{2})}{%
\mu _{12}^{F}},0\leq x_{1}\leq 1,0\leq x_{2}\leq 1,  \label{1}
\end{equation}

where $s_{1}=F_{1}^{-1}(x_{1})$ and $s_{2}=F_{2}^{-1}(x_{2})$ are the
univariate quantiles, given by the generalized inverse $F^{-1}(u)=\inf
\left\{ y:F(y)\geq u\right\} $ and $\mu _{12}^{F}=\int_{-\infty }^{+\infty
}\int_{-\infty }^{+\infty }u_{1}u_{2}dF_{12}(u_{1},u_{2})<+\infty $ is the
mean $E(X_{1}X_{2})$. We will denote by $L$ the \textit{Lorenz curve} above
and by $\mathcal{L}$ the operator $\mathcal{L}(F_{12}(x_{1},x_{2}))(.):[0,1]%
\longrightarrow \lbrack 0,1]$ which maps $F_{12}(.)$ to $L_{F_{12}}(.)$.
\end{definition}

Using the above notation, we can also define the marginal \textit{Lorenz
curves} by $L_{F_{1}}(x_{1})=L_{F_{12}}(x_{1},1)$ and $%
L_{F_{2}}(x_{2})=L_{F_{12}}(1,x_{2})$. Clearly, if $X_{1}$ and $X_{2}$ are
independent, then $F_{12}(x_{1},x_{2})=F_{1}(x_{1})F_{2}(x_{2})$ and $%
L_{F_{12}}(x_{1},x_{2})=L_{F_{1}}(x_{1})L_{F_{2}}(x_{2})$ hold. We will use
the notations $L_{F_{12}}(x_{1},x_{2})$ and $L^{F_{12}}(x_{1},x_{2})$
interchangeably, emphasizing on the distribution function $F$ that generates 
$L$ and the same logic holds for $\mu _{12}^{F}$. Often for convenience we
will also just write $F(x_{1},x_{2})$ instead of $F_{12}(x_{1},x_{2})$ and $%
\mu ^{F}$ instead of $\mu _{12}^{F}.$

It is interesting to observe that for the above-defined \textit{Lorenz curve}%
, the following technical result holds (equations (11) and (12) in \cite{[6]}%
) in case the density exists.

\begin{claim}
\label{aux-cl} If \bigskip $X=(X_{1},X_{2})$ possesses a density, then the
following alternative representation of (\ref{1}) holds%
\begin{equation}
L_{F_{12}}(x_{1},x_{2})=\dint\nolimits_{0}^{x_{1}}\dint%
\nolimits_{0}^{x_{2}}A(u_{1},u_{2})du_{1}du_{2},  \label{2}
\end{equation}%
with 
\begin{equation}
A(u_{1},u_{2})=\frac{1}{E(X_{1}X_{2})}\frac{%
F_{1}^{-1}(u_{1})F_{2}^{-1}(u_{2})f_{12}(F_{1}^{-1}(u_{1}),F_{2}^{-1}(u_{2}))%
}{f_{1}(F_{1}^{-1}(u_{1}))f_{2}(F_{2}^{-1}(u_{2}))},  \label{3}
\end{equation}

where we denote the density of\ $X=(X_{1},X_{2})$ by $f_{12}(.,.)=f(.,.)$
and the corresponding marginal densities by $f_{1}(.)$ and $f_{2}(.)$.
\end{claim}

\begin{proof}
The proof follows directly from a change of variables and is not provided in 
\cite{[6]}. We will compose it here for completeness, because similar
mathematical operations will be used below. Take $y_{1}=F_{1}^{-1}(u_{1})$
and $y_{2}=F_{2}^{-1}(u_{2})$ and plug $A(u_{1},u_{2})$ inside the integral
to get consecutively%
\begin{eqnarray}
&&\dint\nolimits_{0}^{x_{1}}\dint\nolimits_{0}^{x_{2}}\frac{1}{E(X_{1}X_{2})}%
\frac{%
F_{1}^{-1}(u_{1})F_{2}^{-1}(u_{2})f_{12}(F_{1}^{-1}(u_{1}),F_{2}^{-1}(u_{2}))%
}{f_{1}(F_{1}^{-1}(u_{1}))f_{2}(F_{2}^{-1}(u_{2}))}du_{1}du_{2}  \notag \\
&=&\frac{1}{E(X_{1}X_{2})}\dint\nolimits_{0}^{F_{1}^{-1}(x_{1})}\dint%
\nolimits_{0}^{F_{2}^{-1}(x_{2})}\frac{y_{1}y_{2}f_{12}(y_{1},y_{2})}{%
f_{1}(y_{1})f_{2}(y_{2})}dF_{1}(y_{1})dF_{2}(y_{2})  \notag \\
&=&\frac{1}{E(X_{1}X_{2})}\dint\nolimits_{0}^{F_{1}^{-1}(x_{1})}\dint%
\nolimits_{0}^{F_{2}^{-1}(x_{2})}y_{1}y_{2}f_{12}(y_{1},y_{2})dy_{1}dy_{2} 
\notag \\
&=&\frac{\dint\nolimits_{0}^{F_{1}^{-1}(x_{1})}\dint%
\nolimits_{0}^{F_{2}^{-1}(x_{2})}y_{1}y_{2}dF_{12}(y_{1},y_{2})}{\mu _{12}}=%
\frac{\dint\nolimits_{0}^{s_{1}}\dint%
\nolimits_{0}^{s_{2}}y_{1}y_{2}dF_{12}(y_{1},y_{2})}{\mu _{12}},  \label{4}
\end{eqnarray}

which is exactly $L_{F_{12}}(x_{1},x_{2})$.
\end{proof}

\section{Main result}

We use the above Arnold's definition of a \textit{Lorenz curve} and pose the
following theorem in the spirit of the univariate case from \cite{[3]}:

\begin{theorem}
\label{main-th} Given the above definition of a bivariate \textit{Lorenz
curve} and its associated operator, if for $n=0,1,..$we consider the
sequence $L_{n+1}^{F}(x_{1},x_{2}),$ defined by%
\begin{equation}
L_{n+1}^{F}(x_{1},x_{2})=\frac{\int_{0}^{L_{1}^{n,-1}(x_{1})}%
\int_{0}^{L_{2}^{n,-1}(x_{2})}u_{1}u_{2}dL_{n}^{F}(u_{1},u_{2})}{%
\int_{0}^{1}\int_{0}^{1}u_{1}u_{2}dL_{n}^{F}(u_{1},u_{2})},  \label{5.1}
\end{equation}

where%
\begin{equation}
L_{1}^{n}(x)=L_{n}^{F}(x,1),\qquad L_{2}^{n}(x)=L_{n}^{F}(1,x),  \label{5.11}
\end{equation}

or equivalently by%
\begin{eqnarray}
L_{0}^{F}(x_{1},x_{2}) &=&L_{F}^{0}(x_{1},x_{2})=\mathcal{L(}F(x_{1},x_{2}))
\notag \\
L_{1}^{F}(x_{1},x_{2})
&=&L_{F}^{1}(x_{1},x_{2})=L^{L_{0}^{F}}(x_{1},x_{2})=L^{F}(x_{1},x_{2})=L_{F}(x_{1},x_{2})=%
\mathcal{L(}L_{0}(x_{1},x_{2}))  \notag \\
&&...  \notag \\
L_{n+1}^{F}(x_{1},x_{2}) &=&L_{F}^{n+1}(x_{1},x_{2})=L^{L_{n}^{F}}(x)=%
\mathcal{L}(L_{n}(x_{1},x_{2})),  \label{5}
\end{eqnarray}

where in $L$ the subscript $n$ denotes the iteration step, while the
superscript indicates starting distribution $F$, then the functions $%
L_{n}^{F}(x_{1},x_{2})$ are themselves distribution functions on $[0,1]^{2}$%
. When extended to $R^{2}$, this extension is understood in the usual
distribution function sense, namely 
\begin{equation*}
L_{n}^{F}(x_{1},x_{2})=L_{n}^{F}(\pi (x_{1}),\pi (x_{2})),\qquad \pi
(x)=\min \{1,\max \{0,x\}\}.
\end{equation*}

Assume additionally the existence of a strictly positive density for $%
X=(X_{1},X_{2})$ on $(0,1)^{2}$ which is either totally positive of order $2$
$(TP_{2})$ or strictly reverse regular of order $2$ $(SRR_{2})$ in the sense
of \cite{[36]}. Then $L_{n}^{F}(x_{1},x_{2})$ converges uniformly to the
distribution function $G(x_{1},x_{2})$, where 
\begin{equation}
G(x_{1},x_{2})=\pi (x_{1})^{\frac{1+\sqrt{5}}{2}}\pi (x_{2})^{\frac{1+\sqrt{5%
}}{2}},\qquad \pi (x)=\min \{1,\max \{0,x\}\}.  \label{6}
\end{equation}
\end{theorem}

\begin{proof}
We emphasize the following preliminary points, which not only introduce the
necessary notation but also aid in making the exposition of the proof
clearer.

First, we need to verify whether $L_{n}^{F}(x_{1},x_{2})$ can be viewed as a
distribution function of a new bivariate random variable for every $n$.
Clearly, it is enough to consider only the case $n=0$, the others follow by
induction. We use the representation of the bivariate \textit{Lorenz curve}
as a change of measure. Let $P_{F}$ be the law of $X=(X_{1},X_{2})$, and
define the probability measure $Q_{F}$ by 
\begin{equation}
dQ_{F}(y_{1},y_{2})=\frac{y_{1}y_{2}}{E(X_{1}X_{2})}\,dP_{F}(y_{1},y_{2}).
\label{7.11}
\end{equation}%
Equivalently, for every Borel set $A\subseteq \lbrack 0,\infty )^{2}$, 
\begin{equation}
Q_{F}(A)=\frac{E\left[ X_{1}X_{2}1_{\{X\in A\}}\right] }{E(X_{1}X_{2})}.
\label{7.21}
\end{equation}%
This is indeed a probability measure, since $X_{1}X_{2}\geq 0$ and $%
Q_{F}([0,\infty )^{2})=1$.

With this notation, the bivariate \textit{Lorenz curve} can be written as 
\begin{equation}
L_{0}^{F}(x_{1},x_{2})=Q_{F}([0,F_{1}^{-1}(x_{1})]\times \lbrack
0,F_{2}^{-1}(x_{2})]),\qquad 0\leq x_{1},x_{2}\leq 1.  \label{7.31}
\end{equation}%
Thus $L_{0}^{F}$ is the distribution function, in the probability
coordinates $(x_{1},x_{2})$, of the measure $Q_{F}$. Consequently, the usual
distribution function properties follow immediately: boundary conditions,
coordinatewise monotonicity, right-continuity, and the rectangle inequality.

For completeness, we spell out the rectangle inequality. Take $u_{1}\leq
u_{2}$ and $v_{1}\leq v_{2}$, and set 
\begin{equation*}
x_{1,i}=F_{1}^{-1}(u_{i}),\qquad x_{2,j}=F_{2}^{-1}(v_{j}),\qquad i,j=1,2.
\end{equation*}%
Then 
\begin{eqnarray}
\Delta _{L}
&=&L_{0}^{F}(u_{2},v_{2})-L_{0}^{F}(u_{2},v_{1})-L_{0}^{F}(u_{1},v_{2})+L_{0}^{F}(u_{1},v_{1})
\notag \\
&=&Q_{F}((x_{1,1},x_{1,2}]\times (x_{2,1},x_{2,2}])\geq 0.  \label{7.41}
\end{eqnarray}%
If $F$ has density $f_{X_{1},X_{2}}$, this becomes 
\begin{equation}
\Delta _{L}=\frac{\int_{x_{1,1}}^{x_{1,2}}%
\int_{x_{2,1}}^{x_{2,2}}y_{1}y_{2}f_{X_{1},X_{2}}(y_{1},y_{2})\,dy_{2}dy_{1}%
}{E(X_{1}X_{2})}\geq 0.  \label{7.51}
\end{equation}%
Hence the rectangle inequality is satisfied. Therefore $L_{0}^{F}$,
analogously to the univariate case, can be viewed as a distribution function
of a bivariate random variable. Applying the same argument inductively to $%
L_{n}^{F}$ gives the same conclusion for the entire sequence $%
\{L_{n}^{F}\}_{n\geq 0}$.

Second, it should be noted that while the iterations in (\ref{5.1}) for the
bivariate case resemble their univariate counterpart from \cite{[3]} in
structure, they differ in key details. Once the bivariate distribution
functions $L_{n}^{F}(x_{1},x_{2})$ are defined, we must first derive their
marginal distributions and only then determine the iterative process for
them, rather than assuming it a priori. This distinction is made explicit in
the calculations that follow. As observed from \textit{Definition }\ref%
{main-def}, the two iterations coincide---resulting in identical marginal
distributions---only when the initial random variables are independent,
i.e., when $F_{12}(x_{1},x_{2})=F_{1}(x_{1})F_{2}(x_{2}).$

Third, we introduce more precise notation for the marginal distribution
functions and their densities throughout the iterations. The marginal
distribution functions are given by%
\begin{eqnarray}
F_{1}(x) &=&F_{12}(x,+\infty )\text{, }F_{2}(x)=F_{12}(+\infty ,x)  \notag \\
L_{1}^{0}(x) &=&L_{1}^{F}(x)=L_{0}(x,1)\text{, }%
L_{2}^{0}(x)=L_{2}^{F}(x)=L_{0}(1,x)  \notag \\
&&...  \notag \\
L_{1}^{n}(x) &=&L_{1}^{L^{n-1}}(x)=L_{n}(x,1)\text{, }%
L_{2}^{n}(x)=L_{2}^{L^{n-1}}(x)=L_{n}(1,x),  \label{6a}
\end{eqnarray}

and the corresponding marginal densities by%
\begin{eqnarray}
\text{ }f_{1}(x) &=&\int\nolimits_{0}^{+\infty }f_{12}(x,u)du,\text{ }%
f_{2}(x)=\int\nolimits_{0}^{+\infty }f_{12}(u,x)du  \label{6b} \\
l_{1}^{0}(x) &=&\frac{F_{1}^{-1}(x)\int\nolimits_{0}^{+\infty
}uf_{12}(F_{1}^{-1}(x),u)du}{%
f_{1}(F_{1}^{-1}(x))E(X_{1}^{F_{1}}X_{2}^{F_{2}})}\text{, }l_{2}^{0}(x)=%
\frac{F_{2}^{-1}(x)\int\nolimits_{0}^{+\infty }uf_{12}(u,F_{2}^{-1}(x))du}{%
f_{2}(F_{2}^{-1}(x))E(X_{1}^{F_{1}}X_{2}^{F_{2}})}  \notag \\
&&...  \notag \\
l_{1}^{n}(x) &=&\frac{L_{1}^{n-1,-1}(x)\int%
\nolimits_{0}^{1}ul_{12}^{n-1}(L_{1}^{n-1,-1}(x),u)du}{%
l_{1}^{n-1}(L_{1}^{n-1,-1}(x))E(X_{1}^{L_{1}^{n-1}}X_{2}^{L_{1}^{n-1}})}%
\text{, }l_{2}^{n}(x)=\frac{L_{2}^{n-1,-1}(x)\int%
\nolimits_{0}^{1}ul_{12}^{n-1}(u,L_{2}^{n-1,-1}(x))du}{%
l_{2}^{n-1}(L_{2}^{n-1,-1}(x))E(X_{1}^{L_{1}^{n-1}}X_{2}^{L_{1}^{n-1}})}, 
\notag
\end{eqnarray}

where for $n\geq 0,$ $l_{12}^{n}(x_{1},x_{2})$ is the density of $%
L_{F_{12}}^{n}(x_{1},x_{2}),$ with the subscript occasionally omitted for
convenience. The functions $L_{1}^{n}(x_{1})$ and $L_{2}^{n}(x_{2})$
represent the corresponding marginal distribution functions of $%
L^{n}(x_{1},x_{2})$, while $l_{1}^{n}(x_{1})$ and $l_{2}^{n}(x_{2})$ are
their respective marginal densities. Furthermore, for $i=1,2$, $%
X_{i}^{F_{i}} $ indicates that $X_{i}$ follows the distribution function $%
F_{i}$, while $X_{i}^{L_{i}^{n}}$ denotes that $X_{i}$ follows the
distribution function $L_{i}^{n}$. Additionally, $L_{i}^{n,-1}(x)$
represents the generalized inverse of $L_{i}^{n}(x)$.

Fourth, it can be verified that the marginal \textit{Lorenz curves} satisfy
all the standard properties of their univariate counterparts (e.g., as
listed in \cite{[2]} and \cite{[10]}), with one exception: convexity is not
guaranteed. This means the term \textit{Lorenz curve} is somewhat premature
when applied to the marginals. The term \textquotedblleft \textit{marginal
Lorenz curve}\textquotedblright\ remains appropriate for these functions.
This is not because they inherently satisfy the definition of a \textit{%
Lorenz curve}, but rather because they are derived from a bivariate \textit{%
Lorenz curve}, which is itself a valid distribution function. For the sake
of convenience, however, we will often omit the \textquotedblleft
marginal\textquotedblright\ prefix in the subsequent text.

Fifth, the existence of density is a crucial assumption, as will become
clear in the proof below. In certain cases when the distribution lacks a
density, the theorem does not hold. A separate discussion will be devoted to
such situations. The assumption of a strictly positive density implies that
the marginals are continuous and strictly increasing.

Sixth, the assumption that the joint density is either \textit{totally
positive of order 2} ($TP_{2}$) or \textit{reverse regular of order 2} ($%
RR_{2}$), is also crucial for the proof and acts as a suitable foundation to
build upon. Later we will discuss generalizations. For the two definitions%
\footnote{{\footnotesize If the }$TP_{2}${\footnotesize \ concept is
standard and is extensively discussed in the monograph \cite{[36]} as well
as on textbook level in \cite{[38]} and \cite{[24]}, among others, its }$%
RR_{2}${\footnotesize \ }or $RR_{2}$ {\footnotesize counterpart is more
elusive due to the no universally accepted negative dependence analogs of
it. It is discussed both in \cite[page 12]{[36]} and \cite{[32]} with many
variations later on getting popularity. Recently both concepts gained
attention also in the financial mathematics literature, see \cite{[37]}. }},
we will follow the classical text of \cite{[36]}. Although these assumptions
may initially appear restrictive and abstract since both imply high level of
dependence, they are, in fact, widely applicable. Many common
copulas---including \textit{Gaussian}, \textit{Student-t}, \textit{Joe}, 
\textit{Gumbel}, \textit{AMH}, \textit{Cuadras-Aug\'{e}}, and \textit{Frank}%
---as well as the boundary comonotonic, countermonotonic and independence
copulas---satisfy either the $TP_{2}$ or $RR_{2}$ conditions globally or at
least in segments, depending on their parametrization. The latter leads to
outright positive or negative dependence. Since we will prove in the theorem
eventual independence, it is clear that it would be good to apply the
iterative map to distributions subject to strong dependence\footnote{%
{\footnotesize See \textit{Appendix F} for a classification of sample
copulas by }$RR_{2}${\footnotesize \ and }$TP_{2}${\footnotesize .
Information on joint log-concavity is also provided because it implies the
weaker condition of coordinate log-concavity. This weaker condition, which
holds for most standard copulas, is a candidate for a potential regularity
condition in the }$RR_{2}${\footnotesize \ case and will be discussed later.}%
}. Finally, it happens that technically we need to work not just with 
\textit{reverse regular of order 2} ($RR_{2}$) property for the density but
with it in a strict sense, i.e., $SRR_{2}$, which from practical point of
view is not a big restriction since most of the $RR_{2}$ copulas are also $%
SRR_{2}$.

Now we proceed to the proof. Using \textit{Claim }\ref{aux-cl}, the defined
iterative process for the bivariate \textit{Lorenz curves}, and the assumed
notation for the distribution functions and their densities, we obtain the
following%
\begin{equation}
L_{F_{12}}^{0}(x_{1},x_{2})=\dint\nolimits_{0}^{x_{1}}\dint%
\nolimits_{0}^{x_{2}}l_{12}^{0}(u_{1},u_{2})du_{1}du_{2},  \label{7}
\end{equation}

where 
\begin{equation}
l_{12}^{0}(x_{1},x_{2})=\frac{1}{E(X_{1}^{F_{1}}X_{2}^{F_{2}})}\frac{%
F_{1}^{-1}(x_{1})F_{2}^{-1}(x_{2})f_{12}(F_{1}^{-1}(x_{1}),F_{2}^{-1}(x_{2}))%
}{f_{1}(F_{1}^{-1}(x_{1}))f_{2}(F_{2}^{-1}(x_{2}))}.  \label{8}
\end{equation}

Analogously, for any $i\geq 0$, we also have%
\begin{equation}
l_{12}^{i+1}(x_{1},x_{2})=\frac{1}{E(X_{1}^{L_{1}^{i}}X_{2}^{L_{2}^{i}})}%
\frac{%
L_{1}^{i,-1}(x_{1})L_{2}^{i,-1}(x_{2})l_{12}^{i}(L_{1}^{i,-1}(x_{1}),L_{2}^{i,-1}(x_{2}))%
}{l_{1}^{i}(L_{1}^{i,-1}(x_{1}))l_{2}^{i}(L_{2}^{i,-1}(x_{2}))}.  \label{9}
\end{equation}

To better isolate the dependence structure induced by the bivariate
probability distributions, we introduce the corresponding copulas. Following
the established notation, let $c^{i}(.,.)$ denote the copula density of the
joint probability density $l_{12}^{i}(.,.)$ for $i\geq 0$, and let $%
c^{F}(.,.)$ be the copula density of the joint probability density $%
f_{12}(.,.)$. By definition of the copula density, we obtain the identity%
\begin{equation}
l_{12}^{i}(x_{1},x_{2})=c^{i}(L_{1}^{i}(x_{1}),L_{2}^{i}(x_{2}))l_{1}^{i}(x_{1})l_{2}^{i}(x_{2}).
\label{10}
\end{equation}

Substituting (\ref{10}) into (\ref{9}) for $i\geq 0$, we derive%
\begin{eqnarray}
l_{12}^{i+1}(x_{1},x_{2}) &=&\frac{1}{E(X_{1}^{L_{1}^{i}}X_{2}^{L_{2}^{i}})}%
\frac{%
L_{1}^{i,-1}(x_{1})L_{2}^{i,-1}(x_{2})c^{i}(L_{1}^{i}(L_{1}^{i,-1}(x_{1})),L_{2}^{i}(L_{2}^{i,-1}(x_{2})))l_{1}^{i}(L_{1}^{i,-1}(x_{1}))l_{2}^{i}(L_{2}^{i,-1}(x_{2}))%
}{l_{1}^{i}(L_{1}^{i,-1}(x_{1}))l_{2}^{i}(L_{2}^{i,-1}(x_{2}))}  \notag \\
&=&\frac{1}{E(X_{1}^{L_{1}^{i}}X_{2}^{L_{2}^{i}})}%
L_{1}^{i,-1}(x_{1})L_{2}^{i,-1}(x_{2})c^{i}(x_{1},x_{2}).  \label{11}
\end{eqnarray}

Rewriting the left-hand side of (\ref{11}) in terms of the copula density,
we obtain%
\begin{equation}
c^{i+1}(L_{1}^{i+1}(x_{1}),L_{2}^{i+1}(x_{2}))l_{1}^{i+1}(x_{1})l_{2}^{i+1}(x_{2})=%
\frac{1}{E(X_{1}^{L_{1}^{i}}X_{2}^{L_{2}^{i}})}%
L_{1}^{i,-1}(x_{1})L_{2}^{i,-1}(x_{2})c^{i}(x_{1},x_{2}).  \label{12}
\end{equation}

This simplifies to%
\begin{equation}
c^{i+1}(x_{1},x_{2})=\frac{1}{E(X_{1}^{L_{1}^{i}}X_{2}^{L_{2}^{i}})}\frac{%
L_{1}^{i,-1}(L_{1}^{i+1,-1}(x_{1}))L_{2}^{i,-1}(L_{2}^{i+1,-1}(x_{2}))}{%
l_{1}^{i+1}(L_{1}^{i+1,-1}(x_{1}))l_{2}^{i+1}(L_{2}^{i+1,-1}(x_{2}))}%
c^{i}(L_{1}^{i+1,-1}(x_{1}),L_{2}^{i+1,-1}(x_{2})).  \label{13}
\end{equation}

Proceeding further, we obtain%
\begin{equation}
c^{i+2}(x_{1},x_{2})=\frac{1}{E(X_{1}^{L_{1}^{i+1}}X_{2}^{L_{2}^{i+1}})}%
\frac{%
L_{1}^{i+1,-1}(L_{1}^{i+2,-1}(x_{1}))L_{2}^{i+1,-1}(L_{2}^{i+2,-1}(x_{2}))}{%
l_{1}^{i+2}(L_{1}^{i+2,-1}(x_{1}))l_{2}^{i+2}(L_{2}^{i+2,-1}(x_{2}))}%
c^{i+1}(L_{1}^{i+2,-1}(x_{1}),L_{2}^{i+2,-1}(x_{2})).  \label{14}
\end{equation}

Substituting (\ref{13}) into (\ref{14}) gives%
\begin{eqnarray}
&&c^{i+2}(x_{1},x_{2})=\frac{1}{%
E(X_{1}^{L_{1}^{i+1}}X_{2}^{L_{2}^{i+1}})E(X_{1}^{L_{1}^{i}}X_{2}^{L_{2}^{i}})%
}  \notag \\
&&\times \frac{%
L_{1}^{i+1,-1}(L_{1}^{i+2,-1}(x_{1}))L_{2}^{i+1,-1}(L_{2}^{i+2,-1}(x_{2}))L_{1}^{i,-1}(L_{1}^{i+1,-1}(L_{1}^{i+2,-1}(x_{1})))L_{2}^{i,-1}(L_{2}^{i+1,-1}(L_{2}^{i+2,-1}(x_{2})))%
}{%
l_{1}^{i+2}(L_{1}^{i+2,-1}(x_{1}))l_{2}^{i+2}(L_{2}^{i+2,-1}(x_{2}))l_{1}^{i+1}(L_{1}^{i+1,-1}(L_{1}^{i+2,-1}(x_{1})))l_{2}^{i+1}(L_{2}^{i+1,-1}(L_{2}^{i+1,-1}(x_{2})))%
}  \notag \\
&&\times
c^{i}(L_{1}^{i+1,-1}(L_{1}^{i+2,-1}(x_{1})),L_{2}^{i+1,-1}(L_{2}^{i+2,-1}(x_{1}))).
\label{15}
\end{eqnarray}

By trivial induction, we obtain the general form%
\begin{equation}
c^{n}(x_{1},x_{2})=I^{n}D^{n}(x_{1},x_{2})\frac{%
P_{1}^{n}(x_{1})P_{2}^{n}(x_{2})}{Q_{1}^{n}(x_{1})Q_{2}^{n}(x_{2})},
\label{16}
\end{equation}

where $I^{n}$ and $D^{n}(x_{1},x_{2})$ denote the terms%
\begin{eqnarray}
I^{n} &=&\frac{1}{%
E(X_{1}^{F_{1}}X_{2}^{F_{2}})E(X_{1}^{L_{1}^{0}}X_{2}^{L_{2}^{0}})(\Pi
_{i=1}^{n-1}E(X_{1}^{L_{1}^{i}}X_{2}^{L_{2}^{i}}))}  \notag \\
D^{n}(x_{1},x_{2}) &=&c^{F}(L_{1}^{0,-1}\circ ...\circ \text{ }%
L_{1}^{n,-1}(x_{1}),L_{2}^{0,-1}\circ ...\circ \text{ }L_{2}^{n,-1}(x_{2})),
\label{16.1}
\end{eqnarray}

as well as $P_{1}^{n}(x_{1}),$ $Q_{1}^{n}(x_{1}),P_{2}^{n}(x_{2})$ and $%
Q_{2}^{n}(x_{2})$ the terms%
\begin{eqnarray}
P_{1}^{n}(x_{1}) &=&(F_{1}^{-1}\circ ...\circ \text{ }%
L_{1}^{n,-1}(x_{1}))(L_{1}^{0,-1}\circ ...\circ \text{ }L_{1}^{n,-1}(x_{1}))%
\Pi _{i=1}^{n-1}[L_{1}^{i,-1}\circ ...\circ \text{ }L_{1}^{n,-1}(x_{1})] 
\notag \\
Q_{1}^{n}(x_{1}) &=&l_{1}^{0}(L_{1}^{0,-1}\circ ...\circ \text{ }%
L_{1}^{n,-1}(x_{1}))\Pi _{i=1}^{n-1}[l_{1}^{i}(L_{1}^{i,-1}\circ ...\circ 
\text{ }L_{1}^{n,-1}(x_{1}))l_{1}^{n}(L_{1}^{n,-1}(x_{1}))]  \notag \\
P_{2}^{n}(x_{2}) &=&(F_{2}^{-1}\circ ...\circ \text{ }%
L_{2}^{n,-1}(x_{2}))(L_{2}^{0,-1}\circ ...\circ \text{ }L_{2}^{n,-1}(x_{2}))%
\Pi _{i=1}^{n-1}[L_{2}^{i,-1}\circ ...\circ \text{ }L_{2}^{n,-1}(x_{2})] 
\notag \\
Q_{2}^{n}(x_{2}) &=&l_{2}^{0}(L_{2}^{0,-1}\circ ...\circ \text{ }%
L_{2}^{n,-1}(x_{2}))\Pi _{i=1}^{n-1}[l_{2}^{i}(L_{2}^{i,-1}\circ ...\circ 
\text{ }L_{2}^{n,-1}(x_{2}))l_{2}^{n}(L_{2}^{n,-1}(x_{2}))].  \label{16.2}
\end{eqnarray}

Let's return now to (\ref{16}). We can write the equation as 
\begin{eqnarray}
&&c^{n}(x_{1},x_{2})=\frac{P_{1}^{n}(x_{1})P_{2}^{n}(x_{2})D^{n}(x_{1},x_{2})%
}{Q_{1}^{n}(x_{1})Q_{2}^{n}(x_{1})}\frac{I^{n-1}}{%
E(X_{1}^{L_{1}^{n-1}}X_{2}^{L_{2}^{n-1}})}  \notag \\
&=&I^{n-1}\frac{P_{1}^{n}(x_{1})P_{2}^{n}(x_{2})}{%
Q_{1}^{n}(x_{1})Q_{2}^{n}(x_{2})}\frac{D^{n}(x_{1},x_{2})}{%
\dint\nolimits_{0}^{1}\dint%
\nolimits_{0}^{1}L_{1}^{n-1,-1}(u_{1})L_{2}^{n-1,-1}(u_{2})c^{n-1}(u_{1},u_{2})du_{1}du_{2}%
}  \notag \\
&=&I^{n-1}\frac{P_{1}^{n}(x_{1})P_{2}^{n}(x_{2})}{%
Q_{1}^{n}(x_{1})Q_{2}^{n}(x_{2})}\frac{D^{n}(x_{1},x_{2})}{%
\dint\nolimits_{0}^{1}\dint%
\nolimits_{0}^{1}L_{1}^{n-1,-1}(u_{1})L_{2}^{n-1,-1}(u_{2})I^{n-1}\frac{%
P_{1}^{n-1}(u_{1})P_{2}^{n-1}(u_{2})}{Q_{1}^{n-1}(u_{1})Q_{2}^{n-1}(u_{2})}%
D^{n-1}(u_{1},u_{2})du_{1}du_{2}}  \notag \\
&=&\frac{P_{1}^{n}(x_{1})P_{2}^{n}(x_{2})}{Q_{1}^{n}(x_{1})Q_{2}^{n}(x_{2})}%
\frac{D^{n}(x_{1},x_{2})}{\dint\nolimits_{0}^{1}\dint%
\nolimits_{0}^{1}L_{1}^{n-1,-1}(u_{1})L_{2}^{n-1,-1}(u_{2})\frac{%
P_{1}^{n-1}(u_{1})P_{2}^{n-1}(u_{2})}{Q_{1}^{n-1}(u_{1})Q_{2}^{n-1}(u_{2})}%
D^{n-1}(u_{1},u_{2})du_{1}du_{2}}.  \label{69}
\end{eqnarray}

In (\ref{16.1}), we have defined $D^{n}(x_{1},x_{2})$ by%
\begin{equation}
D^{n}(x_{1},x_{2})=c^{F}(L_{1}^{0,-1}\circ ...\circ \text{ }%
L_{1}^{n,-1}(x_{1}),L_{2}^{0,-1}\circ ...\circ \text{ }L_{2}^{n,-1}(x_{2})).
\label{72}
\end{equation}

Applying the results from \textit{Appendices D.1-D.3 }and particularly 
\textit{Theorem }\ref{phi-conv}, we get that the arguments of the function $%
c^{F}(.,.)$ are uniformly subsequentially convergent to constants for $%
x_{1}\in \lbrack \delta ,\eta ]$ and $x_{2}\in \lbrack \delta ,\eta ],$
where $(\delta ,\eta )$ $\subset $ $(0,1)$.

More precisely, for any sequence of indices $\{n_{k}^{(\alpha )}\}_{k\geq 0}$
with $n_{k}^{(\alpha )}\rightarrow \infty $ there exists a (not relabeled)
subsequence, still denoted $\{n_{k}^{(\alpha )}\}$, and constants $%
c_{1}^{(\alpha )}$ and $c_{2}^{(\alpha )}$ such that the uniform
subsequential convergence 
\begin{equation}
D^{n_{k}^{(\alpha )}}(x_{1},x_{2})\ \longrightarrow \ c^{F}(c_{1}^{(\alpha
)},c_{2}^{(\alpha )}),\qquad (x_{1},x_{2})\in \lbrack \delta ,\eta ]^{2},
\label{79}
\end{equation}%
holds. Here $\alpha $ denotes a generic subsequence of $n$, and $%
(c_{1}^{(\alpha )},c_{2}^{(\alpha )})$ belongs to the cluster set of
subsequential limits 
\begin{equation}
\mathcal{C}=\{(c_{1},c_{2}):\ \exists \,n_{k}\rightarrow +\infty \text{ such
that the arguments }\rightarrow (c_{1},c_{2})\text{ uniformly on }[\delta
,\eta ]^{2}\}.  \label{79.1}
\end{equation}

Let us write the convergence (\ref{79}) in $\varepsilon $--form. Fix $%
\varepsilon >0$. Then there exists a natural number $K(\varepsilon )$ such
that for all $k\geq K(\varepsilon )$ and all $(x_{1},x_{2})\in \lbrack
\delta ,\eta ]^{2}$, 
\begin{equation}
\left\vert D^{n_{k}^{(\alpha )}}(x_{1},x_{2})-c^{F}(c_{1}^{(\alpha
)},c_{2}^{(\alpha )})\right\vert <\varepsilon .  \label{79.2}
\end{equation}

Take now (\ref{69}). What we will prove next is that in the limit we have an
independence copula, in the sense that every cluster point of $%
c^{n}(x_{1},x_{2})$ equals $1$ on $[\delta ,\eta ]^{2}$. For $k$ being
\textquotedblleft large"\ we apply the above definition of uniform
subsequential convergence. Namely, take a fixed $\varepsilon $ and consider $%
k\geq K(\varepsilon )$. Since $P_{i}^{n}(x_{i})$ and $Q_{i}^{n}(x_{i})$ are
bounded, for the denominator of (\ref{69}) (with $n=n_{k}^{(\alpha )}$) we
get consecutively%
\begin{eqnarray}
&&\dint\nolimits_{0}^{1}\dint%
\nolimits_{0}^{1}L_{1}^{n-1,-1}(u_{1})L_{2}^{n-1,-1}(u_{2})\frac{%
P_{1}^{n-1}(u_{1})P_{2}^{n-1}(u_{2})}{Q_{1}^{n-1}(u_{1})Q_{2}^{n-1}(u_{2})}%
D^{n-1}(u_{1},u_{2})du_{1}du_{2}  \notag \\
&\leq &\left( c^{F}(c_{1},c_{2})+\varepsilon \right)
\dint\nolimits_{0}^{1}\dint%
\nolimits_{0}^{1}L_{1}^{n-1,-1}(u_{1})L_{2}^{n-1,-1}(u_{2})\frac{%
P_{1}^{n-1}(u_{1})P_{2}^{n-1}(u_{2})}{Q_{1}^{n-1}(u_{1})Q_{2}^{n-1}(u_{2})}%
du_{1}du_{2}  \notag \\
&=&c^{F}(c_{1},c_{2})\dint\nolimits_{0}^{1}\dint%
\nolimits_{0}^{1}L_{1}^{n-1,-1}(u_{1})L_{2}^{n-1,-1}(u_{2})\frac{%
P_{1}^{n-1}(u_{1})P_{2}^{n-1}(u_{2})}{Q_{1}^{n-1}(u_{1})Q_{2}^{n-1}(u_{2})}%
du_{1}du_{2}+\varepsilon _{1}  \notag \\
&=&c^{F}(c_{1},c_{2})\left( \dint\nolimits_{0}^{1}L_{1}^{n-1,-1}(u_{1})\frac{%
P_{1}^{n-1}(u_{1})}{Q_{1}^{n-1}(u_{1})}du_{1}\right) \left(
\dint\nolimits_{0}^{1}L_{2}^{n-1,-1}(u_{2})\frac{P_{2}^{n-1}(u_{2})}{%
Q_{2}^{n-1}(u_{2})}du_{2}\right) +\varepsilon _{1},  \label{81}
\end{eqnarray}

where $\varepsilon _{1}$ is also \textquotedblleft small" and for it we have%
\footnote{{\footnotesize Obviously, the boundedness of }$P_{i}^{n}(x_{i})$%
{\footnotesize \ and }$Q_{i}^{n}(x_{i})${\footnotesize \ matter so far both
in (\ref{81}) and (\ref{82}).}}%
\begin{equation}
\varepsilon _{1}=\varepsilon
\dint\nolimits_{0}^{1}\dint%
\nolimits_{0}^{1}L_{1}^{n-1,-1}(u_{1})L_{2}^{n-1,-1}(u_{2})\frac{%
P_{1}^{n-1}(u_{1})P_{2}^{n-1}(u_{2})}{Q_{1}^{n-1}(u_{1})Q_{2}^{n-1}(u_{2})}%
du_{1}du_{2}.  \label{82}
\end{equation}

Analogously, we have also%
\begin{eqnarray}
&&\dint\nolimits_{0}^{1}\dint%
\nolimits_{0}^{1}L_{1}^{n-1,-1}(u_{1})L_{2}^{n-1,-1}(u_{2})\frac{%
P_{1}^{n-1}(u_{1})P_{2}^{n-1}(u_{2})}{Q_{1}^{n-1}(u_{1})Q_{2}^{n-1}(u_{2})}%
D^{n-1}(u_{1},u_{2})du_{1}du_{2}  \notag \\
&\geq &c^{F}(c_{1},c_{2})\left( \dint\nolimits_{0}^{1}L_{1}^{n-1,-1}(u_{1})%
\frac{P_{1}^{n-1}(u_{1})}{Q_{1}^{n-1}(u_{1})}du_{1}\right) \left(
\dint\nolimits_{0}^{1}L_{2}^{n-1,-1}(u_{2})\frac{P_{2}^{n-1}(u_{2})}{%
Q_{2}^{n-1}(u_{2})}du_{2}\right) -\varepsilon _{1}.  \label{83}
\end{eqnarray}

For the numerator we get%
\begin{equation}
\frac{P_{1}^{n}(x_{1})P_{2}^{n}(x_{2})}{Q_{1}^{n}(x_{1})Q_{2}^{n}(x_{2})}%
c^{F}(c_{1},c_{2})-\varepsilon _{2}\leq \frac{%
P_{1}^{n}(x_{1})P_{2}^{n}(x_{2})}{Q_{1}^{n}(x_{1})Q_{2}^{n}(x_{2})}%
D^{n}(x_{1},x_{2})\leq \frac{P_{1}^{n}(x_{1})P_{2}^{n}(x_{2})}{%
Q_{1}^{n}(x_{1})Q_{2}^{n}(x_{2})}c^{F}(c_{1},c_{2})+\varepsilon _{2},
\label{84}
\end{equation}

where $\varepsilon _{2}$ is again \textquotedblleft small" and for it we have%
\begin{equation}
\varepsilon _{2}=\varepsilon \frac{P_{1}^{n}(x_{1})P_{2}^{n}(x_{2})}{%
Q_{1}^{n}(x_{1})Q_{2}^{n}(x_{2})}.  \label{85}
\end{equation}

So from (\ref{69}) and taking the evaluations of its numerator and
denominator from (\ref{81}), (\ref{83}), and (\ref{84}), we get%
\begin{eqnarray}
c^{n}(x_{1},x_{2}) &\leq &\frac{\frac{P_{1}^{n}(x_{1})P_{2}^{n}(x_{2})}{%
Q_{1}^{n}(x_{1})Q_{2}^{n}(x_{2})}c^{F}(c_{1},c_{2})+\varepsilon _{2}}{%
c^{F}(c_{1},c_{2})\left( \dint\nolimits_{0}^{1}L_{1}^{n-1,-1}(u_{1})\frac{%
P_{1}^{n-1}(u_{1})}{Q_{1}^{n-1}(u_{1})}du_{1}\right) \left(
\dint\nolimits_{0}^{1}L_{2}^{n-1,-1}(u_{2})\frac{P_{2}^{n-1}(u_{2})}{%
Q_{2}^{n-1}(u_{2})}du_{2}\right) -\varepsilon _{1}}  \notag \\
&\leq &\frac{\frac{P_{1}^{n}(x_{1})P_{2}^{n}(x_{2})}{%
Q_{1}^{n}(x_{1})Q_{2}^{n}(x_{2})}c^{F}(c_{1},c_{2})}{c^{F}(c_{1},c_{2})%
\left( \dint\nolimits_{0}^{1}L_{1}^{n-1,-1}(u_{1})\frac{P_{1}^{n-1}(u_{1})}{%
Q_{1}^{n-1}(u_{1})}du_{1}\right) \left(
\dint\nolimits_{0}^{1}L_{2}^{n-1,-1}(u_{2})\frac{P_{2}^{n-1}(u_{2})}{%
Q_{2}^{n-1}(u_{2})}du_{2}\right) }+\varepsilon _{3}  \notag \\
&=&\frac{\frac{P_{1}^{n}(x_{1})P_{2}^{n}(x_{2})}{%
Q_{1}^{n}(x_{1})Q_{2}^{n}(x_{2})}}{\left(
\dint\nolimits_{0}^{1}L_{1}^{n-1,-1}(u_{1})\frac{P_{1}^{n-1}(u_{1})}{%
Q_{1}^{n-1}(u_{1})}du_{1}\right) \left(
\dint\nolimits_{0}^{1}L_{2}^{n-1,-1}(u_{2})\frac{P_{2}^{n-1}(u_{2})}{%
Q_{2}^{n-1}(u_{2})}du_{2}\right) }+\varepsilon _{3},  \label{86}
\end{eqnarray}

where $\varepsilon _{3}$ is also small by a classical linear fraction
representation. Analogously, we have also%
\begin{equation}
\frac{\frac{P_{1}^{n}(x_{1})P_{2}^{n}(x_{2})}{%
Q_{1}^{n}(x_{1})Q_{2}^{n}(x_{2})}}{\left(
\dint\nolimits_{0}^{1}L_{1}^{n-1,-1}(u_{1})\frac{P_{1}^{n-1}(u_{1})}{%
Q_{1}^{n-1}(u_{1})}du_{1}\right) \left(
\dint\nolimits_{0}^{1}L_{2}^{n-1,-1}(u_{2})\frac{P_{2}^{n-1}(u_{2})}{%
Q_{2}^{n-1}(u_{2})}du_{2}\right) }-\varepsilon _{3}\leq c^{n}(x_{1},x_{2}).
\label{87}
\end{equation}

From (\ref{69}), (\ref{86}), and (\ref{87}) we get effectively that for $%
k\geq K(\varepsilon )$ (equivalently, for $n=n_{k}^{(\alpha )}$ sufficiently
large along the chosen subsequence)%
\begin{equation}
\left\vert c^{n}(x_{1},x_{2})-h_{1}^{n}(x_{1})h_{2}^{n}(x_{2})\right\vert
\leq \varepsilon _{3},  \label{88}
\end{equation}

where%
\begin{eqnarray}
h_{1}^{n}(x) &=&\frac{P_{1}^{n}(x)}{Q_{1}^{n}(x)}\frac{1}{\left(
\dint\nolimits_{0}^{1}L_{1}^{n-1,-1}(u_{1})\frac{P_{1}^{n-1}(u_{1})}{%
Q_{1}^{n-1}(u_{1})}du_{1}\right) }  \label{89} \\
h_{2}^{n}(x) &=&\frac{P_{2}^{n}(x)}{Q_{2}^{n}(x)}\frac{1}{\left(
\dint\nolimits_{0}^{1}L_{2}^{n-1,-1}(u_{2})\frac{P_{2}^{n-1}(u_{2})}{%
Q_{2}^{n-1}(u_{2})}du_{2}\right) }.  \label{90}
\end{eqnarray}

We can re-write (\ref{88})$~$as%
\begin{eqnarray}
\left\vert
c^{n}(L_{1}^{n}(x_{1}),L_{2}^{n}(x_{2}))-h_{1}^{n}(L_{1}^{n}(x_{1}))h_{2}^{n}(L_{2}^{n}(x_{2}))\right\vert &\leq &\varepsilon _{3}
\label{101} \\
\left\vert \frac{l_{12}^{n}(x_{1},x_{2})}{l_{1}^{n}(x_{1})l_{2}^{n}(x_{2})}%
-h_{1}^{n}(L_{1}^{n}(x_{1}))h_{2}^{n}(L_{2}^{n}(x_{2}))\right\vert &\leq
&\varepsilon _{3}.  \label{102}
\end{eqnarray}

Thus we get a product separability of the density $l_{12}^{n}(x_{1},x_{2})$
in the limit along the chosen subsequence, which implies that every cluster
point of the copula density $c^{n}(.,.)$ factorizes on $[\delta ,\eta ]^{2}$
to the product of its marginals. Since any copula density has uniform
marginals, this factorization forces the cluster point to be identically $1$
on $[\delta ,\eta ]^{2}$, i.e.,\ the corresponding limiting copula is the
independence copula\footnote{{\footnotesize Note that }$\frac{P_{1}^{n}(x)}{%
Q_{1}^{n}(x)}${\footnotesize \ and }$\frac{P_{2}^{n}(x)}{Q_{2}^{n}(x)},$%
{\footnotesize \ as well as }$h_{1}^{n}(.)${\footnotesize \ and }$%
h_{2}^{n}(.),${\footnotesize \ converging uniformly to 1 comes only as a
byproduct of the separability in the limit. }}.

Once we have the independence, we can resort to the remark from the
beginning that if we start with an independent copula, we get effectively
the iterations from \cite{[3]}. There the following theorem was proven:

\textbf{\textit{Theorem }}\textit{(Univariate case):}\textbf{\ }\textit{Let }%
$X$\textit{\ be an arbitrary non-negative random variable with a
distribution function }$F$\textit{\ and a positive finite mean }$\mu _{F}$%
\textit{. It gives rise to a Lorenz curve, }$L_{F}(x)$%
\begin{equation}
L_{F}(x)=\frac{\dint\nolimits_{0}^{x}F^{-1}(u)du}{\mu _{F}}=\frac{%
\dint\nolimits_{0}^{x}F^{-1}(u)du}{\dint\nolimits_{0}^{1}F^{-1}(u)du},
\label{104}
\end{equation}

\textit{where }$F^{-1}(u)=\inf \left\{ y:F(y)\geq u\right\} $\textit{\ for }$%
0\leq u\leq 1$\textit{\ is the generalized inverse of }$F(u)$\textit{\ and }$%
\mu _{F}=$\textit{\ }$\dint\nolimits_{0}^{1}F^{-1}(u)du<\infty $\textit{\ is
the mean of }$X\,$\textit{. We will denote by }$L$\textit{\ both the Lorenz
curve above (using the notation }$L_{F}(x)$\textit{\ and }$L^{F}(x)$\textit{%
\ interchangeably, emphasizing on the distribution function }$F$\textit{\
that generates }$L$\textit{\ and the same logic holds for }$\mu _{F}$\textit{%
) and the linear operator }$L(F(x))(.):[0,1]\rightarrow \lbrack 0,1]$\textit{%
\ which maps }$F(.)$\textit{\ to }$L_{F}(.)$\textit{.}

\textit{Since }$L_{F}(x)$\textit{\ by itself could be viewed as a
distribution function of a random variable (with the possible extension }$%
L_{F}(x)=0$\textit{\ for }$x<0$\textit{\ and }$L_{F}(x)=1$\textit{\ for }$%
x>1 $\textit{), if for }$i=0,1,...$ \textit{we consider the sequence of
distribution functions }$H_{i}^{F}(x)$%
\begin{eqnarray}
H_{0}^{F}(x) &=&F(x)  \label{104.1} \\
H_{1}^{F}(x) &=&L^{H_{0}^{F}}(x)=L^{F}(x)=L_{F}(x)=L_{0}(x)  \notag \\
H_{2}^{F}(x) &=&L^{H_{1}^{F}}(x)=L(L^{F}(x))=L_{1}(x)  \notag \\
H_{3}^{F}(x) &=&L^{H_{2}^{F}}(x)=L(L_{1}(x))=L_{2}(x)  \notag \\
&&...  \notag \\
H_{n}^{F}(x) &=&L^{H_{n-1}^{F}}(x)=L(L_{n-2}(x))=L_{n-1}(x)  \notag \\
H_{n+1}^{F}(x) &=&L^{H_{n}^{F}}(x)=L(L_{n-1}(x))=L_{n}(x),  \notag
\end{eqnarray}

\textit{where in }$H$\textit{\ and }$L$\textit{\ by the subscript }$n$%
\textit{\ we indicate the iteration and by the superscript the starting
distribution }$F$\textit{, we have that }$H_{n}^{F}(x)$\textit{\ converges
uniformly to the distribution function }$G(x)$\textit{\ }%
\begin{equation}
G(x)=\left\{ 
\begin{array}{c}
x^{\frac{1+\sqrt{5}}{2}},0\leq x\leq 1 \\ 
0,x<0 \\ 
1,x>1.%
\end{array}%
\right.  \label{104.2}
\end{equation}

So the limit marginals are the ones proved in the theorem - power-laws with
golden section exponent. This completes the proof.
\end{proof}

\section{Analysis of extremal dependence cases}

In the previous section, the density assumption was essential to the proof.
Specifically, it was used to establish that both the product copula and
marginals of the form presented in (\ref{104.2}) are attracting fixed points
of the iteration map (\ref{5.1}). This raises the natural question of
whether other such fixed points exist under weaker assumptions, a
possibility we investigate in the present section.

In contrast to our prior work \cite{[3]}, which relied on straight
polynomial bounds, the present analysis must address also the more intricate
problem of an evolving dependence structure. As detailed in \textit{Appendix
D}, our approach leverages the $TP_{2}$ and $RR_{2}$ properties of the
initial density, which induce specific crossing patterns in the \textit{%
marginal} \textit{Lorenz curves} that helped to produce a solution. In
retrospect, the integrand in (\ref{5.1}) can be seen as a special normalized
kernel that possesses both $TP_{2}$ and $RR_{2}$ properties. This kernel
served to reduce the dependence of the integrator, iteratively driving any
initial distribution with strong positive ($TP_{2}$) or negative ($RR_{2}$)
dependence toward complete independence.

This leads to a natural question, which we address in this section: What is
the outcome when the iterative process begins with distributions at the
limits of maximal dependence---namely, the \textit{Fr\'{e}chet-Hoeffding
bounds}---particularly when a density does not exist?

Our approach to answering this question is structured as follows. The
initial distribution $F(x_{1},x_{2})$ is bounded by the \textit{Fr\'{e}%
chet-Hoeffding bounds}, which are determined by its marginals, $F_{1}(x_{1})$
and $F_{2}(x_{2})$. Therefore, the first step in our analysis is to
investigate how these bounds evolve under the iteration map. Specifically,
we aim to characterize the evolution of the bounds for $L_{n}(x_{1},x_{2})$.
This characterization will, in turn, inform our second objective: the
detection of additional fixed points. The \textit{Fr\'{e}chet--Hoeffding
bounds} in the two-dimensional case constitute distribution functions which
are non-smooth and hence lack densities. Although this suggests a relaxation
of the conditions of \textit{Theorem \ref{main-th}}, we will retain the
strictly positive density assumption for the initial marginals $F_{1}(x_{1})$
and $F_{2}(x_{2})$, both here and in the multivariate case. This assumption
is crucial for the results to hold; we will relax it only later.

The key to the next analysis is a well-known inequality for the expectation
of the product of two random variables. In our notation, for any random
vector $(X_{1},$ $X_{2})$ with joint distribution function $%
F_{12}(x_{1},x_{2})$ and marginals $F_{i}(x_{i}),i=1,2$, as well as a random
variable $U\sim U(0,1),$ the following inequality holds (see \cite{[5]})%
\begin{equation}
EF_{1}^{-1}(U)F_{2}^{-1}(1-U)\leq EX_{1}X_{2}\leq
EF_{1}^{-1}(U)F_{2}^{-1}(U).  \label{17}
\end{equation}

In terms of integrals, it reads: 
\begin{equation}
\dint\nolimits_{-\infty }^{+\infty }F_{1}^{-1}(u)F_{2}^{-1}(1-u)du\leq
\dint\nolimits_{-\infty }^{+\infty }\dint\nolimits_{-\infty }^{+\infty
}u_{1}u_{2}dF_{12}(u_{1},u_{2})\leq \dint\nolimits_{-\infty }^{+\infty
}F_{1}^{-1}(u)F_{2}^{-1}(u)du.  \label{18}
\end{equation}

This structure resembles to a great extent exactly Arnold's definition of a 
\textit{Lorenz curve} we employ, with the difference that the integration in
(\ref{18}) is improper (i.e., up to $+\infty $), while in (\ref{1}), it is
proper (i.e., up to a finite variable). Additionally, the \textit{upper} and
the \textit{lower-bounds} above are reached for comonotonic and
countermonotonic random vectors $(X_{1}^{F_{1}},X_{2}^{F_{2}}),$
respectively. This gives further insight into what to expect for the \textit{%
Lorenz curve} iterations.

Let's turn to them. We proceed as follows: take the well-known \textit{Fr%
\'{e}chet-Hoeffding bounds} of $F(x_{1},x_{2})$. Following \cite{[5]} and 
\cite{[11]}, and using the standard notation from there for the two
inequality sides, we have%
\begin{equation}
\overset{F_{-}}{\overbrace{Max(F_{1}(x_{1})+F_{2}(x_{2})-1,0)}}\leq
F(x_{1},x_{2})\leq \overset{F_{+}}{\overbrace{Min(F_{1}(x_{1}),F_{2}(x_{2}))}%
},  \label{19}
\end{equation}

or in terms of copulas%
\begin{equation}
\overset{W^{F}(x_{1},x_{2})}{\overbrace{Max(x_{1}+x_{2}-1,0)}}\leq
C^{F}(x_{1},x_{2})\leq \overset{M^{F}(x_{1},x_{2})}{\overbrace{%
Min(x_{1},x_{2})}.}  \label{20}
\end{equation}

It is well known (e.g., again \cite{[5]} and \cite{[11]}) that in the
bivariate case, both bounds are sharp and form distribution functions ($%
F_{-} $ and $F_{+}$ ) as in (\ref{19}), or copulas ($W^{F}(x_{1},x_{2})$ -
countermonotonic and $M^{F}(x_{1},x_{2})$ - comonotonic) as in (\ref{20}).

\subsection{Fr\'{e}chet-Hoeffding upper-bound}

Let's see what happens with the \textit{Lorenz curve} $%
L_{F_{+}}(x_{1},x_{2}) $ generated by the \textit{upper-bound}. By utilizing
the properties of the \textit{Dirac delta function} $\delta (.)$ in the
context of integration theory, we address the occurrence of a non-smooth
integrator in the corresponding \textit{Stieltjes integrals}. They are also
considered in \textit{Claim }\ref{fh-claim1} of \textit{Appendix A}, which
provides details on the calculations below. We get\footnote{{\footnotesize %
To our knowledge, while the bounds (\ref{17}) are well known and can be
derived in various ways (see \cite[Remark~3.25]{[5]} for more details), the
analysis of the \textit{Lorenz curve} case considered in this
paper---specifically, obtaining the proper integrals in (\ref{18})---has not
been explicitly addressed in the literature.}
\par
{\footnotesize Although our approach is technically involved at certain
points, we have deliberately used general methods from the theory of
distributions to improve problem comprehension and broaden the scope of
potential applications, rather than relying on specialized probabilistic
techniques.}
\par
{\footnotesize However, it is worth noting that an elegant heuristic
probabilistic derivation exists. We have the representation }%
\begin{equation*}
L_{F_{+}}(x_{1},x_{2})=\frac{\dint\nolimits_{0}^{x_{1}}\dint%
\nolimits_{0}^{x_{2}}F_{1}^{-1}(u_{1})F_{2}^{-1}(u_{2})dM^{F}(u_{1},u_{2})}{%
\mu _{12}^{F_{+}}}=\frac{E[1_{\{U_{1}<x_{1}\}}1_{\{U_{2}<x_{2}%
\}}F_{1}^{-1}(U_{1})F_{2}^{-1}(U_{2})]}{E[F_{1}^{-1}(U_{1})F_{2}^{-1}(U_{2})]%
},
\end{equation*}%
\par
{\footnotesize where }$(U_{1},U_{2})${\footnotesize \ is a bivariate
distribution with uniform marginals. The presence of the comonotonic copula }%
$M^{F}(.,.)${\footnotesize \ in the differential implies perfect correlation
between }$U_{1}${\footnotesize \ and }$U_{2}${\footnotesize , i.e., }$%
P(U_{1}=U_{2})=1.${\footnotesize \ As a result, the expectation }%
\begin{equation*}
E[1_{\{U_{1}<x_{1}\}}1_{\{U_{2}<x_{2}\}}F_{1}^{-1}(U_{1})F_{2}^{-1}(U_{2})]%
{\footnotesize \ }
\end{equation*}%
\par
{\footnotesize reduces to }%
\begin{equation*}
E[1_{\{U<x_{1}\wedge x_{2}\}}F_{1}^{-1}(U)F_{2}^{-1}(U)],{\footnotesize \ }
\end{equation*}%
\par
{\footnotesize where }$U${\footnotesize \ is uniformly distributed. This
yields the same result as in (\ref{21}).}
\par
{\footnotesize The derivation can also be extended by incorporating deeper
aspects of the theory of distributions using test functions (see \cite{[12]}
and \cite[Chapter 9]{[19]} for an introduction, \cite[Chapter 1]{[21]} for a
modern treatment, and \cite{[15]} for detailed theoretical and applied
discussions). This approach notably allows non-smooth copulas---such as
extremal copulas attaining \textit{Fr\'{e}chet-Hoeffding} \textit{bounds} or
those induced by atoms---to be interpreted as PDE solutions, which offers
some modeling advantages. In a related context, similar techniques are
utilized in \cite{[16]} for analyzing copulas within \textit{Sobolev spaces}%
. However, such methods exceed the scope of this paper and are unnecessary
for our specific problem, except briefly in \textit{Claim }\ref{fh-claim2}
of \textit{Appendix A}.}}%
\begin{eqnarray}
&&L_{F_{+}}(x_{1},x_{2})  \notag \\
&=&\frac{\dint\nolimits_{-\infty
}^{F_{1}^{-1}(x_{1})}\dint\nolimits_{-\infty
}^{F_{2}^{-1}(x_{2})}u_{1}u_{2}dF_{+}(u_{1},u_{2})}{\mu _{12}^{F_{+}}}=\frac{%
\dint\nolimits_{0}^{x_{1}}\dint%
\nolimits_{0}^{x_{2}}F_{1}^{-1}(u_{1})F_{2}^{-1}(u_{2})dM^{F}(u_{1},u_{2})}{%
\mu _{12}^{F_{+}}}  \notag \\
&=&\frac{\dint\nolimits_{0}^{x_{1}}\dint%
\nolimits_{0}^{x_{2}}F_{1}^{-1}(u_{1})F_{2}^{-1}(u_{2})dMin(u_{1},u_{2})}{%
\mu _{12}^{F_{+}}}=\frac{\dint\nolimits_{0}^{x_{1}}\dint%
\nolimits_{0}^{x_{2}}F_{1}^{-1}(u_{1})F_{2}^{-1}(u_{2})\delta
(u_{2}-u_{1})du_{1}du_{2}}{\mu _{12}^{F_{+}}}  \notag \\
&=&\frac{\dint\nolimits_{0}^{x_{1}\wedge x_{2}}F_{1}^{-1}(u)F_{2}^{-1}(u)du}{%
\dint\nolimits_{0}^{1}F_{1}^{-1}(u)F_{2}^{-1}(u)du}.  \label{21}
\end{eqnarray}

Now, we can observe that the particular form of the last expression in (\ref%
{21}) allows us to derive the following%
\begin{eqnarray}
1)\text{ }x_{1} &\leq &x_{2}:  \label{25} \\
L_{F_{+}}(x_{1},x_{2}) &=&\frac{\dint%
\nolimits_{0}^{x_{1}}F_{1}^{-1}(u)F_{2}^{-1}(u)du}{\dint%
\nolimits_{0}^{1}F_{1}^{-1}(u)F_{2}^{-1}(u)du}=\frac{\dint%
\nolimits_{0}^{x_{1}\wedge 1}F_{1}^{-1}(u)F_{2}^{-1}(u)du}{%
\dint\nolimits_{0}^{1}F_{1}^{-1}(u)F_{2}^{-1}(u)du}%
=L_{F_{+}}(x_{1},1)=L_{1}^{F_{+}}(x_{1})  \notag
\end{eqnarray}%
\begin{eqnarray}
2)\text{ }x_{2} &<&x_{1}:  \label{25ab} \\
L_{F_{+}}(x_{1},x_{2}) &=&\frac{\dint%
\nolimits_{0}^{x_{2}}F_{1}^{-1}(u)F_{2}^{-1}(u)du}{\dint%
\nolimits_{0}^{1}F_{1}^{-1}(u)F_{2}^{-1}(u)du}=\frac{\dint%
\nolimits_{0}^{x_{2}\wedge 1}F_{1}^{-1}(u)F_{2}^{-1}(u)du}{%
\dint\nolimits_{0}^{1}F_{1}^{-1}(u)F_{2}^{-1}(u)du}%
=L_{F_{+}}(1,x_{2})=L_{2}^{F_{+}}(x_{2}).  \notag
\end{eqnarray}

Three important implications follow. First, by the definition of the \textit{%
Fr\'{e}chet-Hoeffding bounds} for the \textit{Lorenz curve} $%
L_{F}(x_{1},x_{2})$ viewed as a distribution function, we have%
\begin{equation}
L_{F}(x_{1},x_{2})\leq L_{+}^{F}(x_{1},x_{2})=Min\left(
L_{1}^{F}(x_{1}),L_{2}^{F}(x_{2})\right) .  \label{25a}
\end{equation}

Here, we continue using the notation introduced earlier to indicate the
extremal distribution $L_{+}^{F}(x_{1},x_{2})$ with a plus subscript. But
from (\ref{25}), it becomes further valid that for any $0\leq x_{1}\leq 1$
and $0\leq x_{2}\leq 1$ 
\begin{equation}
L_{F_{+}}(x_{1},x_{2})=Min\left(
L_{1}^{F_{+}}(x_{1}),L_{2}^{F_{+}}(x_{2})\right) .  \label{26}
\end{equation}

This means that $L_{F}(x_{1},x_{2})$ reaches its \textit{Fr\'{e}%
chet-Hoeffding upper-bound} if $F(x_{1},x_{2})$ does the same, i.e., $%
L_{F}(x_{1},x_{2})=L_{+}^{F}(x_{1},x_{2})$ in (\ref{25a}) when $%
F(x_{1},x_{2})=F_{+}(x_{1},x_{2})$. As shown in \textit{Claim }%
{\footnotesize \ref{fh-claim2} }of \textit{Appendix A}, the reverse also
holds: $L_{F}(x_{1},x_{2})=L_{+}^{F}(x_{1},x_{2})$ implies $%
F(x_{1},x_{2})=F_{+}(x_{1},x_{2})$. Thus, we may conclude that $%
L_{+}^{F}(x_{1},x_{2})=L_{F_{+}}(x_{1},x_{2})\footnote{{\footnotesize It is
important to emphasize that }$L_{+}^{F}(x_{1},x_{2})${\footnotesize \
simultaneously denotes two distinct concepts:}
\par
{\footnotesize 1) }$L${\footnotesize , viewed as a distribution function,
attains its \textit{Fr\'{e}chet-Hoeffding upper-bound}; thus, it equals the
minimum of its marginals. As previously mentioned, this interpretation is
signified by the superscript '+' notation.}
\par
{\footnotesize 2) }$L${\footnotesize , interpreted as a\textit{\ Lorenz curve%
} according to the Arnold's definition, possesses a specific integral form
and is associated with a parent distribution }$F${\footnotesize . This
interpretation is denoted by both }$L${\footnotesize \ itself and the
superscript }$F${\footnotesize .}
\par
{\footnotesize This dual interpretation imposes certain restrictions on }$%
L_{+}^{F}(x_{1},x_{2}).${\footnotesize \ Specifically, they lead directly to
the implication derived from the \textquotedblleft if and only if" argument
just elaborated. To be more eloquent, just by putting the '+' superscript on
the distribution function }$L^{F}(x_{1},x_{2})${\footnotesize , we force it
to be equal to the minimum of its marginals, but the integral structure of
the \textit{Lorenz curve} }$L^{F}(x_{1},x_{2})${\footnotesize \ dependent on 
}$F${\footnotesize \ will allow that to happen if and only if }$F=F_{+}$%
{\footnotesize . This may initially seem to be at odds with the paradigm
"distributions with given marginals"\ when working with \textit{Fr\'{e}%
chet-Hoeffding bounds}, but a careful reading will give that there is no
contradiction in applying them to the particular situation. }
\par
{\footnotesize \bigskip }
\par
{\footnotesize {}}}$ and $L_{i+}^{F}(x_{i})=L_{i}^{F_{+}}(x_{i}),$ where for 
$i=1,2,$ $L_{i+}^{F}(x_{i})$ denotes the marginals of $%
L_{+}^{F}(x_{1},x_{2}) $.

Second, the exact expressions for the values participating in the \textit{%
upper-bound} (i.e., the marginals $L_{i+}^{F}(x_{i})$) are%
\begin{eqnarray}
L_{1+}^{F}(x_{1}) &=&L_{1}^{F_{+}}(x_{1})=\frac{\dint%
\nolimits_{0}^{x_{1}}F_{1}^{-1}(u)F_{2}^{-1}(u)du}{\dint%
\nolimits_{0}^{1}F_{1}^{-1}(u)F_{2}^{-1}(u)du}  \label{27} \\
L_{2+}^{F}(x_{2}) &=&L_{2}^{F_{+}}(x_{2})=\frac{\dint%
\nolimits_{0}^{x_{2}}F_{1}^{-1}(u)F_{2}^{-1}(u)du}{\dint%
\nolimits_{0}^{1}F_{1}^{-1}(u)F_{2}^{-1}(u)du}.  \notag
\end{eqnarray}

Third, given a priori marginals $F_{1}(x_{1})$ and $F_{2}(x_{2}),$ it is
always possible to construct a two-dimensional distribution function based
on them in such a way that the \textit{Fr\'{e}chet-Hoeffding upper-bound} is
reached. This is done by taking $%
F(x_{1},x_{2})=F_{+}=Min(F_{1}(x_{1}),F_{2}(x_{2}))$. For an arbitrary $%
F(x_{1},x_{2})\neq F_{+}$ with the same marginals, we have $%
F(x_{1},x_{2})\leq $ $F_{+}=Min(F_{1}(x_{1}),F_{2}(x_{2}))$. Now, if we take 
$x_{1}=1$ $(x_{2}=1)$ in the latter, we get the trivial inequality $%
F_{2}(x_{2})=F(1,x_{2})\leq Min(1,F_{2}(x_{2}))\leq F_{2}(x_{2})$, which
does not offer any new information. We mention this reasoning because the
situation is not the same for a distribution function $L_{F}(x_{1},x_{2})$
defined by (\ref{1}). As discussed before and visible in (\ref{1}), (\ref%
{5.1}), and (\ref{6b}), here the marginals $L_{F}(x_{1})$ and $L_{F}(x_{2})$
are derived quantities from $L(x_{1},x_{2})$ and not a priori postulated
ones, in contrast to the freedom we had in the previous case when
constructing $F(x_{1},x_{2})$. So, whether the \textit{Fr\'{e}chet-Hoeffding
upper-bound} $L_{+}^{F}(x_{1},x_{2})$ is reached is actually determined by
the parent distribution $F$, and not by constructing $L_{F}(x_{1},x_{2})$ as 
$Min\left( L_{1}^{F}(x_{1}),L_{2}^{F}(x_{2})\right) $. We would be able to
do so if and only if $F$ is taken to be $F_{+}$, as we have already
elaborated in the first implication and footnote 6. However, both the
discussion above and equations (\ref{25a}),(\ref{26}), and (\ref{27}) are
incomplete because an important observation is still missing. The natural
question of how $Min\left( L_{1}^{F}(x_{1}),L_{2}^{F}(x_{2})\right) $ and $%
L_{F_{+}}(x_{1},x_{2})=Min\left(
L_{1}^{F_{+}}(x_{1}),L_{2}^{F_{+}}(x_{2})\right) $ are ordered remains open.
Filling this gap is important, as it will enable a comparison of the
marginals $L_{i}^{F}(x_{1})$ and $L_{i}^{F+}(x_{1})$, for $i=1,2,$ along the
iterative procedure. We will address this in more detail later\footnote{%
{\footnotesize To be more precise, an alternative reasoning behind the three
implications is that we work within the \textit{Fr\'{e}chet class} }$%
M(P_{1},P_{2})${\footnotesize \ (see \cite{[5]} for detailed definitions),
which is determined by the probability measures }$P_{1}${\footnotesize {}
and }$P_{2}${\footnotesize {} of the marginal distributions }$F_{1}$%
{\footnotesize {} and }$F_{2}${\footnotesize {}. We begin with fixed
marginals and then modify their copula dependence to observe how it evolves
through the iterations. To this end, we rely on the well-established
framework of "distributions with given marginals". However, when deviations
from this setting occur (driven by different parent distributions }$F)$%
{\footnotesize , we must carefully analyze the situation and apply
appropriate theorems.}}.

Now we can move forward. A direct inductive argument shows that for the 
\textit{Fr\'{e}chet-Hoeffding upper-bound} $L_{+}^{i+1}(x_{1},x_{2})$ of $%
L^{i+1}(x_{1},x_{2})$, in addition to satisfying the inequalities 
\begin{equation}
L^{i+1}(x_{1},x_{2})\leq L_{+}^{i+1}(x_{1},x_{2})=Min\left(
L_{1}^{i+1}(x_{1}),L_{2}^{i+1}(x_{2})\right) ,  \label{28}
\end{equation}

it is also valid%
\begin{equation}
L_{+}^{i+1}(x_{1},x_{2})=L^{L_{+}^{i}}(x_{1},x_{2})=Min\left(
L_{1}^{L_{+}^{i}}(x_{1}),L_{2}^{L_{+}^{i}}(x_{2})\right) .  \label{28a}
\end{equation}

Thus, we obtain $%
L_{+}^{i+1}(x_{1},x_{2})=L_{+}^{L^{i}}(x_{1},x_{2})=L^{L_{+}^{i}}(x_{1},x_{2}) 
$ as well as $%
L_{1+}^{i+1}(x_{1})=L_{1+}^{L^{i}}(x_{1})=L_{1}^{L_{+}^{i}}(x_{1})$ and $%
L_{2+}^{i+1}(x_{2})=L_{2+}^{L^{i}}(x_{2})=L_{2}^{L_{+}^{i}}(x_{2})$. Since
it is the initial distribution $F$ that determines whether the iterations
will take place at the \textit{upper-bound}, we can also move further
backwards and write that (\ref{28a}) implies%
\begin{eqnarray}
L_{F_{+}}^{i+1}(x_{1},x_{2}) &=&Min\left(
L_{F_{+}}^{i+1}(x_{1},1),L_{F_{+}}^{i+1}(1,x_{2})\right) =Min\left(
L_{1}^{L_{F_{+}}^{i}}(x_{1}),L_{2}^{L_{F_{+}}^{i}}(x_{2})\right)  \label{28b}
\\
&=&Min\left( L_{1+}^{i+1}(x_{1}),L_{2+}^{i+1}(x_{2})\right) .  \notag
\end{eqnarray}

Once we have completed the induction and made the initial distribution
explicit in the notation, the difference between $%
L_{1}^{L_{F_{+}}^{i}}(x_{1})$ ($L_{2}^{L_{F_{+}}^{i}}(x_{2})$) from (\ref%
{28b}) and $L_{1}^{L_{+}^{i}}(x_{1})$ ($L_{2}^{L_{+}^{i}}(x_{2})$) from (\ref%
{28a}) needs clarification. Both quantities are fully in line with our
notational logic. The previously used $L_{1}^{L_{+}^{i}}(x_{1})$ in (\ref%
{28a}) refers to the marginal of $L^{L_{+}^{i}}(x_{1},x_{2}),$ which is
straightforward. On the other hand, $L_{1}^{L_{F_{+}}^{i}}(x_{1})$ refers to
the marginal of $L_{F_{+}}^{i+1}(x_{1},x_{2})$, where we can indicate the
starting distribution by $F_{+}$, but in the former case, this is not
directly possible. Clearly, the induction shows that they both share the
same starting distribution $F_{+}$ and are thus equal. To avoid this
ambiguity, we introduced the notation $L_{1+}^{F}(x_{1})$ and $%
L_{2+}^{F}(x_{2})$ for the marginals at the bounds with a clear meaning of
the plus sign, as used in (\ref{27}) and then used it in (\ref{28b}). The
carryover of the initial distribution $F_{+}$ is automatic and does not need
special marking.

Using the notation discussed above, we also obtain the following expressions
for the marginals%
\begin{eqnarray}
L_{1+}^{i+1}(x_{1}) &=&\frac{\dint%
\nolimits_{0}^{x_{1}}L_{1+}^{i,-1}(u)L_{2+}^{i,-1}(u)du}{\dint%
\nolimits_{0}^{1}L_{1+}^{i,-1}(u)L_{2+}^{i,-1}(u)du}  \notag \\
L_{2+}^{i+1}(x_{2}) &=&\frac{\dint%
\nolimits_{0}^{x_{2}}L_{1+}^{i,-1}(u)L_{2+}^{i,-1}(u)du}{\dint%
\nolimits_{0}^{1}L_{1+}^{i,-1}(u)L_{2+}^{i,-1}(u)du}.  \label{29}
\end{eqnarray}

For $x_{1}=x_{2},$ we get $L_{1+}^{i+1}(x)=L_{2+}^{i+1}(x)$. So, for $i>0$
and given initial marginals $F_{1}(x)$ and $F_{2}(x),$ the system (\ref{29})
simplifies to the separated form%
\begin{eqnarray}
L_{1+}^{i+1}(x) &=&\frac{\dint\nolimits_{0}^{x}\left(
L_{1+}^{i,-1}(u)\right) ^{2}du}{\dint\nolimits_{0}^{1}\left(
L_{1+}^{i,-1}(u)\right) ^{2}du},\text{ with }L_{1+}^{0}(x)=\frac{%
\dint\nolimits_{0}^{x}\left( F_{1}^{-1}(u)\right) ^{2}du}{%
\dint\nolimits_{0}^{1}\left( F_{1}^{-1}(u)\right) ^{2}du}  \notag \\
L_{2+}^{i+1}(x) &=&\frac{\dint\nolimits_{0}^{x}\left(
L_{2+}^{i,-1}(u)\right) ^{2}du}{\dint\nolimits_{0}^{1}\left(
L_{2+}^{i,-1}(u)\right) ^{2}du},\text{ with }L_{2+}^{0}(x)=\frac{%
\dint\nolimits_{0}^{x}\left( F_{2}^{-1}(u)\right) ^{2}du}{%
\dint\nolimits_{0}^{1}\left( F_{2}^{-1}(u)\right) ^{2}du}.  \label{43}
\end{eqnarray}

As established in \textit{Appendix C.2}, the functions $L_{i+}^{n}(x)$ for $%
i=1,2$ converge uniformly to $G_{+}(x)=x^{2}$ on$\,$\ $x\in \lbrack 0,1]$.
The analysis also shows that each iterate is bounded by a polynomial
majorization.

Furthermore, the composite inverse functions $\Phi
_{n}^{i+}(x)=L_{i+}^{0,-1}(...(L_{i+}^{n-1,-1}(L_{i+}^{n,-1}(x_{i})))$,
which are key components of equation (\ref{72}), also converge, in this case
to a limit of $1$. This latter result, however, is not useful in the present
context. The proof technique associated with equation (\ref{72}) is
inapplicable because it requires $L_{n}(x_{1},x_{2})$ to have a density, a
condition not met by the \textit{Fr\'{e}chet-Hoeffding} \textit{bounds}.
These extremal cases constitute exceptions to \textit{Theorem} \ref{main-th}%
, for which we prove separate results using a different set of analytical
techniques.

\subsection{Fr\'{e}chet-Hoeffding lower-bound}

Let's turn attention now to the \textit{Fr\'{e}chet-Hoeffding lower-bound}.
By (\ref{20}), we have%
\begin{eqnarray}
&&L_{F_{-}}(x_{1},x_{2})  \notag \\
&=&\frac{\dint\nolimits_{-\infty
}^{F_{1}^{-1}(x_{1})}\dint\nolimits_{-\infty
}^{F_{2}^{-1}(x_{2})}u_{1}u_{2}dF_{-}(u_{1},u_{2})}{\mu _{12}^{F_{-}}}=\frac{%
\dint\nolimits_{0}^{x_{1}}\dint%
\nolimits_{0}^{x_{2}}F_{1}^{-1}(u_{1})F_{2}^{-1}(u_{2})dW^{F}(u_{1},u_{2})}{%
\mu _{12}^{F_{-}}}  \notag \\
&=&\frac{\dint\nolimits_{0}^{x_{1}}\dint%
\nolimits_{0}^{x_{2}}F_{1}^{-1}(u_{1})F_{2}^{-1}(u_{2})dMax(u_{1}+u_{2}-1,0)%
}{\mu _{12}^{F_{-}}}=\frac{\dint%
\nolimits_{1-x_{2}}^{x_{1}}F_{1}^{-1}(u)F_{2}^{-1}(1-u)du}{%
\dint\nolimits_{0}^{1}F_{1}^{-1}(u)F_{2}^{-1}(1-u)du},  \label{30}
\end{eqnarray}

where we provide all the details in \textit{Claim }{\footnotesize \ref%
{fh-claim3} }of \textit{Appendix A}\footnote{{\footnotesize Again, similarly
to (\ref{21}), we can observe that a more elegant heuristic probabilistic
derivation exists. We have the representation: }%
\begin{equation*}
L_{F_{-}}(x_{1},x_{2})=\frac{\dint\nolimits_{0}^{x_{1}}\dint%
\nolimits_{0}^{x_{2}}F_{1}^{-1}(u_{1})F_{2}^{-1}(u_{2})dW^{F}(u_{1},u_{2})}{%
\mu _{12}^{F_{-}}}=\frac{E[1_{\{U_{1}<x_{1}\}}1_{\{U_{2}<x_{2}%
\}}F_{1}^{-1}(U_{1})F_{2}^{-1}(U_{2})]}{E[F_{1}^{-1}(U_{1})F_{2}^{-1}(U_{2})]%
},
\end{equation*}%
\par
{\footnotesize where }$(U_{1},U_{2})${\footnotesize \ is a bivariate
distribution with uniform marginals. The presence of the countermonotonic
copula }$W^{F}(u_{1},u_{2})${\footnotesize \ in the differential implies
that }$U_{1}${\footnotesize \ and }$U_{2}${\footnotesize \ are related by }$%
P(U_{1}+U_{2}=1)=1.${\footnotesize \ As a result, the expectation }%
\begin{equation*}
E[1_{\{U_{1}<x_{1}\}}1_{\{U_{2}<x_{2}\}}F_{1}^{-1}(U_{1})F_{2}^{-1}(U_{2})]%
{\footnotesize \ }
\end{equation*}%
\par
{\footnotesize reduces to }%
\begin{equation*}
E[1_{\{U<x_{1}\}}1_{\{U\geq 1-x_{2}\}}F_{1}^{-1}(U)F_{2}^{-1}(1-U)],%
{\footnotesize \ }
\end{equation*}%
\par
{\footnotesize where }$U${\footnotesize \ is uniformly distributed. This
yields the same result as in (\ref{30}).}}.

From (\ref{30}), we get for the counterpart of (\ref{21}) 
\begin{eqnarray}
Max(L_{1}^{F_{-}}(x_{1})+L_{2}^{F_{-}}(x_{2})-1,0)
&=&Max(L_{F_{-}}(x_{1},1)+L_{F_{-}}(1,x_{2})-1,0)  \notag \\
&=&\frac{\dint\nolimits_{0}^{x_{1}}F_{1}^{-1}(u)F_{2}^{-1}(1-u)du}{%
\dint\nolimits_{0}^{1}F_{1}^{-1}(u)F_{2}^{-1}(1-u)du}+\frac{%
\dint\nolimits_{1-x_{2}}^{1}F_{1}^{-1}(u)F_{2}^{-1}(1-u)du}{%
\dint\nolimits_{0}^{1}F_{1}^{-1}(u)F_{2}^{-1}(1-u)du}-1  \notag \\
&=&\frac{\dint\nolimits_{1-x_{2}}^{x_{1}}F_{1}^{-1}(u)F_{2}^{-1}(1-u)du}{%
\dint\nolimits_{0}^{1}F_{1}^{-1}(u)F_{2}^{-1}(1-u)du}=L_{F_{-}}(x_{1},x_{2}).
\label{36}
\end{eqnarray}

Again, this leads to three important implications. First, by the definition
of the \textit{Fr\'{e}chet-Hoeffding bounds} for the \textit{Lorenz curve} $%
L_{F}(x_{1},x_{2})$ viewed as a distribution function, we have%
\begin{equation}
L_{F}(x_{1},x_{2})\geq
L_{-}^{F}(x_{1},x_{2})=Max(L_{1}^{F}(x_{1})+L_{2}^{F}(x_{2})-1,0),
\label{36a}
\end{equation}

where we continue to use the notation logic to indicate the extremal
distribution $L_{-}^{F}(x_{1},x_{2})$ with a minus subscript. But by (\ref%
{36}), it follows that%
\begin{equation*}
L_{F_{-}}(x_{1},x_{2})=Max(L_{1}^{F_{-}}(x_{1})+L_{2}^{F_{-}}(x_{2})-1,0).
\end{equation*}

This means that $L_{F}(x_{1},x_{2})$ reaches its \textit{Fr\'{e}%
chet-Hoeffding lower-bound} when $F(x_{1},x_{2})$ does the same, i.e., $%
L_{F}(x_{1},x_{2})=L_{-}^{F}(x_{1},x_{2})$ when $F(x_{1},x_{2})=F_{-}$. As
shown in \textit{Claim }{\footnotesize \ref{fh-claim4} } of \textit{Appendix
A}, the reverse also holds: $L_{F}(x_{1},x_{2})=L_{-}^{F}(x_{1},x_{2})$
implies $F(x_{1},x_{2})=F_{-}(x_{1},x_{2})$. Thus, we may conclude that $%
L_{-}^{F}(x_{1},x_{2})=L_{F_{-}}(x_{1},x_{2})$ and $%
L_{i-}^{F}(x_{i})=L_{i}^{F_{-}}(x_{i}),$ where for $i=1,2,$ $%
L_{i-}^{F}(x_{i})$ denotes the marginals of $L_{-}^{F}(x_{1},x_{2})$.

Second, the exact expressions for the values participating in the \textit{%
lower-bound} (i.e., the marginals of $L_{i-}^{F}(x_{i})$) are%
\begin{eqnarray}
L_{1-}^{F}(x_{1}) &=&L_{1}^{F_{-}}(x_{1})=\frac{\dint%
\nolimits_{0}^{x_{1}}F_{1}^{-1}(u)F_{2}^{-1}(1-u)du}{\dint%
\nolimits_{0}^{1}F_{1}^{-1}(u)F_{2}^{-1}(1-u)du}  \notag \\
L_{2-}^{F}(x_{2}) &=&L_{2}^{F_{-}}(x_{2})=\frac{\dint%
\nolimits_{1-x_{2}}^{1}F_{1}^{-1}(u)F_{2}^{-1}(1-u)du}{\dint%
\nolimits_{0}^{1}F_{1}^{-1}(u)F_{2}^{-1}(1-u)du}.  \label{37}
\end{eqnarray}

Third, the natural question of how $%
Max(L_{1}^{F}(x_{1})+L_{2}^{F}(x_{2})-1,0)$ and $%
L_{F_{-}}(x_{1},x_{2})=Max(L_{1}^{F_{-}}(x_{1})+L_{2}^{F_{-}}(x_{2})-1,0)$
are ordered stays, and this gap should be filled. We will focus on this
later on when we compare the marginals $L_{i}^{F}(x_{1})$ and $%
L_{i}^{F-}(x_{1})$, $i=1,2,$ along the iterative procedure.

Again, an inductive argument gives that for the \textit{Fr\'{e}%
chet-Hoeffding lower-bound} $L_{-}^{i+1}(x_{1},x_{2})$ of $%
L^{i+1}(x_{1},x_{2})$, apart from satisfying the inequalities 
\begin{equation}
L^{i+1}(x_{1},x_{2})\geq
L_{-}^{i+1}(x_{1},x_{2})=Max(L_{1}^{i+1}(x_{1})+L_{2}^{i+1}(x_{2})-1,0),
\label{39}
\end{equation}

it is also valid:%
\begin{equation}
L_{-}^{i+1}(x_{1},x_{2})=L^{L_{-}^{i}}(x_{1},x_{2})=Max\left(
L_{1}^{L_{-}^{i}}(x_{1})+L_{2}^{L_{-}^{i}}(x_{2})-1,0\right) .  \label{40}
\end{equation}

So we get $%
L_{-}^{i+1}(x_{1},x_{2})=L_{-}^{L^{i}}(x_{1},x_{2})=L^{L_{-}^{i}}(x_{1},x_{2}) 
$ and $L_{1-}^{i+1}(x_{1})=L_{1-}^{L^{i}}(x_{1})=L_{1}^{L_{-}^{i}}(x_{1})$
and $L_{2-}^{i+1}(x_{2})=L_{2-}^{L^{i}}(x_{2})=L_{2}^{L_{-}^{i}}(x_{2})$.
Since it is the initial distribution $F$ which determines whether the
iterations will take place at the \textit{Fr\'{e}chet-Hoeffding lower-bound}%
, we can also write that equation (\ref{40}) implies%
\begin{eqnarray}
L_{F_{-}}^{i+1}(x_{1},x_{2}) &=&Max\left(
L_{F_{-}}^{i+1}(x_{1},1)+L_{F_{-}}^{i+1}(1,x_{2})-1,0\right)  \notag \\
&=&Max\left(
L_{1}^{L_{F_{-}}^{i}}(x_{1})+L_{2}^{L_{F_{-}}^{i}}(x_{2})-1,0\right)  \notag
\\
&=&Max\left( L_{1-}^{i+1}(x_{1})+L_{2-}^{i+1}(x_{2})-1,0\right) ,  \label{41}
\end{eqnarray}

where the same comments about the notation as for equation (\ref{28b}) hold.
We also get for the marginals%
\begin{eqnarray}
L_{1-}^{i+1}(x_{1}) &=&\frac{\dint%
\nolimits_{0}^{x_{1}}L_{1-}^{i,-1}(u)L_{2-}^{i,-1}(1-u)du}{%
\dint\nolimits_{0}^{1}L_{1-}^{i,-1}(u)L_{2-}^{i,-1}(1-u)du}  \notag \\
L_{2-}^{i+1}(x_{2}) &=&\frac{\dint%
\nolimits_{1-x_{2}}^{1}L_{1-}^{i,-1}(u)L_{2-}^{i,-1}(1-u)du}{%
\dint\nolimits_{0}^{1}L_{1-}^{i,-1}(u)L_{2-}^{i,-1}(1-u)du}.  \label{42}
\end{eqnarray}

Take now $x_{2}=1-x_{1}$ in the second equation of (\ref{42}). We get%
\begin{equation}
L_{2-}^{i+1}(1-x_{1})=\frac{\dint%
\nolimits_{x_{1}}^{1}L_{1-}^{i,-1}(u)L_{2-}^{i,-1}(1-u)du}{%
\dint\nolimits_{0}^{1}L_{1-}^{i,-1}(u)L_{2-}^{i,-1}(1-u)du}.  \label{42.1}
\end{equation}

Adding the first equation of (\ref{42}) to equation (\ref{42.1}), we get
that for the direct and the inverse functions, it holds%
\begin{eqnarray}
L_{1-}^{i+1}(x_{1})+L_{2-}^{i+1}(1-x_{1}) &=&1  \notag \\
L_{1-}^{i+1,-1}(1-x_{1})+L_{2-}^{i+1,-1}(x_{1}) &=&1  \label{42.2}
\end{eqnarray}

or equivalently%
\begin{eqnarray}
L_{1-}^{i+1}(1-x_{1})+L_{2-}^{i+1}(x_{1}) &=&1  \notag \\
L_{1-}^{i+1,-1}(x_{1})+L_{2-}^{i+1,-1}(1-x_{1}) &=&1.  \label{42.3}
\end{eqnarray}

Substitute now $L_{2-}^{i+1,-1}(1-x_{1})$ from (\ref{42.3}) in the first
equation of (\ref{42}). Similarly, substitute $L_{1-}^{i+1,-1}(x_{1})$ from
equation (\ref{42.2}) in the second equation of (\ref{42}) and additionally
change variables. So, given initial marginals $F_{1}(x)$ and $F_{2}(x)$, we
get a simplified separated form of the system (\ref{42}) 
\begin{eqnarray}
L_{1-}^{i+1}(x_{1}) &=&\frac{\dint\nolimits_{0}^{x_{1}}L_{1-}^{i,-1}(u)%
\left( 1-L_{1-}^{i,-1}(u)\right) du}{\dint\nolimits_{0}^{1}L_{1-}^{i,-1}(u)%
\left( 1-L_{1-}^{i,-1}(u)\right) du},\text{with }L_{1-}^{0}(x_{1})=\frac{%
\dint\nolimits_{0}^{x_{1}}F_{1}^{-1}(u)\left( 1-F_{1}^{-1}(u)\right) du}{%
\dint\nolimits_{0}^{1}F_{1}^{-1}(u)\left( 1-F_{1}^{-1}(u)\right) du}  \notag
\\
L_{2-}^{i+1}(x_{2}) &=&\frac{\dint\nolimits_{0}^{x_{2}}L_{2-}^{i,-1}(u)%
\left( 1-L_{2-}^{i,-1}(u)\right) du}{\dint\nolimits_{0}^{1}L_{2-}^{i,-1}(u)%
\left( 1-L_{2-}^{i,-1}(u)\right) du},\text{with }L_{2-}^{0}(x_{2})=\frac{%
\dint\nolimits_{0}^{x_{2}}F_{2}^{-1}(u)\left( 1-F_{2}^{-1}(u)\right) du}{%
\dint\nolimits_{0}^{1}F_{2}^{-1}(u)\left( 1-F_{2}^{-1}(u)\right) du}.
\label{42.4}
\end{eqnarray}

In \textit{Appendix C.1}, we establish both the convergence of $%
L_{i-}^{n}(x) $ and several principal results related to it. Furthermore,
again the composite inverse functions $\Phi
_{n}^{i-}(x)=L_{i-}^{0,-1}(...(L_{i-}^{n-1,-1}(L_{i-}^{n,-1}(x_{i})))$,
which are key components of equation (\ref{72}), also converge, but in this
case to a limit of $0.5$. Although this latter result is inapplicable to the
density-dependent proof of \textit{Theorem} \ref{main-th}, the detailed
analysis of $L_{i-}^{n}(x)$ from \textit{Appendix C.1} is nevertheless
essential. This analysis provides the necessary foundation to investigate $%
\Phi _{n}^{i}(x)$ in the general case and, ultimately, to prove the theorem
itself as done in \textit{Appendix D} and even generalize it. \textit{\ }

\subsection{Discussion}

The main findings from \textit{Sections 4.1} and \textit{4.2} can be
distilled into the following key points:

\begin{theorem}
An initial distribution $F$ with a comonotonic copula and square-law
marginals is a fixed point of the iteration map (\ref{5.1}).
\end{theorem}

\begin{theorem}
An initial distribution $F$ with a countermonotonic copula and suitable
marginals satisfying the limit condition (\ref{42.4}) is a fixed point of
the iteration map (\ref{5.1}).
\end{theorem}

Notably, in contrast to the $TP_{2}$ and $RR_{2}$ cases, the iteration map (%
\ref{5.1}) does not drive initial distributions with extremal copulas toward
independence. Instead, it preserves the maximal positive or negative
dependence structure. Our analysis in \textit{Appendix C}, supported by
empirical observations, strongly suggests that only the extremal copulas
resist this convergence to independence. This implies that the $TP_{2}$ or $%
RR_{2}$ conditions in our main \textit{Theorem} \ref{main-th} may not be
strictly necessary. However, it remains unclear whether the density
requirement can be relaxed. This leads us to formulate the following
conjecture:

\begin{conjecture}
\label{conj} \textit{Theorem} \ref{main-th} is valid for any bivariate
distribution that admits density.
\end{conjecture}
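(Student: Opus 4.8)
The plan is to keep the architecture of the proof of Theorem 3 intact and to replace only the one ingredient that used the dependence hypothesis. The $TP_2$/$RR_2$ assumption entered the proof \emph{solely} through \textit{Appendix C}: it is what forced the iterated inverse compositions $\beta_j^{n}(x_j)=L_j^{0,-1}(\dots(L_j^{n-1,-1}(L_j^{n,-1}(x_j))))$, $j=1,2$, to converge uniformly on compact subintervals of $(0,1)$ to \emph{constants} $c_j$, whence $D^{n}(x_1,x_2)\to c^{F}(c_1,c_2)$ and, through (\ref{69})--(\ref{103}), the product separability $c^{n}(x_1,x_2)\to1$. Once $c^{n}\to1$ uniformly on compact subrectangles the remainder is free of the hypothesis: substituting the copula representation (\ref{10}) into (\ref{6b}) and changing variables $v=L_2^{n-1}(u)$ collapses the marginal update to $l_1^{n}(x)=L_1^{n-1,-1}(x)\,\big[\int_0^1 L_2^{n-1,-1}(v)c^{n-1}(x,v)\,dv\big]/E(X_1^{L_1^{n-1}}X_2^{L_2^{n-1}})$, which as $c^{n-1}\to1$ becomes asymptotically $l_1^{n}(x)=L_1^{n-1,-1}(x)/\!\int_0^1 L_1^{n-1,-1}(u)\,du$, i.e. precisely the univariate Lorenz map of \cite{[3]} (and symmetrically for the second marginal). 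The univariate theorem already quoted inside the proof then gives $L_1^{n},L_2^{n}\to x^{\varphi}$, $\varphi=\tfrac{1+\sqrt5}{2}$, and Sklar's theorem reassembles $L_n^{F}\to G$. Thus everything reduces to one claim: \emph{for an arbitrary bivariate density, $c^{n}\to1$ uniformly on every compact subrectangle of $(0,1)^2$.}

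To establish that claim without crossing patterns I would track the oscillation of $\log c^{n}$. Combining (\ref{11}) with the Sklar deconstruction of $l_{12}^{n+1}$ yields the closed copula recursion $c^{n+1}(L_1^{n+1}(x_1),L_2^{n+1}(x_2))=E(X_1^{L_1^{n}}X_2^{L_2^{n}})\,c^{n}(x_1,x_2)/\!\big(R_1^{n}(x_1)R_2^{n}(x_2)\big)$, where $R_1^{n}(x_1)=\int_0^1 L_2^{n,-1}(t)c^{n}(x_1,t)\,dt$ and $R_2^{n}(x_2)=\int_0^1 L_1^{n,-1}(s)c^{n}(s,x_2)\,ds$ are weighted slice-averages of $c^{n}$ (with increasing weights that, on compacts, are bounded above and below). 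Taking logarithms, $\log c^{n+1}$ in stage-$n$ coordinates equals $\log c^{n}$ minus the \emph{separable} correction $\log R_1^{n}(x_1)+\log R_2^{n}(x_2)$ plus a constant; since dividing $c^{n}(x_1,x_2)$ by $R_1^{n}(x_1)$ removes exactly the part of $\log c^{n}$ that survives the $x_2$-averaging, one expects a contraction estimate $\operatorname{osc}_{K}\log c^{n+1}\le\rho\,\operatorname{osc}_{K'}\log c^{n}+o(1)$ on compact rectangles with a factor $\rho<1$ that can be made uniform, the subsequent reparametrization by $(L_1^{n+1},L_2^{n+1})$ distorting oscillation only by a bi-Lipschitz factor tending to that of $x\mapsto x^{\varphi}$ on compacts. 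A geometric-sum argument then forces $\operatorname{osc}_{K}\log c^{n}\to0$, and a copula density with vanishing log-oscillation on every compact subrectangle necessarily tends to $1$ there. Conceptually this is a Sinkhorn/Birkhoff mechanism: the step ``divide by the self-generated separable profile, then restore uniform marginals'' is a strict contraction toward the product copula, whose unique fixed point \emph{among densities} is $c\equiv1$ — in agreement with \textit{Section 4}, where the only other fixed copulas, comonotone and countermonotone, are singular and hence outside the present scope.

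The main obstacle is the quantitative contraction bound, and it concentrates exactly where \textit{Appendix C} used $TP_2$/$RR_2$: at the boundary of the square. Two things must be handled. First, $\rho<1$ requires $c^{n}$ and the reparametrizing maps $L_j^{n+1}$ to be bounded away from $0$ and $\infty$ on the rectangle in play; for a general density this has to be propagated by showing that the orbit of a fixed compact rectangle under the iterated inverse reparametrizations stays inside a fixed compact subrectangle of $(0,1)^2$ — the monotone substitute for the crossing argument, available because each marginal Lorenz curve lies below the diagonal (so $L_j^{n,-1}(x)\ge x$) and is being driven toward $x^{\varphi}$, whose inverse $x^{1/\varphi}$ is a strict contraction on compacts of $(0,1)$. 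Second, convergence on every compact subrectangle must be upgraded to uniform convergence of the distribution functions $L_n^{F}$ on all of $[0,1]^2$; this is a tightness statement near $\partial[0,1]^2$ and should follow from the fact that, by (\ref{8})--(\ref{9}), the iterated densities are products of inverse distribution functions, which are integrable with tails controlled uniformly along the iteration (the means $E(X_i^{L_i^{n}})$ remaining bounded and bounded away from $0$), so that no mass escapes to the boundary. Putting these three pieces together — boundary confinement of the orbit, the oscillation contraction, and boundary tightness — in place of \textit{Appendix C} would establish Theorem 3 for an arbitrary bivariate density.
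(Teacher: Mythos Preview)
The paper does not prove this statement: Conjecture 6 is explicitly left open. The discussion preceding it in Section 4.3 says only that empirical observations and the structure of the Appendix C argument make it plausible, and that ``the density existence is not clear whether it is a condition that can be softened.'' So there is no paper proof to compare against; you are attempting something the authors themselves did not claim to have.

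Your proposal is a reasonable strategic outline and correctly isolates where the $TP_2/RR_2$ hypothesis enters (Appendix C, convergence of the compositions $\beta_j^n$ to constants), and the Sinkhorn/Birkhoff heuristic for why $c^n\to 1$ is an appealing way to think about the iteration. But there is a concrete gap in your orbit-confinement step. You write that boundary control is ``available because each marginal Lorenz curve lies below the diagonal (so $L_j^{n,-1}(x)\ge x$).'' That sub-diagonality is precisely what the paper can prove \emph{only} under $TP_2$ (Theorem 26 in Appendix C.1); under $RR_2$ the marginals cross the diagonal once (Lemma 33) and sub-diagonality holds only eventually and only to the left of the crossing (Lemma 35). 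For a general density---neither $TP_2$ nor $RR_2$---nothing in the paper establishes any crossing pattern, and the fourth preliminary remark in the proof of Theorem 3 explicitly warns that convexity (hence sub-diagonality) of the marginal Lorenz curves is \emph{not} guaranteed. So the very step you flag as the ``monotone substitute for the crossing argument'' rests on an assertion that the paper shows fails already in the $RR_2$ case and is entirely open in general.

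The other two pieces you identify---the quantitative contraction $\rho<1$ for $\operatorname{osc}\log c^n$ and the boundary tightness---are stated as expectations (``one expects,'' ``should follow'') rather than proved, which you acknowledge. These are not minor technicalities: the contraction constant depends on lower bounds for $c^n$ on the moving rectangle, and without orbit confinement you cannot rule out those bounds degenerating. In short, the architecture is sound, but the load-bearing estimate (orbit stays in a fixed compact) is asserted using a property that the paper's own analysis shows need not hold without the dependence hypothesis you are trying to remove.
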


We conclude the discussion with several plots showing the evolution of the
iterated \textit{Lorenz curves} marginal d.f.s - $L_{i}^{n}(x_{i})$,
dependence measures - \textit{Kendall's tau} and \textit{Spearman's rho},
and the compounds of the inverse marginal d.f.s - $\Phi _{n}^{i}(x)$. They
are some of the key quantities participating in the proofs and give a good
general picture of how the iteration map behaves. We present both the $%
RR_{2} $ and the $TP_{2}$ cases. All the discussed patterns are clearly
visible - convergence, crossings, dependence, etc. \textit{Figures 1-5}
represent the $RR_{2}$ case. We take: $F_{1}\thicksim Lognormal(0.5$, $0.2)$%
, $F_{2}$ $\thicksim $ $Beta(2,2)$, and the copula for $F$ Gaussian with
correlation parameter $\rho =-0.8$.


\begin{center}
\begin{minipage}[t]{0.48\linewidth}\centering
    \includegraphics[width=\linewidth]{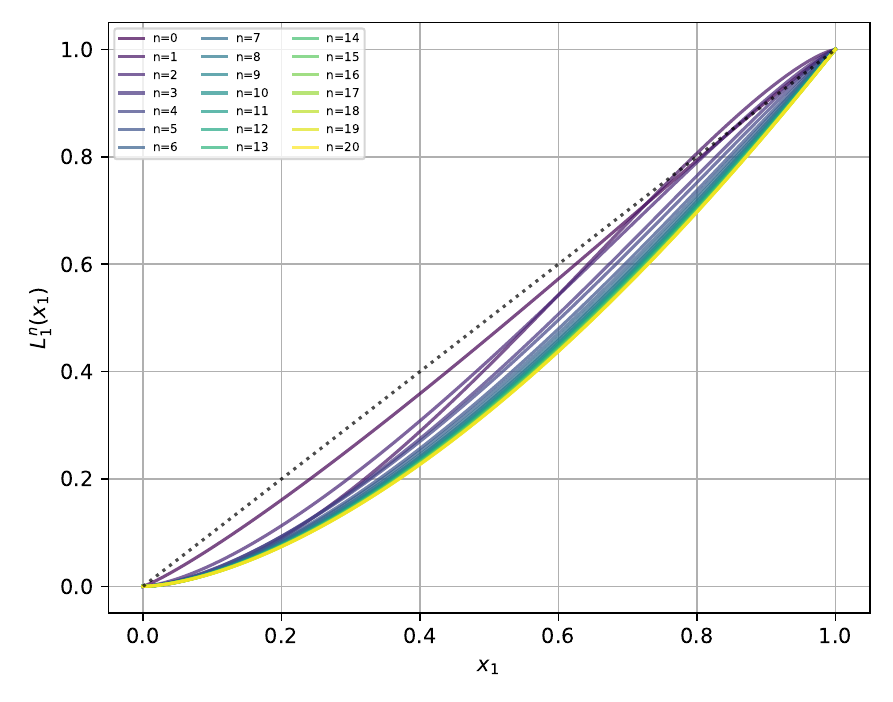}
\captionof{figure}[Figure 1:: $L_{1}^{n}(x_{1})$ evolution]{Figure 1:\\ $L_{1}^{n}(x_{1})$ evolution}
    \label{fig:figure1}
  \end{minipage}\hfill 
\begin{minipage}[t]{0.48\linewidth}\centering
    \includegraphics[width=\linewidth]{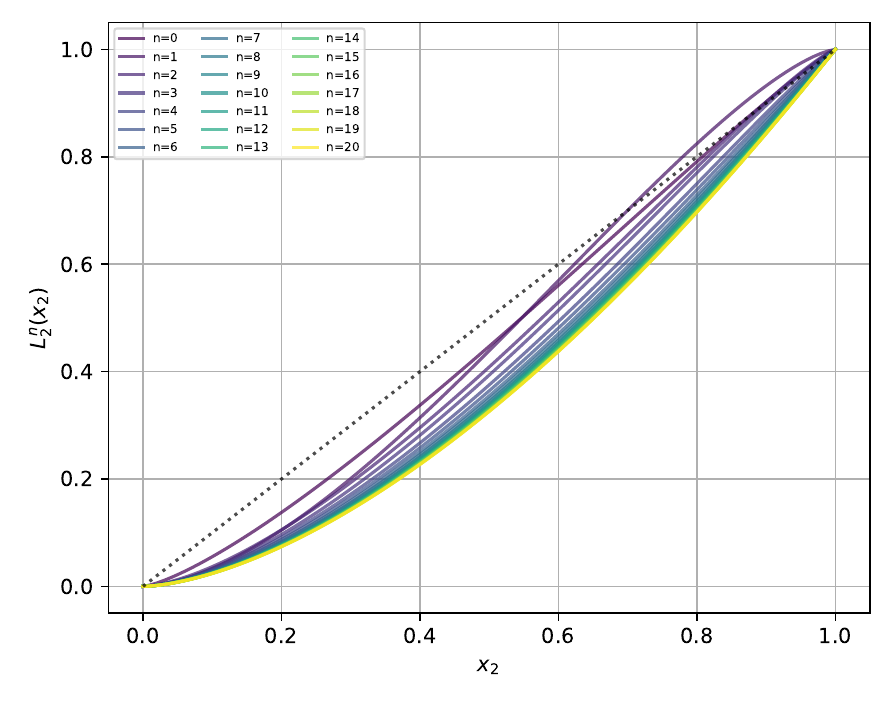}
\captionof{figure}[Figure 2:: $L_{2}^{n}(x_{2})$ evolution]{Figure 2:\\ $L_{2}^{n}(x_{2})$ evolution}
    \label{fig:figure2}
  \end{minipage}

\begin{minipage}[t]{0.48\linewidth}\centering
    \includegraphics[width=\linewidth]{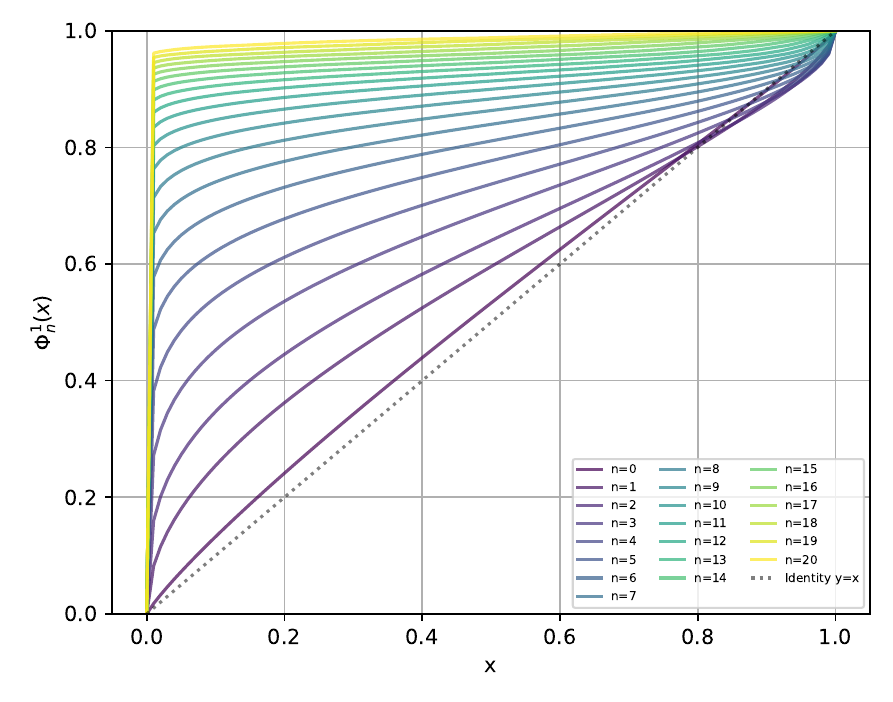}
\captionof{figure}[Figure 3:: Compounds of the inverse marginal d.f.s – $\Phi_{n}^{1}(x)$]{Figure 3:\\ Compounds of the inverse marginal d.f.s – $\Phi_{n}^{1}(x)$}
    \label{fig:figure3}
  \end{minipage}\hfill 
\begin{minipage}[t]{0.48\linewidth}\centering
    \includegraphics[width=\linewidth]{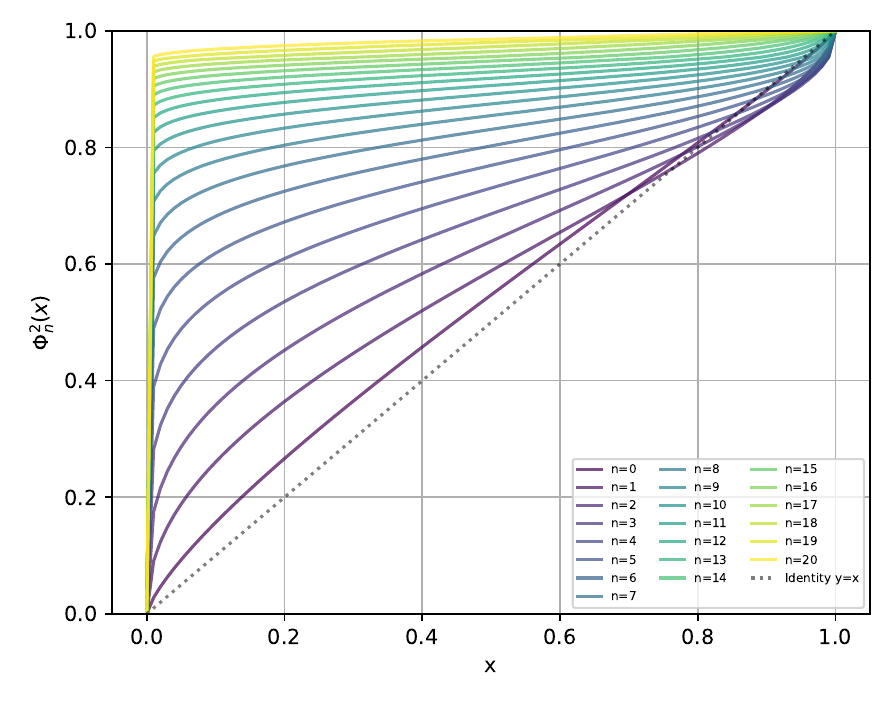}
\captionof{figure}[Figure 4:: Compounds of the inverse marginal d.f.s – $\Phi_{n}^{2}(x)$]{Figure 4:\\ Compounds of the inverse marginal d.f.s – $\Phi_{n}^{2}(x)$}
    \label{fig:figure4}
  \end{minipage}

\includegraphics[width=0.48\linewidth]{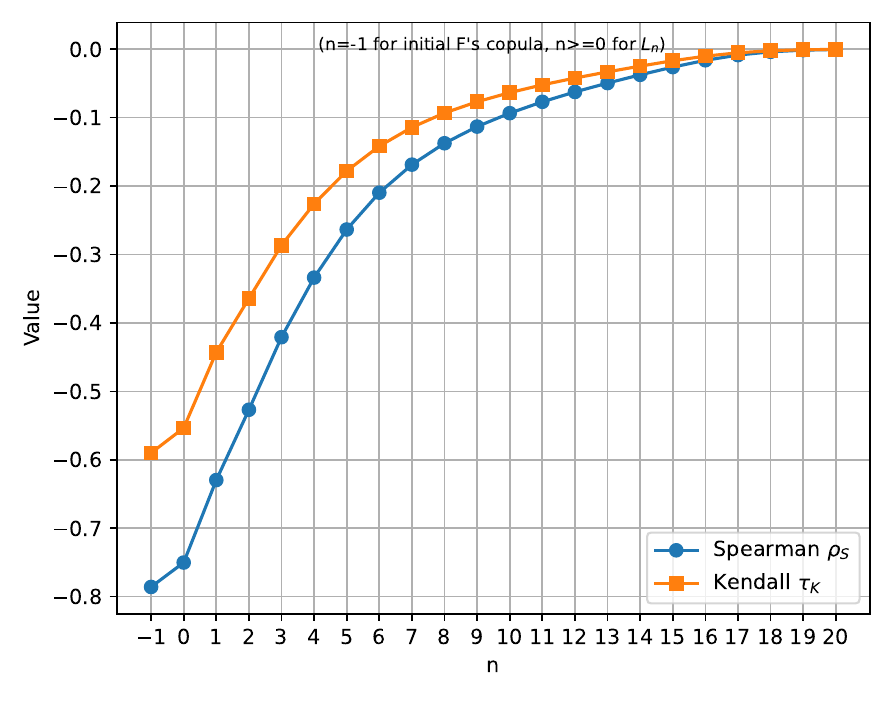} 
\captionof{figure}[Figure 5:: Dependence measures]{Figure 5:\\ Dependence
measures} \label{fig:figure5}
\end{center}

\medskip \noindent Figures 6-11 represent the $TP_{2}$ case\textbf{.} $%
F_{1}\sim $~Sinewave with frequency~$3$ and amplitude~$0.5$, $F_{2}\sim
\Gamma (2,1)$, and the copula for $F$ is Clayton with parameter $\theta =2$.
\medskip 

\begin{center}
\begin{minipage}[t]{0.48\linewidth}\centering
    \includegraphics[width=\linewidth]{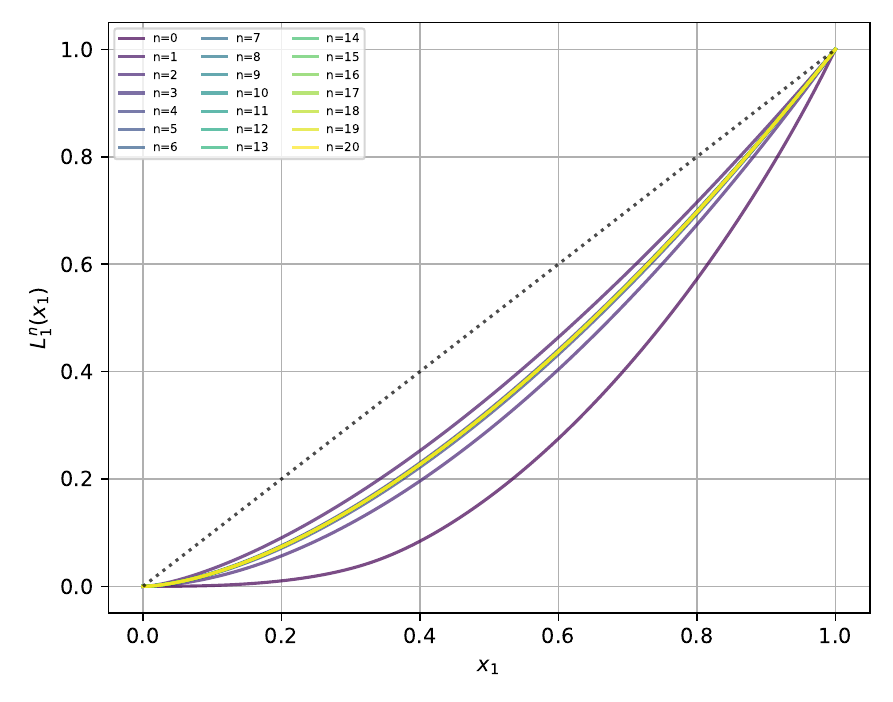}
\captionof{figure}[Figure 6:: $L_{1}^{n}(x_{1})$ evolution ]{Figure 6:\\ $L_{1}^{n}(x_{1})$ evolution}
    \label{fig:figure6}
  \end{minipage}\hfill 
\begin{minipage}[t]{0.48\linewidth}\centering
    \includegraphics[width=\linewidth]{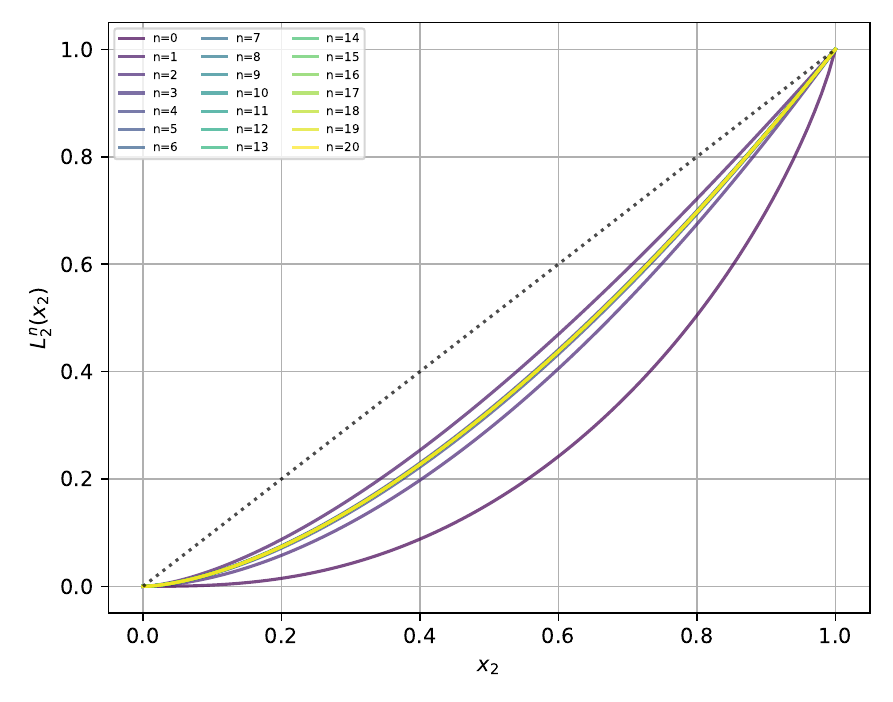}
\captionof{figure}[Figure 7:: $L_{2}^{n}(x_{2})$ evolution]{Figure 7:\\ $L_{2}^{n}(x_{2})$ evolution}
    \label{fig:figure7}
  \end{minipage}

\begin{minipage}[t]{0.48\linewidth}\centering
    \includegraphics[width=\linewidth]{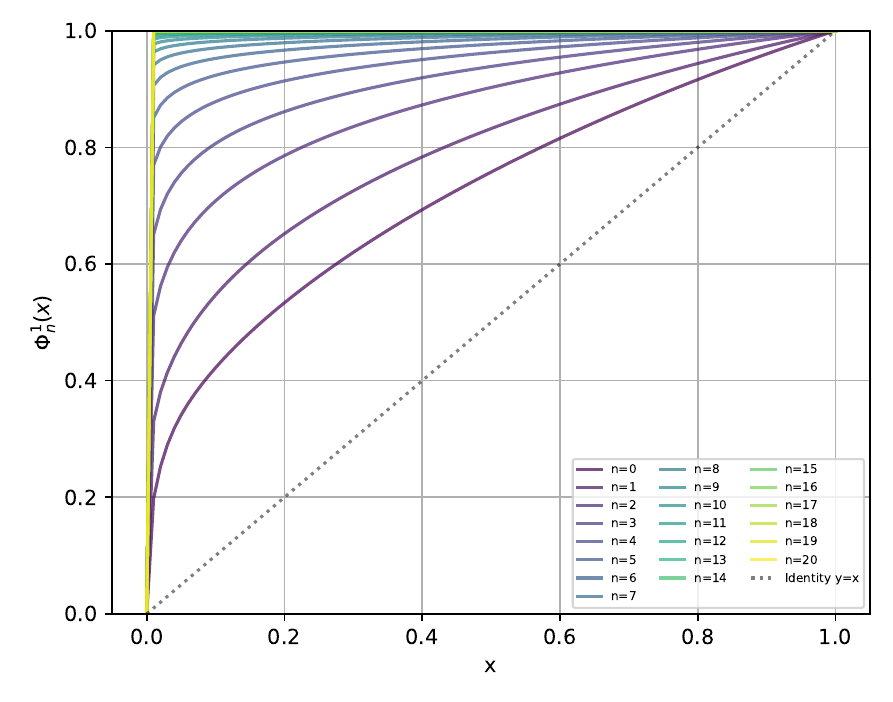}
\captionof{figure}[Figure 8:: Compounds of the inverse marginal d.f.s - $\Phi_{n}^{1}(x)$ ]{Figure 8:\\ Compounds of the inverse marginal d.f.s - $\Phi_{n}^{1}(x)$}
    \label{fig:figure8}
  \end{minipage}\hfill 
\begin{minipage}[t]{0.48\linewidth}\centering
    \includegraphics[width=\linewidth]{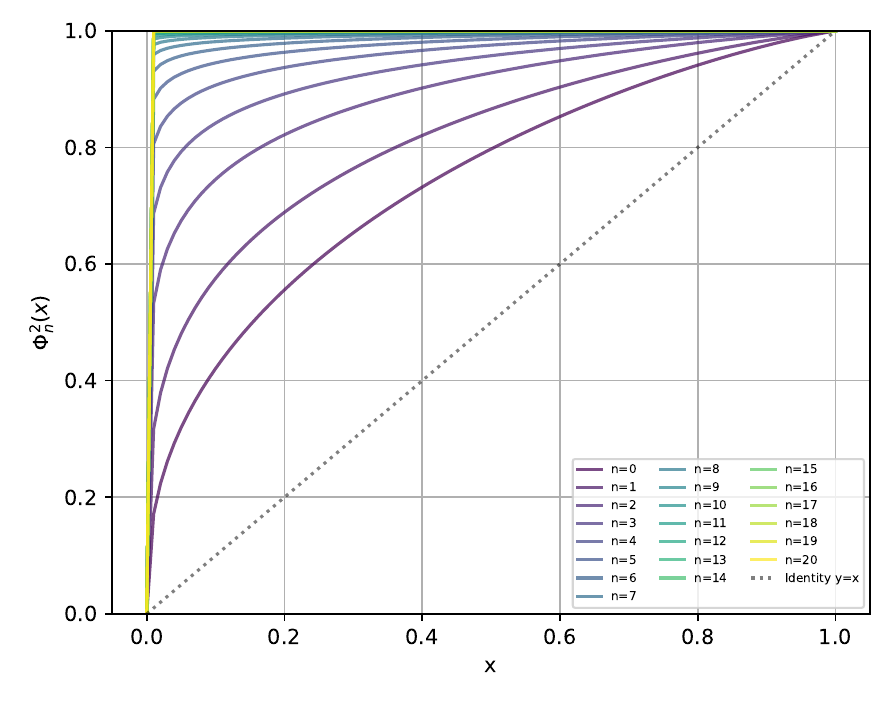}
\captionof{figure}[Figure 9:: Compounds of the inverse marginal d.f.s - $\Phi_{n}^{2}(x)$ ]{Figure 9:\\ Compounds of the inverse marginal d.f.s - $\Phi_{n}^{2}(x)$ }
    \label{fig:figure9}
  \end{minipage}

\begin{minipage}[t]{0.48\linewidth}\centering
    \includegraphics[width=\linewidth]{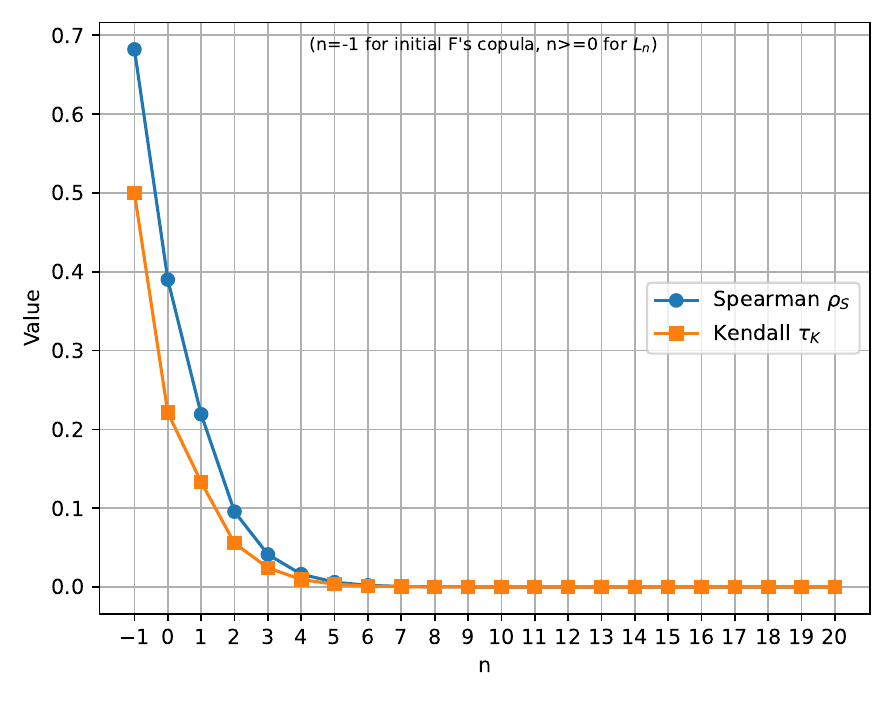}
\captionof{figure}[Figure 10:: Dependence measures ]{Figure 10:\\ Dependence measures}
    \label{fig:figure10}
  \end{minipage}\hfill 
\begin{minipage}[t]{0.48\linewidth}\centering
    \includegraphics[width=\linewidth]{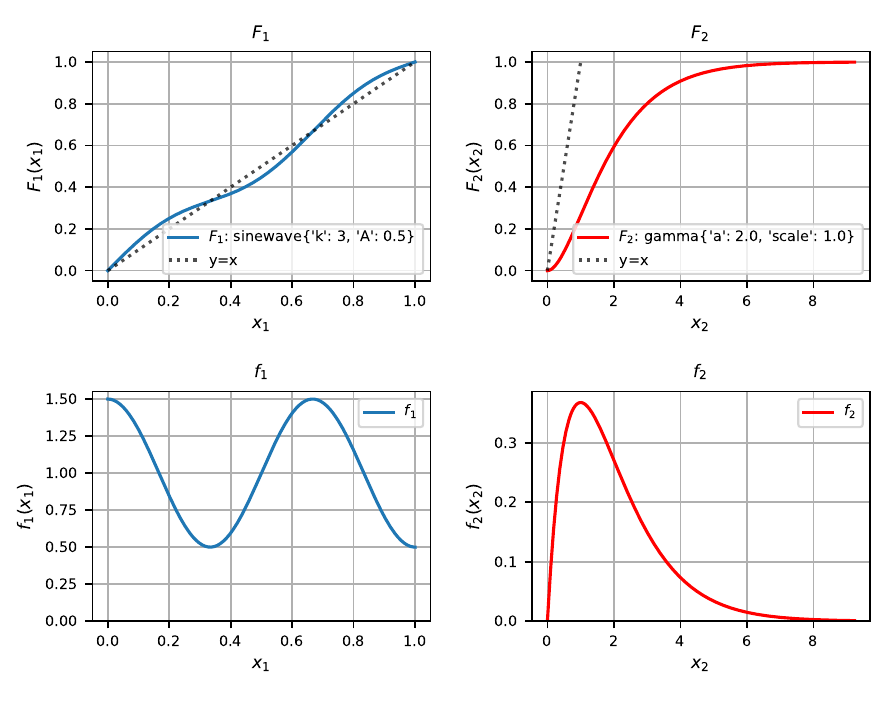}
\captionof{figure}[Figure 11:: Initial distribution d.f.s ($F_{1}$, $F_{2}$) and p.d.f.s ($f_{1}$, $f_{2}$)]{Figure 11:\\ Initial distribution d.f.s ($F_{1}$, $F_{2}$) and p.d.f.s ($f_{1}$, $f_{2}$)}
    \label{fig:figure11}
  \end{minipage}
\end{center}

All plots confirm our theoretical findings. We can observe both quick
convergence and quick decorrelation. Of special interest are the crossing
patterns. For the $TP_{2}$ case, the subdiagonality is clear. For the $%
RR_{2} $ case, we may observe eventual subdiagonality achieved in several
iterations.

\section{Multivariate setting}

In this section we extend the two-dimensional analysis developed earlier in
the main body and the appendices to arbitrary dimension $d\geq 2$. We show
that the entire structure of the theory persists:

\begin{itemize}
\item under $d$-dimensional analogues of $TP_{2}$ and $RR_{2}$, the unique
attractor is the $d$-dimensional independence copula;

\item under \textit{extreme positive dependence}, the unique fixed point is
the $d$-dimensional \textit{Fr\'{e}chet--Hoeffding upper-bound} $%
M_{d}(x)=\min (x_{1},\dots ,x_{d})$, with marginals $L_{i}^{\ast }(x)=x^{%
\frac{1+\sqrt{1+4d}}{2}}$;

\item under suitable \textit{extreme negative dependence} concepts (see \cite%
{[48]} for an overview), fixed points are feasible, which generalizes the 
\textit{Fr\'{e}chet--Hoeffding lower-bound} $W$ from $d=2$.
\end{itemize}

We also show that the $d$-dimensional analogue of the $\Phi$%
--maps---appropriately defined---converges uniformly to a constant and
identifies the fixed point in all three regimes.

Throughout this section the notation and functional--analytic structure
follow those of \textit{Sections~2--4} of the paper with suitable updates. 
\textit{Appendices~A-D} may be consulted for the two--dimensional results
that are used as prototypes in what follows.

\subsection{Setup and notation}

\begin{definition}
\noindent \label{d-centering} Given a continuous distribution $F$ function
on $[0,1]^{d}$ with density $f_{F}$ and continuous marginal d.f.'s $F_{i}$
and densities $f_{i}$, $i=1,\dots ,d$, we set $T_{i}^{F}=F_{i}^{-1}$ and
write $X^{F}=(X_{1}^{F},\dots ,X_{d}^{F})$ for a random vector with law $F$.
Denote also the mixed moment by%
\begin{equation}
I_{F}\;=\;E(\prod_{i=1}^{d}X_{i}^{F})\;=\;\int_{[0,1]^{d}}(%
\prod_{i=1}^{d}u_{i})\,dF(u_{1},\dots ,u_{d}).  \label{940}
\end{equation}

Given $F$ as above with $I_{F}>0$, the \textit{Lorenz curve} $L_{F}$ is
defined by 
\begin{equation}
L_{F}(x_{1},\dots ,x_{d})\;=\;\frac{1}{I_{F}}\int_{[0,T_{1}^{F}(x_{1})]%
\times \cdots \times \lbrack
0,T_{d}^{F}(x_{d})]}(\prod_{i=1}^{d}u_{i})\,dF(u_{1},\dots ,u_{d}),\qquad
(x_{1},\dots ,x_{d})\in \lbrack 0,1]^{d}.  \label{941}
\end{equation}
\end{definition}

Thus $L_{F}$ is the distribution of the random vector $X^{F}$ reweighted by
the product kernel $\prod_{i}X_{i}^{F}$. When $d=2$ this reduces to the
operator studied in \textit{Sections~2--4}.

\begin{definition}
\noindent \label{d-centering-n} Given a distribution $L_{n}$ on $[0,1]^{d}$
with density $l_{n}$ and strictly increasing continuous marginals $L_{i}^{n}$%
, $i=1,\dots ,d$, we set $T_{i}^{n}=(L_{i}^{n})^{-1}$ and 
\begin{equation}
X^{L_{n}}=(X_{1}^{L_{n}},\dots ,X_{d}^{L_{n}})\sim L_{n},\qquad
I_{n}=E[\prod_{i=1}^{d}X_{i}^{L_{n}}].  \label{941a}
\end{equation}

The $d$--dimensional analogue of the map (\ref{5.1}) is given by 
\begin{equation}
L_{n+1}(x_{1},\dots ,x_{d})=\frac{1}{I_{n}}\int_{[0,T_{1}^{n}(x_{1})]\times
\cdots \times \lbrack
0,T_{d}^{n}(x_{d})]}(\prod_{i=1}^{d}u_{i})\,dL_{n}(u_{1},\dots ,u_{d}),
\label{941b}
\end{equation}%
for $(x_{1},\dots ,x_{d})\in \lbrack 0,1]^{d}$, with initial condition $%
L_{0} $ obtained from $F$ by \textit{Definition~\ref{d-centering}}.
\end{definition}

We can view $\{L_{n}\}_{n\geq 0}$ is the orbit of $F$ under repeated
centering by the product integrand. Under density existence hypothesis,
differentiating under the integral sign yields the density recursion 
\begin{equation}
\begin{split}
l_{n+1}(x_{1},\dots ,x_{d})& =\frac{1}{I_{n}}%
(\prod_{i=1}^{d}T_{i}^{n}(x_{i}))\,l_{n}(T_{1}^{n}(x_{1}),\dots
,T_{d}^{n}(x_{d}))(\prod_{i=1}^{d}(T_{i}^{n})^{\prime }(x_{i})), \\
& (x_{1},\dots ,x_{d})\in (0,1)^{d}
\end{split}
\label{941c}
\end{equation}

We now define the dependence concepts under which the main theorem of this
section is stated. In much detail they are discussed in \textit{Appendix D.4.%
}

\begin{definition}
\label{MTP2-SMRR2} A strictly positive density $l:[0,1]^{d}\rightarrow
(0,\infty )$ is multivariate totally positive of order $2$ ($MTP_{2}$) if 
\begin{equation}
l(x\wedge y)\,l(x\vee y)\ \geq \ l(x)\,l(y),\qquad x,y\in \lbrack 0,1]^{d},
\label{2000a}
\end{equation}%
where $\wedge ,\vee $ denote coordinatewise minimum and maximum. It is 
\textit{multivariate reverse regular of order }$2$ ($MRR_{2}$) if the
reverse inequality holds: 
\begin{equation}
l(x\wedge y)\,l(x\vee y)\ \leq \ l(x)\,l(y),\qquad x,y\in \lbrack 0,1]^{d}.
\label{2000b}
\end{equation}%
We say that $l$ is strictly $MRR_{2}$ ($SMRR_{2}$) if for every compact set $%
K\subset (0,1)^{d}$ there exists a set of full \textit{Lebesgue measure} in $%
K\times K$ on which (\ref{2000b}) holds with strict inequality whenever $x$
and $y$ are incomparable (i.e.\ neither $x\leq y$ nor $y\leq x$
coordinatewise). Equivalently (and more concretely), on each $K\subset
(0,1)^{d}$ on which $l$ is bounded away from $0$, (\ref{2000b}) is strict
for all incomparable $x,y\in K$.
\end{definition}

\begin{remark}
\label{rem:SMRR2-analogue} In $d=2$, strict $MRR_{2}$ coincides with $%
SRR_{2} $ as used in the bivariate theorem: strictness is only needed on
compact subsets away from the boundary, precisely where the crossing and
return--shift arguments are carried out in \textit{Appendix~D.2}.
\end{remark}

As before, we extend d.f.s from $[0,1]^{d}$ to $R^{d}$ by clipping. Let $\pi
:R^{d}\rightarrow \lbrack 0,1]^{d}$ be the coordinatewise clipping map and
set 
\begin{equation}
\mathcal{R}_{0}=\{x\in R^{d}:\exists i,\ x_{i}\leq 0\},\qquad \mathcal{R}%
_{1}=\{x\in R^{d}:\forall i,\ x_{i}\geq 1\}.  \label{2000}
\end{equation}%
For any d.f. $H$ on $[0,1]^{d}$, define its extension by 
\begin{equation}
H(x)=%
\begin{cases}
0, & x\in \mathcal{R}_{0}, \\ 
1, & x\in \mathcal{R}_{1}, \\ 
H(\pi (x)), & \text{otherwise}.%
\end{cases}
\label{2001}
\end{equation}

\subsection{Main theorem: long and short formulations}

The following theorem is the multivariate counterpart to \textit{Theorem~\ref%
{main-th}.}

\begin{theorem}
\label{Dd-main-long} Let $d\geq 2$ and let $F$ be a distribution function on 
$[0,1]^{d}$ with strictly positive density $f$ on $(0,1)^{d}$. Define $%
\{L_{n}\}_{n\geq 0}$ by $L_{0}=L^{F}$ and (\ref{941b}), and extend each $%
L_{n}$ to $R^{d}$ by (\ref{2001}). Assume that $f$ is either $MTP_{2}$ or $%
SMRR_{2}$. Then 
\begin{equation*}
\sup_{x\in R^{d}}|L_{n}(x)-G(x)|\text{ }\overset{n\rightarrow +\infty }{%
\longrightarrow }\text{ }0,
\end{equation*}%
where $G(x)=\prod_{i=1}^{d}x_{i}^{\frac{1+\sqrt{5}}{2}}$ on $[0,1]^{d}$ and
is extended by (\ref{2001}).
\end{theorem}

\begin{proof}
The core proof strategy for \textit{Theorem~\ref{main-th}}, which is based
on the iteration (\ref{12}), extends straightforwardly to the multivariate
case. Similarly, the cancellation mechanism for the initial copula density $%
c^{F}$ in (\ref{86}) and (\ref{87}) also applies without modification.

The crucial remaining step is therefore to prove the subsequential
convergence of the compositions $\Phi
_{n}^{i}(x)=L_{i}^{0,-1}(...(L_{i}^{n-1,-1}(L_{i}^{n,-1}(x)))$, $i=1,2$ to
constants. A detailed proof is provided in \textit{Appendix D.4}, and it
builds upon the conceptual framework established in \textit{Appendices
D.1-D.3}.
\end{proof}

We can also have a shorter formulation based on the \textit{Lorenz map}.

\begin{theorem}
\label{Dd-main-short} Let $\mathcal{L}$ denote the operator on distribution
functions induced by (\ref{941b}). Under the assumptions of Theorem~\ref%
{Dd-main-long}, we have $\mathcal{L}^{n}(F)\rightarrow G$ uniformly on $%
R^{d} $.
\end{theorem}

\begin{remark}
\label{Dd-two-forms} \textit{Theorem~\ref{Dd-main-short}} is a notational
reformulation of \textit{Theorem~\ref{Dd-main-long}}. We keep the iterated
notation $L_{n}$ since it matches the bivariate presentation and the $\Phi
_{n}$--construction in \textit{Appendices~D.1--D.2}.
\end{remark}

\subsection{Extreme dependence}

In this addendum we treat the two \textquotedblleft extreme
dependence\textquotedblright\ regimes in the same computational spirit as
the bivariate sections~\textit{Fr\'{e}chet--Hoeffding upper-bound} and 
\textit{lower-bound}. The guiding principle is that, under these extremal
dependence structures, the $d$--dimensional \textit{Lorenz operator} reduces
to a one--dimensional recursion for each marginal, exactly as in the
bivariate case. We therefore (i) write down the $d$--dimensional analogues
of (\ref{43}) and (\ref{42.4}), and (ii) investigate whether the
corresponding extreme dependence concepts are invariant (hence
\textquotedblleft fixed\textquotedblright ) at the dependence level.

Throughout, we work with the \textit{Lorenz operator} from \textit{%
Definition~\ref{d-centering}} and the iteration $L^{0}=L^{F}$, $%
L^{n+1}=L_{L^{n}}$. Furthermore, consistent with the extremal bivariate
case, we maintain the standing assumption of strictly positive marginal
densities. Nevertheless, we explicitly examine scenarios where this
assumption is lifted; a more detailed treatment of the latter is provided in 
\textit{Appendix E} as an extension.

\subsubsection{Extreme positive dependence: Fr\'{e}chet--Hoeffding
upper-bound $M_{d}$}

\paragraph{The dependence concept and its invariance}

Let 
\begin{equation}
M_{d}(x_{1},\dots ,x_{d})=\min (x_{1},\dots ,x_{d}),\qquad (x_{1},\dots
,x_{d})\in \lbrack 0,1]^{d},  \label{2002}
\end{equation}%
be the \textit{Fr\'{e}chet--Hoeffding upper-bound}. A random vector $%
U=(U_{1},\dots ,U_{d})$ with uniform marginals has copula $M_{d}$ if and
only if it is comonotone, i.e.,\ there exists a single uniform $V\sim 
\mathrm{U}(0,1)$ such that $U_{i}=V$ a.s.\ for all $i$ (equivalently, the
law is supported on the diagonal). We have the following result.

\begin{lemma}
\label{Md-invariance} \medskip \noindent Assume that $F$ has comonotone
dependence in the sense that there exists a random variable $V\in \lbrack
0,1]$ and non--decreasing maps $g_{i}:[0,1]\rightarrow \lbrack 0,1]$ such
that $X_{i}=g_{i}(V)$ a.s.\ and $X\sim F$. Then $L_{F}$ is comonotone as
well (hence has copula $M_{d}$).
\end{lemma}

\begin{proof}
\medskip The defining integral for $L_{F}$ is a product tilt by $%
\prod_{i=1}^{d}X_{i}$ restricted to rectangles, followed by marginal
reparametrization via the quantiles $T_{i}^{F}$; see \textit{Definition~\ref%
{d-centering}}. If $X$ is supported on the comonotone curve $%
\{(g_{1}(v),\dots ,g_{d}(v)):v\in \lbrack 0,1]\}$, then multiplying by $%
\prod_{i}X_{i}$ cannot create mass off that support, and the coordinatewise
quantile maps preserve the same one--parameter representation. This is
exactly the heuristic argument used in the bivariate \textit{Fr\'{e}%
chet--Hoeffding upper-bound}, now with $d$ coordinates. Alternatively, we
can replicate also the formal algebraic proof from there without problems to 
$d$ dimensions, but we will skip this for space considerations.
\end{proof}

\medskip Thus, in the \textit{Fr\'{e}chet--Hoeffding upper-bound} regime the
entire iteration stays within the comonotone class and the only nontrivial
evolution is in the marginals.

\paragraph{The $d$--dimensional marginal recursion}

Fix $i\in \{1,\dots ,d\}$ and write $L_{i,+}^{n}$ for the $i$-th marginal of
the $n$-th iterate in the comonotone regime (the \textquotedblleft $+$%
\textquotedblright\ reminds of the \textit{upper-bound} case). Exactly as in
the bivariate derivation of (\ref{43}), comonotonicity implies that, when we
write the \textit{Lorenz} \textit{update} in quantile form, the product
kernel $\prod_{k=1}^{d}X_{k}$ collapses to a one-dimensional kernel
proportional to $(\cdot )^{d}$ along the common driving variable. This
yields the direct $d$--dimensional generalization of (\ref{43}):

\begin{equation}
L_{i,+}^{n+1}(x)=\frac{\int_{0}^{x}(L_{i,+}^{n,-1}(u))^{d}\,du}{%
\int_{0}^{1}(L_{i,+}^{n,-1}(u))^{d}\,du},\qquad L_{i,+}^{0}(x)=\frac{%
\int_{0}^{x}(F_{i}^{-1}(u))^{d}\,du}{\int_{0}^{1}(F_{i}^{-1}(u))^{d}\,du}%
,\qquad x\in \lbrack 0,1].  \label{43d}
\end{equation}

\begin{remark}
\label{43d-eq} For $d=2$, (\ref{43d}) reduces exactly to (\ref{43}). The
derivation is verbatim the same as in the main text: comonotonicity reduces
the $d$--dimensional product kernel $\prod_{k}X_{k}$ to a one-dimensional
power, the only change being that the power becomes $d$ instead of $2$.
\end{remark}

\paragraph{Fixed point within the comonotone class}

\textit{Lemma~\ref{Md-invariance}} shows that the dependence structure
(copula $M_{d}$) is invariant. A genuine fixed point $L^{\ast }$ in this
regime is therefore characterized by solving the one-dimensional fixed-point
problem induced by (\ref{43d}) for each marginal.

\begin{claim}
\medskip $\noindent $\label{Md-fixed} Assume $L^{\ast }$ is comonotone and
satisfies $L_{L^{\ast }}=L^{\ast }$. Then its marginals $L_{i,+}^{\ast }$
satisfy the fixed-point equation 
\begin{equation}
L_{i,+}^{\ast }(x)=\frac{\int_{0}^{x}((L_{i,+}^{\ast })^{-1}(u))^{d}\,du}{%
\int_{0}^{1}((L_{i,+}^{\ast })^{-1}(u))^{d}\,du},\qquad x\in \lbrack 0,1].
\label{2003}
\end{equation}%
In particular, the single solution is the power-law solution $L_{i,+}^{\ast
}(x)=x^{a}$, where the exponent satisfies the relation 
\begin{equation}
a=1+\frac{d}{a}\qquad \Longleftrightarrow \qquad a^{2}-a-d=0\text{ }%
\Longleftrightarrow \text{ }a=\frac{1+\sqrt{1+4d}}{2}  \label{2004}
\end{equation}
\end{claim}

\begin{proof}
The first statement is immediate from (\ref{43d}) by setting $%
L_{i,+}^{n+1}=L_{i,+}^{n}=L_{i,+}^{\ast }$. For the power-law computation,
we can use similar polynomial majorization methods as in \textit{Appendix C.2%
} and \cite{[3]}.
\end{proof}

\begin{remark}
Notably, for large $d$, $L_{i,+}^{\ast }(x)$ exhibits the behavior of a
univariate \textit{Lorenz curve} under extreme inequality, effectively
becoming an indicator function of the form $1_{x\in \lbrack 1-\varepsilon
,1]}$.
\end{remark}

\begin{remark}
Since each function $L_{i,+}^{n}(x)$ in the sequence is subdiagonal, it
follows that the corresponding sequence of $\Phi $--maps converges to 1.
\end{remark}

\subsubsection{Extreme negative dependence}

\paragraph{Scope and aims}

In dimension $d=2$, the \textit{Fr\'{e}chet--Hoeffding lower-bound} is a
genuine copula and yields the canonical notion of extreme negative
dependence, namely countermonotonicity. This allows the \textit{Lorenz
integrals} to be reduced to one-dimensional integrals along the line $%
u_{2}=1-u_{1}$, and leads to closed marginal recursions.

For $d\geq 3$, the pointwise \textit{Fr\'{e}chet--Hoeffding lower-bound} is
not, in general, a copula. Hence there is no unique multivariate analogue of
countermonotonicity obtained by simply replacing the bivariate \textit{%
lower-bound} by a $d$-dimensional \textit{lower-bound} copula. We must
instead choose a substitute negative-dependence concept. In this section we
study three such concepts (see \cite{[48]}): \textit{paired
countermonotonicity}, \textit{joint mixability}, and $\Sigma $\textit{%
-countermonotonicity}.

Throughout this section we remain in the regular regime of the main text:
the relevant marginal distribution functions are assumed to be continuous
and nondegenerate. This atom-free assumption is essential whenever we
replace events of the form 
\begin{equation*}
\{T_{i}(u)\leq T_{i}(x)\}
\end{equation*}%
by intervals such as $[0,x]$. If atoms are present, this replacement must be
corrected by the generalized-inverse formulas of \textit{Appendix~E}. In
particular, the discrete and endpoint atomic cases are not treated in this
section.

The objectives of this section are therefore:

\begin{enumerate}
\item to identify negative-dependence concepts which are meaningful in all
dimensions;

\item to determine which of them are preserved by the \textit{Lorenz map} in
the regular continuous setting;

\item to record the explicit one-step \textit{Lorenz formulas} available
under each concept;

\item to explain where closure fails in dimension $d\geq 3$ without
additional dependence information.
\end{enumerate}

\paragraph{Three workable concepts and why we focus on them}

Because the $d$-dimensional \textit{Fr\'{e}chet--Hoeffding lower-bound} is
not a copula for $d\geq 3$, multivariate extreme negative dependence is not
represented by a single universal copula. We focus on the following three
concepts.

\begin{enumerate}
\item \emph{Paired countermonotonicity} (PCM). This imposes ordinary
bivariate countermonotonicity inside fixed disjoint pairs of coordinates,
and independence across the pairs. It is a direct and computable
tensorization of the bivariate lower-bound theory.

\item \emph{Joint mixability} (JM). This is the constant-sum condition 
\begin{equation*}
\sum_{i=1}^{d}X_{i}=k\quad \text{a.s.}
\end{equation*}%
It is a standard multivariate negative-dependence concept, but in dimensions 
$d\geq 3$ it defines a class of \textit{couplings} rather than a unique
dependence structure.

\item \emph{$\Sigma $-countermonotonicity} ($\Sigma $-CTM). This requires
complementary partial sums to be countermonotone. It is weaker than
specifying a complete coupling, and therefore does not by itself determine
the \textit{Lorenz update} from the marginals alone.
\end{enumerate}

We do not use the full condition that every pair $(X_{i},X_{j})$ be
countermonotone, because in dimensions $d\geq 3$ this is typically
infeasible except in degenerate or very small-support cases. PCM is the
feasible paired version of that idea. JM and $\Sigma $-CTM are important
structural concepts, but, as shown below, they do not generally give closed
all-$n$ \textit{Lorenz iterations} without additional invariant structure.

\paragraph{Concept I: Paired countermonotonicity (PCM)}

\subparagraph{Definition and interpretation}

Let $d=2m$. Partition the coordinates into disjoint pairs 
\begin{equation}
(1,2),(3,4),\ldots ,(2m-1,2m).  \label{2050}
\end{equation}%
Let 
\begin{equation*}
W(s,t)=\max \{s+t-1,0\}
\end{equation*}%
be the bivariate \textit{Fr\'{e}chet--Hoeffding lower-bound} copula.

\begin{definition}
\medskip \noindent \label{def:PCM} A random vector $X=(X_{1},\ldots ,X_{d})$
is \textit{paired countermonotone}, or PCM, with respect to the pairing (\ref%
{2050}), if:

\begin{enumerate}
\item for each $r=1,\ldots ,m$, the pair $(X_{2r-1},X_{2r})$ is
countermonotone in the bivariate sense;

\item the blocks 
\begin{equation*}
(X_{1},X_{2}),\ (X_{3},X_{4}),\ldots ,(X_{2m-1},X_{2m})
\end{equation*}%
are independent.
\end{enumerate}
\end{definition}

\begin{remark}
The \textit{pairwise countermonotonicity} (PW-CTM) from \cite{[48]} requires
every pair $(X_{i},X_{j})$ to be countermonotone. PCM requires this only for
a fixed pairing, and hence is feasible and stable under many constructions.
In particular, PCM should be viewed as a structured pairwise-extreme
negative dependence model rather than a full PW-CTM condition.
\end{remark}

\subparagraph{Lorenz map under PCM}

Assume $X\sim F$ is PCM, and assume the one-dimensional marginals are
continuous. Let $F_{i}$ be the marginal distribution functions and 
\begin{equation*}
T_{i}^{F}=F_{i}^{-1}.
\end{equation*}%
For each block $r$, there exists $Z_{r}\sim U(0,1)$ such that 
\begin{equation}
(X_{2r-1},X_{2r})=(T_{2r-1}^{F}(Z_{r}),T_{2r}^{F}(1-Z_{r})),  \label{2051}
\end{equation}%
and the random variables $Z_{1},\ldots ,Z_{m}$ are independent.

For each block define 
\begin{equation}
\mu _{r}=\int_{0}^{1}T_{2r-1}^{F}(u)T_{2r}^{F}(1-u)\,du.  \label{2052}
\end{equation}%
Assume $\mu _{r}>0$. Define the bivariate block \textit{Lorenz factor} 
\begin{equation}
L_{r}(x,y)=\frac{\int_{1-y}^{x}T_{2r-1}^{F}(u)T_{2r}^{F}(1-u)\,du}{%
\int_{0}^{1}T_{2r-1}^{F}(u)T_{2r}^{F}(1-u)\,du},  \label{2053}
\end{equation}%
with the convention that the numerator is zero if $x<1-y$.

\begin{theorem}
\medskip \noindent \label{PCM-product} Let $d=2m$, and let $F$ be PCM with
continuous marginals and positive block normalizers $\mu _{r}$. Then the
Lorenz image $L_{F}$ factorizes blockwise: 
\begin{equation}
L_{F}(x_{1},\ldots ,x_{d})=\prod_{r=1}^{m}L_{r}(x_{2r-1},x_{2r}).
\label{2054}
\end{equation}%
In particular, the $i$-th marginal of $L_{F}$ depends only on the
countermonotone block containing $i$. \medskip
\end{theorem}

\begin{proof}
\noindent The \textit{Lorenz numerator} can be written as 
\begin{equation}
E\left[ \left( \prod_{i=1}^{d}X_{i}\right) \prod_{i=1}^{d}1_{\{X_{i}\leq
T_{i}^{F}(x_{i})\}}\right] .  \label{2055}
\end{equation}%
Using the PCM representation (\ref{2051}), both the product $\prod_{i}X_{i}$
and the indicator product factor over the independent blocks. The
expectation therefore factorizes into a product of bivariate countermonotone
integrals. Continuity of the block marginals gives 
\begin{equation*}
\{T_{2r-1}^{F}(Z_{r})\leq T_{2r-1}^{F}(x)\}=\{Z_{r}\leq x\}
\end{equation*}%
and 
\begin{equation*}
\{T_{2r}^{F}(1-Z_{r})\leq T_{2r}^{F}(y)\}=\{1-Z_{r}\leq y\}
\end{equation*}%
up to null sets. Hence the $r$-th block contributes exactly (\ref{2053}).
Multiplying over $r$ gives (\ref{2054}). \hfill\ \medskip
\end{proof}

\begin{remark}
\medskip \noindent The continuity assumption in \textit{Theorem~\ref%
{PCM-product}} is not cosmetic. If a block marginal has an atom, then the
event 
\begin{equation*}
\{T_{i}^{F}(u)\leq T_{i}^{F}(x)\}
\end{equation*}%
need not equal $[0,x]$. The correct atomic replacement uses the plateau
endpoint $F_{i}(T_{i}^{F}(x))$, as explained in \textit{Appendix~E}. \medskip
\end{remark}

\subparagraph{Invariance of PCM}

\begin{lemma}
\noindent \label{PCM-invariance} Let $d=2m$, and let $F$ be PCM with respect
to the fixed pairing (\ref{2050}). Assume the relevant marginals are
continuous and the Lorenz normalizer is positive. Then $L_{F}$ is again PCM
with respect to the same pairing. Consequently, if $L^{0}=L^{F}$ and $%
L^{n+1}=L_{L^{n}}$, then $L^{n}$ is PCM for all $n\geq 0$, as long as the
iterates remain in the regular atom-free regime. \medskip
\end{lemma}

\begin{proof}
Write $X=(B_{1},\ldots ,B_{m})$, where 
\begin{equation*}
B_{r}=(X_{2r-1},X_{2r}).
\end{equation*}%
By PCM, the blocks are independent and each block is bivariate
countermonotone.

The \textit{Lorenz step} consists of two operations. First one tilts the law
by the weight 
\begin{equation*}
\prod_{i=1}^{d}X_{i}=\prod_{r=1}^{m}X_{2r-1}X_{2r}.
\end{equation*}%
This weight factorizes over the blocks. Hence the tilted law still has
independent blocks. Inside each block, tilting by $X_{2r-1}X_{2r}$ does not
move mass off the countermonotone support; it only changes the marginal
distributions along that support. Therefore each tilted block remains
countermonotone with respect to its new marginals.

Second, the \textit{Lorenz map} standardizes the coordinates by their
marginal distribution functions. In the present continuous setting, these
coordinatewise monotone transformations preserve countermonotonicity inside
each block and preserve independence across blocks. Therefore $L_{F}$ is
again PCM. Iteration gives the result.
\end{proof}

\subparagraph{Marginal iteration under PCM}

Let $L^{0}=L^{F}$, $L^{n+1}=L_{L^{n}}$, and suppose the trajectory remains
PCM in the regular continuous sense. For a block $(i,j)=(2r-1,2r)$, write 
\begin{equation*}
T_{i}^{n}=(L_{i}^{n})^{-1},\qquad T_{j}^{n}=(L_{j}^{n})^{-1}.
\end{equation*}%
Then the block denominator is 
\begin{equation*}
\mu _{ij,n}=\int_{0}^{1}T_{i}^{n}(u)T_{j}^{n}(1-u)\,du.
\end{equation*}%
The marginal updates are 
\begin{equation}
L_{i}^{n+1}(x)=\frac{\int_{0}^{x}T_{i}^{n}(u)T_{j}^{n}(1-u)\,du}{%
\int_{0}^{1}T_{i}^{n}(u)T_{j}^{n}(1-u)\,du},  \label{2056a}
\end{equation}%
and 
\begin{equation}
L_{j}^{n+1}(x)=\frac{\int_{1-x}^{1}T_{i}^{n}(u)T_{j}^{n}(1-u)\,du}{%
\int_{0}^{1}T_{i}^{n}(u)T_{j}^{n}(1-u)\,du}.  \label{2056b}
\end{equation}%
Under the usual complementary symmetry inside the block, these reduce to the
one-dimensional lower-bound recursion studied in the bivariate part of the
paper.

\begin{remark}
\medskip \noindent PCM is the closest multivariate analogue of the bivariate 
\textit{Fr\'{e}chet--Hoeffding lower-bound} case that remains fully explicit
in the regular continuous setting. It is a tensorization of the bivariate
countermonotone dynamics over independent blocks. The atomic version is
different and is treated in \textit{Appendix~E}. \medskip
\end{remark}

\paragraph{Concept II: Joint mixability (JM)}

\subparagraph{Definition and interpretation}

\begin{definition}
\textbf{\label{JM}} A random vector $X=(X_{1},\ldots ,X_{d})$ is \textit{%
jointly mixable} if there exists $k\in R$ such that 
\begin{equation}
\sum_{i=1}^{d}X_{i}=k\quad \text{a.s.}  \label{2057}
\end{equation}%
\medskip
\end{definition}

If (\ref{2057}) holds, then for every subset $A\subset \{1,\ldots ,d\}$, 
\begin{equation}
\sum_{j\notin A}X_{j}=k-\sum_{i\in A}X_{i}.  \label{2058}
\end{equation}%
Thus complementary partial sums move in exact opposition.

Assume $X\sim F$ is supported on $[0,1]^{d}$ and satisfies (\ref{2057}) for
some $k\in (0,d)$. Let $\mu $ be the law of $(X_{1},\ldots ,X_{d-1})$, and
define 
\begin{equation*}
h(u_{1},\ldots ,u_{d-1})=k-\sum_{j=1}^{d-1}u_{j}.
\end{equation*}%
Then $\mu $ is supported on 
\begin{equation*}
H=\{u\in \lbrack 0,1]^{d-1}:0\leq h(u)\leq 1\}.
\end{equation*}

\begin{proposition}
\medskip \noindent \textbf{\label{JM-dm1}} The Lorenz normalizer is 
\begin{equation}
I_{F}=E\left[ \prod_{i=1}^{d}X_{i}\right] =\int_{H}\left(
\prod_{j=1}^{d-1}u_{j}\right) h(u)\,d\mu (u).  \label{2059}
\end{equation}%
For $x=(x_{1},\ldots ,x_{d})\in \lbrack 0,1]^{d}$, 
\begin{equation}
L_{F}(x)=\frac{1}{I_{F}}\int_{R_{F}(x)}\left( \prod_{j=1}^{d-1}u_{j}\right)
h(u)\,d\mu (u),  \label{2060}
\end{equation}%
where 
\begin{equation*}
R_{F}(x)=\left\{ u\in H:0\leq u_{j}\leq T_{j}^{F}(x_{j}),\ 1\leq j\leq
d-1,\quad 0\leq h(u)\leq T_{d}^{F}(x_{d})\right\} .
\end{equation*}%
In particular, 
\begin{equation}
L_{d}^{F}(x)=\frac{1}{I_{F}}\int_{\{u\in H:\,0\leq h(u)\leq
T_{d}^{F}(x)\}}\left( \prod_{j=1}^{d-1}u_{j}\right) h(u)\,d\mu (u),
\label{2061}
\end{equation}%
and, for $i=1,\ldots ,d-1$, 
\begin{equation}
L_{i}^{F}(x)=\frac{1}{I_{F}}\int_{\{u\in H:\,0\leq u_{i}\leq
T_{i}^{F}(x)\}}\left( \prod_{j=1}^{d-1}u_{j}\right) h(u)\,d\mu (u).
\label{2062}
\end{equation}%
\medskip
\end{proposition}

\begin{remark}
\medskip For $d=2$, the joint-mixability manifold is one-dimensional, and
the formula above reduces to a single integral. For $d\geq 3$, the manifold
has dimension $d-1\geq 2$, and the right-hand sides depend on the full
coupling $\mu $, not only on the one-dimensional marginals. Therefore JM
does not give a closed marginal recursion in terms of $(F_{1},\ldots ,F_{d})$
alone. \medskip
\end{remark}

\subparagraph{JM is not generally invariant under the Lorenz map}

The bivariate \textit{lower-bound} case closes because the countermonotone
coupling is essentially unique. In dimension $d\geq 3$, \textit{joint
mixability} specifies a class of \textit{couplings}. The \textit{Lorenz map}
depends on the full joint law through the product kernel and the truncation
sets, and therefore the one-dimensional marginals do not determine the 
\textit{Lorenz update}.

Moreover, JM is not automatically preserved by the full \textit{Lorenz map}.
The \textit{Lorenz iteration} may be decomposed into two parts:

\begin{enumerate}
\item product tilting by $\prod_iX_i$;

\item coordinatewise marginal standardization.
\end{enumerate}

The product tilt preserves the support constraint 
\begin{equation*}
\sum_{i}X_{i}=k,
\end{equation*}%
because it does not move mass off the support. However, the subsequent
coordinatewise standardization applies generally different nonlinear
monotone maps to the coordinates. Such transformations do not preserve the
identity 
\begin{equation*}
\sum_{i}X_{i}=k
\end{equation*}%
in general. Consequently, JM gives correct one-step \textit{Lorenz formulas}%
, but it does not in general define an invariant all-$n$ \textit{Lorenz
dynamics}.

\subparagraph{One-factor rearrangement representation}

A useful way to describe particular \textit{JM couplings} is by
rearrangements of a common uniform variable. Let $U\sim U(0,1)$. Choose
measure-preserving maps $\pi _{i}:[0,1]\rightarrow \lbrack 0,1]$ and set 
\begin{equation}
X_{i}=T_{i}^{F}(\pi _{i}(U))=q_{i}(U).  \label{2063}
\end{equation}%
Then $X_{i}\sim F_{i}$. The JM constraint becomes the pointwise identity 
\begin{equation}
\sum_{i=1}^{d}q_{i}(u)=\sum_{i=1}^{d}T_{i}^{F}(\pi _{i}(u))\equiv k\quad 
\text{for a.e. }u.  \label{2064}
\end{equation}

Define 
\begin{equation}
K(u)=\prod_{j=1}^{d}q_{j}(u)=\prod_{j=1}^{d}T_{j}^{F}(\pi _{j}(u)),\qquad
I_{F}=\int_{0}^{1}K(u)\,du.  \label{2065}
\end{equation}%
Then the exact one-step marginal formula is 
\begin{equation}
L_{i}^{F}(x)=\frac{\int_{0}^{1}K(u)1_{\{q_{i}(u)\leq T_{i}^{F}(x)\}}\,du}{%
\int_{0}^{1}K(u)\,du}.  \label{2067}
\end{equation}%
If the marginal $F_{i}$ is continuous, then 
\begin{equation*}
\{q_{i}(u)\leq T_{i}^{F}(x)\}=\{\pi _{i}(u)\leq x\}\quad \text{up to null
sets}.
\end{equation*}%
Thus, in the regular continuous case, 
\begin{equation}
L_{i}^{F}(x)=\frac{\int_{\{u:\,\pi _{i}(u)\leq x\}}K(u)\,du}{%
\int_{0}^{1}K(u)\,du}.  \label{2066}
\end{equation}

\begin{remark}
\medskip Equation (\ref{2066}) is a one-step formula. It should not be read
as an all-$n$ iteration unless one separately proves that the \textit{Lorenz
map} preserves the chosen rearrangement structure. Discrete
complete-mixability and other atomic invariant subclasses are treated in 
\textit{Appendix~E}. \medskip
\end{remark}

\paragraph{Concept III: $\Sigma $-countermonotonicity ($\Sigma $-CTM)}

\subparagraph{Definition and interpretation}

For a nonempty proper subset $A\subset \{1,\ldots ,d\}$, define the partial
sums 
\begin{equation}
S_{A}=\sum_{i\in A}X_{i},\qquad S_{A^{c}}=\sum_{j\notin A}X_{j}.
\label{2072}
\end{equation}

\begin{definition}
\medskip \noindent \label{SigmaCTM} A random vector $X$ is $\Sigma $\textit{%
-countermonotone} if, for every nonempty proper subset $A$, the bivariate
pair 
\begin{equation*}
(S_{A},S_{A^{c}})
\end{equation*}%
is countermonotone. \medskip

\textit{Joint mixability} implies $\Sigma $\textit{-countermonotonicity},
since if 
\begin{equation*}
\sum_{i}X_{i}=k\quad \text{a.s.},
\end{equation*}%
then 
\begin{equation*}
S_{A^{c}}=k-S_{A}.
\end{equation*}
\end{definition}

\subparagraph{No closure from marginals alone}

Unlike PCM, $\Sigma $\textit{-countermonotonicity} does not impose a fixed
low-dimensional product structure on $X$. It constrains complementary sums,
but it does not specify a unique coupling for the coordinates themselves.

\begin{proposition}
\medskip \noindent \label{Sigma-noclosure} For $d\geq 3$, the condition that 
$F$ is $\Sigma $-countermonotone does not determine the Lorenz marginals $%
L_{i}^{F}$ as functionals of the marginal vector $(F_{1},\ldots ,F_{d})$
alone. In particular, for fixed marginals, different $\Sigma $%
-countermonotone couplings may yield different Lorenz marginals. \medskip
\end{proposition}

\begin{remark}
\medskip \noindent The \textit{Lorenz map} depends on the full joint law
through the multiplicative kernel 
\begin{equation*}
\prod_{i}X_{i}
\end{equation*}%
and the coordinatewise truncation sets. Since $\Sigma $-CTM defines a class
of \textit{couplings} rather than a unique copula, additional structure is
required to obtain explicit closed recursions. \medskip
\end{remark}

\subparagraph{One-factor formulas}

A useful restricted regime is the one-factor case. Suppose 
\begin{equation*}
X_{i}=q_{i}(U),\qquad i=1,\ldots ,d,\qquad U\sim U(0,1).
\end{equation*}%
Define 
\begin{equation*}
K(u)=\prod_{i=1}^{d}q_{i}(u),\qquad I=\int_{0}^{1}K(u)\,du.
\end{equation*}%
Then, independently of whether $X$ is $\Sigma $\textit{-countermonotone},
the one-step \textit{Lorenz marginal} is 
\begin{equation}
L_{i}^{F}(x)=\frac{\int_{0}^{1}K(u)1_{\{q_{i}(u)\leq T_{i}^{F}(x)\}}\,du}{%
\int_{0}^{1}K(u)\,du}.  \label{2073}
\end{equation}

Now impose the $\Sigma $-CTM structure at the level of the functions $q_{i}$%
: for every nonempty proper subset $A\subset \{1,\ldots ,d\}$, assume there
is a decreasing function $\varphi _{A}$ such that 
\begin{equation}
\sum_{j\notin A}q_{j}(u)=\varphi _{A}\left( \sum_{i\in A}q_{i}(u)\right)
\quad \text{for a.e. }u.  \label{2074}
\end{equation}%
Then the law is $\Sigma $\textit{-countermonotone} in this one-factor sense,
and (\ref{2073}) gives its one-step \textit{Lorenz marginals}.

If, moreover, $q_{i}=T_{i}^{F}\circ \pi _{i}$ for measure-preserving maps $%
\pi _{i}$, then in the continuous marginal setting 
\begin{equation*}
\{q_{i}(u)\leq T_{i}^{F}(x)\}=\left\{ \pi _{i}(u)\leq x\right\} \quad \text{%
up to null sets},
\end{equation*}%
and therefore 
\begin{equation}
L_{i}^{F}(x)=\frac{\int_{\{u:\,\pi _{i}(u)\leq
x\}}\prod_{j=1}^{d}T_{j}^{F}(\pi _{j}(u))\,du}{\int_{0}^{1}%
\prod_{j=1}^{d}T_{j}^{F}(\pi _{j}(u))\,du}.  \label{2076}
\end{equation}

\medskip \noindent \textbf{Remark.} Formula (\ref{2076}) is again a one-step
formula unless the one-factor $\Sigma $-CTM structure is known to be
invariant under the full \textit{Lorenz map}. In general it is not. Product
tilting preserves support, but the subsequent coordinatewise standardization
may destroy the functional relations (\ref{2074}). Thus $\Sigma $-CTM, like
JM, is a meaningful one-step negative-dependence input, but it does not by
itself close the \textit{Lorenz dynamics}. Atomic and finite-support
invariant subclasses are treated separately in \textit{Appendix~E}. \medskip

\section{Applications}

This section sketches several application directions in statistics and in
mathematical finance. Throughout, $F$ denotes a $d$-variate distribution on $%
[0,1]^{d}$ with strictly positive density satisfying the dependence
regularity assumed in \textit{Theorems~\ref{Dd-main-long}--\ref%
{Dd-main-short}}, and $L_{n}^{F}$ denotes the $n$-th iterated \textit{Lorenz
transform} of $F$. The main results imply the existence of a universal
attractor 
\begin{equation}
G(x)\;=\;\prod_{i=1}^{d}x_{i}^{\varphi },\qquad x\in \lbrack 0,1]^{d},\qquad
\varphi =\frac{1+\sqrt{5}}{2},  \label{2077}
\end{equation}%
such that $L_{n}^{F}\rightarrow G$ uniformly. We interpret this convergence
as an explicit \textit{balancing transform}: after sufficiently many
iterations, heterogeneous distributions are mapped to a common, explicitly
known and comparatively \textquotedblleft regular\textquotedblright\
benchmark. This motivates robust summary statistics, diagnostics, and stress
transforms.

\subsection{Statistical applications}

\subsubsection{Multivariate dispersion / concentration indices via distance
to the universal limit}

The convergence $L_{n}^{F}\rightarrow G$ provides a natural reference
distribution against which we may quantify \textit{joint dispersion} or 
\textit{concentration}. Let $w:[0,1]^{d}\rightarrow \lbrack 0,\infty )$ be
an integrable weight emphasizing regions of interest (e.g.\ corners/tails).
Define for $n\geq 1$ the weighted $L^{1}$- and $L^{\infty }$-type indices 
\begin{align}
\mathcal{C}_{w,1}(F;n)& =\int_{[0,1]^{d}}w(x)\,\left\vert
L_{n}^{F}(x)-G(x)\right\vert dx,  \label{2078} \\
\mathcal{C}_{w,\infty }(F;n)& =\sup_{x\in \lbrack 0,1]^{d}}w(x)\,\left\vert
L_{n}^{F}(x)-G(x)\right\vert .  \label{2079}
\end{align}%
When $w\equiv 1$ these quantify global deviation; choices such as $%
w(x)=\prod_{i=1}^{d}x_{i}^{-\alpha }(1-x_{i})^{-\beta }$ (with $\alpha
,\beta \geq 0$ chosen so that $w$ is integrable) emphasize lower/upper tails
and corners, producing tail-sensitive multivariate concentration summaries.
The iteration depth $n$ plays the role of a scale parameter: small $n$
captures near-original features, while larger $n$ probes more structural
aspects of concentration, since repeated \textit{Lorenz transforms}
\textquotedblleft regularize\textquotedblright\ $F$ toward the balanced
benchmark $G$. In empirical work, $F$ may be replaced by an estimator $%
\widehat{F}$ (e.g.\ empirical, smoothed, or copula-based), yielding $%
\mathcal{C}_{w,\cdot }(\widehat{F};n)$ as a stable scalar functional for
comparing populations, time periods, or experimental conditions.

\subsubsection{Dimension reduction via Lorenz-iteration coordinates}

For high-dimensional data, the mapping $F\mapsto (L_{1}^{F},\dots
,L_{N}^{F}) $ offers a structured nonlinear feature extraction mechanism.
Fix a grid $\Xi =\{\xi ^{(1)},\dots ,\xi ^{(m)}\}\subset \lbrack 0,1]^{d}$
(or a basis $\{b_{j}\}$ on $[0,1]^{d}$) and define the \textit{iteration
embedding} 
\begin{equation}
\Phi _{N,\Xi }(F)=(L_{1}^{F}(\xi ^{(1)}),\dots ,L_{1}^{F}(\xi
^{(m)}),\,\dots ,\,L_{N}^{F}(\xi ^{(1)}),\dots ,L_{N}^{F}(\xi ^{(m)}))\in
R^{Nm}.  \label{2080}
\end{equation}%
Applied to rolling-window estimates $\widehat{F}_{t}$ of a multivariate time
series, or to a collection of subpopulations $\widehat{F}^{(k)}$, the
vectors $\Phi _{N,\Xi }(\widehat{F}_{t})$ can be analyzed via \textit{PCA},
factor models, clustering, or manifold learning. The key point is that the 
\textit{Lorenz operator} acts as a \textit{balancing transform} that
suppresses idiosyncratic irregularities while retaining the joint
concentration geometry: thus the dominant directions in $\Phi _{N,\Xi }$ can
provide a low-dimensional description of dependence/concentration regimes.
Coordinate-wise sensitivity can also be used for variable importance:
perturbing or omitting one component and tracking the change in $\mathcal{C}%
_{w,\cdot }$ or in $\Phi _{N,\Xi }$ identifies which margins drive
multivariate concentration.

\subsubsection{A decorrelation / dependence-stripping transform and
privacy-oriented synthetic data}

The universal attractor $G$ has product form, hence corresponds to an
independence-shaped benchmark on $[0,1]^{d}$. The iteration $F\mapsto
L_{n}^{F}$ may therefore be viewed as a principled \textit{%
dependence-stripping} transform within the regularity class: as $n$
increases, the transformed distributions become closer (in the \textit{Lorenz%
} sense) to an explicitly known, \textquotedblleft
balanced\textquotedblright\ target. This motivates the following
preprocessing pipeline. Given a sample $X_{1},\dots ,X_{N}$ from $F$,
estimate $L_{n}^{\widehat{F}}$ for moderate $n$ and use the transformed
object to construct decorrelated summaries or synthetic samples with
attenuated dependence. Operationally, we may select the smallest $n$ such
that $\mathcal{C}_{w,\infty }(\widehat{F};n)$ falls below a tolerance. The
resulting output retains controlled marginal structure (through the \textit{%
Lorenz}-type transformation) while systematically weakening joint
dependence, which is attractive both for (i) stabilizing downstream learning
algorithms on heavy-tailed dependent features, and (ii) generating
privacy-oriented synthetic data where sensitive cross-feature dependencies
are intentionally damped.

\subsubsection{Monitoring time-varying multivariate concentration in
econometrics}

For a multivariate time series (returns, losses, macro indicators), let $%
\widehat{F}_{t}$ be the empirical distribution on a rolling window ending at
time $t$. The scalar process 
\begin{equation}
t\;\longmapsto \;\mathcal{C}_{w,1}(\widehat{F}_{t};n)\quad \text{or}\quad 
\mathcal{C}_{w,\infty }(\widehat{F}_{t};n)  \label{2081}
\end{equation}%
can be used as a regime indicator capturing changes in joint concentration
and dependence structure, with $n$ controlling the aggressiveness of the
balancing transform. Compared to direct tail-dependence or copula-parameter
tracking, these indicators are functionals of the \textit{Lorenz} \textit{%
iterates} and can be numerically stable even in moderate samples, because
the iterated \textit{Lorenz map} tends to regularize the empirical object.

\subsubsection{Nonparametric testing and model diagnostics}

Theorem-driven convergence provides a basis for diagnostics of
dependence/regularity assumptions. Under the model class, $L_{n}^{F}$ should
approach $G$ in sup norm. Given data and an estimator $\widehat{F}$,
consider test statistics of the form 
\begin{equation}
T_{n}=d(L_{n}^{\widehat{F}},G),  \label{2082}
\end{equation}%
where $d$ is $\Vert \cdot \Vert _{\infty }$ or a weighted integrated
distance as in~(\ref{2078})--(\ref{2079}). We may calibrate $T_{n}$ via
bootstrap or subsampling. Large persistent deviations for moderate $n$
indicate misspecification of the dependence/regularity class (e.g.\
violation of the required total positivity / reverse-regularity). Similarly,
in copula modeling, comparing empirical $L_{n}^{\widehat{F}}$ to the implied 
$L_{n}$ under a fitted copula yields a functional \textit{goodness-of-fit }%
diagnostic that avoids direct estimation of high-dimensional density
constraints.

\subsection{Applications in portfolio theory and mathematical finance}

\subsubsection{Lorenz-based risk functionals as stable multi-criteria
objectives}

Let $X=(X_{1},\dots ,X_{d})$ denote a vector of losses (or negative returns)
associated with $d$ assets, desks, or risk factors, and let $F_{X}$ be the
distribution of a suitable transformation to $[0,1]^{d}$ (e.g.\ via marginal
probability integral transforms on each component). The iterated \textit{%
Lorenz} surfaces $L_{n}^{F_{X}}$ suggest a family of risk/concentration
functionals that complement tail-based measures and can be more stable in
optimization. A generic class is 
\begin{equation}
\rho _{n,w}(X)=\int_{[0,1]^{d}}w(u)\,\psi \!(L_{n}^{F_{X}}(u))\,du,
\label{2083}
\end{equation}%
where $w$ emphasizes relevant regions (e.g.\ tail corners) and $\psi $ is
increasing and convex, encoding risk aversion. Alternatively, we may use
deviation-from-benchmark penalties 
\begin{equation}
\rho _{n,w}^{\mathrm{dev}}(X)=\int_{[0,1]^{d}}w(u)\,\left\vert
L_{n}^{F_{X}}(u)-G(u)\right\vert \,du\qquad \text{or}\qquad \sup_{u\in
\lbrack 0,1]^{d}}w(u)\,\left\vert L_{n}^{F_{X}}(u)-G(u)\right\vert .
\label{2084}
\end{equation}%
These functionals treat the universal limit $G$ as a canonical
\textquotedblleft balanced\textquotedblright\ reference. The iteration depth 
$n$ acts as a robustness knob: for small $n$ the functional remains close to
the empirical dependence structure; for larger $n$ it emphasizes persistent
concentration features while dampening small-sample irregularities. This can
yield portfolio weights with lower turnover when combined with classical
objectives, e.g.\ \textit{mean--variance} or \textit{mean--CVaR}, by adding $%
\lambda \,\rho _{n,w}^{\mathrm{dev}}(X)$ as a regularization term.

\subsubsection{Strategy signatures and fragility: Lorenz iteration profiles}

For a trading strategy $S$ with P\&L (or drawdown) series, define on each
time window an estimated distribution $\widehat{F}_{S}$ (after
standardization to $[0,1]^{d}$ when multivariate) and compute the\textit{\
iteration profile} 
\begin{equation}
\Gamma _{N}(S)=(d(L_{1}^{\widehat{F}_{S}},G),\,d(L_{2}^{\widehat{F}%
_{S}},G),\,\dots ,\,d(L_{N}^{\widehat{F}_{S}},G)),  \label{2085}
\end{equation}%
with $d$ a weighted integrated distance or $\Vert \cdot \Vert _{\infty }$.
Even when strategies have similar first-order performance metrics, $\Gamma
_{N}(S)$ can separate them by their dependence/concentration dynamics. Fast
decay of $d(L_{n}^{\widehat{F}_{S}},G)$ suggests that dependence patterns
are easily \textquotedblleft washed out\textquotedblright\ under the
balancing transform (a potential proxy for robustness), whereas slow decay
indicates structural co-movement, crowding, hidden factor exposure, or tail
co-dependence. These profiles can be used for clustering strategies,
monitoring changes in strategy behavior, and defining stress scenarios (see
below).

\subsubsection{Model risk and stress testing via Lorenz-generated
perturbation paths}

A central advantage of the present framework is that it does not require
restricting attention to a fixed-marginal \textit{Fr\'{e}chet class}: the 
\textit{Lorenz} \textit{iteration} defines an intrinsic path 
\begin{equation*}
F\;\longmapsto \;L_{1}^{F}\;\longmapsto \;L_{2}^{F}\;\longmapsto \;\cdots
\end{equation*}%
within the regularity class, converging to the explicit benchmark $G$. This
supplies a structured family of distributional perturbations for model risk
assessment. Given a pricing functional or risk measure $\mathfrak{R}(F)$
(e.g.\ \textit{VaR/ES} of portfolio loss, risk contributions, or a convex
capital requirement), we may examine the envelope 
\begin{equation}
\sup_{1\leq n\leq N}\,\mathfrak{R}(L_{n}^{F})\qquad \text{or}\qquad \{%
\mathfrak{R}(L_{n}^{F}):1\leq n\leq N\},  \label{2086}
\end{equation}%
interpreting it as a \textit{Lorenz}-induced stress test that gradually
\textquotedblleft balances\textquotedblright\ the distribution. Because the
limit is known explicitly, $G$ can serve as a universal stress reference. In
the spirit of modern model risk treatments (where uncertainty is represented
by plausible transformations rather than parametric perturbations), the
family $\{L_{n}^{F}\}$ provides a mathematically grounded and
computationally tractable stress path.

\subsubsection{Extreme dependence, fixed points, and optimization
perspectives}

Classical risk aggregation results identify extreme dependence structures
(comonotonicity/countermonotonicity) as optimizers for various functionals
under equal-marginal constraints. The \textit{Lorenz} setting suggests an
analogous optimization program: identify objectives $\mathcal{J}(F)$ built
from \textit{Lorenz} \textit{surfaces} for which fixed points (including
comonotonic/countermonotonic corners and independence) arise as extremizers.
Natural candidates are \textit{Lorenz}-type integral functionals 
\begin{equation}
\mathcal{J}(F)=\int_{[0,1]^{d}}w(u)\,\Phi \!(L_{F}(u))\,du\qquad \text{or}%
\qquad \mathcal{J}(F)=d(L_{F},G),  \label{2087}
\end{equation}%
possibly under constraints reflecting market calibration (moments, marginal
tail behavior, or transportation budgets). In this view, the universal
attractor $G$ provides a canonical candidate for \textquotedblleft
balanced\textquotedblright\ dependence, while corner fixed points represent
maximally concentrated configurations. Notably, independence is itself a
fixed point of the \textit{Lorenz operator} in several settings, raising the
question of which \textit{Lorenz}-based objectives it may optimize (e.g.\
dispersion-maximizing criteria) under economically meaningful constraints.
Even partial results in this direction would connect iterated \textit{Lorenz}
\textit{dynamics} to extremal dependence theory.

\subsubsection{Connections to unbalanced optimal transport and insurance
risk allocation}

The map $F\mapsto L_{n}^{F}$ can be interpreted as a structured mass
redistribution on $[0,1]^{d}$ toward the explicit product-form benchmark $G$%
. This naturally suggests links to (possibly unbalanced) \textit{optimal
transport }(OT): we may compare $F$ and $G$ via an optimal transport\textit{%
\ }cost to obtain alternative multivariate concentration measures, or
interpret the iteration path $\{L_{n}^{F}\}$ as a canonical transport-like
deformation. In insurance and systemic risk applications, this viewpoint can
support capital allocation: the coordinates that most influence the movement
of $L_{n}^{F}$ toward $G$ (as quantified by sensitivities of $\mathcal{C}%
_{w,\cdot }$) may be interpreted as the principal contributors to
concentration and thus to capital requirements.

\medskip The above directions are intended as a compact roadmap. They all
leverage the same conceptual ingredient: iterated \textit{Lorenz transforms}
provide a canonical, robustness-promoting balancing procedure with an
explicit universal limit, enabling both summary indices and stress-transform
constructions.

\section{Conclusion}

The presented results provide a generalization of our earlier ones from \cite%
{[3]} to the multivariate case. Our simulations show a relatively quick
convergence with $15-20$ iterations being enough for most of the cases%
\footnote{{\footnotesize Source code:
https://github.com/vjord-research/source-code/tree/main/lorenz-curves/paper2}%
}. The paper provided further interesting and unexplored properties of the 
\textit{Lorenz curve}s and the results shown can find multiple applications
as discussed in \cite{[3]}. Additionally, they have relation to many
problems of pure dependence character.

\bigskip

\bigskip

\bigskip

\textbf{Declarations:}

\textbf{Chronology (ICMJE rules compatibility): }{\small \textit{Theorem} %
\ref{main-th} was first brought to the author's attention by Zvetan Ignatov
in the summer of 2007. It subsequently remained an open problem for many
years, and to the author's knowledge, no viable solution was published or
discussed during that time. The only known reference to the problem appears
in the PhD thesis of Ismat Ibrahim \cite{[41]}, where the conjecture is
mentioned briefly as a tangential topic. That work discusses only the
trivial case of an independent bivariate starting distribution and includes
a remark that informally questions the conjecture's validity in the general
case. The author first found strong evidence for the conjecture's
validity---under the assumption of a density---in the summer of 2023 after a
period of focused research. The author is grateful to Zvetan Ignatov for his
encouragement during the initial development of the solution. The
methodology, its implementation, and the writing of this paper, including
any potential errors or omissions, are the sole responsibility of the author.%
}

\newpage

\newpage

\begin{flushleft}
{\Large Appendix A}
\end{flushleft}

In this appendix, we prove several technical results supporting the \textit{%
Fr\'{e}chet-Hoeffding bounds} calculations from the main text.

\begin{claim}
\label{fh-claim1} The validity of the following equation holds 
\begin{equation}
\frac{\dint\nolimits_{0}^{x_{1}}\dint%
\nolimits_{0}^{x_{2}}F_{1}^{-1}(u_{1})F_{2}^{-1}(u_{2})dMin(u_{1},u_{2})}{%
\mu _{12}^{F_{+}}}=\frac{\dint\nolimits_{0}^{x_{1}\wedge
x_{2}}F_{1}^{-1}(u)F_{2}^{-1}(u)du}{\dint%
\nolimits_{0}^{1}F_{1}^{-1}(u)F_{2}^{-1}(u)du}.  \label{22.1}
\end{equation}
\end{claim}

\begin{proof}
We have%
\begin{eqnarray}
&&\dint\nolimits_{0}^{x_{1}}\dint%
\nolimits_{0}^{x_{2}}F_{1}^{-1}(u_{1})F_{2}^{-1}(u_{2})dMin(u_{1},u_{2}) 
\notag \\
&=&\dint\nolimits_{0}^{x_{1}}\dint%
\nolimits_{0}^{x_{2}}F_{1}^{-1}(u_{1})F_{2}^{-1}(u_{2})d(u_{1}1_{\left\{
u_{1}\leq u_{2}\right\}
})+\dint\nolimits_{0}^{x_{1}}\dint%
\nolimits_{0}^{x_{2}}F_{1}^{-1}(u_{1})F_{2}^{-1}(u_{2})d(u_{2}1_{\left\{
u_{2}\leq u_{1}\right\} })  \notag \\
&=&\dint\nolimits_{0}^{x_{1}}\dint%
\nolimits_{0}^{x_{2}}F_{1}^{-1}(u_{1})F_{2}^{-1}(u_{2})(\frac{\partial }{%
\partial u_{2}}1_{\left\{ u_{1}\leq u_{2}\right\}
})du_{1}du_{2}+\dint\nolimits_{0}^{x_{1}}\dint%
\nolimits_{0}^{x_{2}}F_{1}^{-1}(u_{1})F_{2}^{-1}(u_{2})(u_{1}\frac{\partial
^{2}}{\partial u_{1}\partial u_{2}}1_{\left\{ u_{1}\leq u_{2}\right\}
})du_{1}du_{2}  \notag \\
&&+\dint\nolimits_{0}^{x_{1}}\dint%
\nolimits_{0}^{x_{2}}F_{1}^{-1}(u_{1})F_{2}^{-1}(u_{2})(\frac{\partial }{%
\partial u_{1}}1_{\left\{ u_{2}\leq u_{1}\right\}
})du_{1}du_{2}+\dint\nolimits_{0}^{x_{1}}\dint%
\nolimits_{0}^{x_{2}}F_{1}^{-1}(u_{1})F_{2}^{-1}(u_{2})(u_{2}\frac{\partial
^{2}}{\partial u_{1}\partial u_{2}}1_{\left\{ u_{2}\leq u_{1}\right\}
})du_{1}du_{2}  \notag \\
&=&\dint\nolimits_{0}^{x_{1}}\dint%
\nolimits_{0}^{x_{2}}F_{1}^{-1}(u_{1})F_{2}^{-1}(u_{2})\delta
(u_{2}-u_{1})du_{1}du_{2}+\dint\nolimits_{0}^{x_{1}}\dint%
\nolimits_{0}^{x_{2}}F_{1}^{-1}(u_{1})F_{2}^{-1}(u_{2})\delta
(u_{1}-u_{2})du_{1}du_{2}  \notag \\
&&+\dint\nolimits_{0}^{x_{1}}\dint%
\nolimits_{0}^{x_{2}}F_{1}^{-1}(u_{1})F_{2}^{-1}(u_{2})u_{1}\delta
^{^{\prime
}}(u_{1}-u_{2})du_{1}du_{2}+\dint\nolimits_{0}^{x_{1}}\dint%
\nolimits_{0}^{x_{2}}F_{1}^{-1}(u_{1})F_{2}^{-1}(u_{2})u_{2}\delta
^{^{\prime }}(u_{2}-u_{1})du_{1}du_{2}.  \label{22.2}
\end{eqnarray}

For the first two terms in (\ref{22.2}), we can use the indefinite integral
properties of the \textit{Dirac function} when being an integrand, as well
as its translation property. We get%
\begin{eqnarray}
&&\frac{\dint\nolimits_{0}^{x_{1}}F_{1}^{-1}(u_{1})\left(
\dint\nolimits_{0}^{x_{2}}F_{2}^{-1}(u_{2})\delta (u_{2}-u_{1})du_{2}\right)
du_{1}}{\mu _{12}^{F_{+}}}  \notag \\
&=&\frac{\dint%
\nolimits_{0}^{x_{1}}F_{1}^{-1}(u_{1})F_{2}^{-1}(u_{1})H(x_{2}-u_{1})du_{1}}{%
\mu _{12}^{F_{+}}}=\frac{\dint%
\nolimits_{0}^{x_{1}}F_{1}^{-1}(u_{1})F_{2}^{-1}(u_{1})1_{\left\{ u_{1}\leq
x_{2}\right\} }du_{1}}{\mu _{12}^{F_{+}}}  \notag \\
&=&\frac{\dint\nolimits_{0}^{x_{1}\wedge
x_{2}}F_{1}^{-1}(u_{1})F_{2}^{-1}(u_{1})du_{1}}{\mu _{12}^{F_{+}}},
\label{22}
\end{eqnarray}

and analogously 
\begin{equation}
\frac{\dint\nolimits_{0}^{x_{2}}F_{2}^{-1}(u_{2})\left(
\dint\nolimits_{0}^{x_{1}}F_{1}^{-1}(u_{1})\delta (u_{1}-u_{2})du_{1}\right)
du_{2}}{\mu _{12}^{F_{+}}}=\frac{\dint\nolimits_{0}^{x_{1}\wedge
x_{2}}F_{1}^{-1}(u_{2})F_{2}^{-1}(u_{2})du_{2}}{\mu _{12}^{F_{+}}},
\label{23}
\end{equation}

where $H(.)$ is the Heaviside function: $H(x)=\left\{ 
\begin{array}{c}
1,x>0 \\ 
0,x\leq 0%
\end{array}%
\right. .$

For the last two terms in (\ref{22.2}), we get%
\begin{eqnarray}
&&\frac{\dint\nolimits_{0}^{x_{1}}\dint%
\nolimits_{0}^{x_{2}}F_{1}^{-1}(u_{1})F_{2}^{-1}(u_{2})u_{1}\delta
^{^{\prime }}(u_{1}-u_{2})du_{1}du_{2}}{\mu _{12}^{F_{+}}}+\frac{%
\dint\nolimits_{0}^{x_{1}}\dint%
\nolimits_{0}^{x_{2}}F_{1}^{-1}(u_{1})F_{2}^{-1}(u_{2})u_{2}\delta
^{^{\prime }}(u_{2}-u_{1})du_{1}du_{2}}{\mu _{12}^{F_{+}}}  \notag \\
&=&\frac{\dint\nolimits_{0}^{x_{1}}\dint\nolimits_{0}^{x_{2}}\left(
u_{1}-u_{2}\right) F_{1}^{-1}(u_{1})F_{2}^{-1}(u_{2})\delta ^{^{\prime
}}(u_{1}-u_{2})du_{1}du_{2}}{\mu _{12}^{F_{+}}}=-\frac{\dint%
\nolimits_{0}^{x_{1}\wedge x_{2}}F_{1}^{-1}(u_{2})F_{2}^{-1}(u_{2})du_{2}}{%
\mu _{12}^{F_{+}}},  \label{23a}
\end{eqnarray}

where $\delta ^{^{\prime }}(.)$ denotes the derivative of the \textit{Dirac
function}. We also applied the distributional identity $\delta ^{^{\prime
}}(x)x=-$ $\delta (x)$ and then completed the computation using the
derivation from (\ref{22}).

Plugging (\ref{22}), (\ref{23}), and (\ref{23a}) into (\ref{22.2}), and
making the same computation for the denominator (i.e., taking the numerator
with $x_{1}=1$ and $x_{2}=1$), we finally get%
\begin{equation}
L_{F_{+}}(x_{1},x_{2})=\frac{\dint\nolimits_{0}^{x_{1}\wedge
x_{2}}F_{1}^{-1}(u)F_{2}^{-1}(u)du}{\mu _{12}^{F_{+}}}=\frac{%
\dint\nolimits_{0}^{x_{1}\wedge x_{2}}F_{1}^{-1}(u)F_{2}^{-1}(u)du}{%
\dint\nolimits_{0}^{1}F_{1}^{-1}(u)F_{2}^{-1}(u)du}.  \label{24}
\end{equation}
\end{proof}

\begin{claim}
\label{fh-claim2} If $L_{F}(x_{1},x_{2})=L_{+}^{F}(x_{1},x_{2})$, then $%
F(x_{1},x_{2})=F_{+}(x_{1},x_{2}).$
\end{claim}

\begin{proof}
We may note that we can write the implication in terms of copulas. Namely,
we have to prove that it holds%
\begin{eqnarray}
&&\dint\nolimits_{0}^{x_{1}}\dint%
\nolimits_{0}^{x_{2}}F_{1}^{-1}(u_{1})F_{2}^{-1}(u_{2})dC(u_{1},u_{2})
\label{105} \\
&=&Min\left[ \dint\nolimits_{0}^{x_{1}}\dint%
\nolimits_{0}^{1}F_{1}^{-1}(u_{1})F_{2}^{-1}(u_{2})dC(u_{1},u_{2}),\dint%
\nolimits_{0}^{1}\dint%
\nolimits_{0}^{x_{2}}F_{1}^{-1}(u_{1})F_{2}^{-1}(u_{2})dC(u_{1},u_{2})\right]
\text{ }  \notag \\
&\Rightarrow &C(x_{1},x_{2})=Min(x_{1},x_{2}),  \label{105a}
\end{eqnarray}

where $C(x_{1},x_{2})$ is the copula of $F(x_{1},x_{2})$ with marginals $%
F_{1}(x_{1})$ and $F_{2}(x_{2})$. Additionally, let's denote 
\begin{eqnarray}
A(x_{1},x_{2})
&=&\dint\nolimits_{0}^{x_{1}}\dint%
\nolimits_{0}^{x_{2}}F_{1}^{-1}(u_{1})F_{2}^{-1}(u_{2})dC(u_{1},u_{2}) 
\notag \\
B_{1}(x_{1})
&=&\dint\nolimits_{0}^{x_{1}}\dint%
\nolimits_{0}^{1}F_{1}^{-1}(u_{1})F_{2}^{-1}(u_{2})dC(u_{1},u_{2})  \notag \\
B_{2}(x_{2})
&=&\dint\nolimits_{0}^{1}\dint%
\nolimits_{0}^{x_{2}}F_{1}^{-1}(u_{1})F_{2}^{-1}(u_{2})dC(u_{1},u_{2}).
\label{105aa}
\end{eqnarray}

Intuitively, similarly to the direct implication from the main text, we can
notice that the joint measure $dC(x_{1},x_{2})$ concentrates its mass along
a one-dimensional subset of the square. We expect that the only way for the
minimum--representation (\ref{105}) to hold for all $x_{1}$ and $x_{2}$ is
that the support of the copula measure lies along the set where the
\textquotedblleft marginal integrals\textquotedblright\ match---that is,
along the diagonal $x_{1}=x_{2}$. We will make this observation formal
resorting to the theory of distributions, but now unlike the case of \textit{%
Claim }\ref{fh-claim1} above and \textit{Claim }{\footnotesize \ref%
{fh-claim3} }below, where test functions can be avoided\footnote{%
{\footnotesize This is a popular shortcut calculus relying solely on
properties of the \textit{Dirac delta function}, as discussed in the cited
references on distributions. It avoids the complexity associated with
employing test functions.}}, here it is necessary to use their toolkit.

As no differentiability of $C$ is assumed, we will work with \textit{%
mollified versions} of $A(x_{1},x_{2}),$ $B_{1}(x_{1})$, and $B_{2}(x_{2})$.
Let $\left\{ \phi _{n}\right\} _{n\geq 1}$ be a sequence of smooth functions
such that for each $n\in N$, $\phi _{n}:R\rightarrow \lbrack 0,+\infty )$, $%
\phi _{n}(u)=0$ for $\left\vert u\right\vert >\frac{1}{n}$, $\int_{-\frac{1}{%
n}}^{\frac{1}{n}}\phi _{n}(u)du=1$, and as $n\rightarrow \infty $, $\phi
_{n} ${} converges (in the sense of distributions) to the \textit{Dirac
delta function}. For an interior point $x_{1}\in \lbrack \frac{1}{n},1-\frac{%
1}{n}] $ and any $x_{2}\in (0,1)$, define the \textit{smeared derivative}
(see \cite[Section 5.3]{[20]}, \cite[Appendix C5]{[20]}, \cite[Chapter 9]%
{[19]}, and \cite[Chapter 1]{[21]}) in the $x_{1}$-direction by%
\begin{equation}
D_{n,1}(x_{1},x_{2})=\int_{-\frac{1}{n}}^{\frac{1}{n}}\phi _{n}(u)\frac{%
A(x_{1}+u,x_{2})-A(x_{1},x_{2})}{u}du.  \label{106}
\end{equation}

This quantity is a \textit{mollified version} of the difference quotient. In
the limit as $n\rightarrow +\infty $, if a weak derivative $%
D_{1}(x_{1},x_{2})$ of $A(x_{1},x_{2})$ with respect to $x_{1}$ exists, then%
\begin{equation}
D_{1}(x_{1},x_{2})=\underset{n\rightarrow +\infty }{\lim }%
D_{n,1}(x_{1},x_{2})  \label{107}
\end{equation}

in the sense of distributions. A similar expression may be written for the
derivative in the $x_{2}$-direction.

Assume by employing contradiction argument that the copula measure $%
dC(u_{1},u_{2})$ assigns a positive mass to a set off the diagonal. In
particular, there exist numbers $0\leq a<b\leq 1$, $\delta >0$, and $%
\varepsilon >0$ (small) such that the set 
\begin{equation}
E_{\varepsilon ,\delta }\subset \left\{ (u_{1},u_{2})\in \lbrack
0,1]^{2}:a\leq u_{1}\leq a+\varepsilon ,b\leq u_{2}\leq b+\varepsilon ,\text{%
and }u_{2}\geq u_{1}+\delta \right\}  \label{107a}
\end{equation}

has a strictly positive $C-$measure, i.e.,%
\begin{equation}
C(E_{\varepsilon ,\delta })>0.  \label{108a}
\end{equation}

This set is a small patch of size $\varepsilon $ in both directions, located
in a region where $u_{2}${} exceeds $u_{1}$ by at least $\delta $. Now,
choose a point $(x_{1}^{\ast },x_{2}^{\ast })$ so that: (i) $x_{1}^{\ast }$
lies in the interval $[a,a+\varepsilon ],$ and (ii) $x_{2}^{\ast }$ is
chosen so that $x_{2}^{\ast }<b$. By the definition of $B_{1}${} and $A$ in (%
\ref{105aa}), $B_{1}(x_{1}^{\ast })$ will include the contribution from the
patch $E_{\varepsilon ,\delta }$ (since $E_{\varepsilon ,\delta }\subset
\lbrack 0,x_{1}^{\ast }]\times \lbrack 0,1]$) while the integration
rectangle of $A(x_{1}^{\ast },x_{2}^{\ast })$ ($[0,x_{1}^{\ast }]\times
\lbrack 0,x_{2}^{\ast }]$) does not capture any of the mass in $%
E_{\varepsilon ,\delta }${} (because $x_{2}^{\ast }<b$). In other words, we
have%
\begin{equation}
A(x_{1}^{\ast },x_{2}^{\ast })<B_{1}(x_{1}^{\ast }).  \label{108aa}
\end{equation}

But according to the assumed equality%
\begin{equation}
A(x_{1},x_{2})=\min \left[ B_{1}(x_{1}),B_{2}(x_{2})\right]  \label{109a}
\end{equation}

if we are in the region where $B_{1}(x_{1})\leq B_{2}(x_{2})$ (which we can
ensure by an appropriate choice of $x_{2}^{\ast }${}), then we should have%
\begin{equation}
A(x_{1}^{\ast },x_{2}^{\ast })=B_{1}(x_{1}^{\ast }).  \label{109aa}
\end{equation}

Thus, the presence of the off--diagonal patch $E_{\varepsilon ,\delta }${}
produces a discrepancy (see \textit{Figure A1} for illustration). 
\begin{figure}[tbph]
\caption{Figure A1: Integration regions}
\label{fig:figure12}\centering
\includegraphics[width=0.45\linewidth]{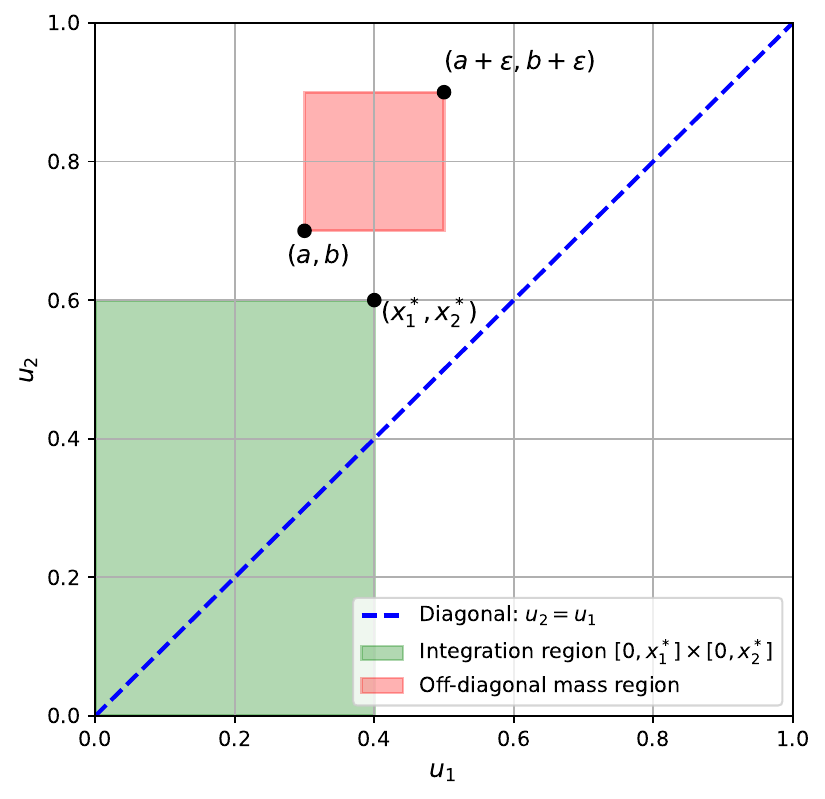} 
\end{figure}

It can be reinforced further. Consider the \textit{smeared derivative} in
the $x_{1}$--direction, defined for interior $x_{1}\in \lbrack \frac{1}{n},1-%
\frac{1}{n}]$ by (\ref{106}). Because the integration defining $%
A(x_{1},x_{2})$ stops at $x_{2}$, if we fix $x_{2}=x_{2}^{\ast }$ (with $%
x_{2}^{\ast }<b$) and take $x_{1}=x_{1}^{\ast }${} as above, then for any
small increment $s>0$ the difference%
\begin{equation}
A(x_{1}^{\ast }+s,x_{2}^{\ast })-A(x_{1}^{\ast },x_{2}^{\ast })  \label{110}
\end{equation}

captures only the increase in mass from the rectangle $[0,x_{1}^{\ast
}+s]\times \lbrack 0,x_{2}^{\ast }]$. Since the off--diagonal patch $%
E_{\varepsilon ,\delta }${} (with size $\varepsilon $) lies entirely outside 
$[0,x_{1}^{\ast }+s]\times \lbrack 0,x_{2}^{\ast }]$, the incremental mass
added here is smaller than the incremental mass in%
\begin{equation}
B_{1}(x_{1}^{\ast }+s)-B_{1}(x_{1}^{\ast }),  \label{140}
\end{equation}

which integrates over the full vertical range $[0,1]$ and thus captures the
mass from $E_{\varepsilon ,\delta }.$ In the limit as $n\rightarrow +\infty $%
, the \textit{smeared derivative} (which approximates the weak derivative of 
$A$ with respect to $x_{1}$) satisfies%
\begin{equation}
\underset{n\rightarrow +\infty }{\lim }D_{n,1}(x_{1}^{\ast },x_{2}^{\ast })<%
\frac{dB_{1}}{dx_{1}}(x_{1}^{\ast })  \label{141}
\end{equation}

since the increment in $A$ is deficient by at least the contribution of the
off--diagonal patch of size controlled by $\varepsilon $ and $\delta $. This
strict inequality contradicts the required equality of derivatives (which
would hold if $A(x_{1},x_{2})=B_{1}(x_{1})$ in the region under
consideration). We can say that effectively the \textit{smeared derivative} $%
D_{n,1}(x_{1}^{\ast },x_{2}^{\ast })$ detects the discrepancy (\ref{108aa}),
leading to the inequality (\ref{141}). This violates the assumed equality of
the integrals (and their derivatives) and forces the conclusion that no
assumed $\varepsilon $--patch (or off--diagonal mass) can exist. In turn,
the only possibility is that the measure $dC(u_{1},u_{2})$ is concentrated
on the diagonal $\{(u,u):u\in \lbrack 0,1]\}$, which implies the validity of%
\begin{equation}
C(x_{1},x_{2})=Min(x_{1},x_{2}).  \label{142}
\end{equation}
\end{proof}

\begin{claim}
\label{fh-claim3} The validity of the following equation holds 
\begin{equation}
\frac{\dint\nolimits_{0}^{x_{1}}\dint%
\nolimits_{0}^{x_{2}}F_{1}^{-1}(u_{1})F_{2}^{-1}(u_{2})dMax(u_{1}+u_{2}-1,0)%
}{\mu _{12}^{F_{-}}}=\frac{\dint%
\nolimits_{1-x_{2}}^{x_{1}}F_{1}^{-1}(u)F_{2}^{-1}(1-u)du}{%
\dint\nolimits_{0}^{1}F_{1}^{-1}(u)F_{2}^{-1}(1-u)du}.  \label{35a}
\end{equation}
\end{claim}

\begin{proof}
We have%
\begin{eqnarray}
&&\frac{\dint\nolimits_{0}^{x_{1}}\dint%
\nolimits_{0}^{x_{2}}F_{1}^{-1}(u_{1})F_{2}^{-1}(u_{2})dMax(u_{1}+u_{2}-1,0)%
}{\mu _{12}^{F_{-}}}  \notag \\
&=&\frac{\dint\nolimits_{0}^{x_{1}}\dint%
\nolimits_{0}^{x_{2}}F_{1}^{-1}(u_{1})F_{2}^{-1}(u_{2})d(\left(
u_{1}+u_{2}-1\right) 1_{\left\{ u_{1}+u_{2}\geq 1\right\} })}{\mu
_{12}^{F_{-}}}  \label{35.2} \\
&=&P_{1}+P_{2}-P_{3},  \notag
\end{eqnarray}

where by $P_{i},$ $i=1,2,3,$ we denoted%
\begin{eqnarray}
P_{1} &=&\frac{\dint\nolimits_{0}^{x_{1}}\dint%
\nolimits_{0}^{x_{2}}F_{1}^{-1}(u_{1})F_{2}^{-1}(u_{2})(\frac{\partial }{%
\partial u_{2}}1_{\left\{ u_{2}\geq 1-u_{1}\right\} }+u_{1}\frac{\partial
^{2}}{\partial u_{1}\partial u_{2}}1_{\left\{ u_{2}\geq 1-u_{1}\right\}
})du_{1}du_{2}}{\mu _{12}^{F_{-}}}  \notag \\
P_{2} &=&\frac{\dint\nolimits_{0}^{x_{1}}\dint%
\nolimits_{0}^{x_{2}}F_{1}^{-1}(u_{1})F_{2}^{-1}(u_{2})(\frac{\partial }{%
\partial u_{1}}1_{\left\{ u_{1}\geq 1-u_{2}\right\} }+u_{2}\frac{\partial
^{2}}{\partial u_{1}\partial u_{2}}1_{\left\{ u_{1}\geq 1-u_{2}\right\}
})du_{1}du_{2}}{\mu _{12}^{F_{-}}}  \notag \\
P_{3} &=&\frac{\dint\nolimits_{0}^{x_{1}}\dint%
\nolimits_{0}^{x_{2}}F_{1}^{-1}(u_{1})F_{2}^{-1}(u_{2})(\frac{\partial ^{2}}{%
\partial u_{1}\partial u_{2}}1_{\left\{ u_{2}\geq 1-u_{1}\right\}
})du_{1}du_{2}}{\mu _{12}^{F_{-}}}.  \label{31}
\end{eqnarray}

Let $P_{4}=P_{1}+P_{2}$. We get%
\begin{eqnarray}
\mu _{12}^{F_{-}}P_{4}\text{ } &=&  \notag \\
&=&\dint\nolimits_{0}^{x_{1}}\dint%
\nolimits_{0}^{x_{2}}F_{1}^{-1}(u_{1})F_{2}^{-1}(u_{2})(\frac{\partial }{%
\partial u_{1}}1_{\left\{ u_{1}\geq 1-u_{2}\right\} }+\frac{\partial }{%
\partial u_{2}}1_{\left\{ u_{2}\geq 1-u_{1}\right\} }+(u_{1}+u_{2})\frac{%
\partial ^{2}}{\partial u_{1}\partial u_{2}}1_{\left\{ u_{1}+u_{2}\geq
1\right\} })du_{1}du_{2}  \notag \\
&=&\dint\nolimits_{0}^{x_{1}}\dint%
\nolimits_{0}^{x_{2}}F_{1}^{-1}(u_{1})F_{2}^{-1}(u_{2})(2\delta
(u_{1}+u_{2}-1)+(u_{1}+u_{2})\delta ^{^{\prime
}}(u_{1}+u_{2}-1))du_{1}du_{2}.  \label{32}
\end{eqnarray}

In the last expression, $\delta ^{^{\prime }}(.)$ denotes again the
derivative of the \textit{Dirac function} and a further resort to its
special properties is needed. We begin with the substitution $u_{1}+u_{2}=u$
and proceed from there%
\begin{eqnarray}
\mu _{12}^{F_{-}}P_{4}
&=&\dint\nolimits_{0}^{x_{1}}\dint%
\nolimits_{u_{1}}^{u_{1}+x_{2}}F_{1}^{-1}(u_{1})F_{2}^{-1}(u-u_{1})(2\delta
(u-1)+u\delta ^{^{\prime }}(u-1))du_{1}du  \notag \\
&=&2\dint\nolimits_{0}^{x_{1}}\dint%
\nolimits_{u_{1}}^{u_{1}+x_{2}}F_{1}^{-1}(u_{1})F_{2}^{-1}(u-u_{1})\delta
(u-1)du_{1}du+\dint\nolimits_{0}^{x_{1}}\dint%
\nolimits_{u_{1}}^{u_{1}+x_{2}}F_{1}^{-1}(u_{1})F_{2}^{-1}(u-u_{1})u\delta
^{^{\prime }}(u-1)du_{1}du  \notag \\
&=&2\dint\nolimits_{0}^{x_{1}}F_{1}^{-1}(u_{1})\left(
\dint\nolimits_{u_{1}}^{u_{1}+x_{2}}F_{2}^{-1}(u-u_{1})\delta (u-1)du\right)
du_{1}+\dint\nolimits_{0}^{x_{1}}F_{1}^{-1}(u_{1})\left(
\dint\nolimits_{u_{1}}^{u_{1}+x_{2}}F_{2}^{-1}(u-u_{1})u\delta ^{^{\prime
}}(u-1)du\right) du_{1}  \notag \\
&=&2\dint\nolimits_{0}^{x_{1}}F_{1}^{-1}(u_{1})F_{2}^{-1}(1-u_{1})\left(
H(u_{1}+x_{2}-1)-H(u_{1}-1)\right)
du_{1}-\dint\nolimits_{1-x_{2}}^{x_{1}}F_{1}^{-1}(u_{1})\left[ \frac{%
\partial }{\partial u}[F_{2}^{-1}(u-u_{1})u]\mid _{u=1}\right] du_{1}  \notag
\\
&=&2\dint%
\nolimits_{1-x_{2}}^{x_{1}}F_{1}^{-1}(u_{1})F_{2}^{-1}(1-u_{1})du_{1}-\dint%
\nolimits_{1-x_{2}}^{x_{1}}F_{1}^{-1}(u_{1})\left[ \frac{\partial }{\partial
u}[F_{2}^{-1}(u-u_{1})u]\mid _{u=1}\right] du_{1}  \notag \\
&=&2\dint%
\nolimits_{1-x_{2}}^{x_{1}}F_{1}^{-1}(u_{1})F_{2}^{-1}(1-u_{1})du_{1}-\dint%
\nolimits_{1-x_{2}}^{x_{1}}F_{1}^{-1}(u_{1})\left[ \frac{\partial }{\partial
u}F_{2}^{-1}(u-u_{1})\mid _{u=1}\right] du_{1}-\dint%
\nolimits_{1-x_{2}}^{x_{1}}F_{1}^{-1}(u_{1})F_{2}^{-1}(1-u_{1})du_{1}  \notag
\\
&=&\dint%
\nolimits_{1-x_{2}}^{x_{1}}F_{1}^{-1}(u_{1})F_{2}^{-1}(1-u_{1})du_{1}-\dint%
\nolimits_{1-x_{2}}^{x_{1}}F_{1}^{-1}(u_{1})\left[ \frac{\partial }{\partial
u}F_{2}^{-1}(u-u_{1})\mid _{u=1}\right] du_{1}.  \label{33}
\end{eqnarray}

Here, in addition to the indefinite integral and translation properties of
the \textit{Dirac delta function} used earlier in (\ref{22.2}), we have also
applied the general property $\dint\nolimits_{a}^{b}h(x)\delta ^{^{\prime
}}(x-c)dx=-h^{^{\prime }}(c)$ for any regular function $h(x)$ and constants $%
a,b$, and $c.$ Additionally, when applying this, we account for the
necessary change in the limits of integration.

Having established (\ref{33}), it is easy to see that for $P_{3}$, we have%
\begin{eqnarray}
P_{3} &=&\frac{\dint\nolimits_{0}^{x_{1}}\dint%
\nolimits_{0}^{x_{2}}F_{1}^{-1}(u_{1})F_{2}^{-1}(u_{2})(\frac{\partial ^{2}}{%
\partial u_{1}\partial u_{2}}1_{\left\{ u_{2}\geq 1-u_{1}\right\}
})du_{1}du_{2}}{\mu _{12}^{F_{-}}}  \notag \\
&=&\frac{\dint\nolimits_{0}^{x_{1}}\dint%
\nolimits_{u_{1}}^{u_{1}+x_{2}}F_{1}^{-1}(u_{1})F_{2}^{-1}(u-u_{1})\delta
^{^{\prime }}(u-1)du_{1}du}{\mu _{12}^{F_{-}}}=-\frac{\dint%
\nolimits_{1-x_{2}}^{x_{1}}F_{1}^{-1}(u_{1})\left[ \frac{\partial }{\partial
u}F_{2}^{-1}(u-u_{1})\mid _{u=1}\right] du_{1}}{\mu _{12}^{F_{-}}}.
\label{33a}
\end{eqnarray}

Thus substituting (\ref{33}) and (\ref{33a}) into (\ref{35.2}), we get%
\begin{equation}
L_{F_{-}}(x_{1},x_{2})=\frac{\dint\nolimits_{0}^{x_{1}}\dint%
\nolimits_{0}^{x_{2}}F_{1}^{-1}(u_{1})F_{2}^{-1}(u_{2})dMax(u_{1}+u_{2}-1,0)%
}{\mu _{12}^{F_{-}}}=\frac{\dint%
\nolimits_{1-x_{2}}^{x_{1}}F_{1}^{-1}(u)F_{2}^{-1}(1-u)du}{%
\dint\nolimits_{0}^{1}F_{1}^{-1}(u)F_{2}^{-1}(1-u)du}.  \label{35.3}
\end{equation}
\end{proof}

\begin{claim}
\label{fh-claim4} If $L_{F}(x_{1},x_{2})=L_{-}^{F}(x_{1},x_{2})$, then $%
F(x_{1},x_{2})=F_{-}(x_{1},x_{2}).$
\end{claim}

\begin{proof}
We may note that we can write the implication in terms of copulas. Namely,
we have to prove that it holds%
\begin{eqnarray}
&&\dint\nolimits_{0}^{x_{1}}\dint%
\nolimits_{0}^{x_{2}}F_{1}^{-1}(u_{1})F_{2}^{-1}(u_{2})dC(u_{1},u_{2})
\label{35b} \\
&=&Max\left[ \dint\nolimits_{0}^{x_{1}}\dint%
\nolimits_{0}^{1}F_{1}^{-1}(u_{1})F_{2}^{-1}(u_{2})dC(u_{1},u_{2})+\dint%
\nolimits_{0}^{1}\dint%
\nolimits_{0}^{x_{2}}F_{1}^{-1}(u_{1})F_{2}^{-1}(u_{2})dC(u_{1},u_{2})-1,0%
\right] \text{ }  \notag \\
&\Rightarrow &C(x_{1},x_{2})=Max(x_{1}+x_{2}-1,0),  \label{35bb}
\end{eqnarray}

where $C(x_{1},x_{2})$ is the copula of $F(x_{1},x_{2})$ with marginals $%
F_{1}(x_{1})$ and $F_{2}(x_{2})$. The proof can be carried out in a similar
manner to \textit{Claim }{\footnotesize \ref{fh-claim2}}.
\end{proof}

\pagebreak

\begin{flushleft}
{\Large Appendix B}\hfill
\end{flushleft}

In this appendix, we prove several auxiliary claims and lemmas. Some of them
will be useful in \textit{Section 4}, others in \textit{Appendix C} and the
proof of the main theorem. Many have also a standalone and supplementary
character to better understand the problems we face.

We start with the proof of several inequalities between the marginals of the
distributions participating in the \textit{Fr\'{e}chet-Hoeffding bounds} of $%
L_{F}(x_{1},x_{2})$ when it is viewed as a distribution function. Although
we will make referrals to definitions and results from stochastic orders
theory based on standard references such as \cite{[27]}, \cite{[28]}, \cite%
{[25]}, \cite{[5]}, and \cite{[24]}, the appendix is largely self-contained.

Let's consider the two copulas $C_{1}$ and $C_{2},$ subject to \textit{%
concordance order} $\leq _{c}$ (or \textit{Fr\'{e}chet-Hoeffding order}, see
for details \cite[Definition 2.8.1]{[25]} or with a nuance \cite[Definition
3.8.5]{[28]}). Concretely, we will impose the relation $C_{1}\leq _{c}$ $%
C_{2}$ if and only if $W\leq $ $C_{1}\leq C_{2}\leq M$ holds, where again we
use the standard definitions, thus by $W$ and $M$, we denote the
countermonotonic and the comonotonic copulas respectively. Now consider the
distribution functions $F_{C_{1}}(x_{1},x_{2})$ and $F_{C_{2}}(x_{1},x_{2})$
induced by the copulas $C_{1}$ and $C_{2}$. From (\ref{1}), for the
integrals forming the distribution functions and their marginals, for $i=1,2$%
, we have the representations 
\begin{eqnarray}
L^{F_{C_{i}}}(F_{1}(s_{1}),F_{2}(s_{2})) &=&\frac{\dint\nolimits_{0}^{s_{1}}%
\dint\nolimits_{0}^{s_{2}}u_{1}u_{2}dF_{C_{i}}(u_{1},u_{2})}{\mu ^{F_{C_{i}}}%
}=\frac{E^{F_{C_{i}}}(X_{1}X_{2}1_{\{X_{1}\leq s_{1}\}}1_{\{X_{2}\leq
s_{2}\}})}{E^{F_{C_{i}}}(X_{1}X_{2})}  \label{220} \\
L_{1}^{F_{C_{i}}}(F_{1}(s_{1})) &=&\frac{\dint\nolimits_{0}^{s_{1}}\dint%
\nolimits_{0}^{+\infty }u_{1}u_{2}1_{\{u_{1}\leq
s_{1}\}}dF_{C_{i}}(u_{1},u_{2})}{\mu ^{F_{C_{i}}}}=\frac{%
E^{F_{C_{i}}}(X_{1}X_{2}1_{\{X_{1}\leq s_{1}\}})}{E^{F_{C_{i}}}(X_{1}X_{2})}
\label{221} \\
L_{2}^{F_{C_{i}}}(F_{2}(s_{2})) &=&\frac{\dint\nolimits_{0}^{+\infty
}\dint\nolimits_{0}^{s_{2}}u_{1}u_{2}1_{\{u_{2}\leq
s_{2}\}}dF_{C_{i}}(u_{1},u_{2})}{\mu ^{F_{C_{i}}}}=\frac{%
E^{F_{C_{i}}}(X_{1}X_{2}1_{\{X_{2}\leq s_{2}\}})}{E^{F_{C_{i}}}(X_{1}X_{2})},
\label{222}
\end{eqnarray}

where the random vector $(X_{1},X_{2})$ has a d.f. $F_{C_{i}}(x_{1},x_{2})$
for the different cases $i=1,2$. Our aim is to see how the \textit{%
concordance order} $C_{1}\leq _{c}$ $C_{2}$ affects these three functionals.
We start with a simpler set of inequalities at the level of (un-normalized)
expectations.

\begin{claim}
\label{com-claim1} Let $W\leq C_{1}\leq C_{2}\leq M$. Then for all real $%
s,s_{1},s_{2}$,%
\begin{eqnarray}
E^{F_{C_{1}}}(X_{1}X_{2}1_{\{X_{1}\geq s_{1}\}}1_{\{X_{2}\geq s_{2}\}})
&\leq &E^{F_{C_{2}}}(X_{1}X_{2}1_{\{X_{1}\geq s_{1}\}}1_{\{X_{2}\geq
s_{2}\}})  \label{223.1} \\
E^{F_{C_{1}}}(X_{1}X_{2}1_{\{X_{1}\geq s\}}) &\leq
&E^{F_{C_{2}}}(X_{1}X_{2}1_{\{X_{1}\geq s\}})  \label{223.2} \\
E^{F_{C_{1}}}(X_{1}X_{2}1_{\{X_{2}\geq s\}}) &\leq
&E^{F_{C_{2}}}(X_{1}X_{2}1_{\{X_{2}\geq s\}})  \label{224} \\
E^{F_{C_{1}}}(X_{1}X_{2}) &\leq &E^{F_{C_{2}}}(X_{1}X_{2}).  \label{225}
\end{eqnarray}
\end{claim}

\begin{proof}
The inequalities follow after a direct application of the \textit{upper
ortant order} ($uo$), or more generally the \textit{supermodular order} ($sm$%
), to appropriate functionals based on the original work of \cite{[34]}, 
\cite{[30]}, and \cite{[13]}, receiving review and extensions in \cite{[29]}%
, \cite{[26]}, and \cite{[31]}, among others as well as getting a
comprehensive modern treatment in \cite[Chapter 9.A.4]{[24]}, \cite[Chapter 6%
]{[33]}, and especially the encyclopedic \cite[Chapter 6]{[5]}. We will use
mainly the latter in exposition of the proof. Let first remind some basic
definitions and properties of the stochastic orders we will use\footnote{%
{\footnotesize We will have a slight deviation from the notation used in the
claim.}}.

Start with the general \textit{integral order} $\prec _{\mathcal{F}}$.
Following \cite[Definition 3.30]{[5]}, the order $\prec _{\mathcal{F}}$ can
be defined on the space of probability measures $\mathcal{M}^{1}(E,\mathfrak{%
U})$ by 
\begin{equation}
P\prec _{\mathcal{F}}Q\text{ if }\int fdP\leq \int fdQ  \label{225.1}
\end{equation}

for all integrable $f\in \mathcal{F}$ for which both integrals are finite,
where: (i) $\mathcal{F}$ is the subclass of $\mathfrak{U}$-measurable
real-valued functions on $\mathit{E}$, chosen according to the order being
considered, (ii) $E$ is the sample space, (iii) $\mathfrak{U}$ is the family
of measurable sets on $E$, and (iv) $\mathcal{M}^{1}$ is the set of all
probability measures on the measurable space $(\mathit{E},$ $\mathfrak{U}$$)$
which have finite first moment.

Second, two random vectors $X$ and $Y$ in $R^{n}$ we have $X\leq _{lo}Y$ if
for their distribution functions holds $F_{X}\leq F_{Y}$ and $X\leq _{uo}Y$
if for their survival functions holds $\overline{F}_{X}\leq \overline{F}_{Y}$%
. Alternatively, we can also denote the orders by $F_{X}\leq _{lo}F_{Y}$ and 
$F_{X}\leq _{uo}F_{Y}$ (see \cite[Definition 6.1]{[5]}). Then we can notice
that the \textit{concordance order} between $X\leq _{c}Y$ can be viewed also
as a situation when both $X\leq _{lo}Y$ and $X\leq _{uo}Y$ hold (see again 
\cite[Definition 6.1]{[5]}). This gives the obvious implications 
\begin{equation}
X\leq _{c}Y\Longrightarrow X\leq _{lo}Y\text{ and }X\leq
_{c}Y\Longrightarrow X\leq _{uo}Y.  \label{225.2}
\end{equation}

Intuitively, the relation between the copulas dependence has strong
influence both on the upper and the lower tails of the distributions of $X$
and $Y.$ For the bivariate case we have the equivalence 
\begin{equation}
X\leq _{c}Y\Longleftrightarrow X\leq _{lo}Y\Longleftrightarrow X\leq _{uo}Y.
\label{225.3}
\end{equation}

Third, based on \cite[Definition 6.6]{[5]}, define for the functions $%
f:R^{n}\longrightarrow R$ the difference operator $\Delta _{i}^{\varepsilon
} $, $\varepsilon >0,1\leq i\leq n$ by $\Delta _{i}^{\varepsilon
}f(x)=f(x+\varepsilon e_{i})-f(x)$, where $e_{i}$ is the $i$-th unit vector.
Then we can consider the class $\mathcal{F}_{\Delta }$ of \textquotedblleft $%
\Delta $-monotone" functions on $R^{n}$. They are defined as the functions $%
f $ for which for any subset $J=\{i_{1},...,i_{k}\}\subset \{1,...,n\}$ and
any $\varepsilon _{1},...,\varepsilon _{k}>0$ holds 
\begin{equation}
\Delta _{i_{1}}^{\varepsilon _{1}}...\Delta _{i_{k}}^{\varepsilon _{k}}\geq
0,  \label{225.4}
\end{equation}

or (by \cite[Remark 6.7]{[5]}) if $f$ $\ $is differentiable, holds the
easier to check condition 
\begin{equation}
\frac{\partial ^{k}f}{\partial x_{i_{1}}...\partial x_{i_{k}}}\geq 0\text{
for }\forall k\leq n\text{ and }i_{1}<...<i_{k}.  \label{225.6}
\end{equation}

This prompts to define the \textit{integral order} $\leq _{\Delta }$
generated by $\mathcal{F}_{\Delta }$ (i.e. comparing integral transforms of $%
X$ and $Y$ based on $\Delta $-monotone functions) by posing 
\begin{equation}
\leq _{\Delta }\text{=}\leq _{\mathcal{F}_{\Delta }}.  \label{225.7}
\end{equation}

An important result is that we have by \cite{[13]} and \cite[Theorem 6.8]%
{[5]} that

\begin{equation}
X\leq _{uo}Y\Longleftrightarrow X\leq _{\mathcal{F}_{\Delta }}Y.
\label{225.8}
\end{equation}

Fourth, the class $\mathcal{F}_{\Delta }$ can be strengthened based on \cite[%
Definition 6.12]{[5]}. This is done by considering the class $\mathcal{F}%
_{sm}$ of \textquotedblleft supermodular" functions $f$ which for all $1\leq
i\leq j\leq n$ and $\varepsilon $, $\delta >0$ obey 
\begin{equation}
\Delta _{i}^{\varepsilon }\Delta _{j}^{\varepsilon }f(x)\geq 0\text{ }%
\forall x\in R^{n},  \label{225.9}
\end{equation}

or (by \cite[Remark 6.13]{[5]}) if $f$ $\ $is differentiable, holds the
easier to check condition 
\begin{equation}
\frac{\partial ^{2}f}{\partial x_{i}\partial x_{j}}\geq 0\text{ for }\forall
i<j\text{ and }\forall x\in R^{n}.  \label{225.9a}
\end{equation}

Analogously to the previous case, we can define the integral order $\leq
_{sm}$generated by $\mathcal{F}_{sm}$ (i.e. the \textit{\textquotedblleft
supermodular\textquotedblright\ order}) by posing 
\begin{equation}
\leq _{sm}\text{=}\leq _{\mathcal{F}_{sm}}.  \label{225.10}
\end{equation}

By definition $\mathcal{F}_{\Delta }\subset \mathcal{F}_{sm}$ leading to the
comparison with respect to the $\leq _{sm}$ order being stronger than the
comparison with respect to the \textit{ortant orders}. An important result
holds for the case of $n=2$ when restricted\footnote{{\footnotesize For }$%
n\geq 3${\footnotesize \ and not restricting to the \textit{Fr\'{e}chet class%
}, by \cite{[34]}, \cite{[30]}, and \cite[Theorem 6.14]{[5]}, we have only }$%
X\leq _{sm}X^{c}.$} to the \textit{Fr\'{e}chet class} $\mathcal{F}%
(F_{1},F_{2})$ (i.e. $X=(X_{1},X_{2})$, $Y=(Y_{1},Y_{2})$ have identical
marginals $F_{1}$ and $F_{2}$). There we have by \cite{[35]} and \cite[%
Theorem 6.15]{[5]} that%
\begin{equation}
X\leq _{sm}Y\text{ }\Longleftrightarrow \text{ }X\leq _{lo}Y\text{ }%
\Longleftrightarrow \text{ }X\leq _{uo}Y  \label{225.11a}
\end{equation}%
\begin{equation}
X_{c}\text{ }\leq _{sm}\text{ }Y\text{ }\leq _{sm}\text{ }X^{c},
\label{225.12}
\end{equation}

where $X^{c}=(F^{-1}(U),F_{2}^{-1}(U))$ and $%
X_{c}=(F^{-1}(U),F_{2}^{-1}(1-U))$ are the comonotonic and the
countermonotonic vectors respectively.

The exposition above helps to position the problem we need to prove better
both within the literature and the theory of stochastic orders which is
important for its better understanding. Returning to our direct notation,
providing a solution based on the stated theorems is not difficult. Since we
posed \textit{concordant order} between the copulas $C_{1}$ and $C_{2}$, due
to (\ref{225.2}), this also implies $uo$ order between them. So, from $%
C_{1}\leq _{uo}C_{2}$ and under equal marginals also $F_{C_{1}}\leq
_{uo}F_{C_{2}}$, follows $F_{C_{1}}\leq _{\mathcal{F}_{\Delta }}F_{C_{2}}$.
The latter means that we can apply the two probability distributions $%
F_{C_{1}}$ and $F_{C_{2}}$ on any integrands which are $\Delta $-monotone
and preserve the order. Using (\ref{225.6}) we can directly check that the
functions $x_{1}x_{2}1_{\{x_{1}\geq s_{1}\}}1_{\{x_{2}\geq
s_{2}\}},x_{1}x_{2}1_{\{x_{1}\geq s\}},x_{1}x_{2}1_{\{x_{2}\geq s\}},$ and $%
x_{1}x_{2}$ generating the expectations in (\ref{223.1}), (\ref{223.2}), (%
\ref{224}), and (\ref{225}) respectively are $\Delta $-monotone. So the
validity of the claim follows. We may note that we do not need to restrict
ourselves to (i) $n=2$ or (ii) \textit{supermodular order} and use the
alternative theorems stated. Our proof works for the case $n\geq 3.$ Also,
the functions under scope although being also supermodular is not of
relevance since it is enough that they are $\Delta $-monotone. Finally,
setting the claim for copulas is more convenient since this allows to focus
on the dependence. The equal marginals are implied.
\end{proof}

Although \textit{Claim} \ref{com-claim1} is already useful, it is
insufficient for directly comparing the normalized functionals (\ref{220})-(%
\ref{222}), because it yields inequalities for both numerator and
denominator. We therefore seek a stronger comparison, at the level of
ratios. To this end we introduce a tilting (change-of-measure) argument and
first work under the stronger \textit{likelihood ratio order}. We then
discuss how much can be recovered under the initial concordance assumption.

We recall the following standard stochastic orders; see \cite{[24]} for a
standard reference as well as discussion on the particular usage below.

\begin{definition}
\label{lr} Let $X,Y$ be $R^{2}$-valued random vectors with densities $%
f_{X},f_{Y}$ with respect to a common dominating measure. We say that $X$ is
smaller than $Y$ in \textit{likelihood ratio order}, and write $X\leq _{%
\mathrm{lr}}Y$, if the density ratio $\frac{f_{Y}}{f_{X}}$ is coordinatewise
non-decreasing; that is, 
\begin{equation}
\frac{f_{Y}(x_{1},x_{2})}{f_{X}(x_{1},x_{2})}\quad \text{is non-decreasing
in each coordinate $x_{i}$, $i=1,2.$}  \label{800}
\end{equation}
\end{definition}

\begin{remark}
We use a coordinatewise monotonicity condition on the density ratio $%
f_{Y}/f_{X}$. This is closer to the weak multivariate monotone likelihood
ratio condition in the sense of \cite{[71]} than to the standard
multivariate \textit{likelihood ratio order} of \cite[Definition 6.E.1]{[24]}%
, which is based on the lattice inequality 
\begin{equation*}
f_{X}(x)f_{Y}(y)\leq f_{X}(x\wedge y)\,f_{Y}(x\vee y),\qquad x,y\in R^{2},
\end{equation*}%
where $x\wedge y$ and $x\vee y$ denote the coordinatewise minimum and
maximum, respectively. That is done to maintain our targeted logical
sequence in the sequel.
\end{remark}

\begin{definition}
\noindent \label{st} We say that $X$ is smaller than $Y$ in the usual 
\textit{stochastic order}, and write $X\leq _{\mathrm{st}}Y$, if 
\begin{equation}
E[\varphi (X)]\;\leq \;E[\varphi (Y)]  \label{801}
\end{equation}%
for all bounded, Borel-measurable functions $\varphi :R^{2}\rightarrow R$
that are coordinatewise non-decreasing.
\end{definition}

The fundamental relationship between these orders is:

\begin{theorem}
\noindent \label{lrst} Let $X,Y$ be $R^{2}$-valued random vectors with
densities $f_{X},f_{Y}$. If $X\leq _{\mathrm{lr}}Y$, then $X\leq _{\mathrm{st%
}}Y$.
\end{theorem}

\begin{remark}
\label{assum unness} It should be noted that if we used \cite[Definition
6.E.1]{[24]} as a definition for the \textit{LR-order}, the Theorem \ref%
{lrst} would have required the assumption that $X$ is associated (see \cite[%
Remark 6.E.10]{[24]})$.$
\end{remark}

\begin{remark}
The \textit{likelihood ratio order} is among the strongest stochastic
orders; see \cite{[24]} for a comprehensive treatment and additional
equivalent characterizations.
\end{remark}

We will use this implication after an appropriate change of measure
(tilting) by the weight $w(x_{1},x_{2})=x_{1}x_{2}$. Let $P_{i}$ denote the
joint distribution $F_{C_{i}}$ of $(X_{1},X_{2})$ associated to copula $%
C_{i} $, and assume $P_{i}$ has density $f_{i}$ with respect to \textit{%
Lebesgue measure} on $R^{2}$. We also assume that 
\begin{equation}
0<E^{P_{i}}[X_{1}X_{2}]<\infty ,\quad i=1,2.  \label{802}
\end{equation}

\begin{definition}
\noindent \label{tilt} Let $w(x_{1},x_{2})=x_{1}x_{2}$ for $(x_{1},x_{2})\in
R^{2}$. We define the \textit{tilted measures} $\tilde{P}_{i}$ by 
\begin{equation}
d\tilde{P}_{i}(x_{1},x_{2})=\frac{w(x_{1},x_{2})}{E^{P_{i}}[w(X_{1},X_{2})]}%
\,dP_{i}(x_{1},x_{2})=\frac{x_{1}x_{2}}{E^{P_{i}}[X_{1}X_{2}]}%
\,f_{i}(x_{1},x_{2})\,dx_{1}dx_{2}.  \label{803}
\end{equation}%
We denote expectation with respect to $\tilde{P}_{i}$ by $E^{\tilde{P}%
_{i}}[\cdot ]$.
\end{definition}

A simple calculation shows that the ratios appearing in (\ref{220})-(\ref%
{222}) are exactly expectations under the \textit{tilted measures}.

\begin{lemma}
\label{ratio-tilt} Let $A$ be any Borel subset of $R^{2}$. Then 
\begin{equation}
\frac{E^{P_{i}}[X_{1}X_{2}\,1_{\{(X_{1},X_{2})\in A\}}]}{%
E^{P_{i}}[X_{1}X_{2}]}=E^{\tilde{P}_{i}}[1_{\{(X_{1},X_{2})\in A\}}],\qquad
i=1,2.  \label{804}
\end{equation}%
In particular, for the upper sets 
\begin{equation}
A_{s_{1},s_{2}}=\{(x_{1},x_{2})\colon x_{1}\geq s_{1},\ x_{2}\geq s_{2}\}
\label{805}
\end{equation}%
we have 
\begin{equation}
\frac{E^{F_{C_{i}}}\![X_{1}X_{2}\,1_{\{X_{1}\geq s_{1},X_{2}\geq s_{2}\}}]}{%
E^{F_{C_{i}}}[X_{1}X_{2}]}=E^{\tilde{P}%
_{i}}[1_{A_{s_{1},s_{2}}}(X_{1},X_{2})],\qquad i=1,2.  \label{806}
\end{equation}
\end{lemma}

\begin{proof}
By the definition of $\tilde{P}_{i}$, for any bounded Borel function $g$, 
\begin{equation}
E^{\tilde{P}_{i}}[g(X_{1},X_{2})]=\int_{R^{2}}g(x_{1},x_{2})\,\frac{%
x_{1}x_{2}}{E^{P_{i}}[X_{1}X_{2}]}f_{i}(x_{1},x_{2})\,dx_{1}dx_{2}=\frac{%
E^{P_{i}}[g(X_{1},X_{2})X_{1}X_{2}]}{E^{P_{i}}[X_{1}X_{2}]}.  \label{807}
\end{equation}%
Taking $g=1_{A}$ gives (\ref{804}). The specialization (\ref{806}) is
immediate.
\end{proof}

We now state and prove the central auxiliary result: that tilting by $w$
preserves \textit{likelihood ratio order}, and thus, by \textit{Theorem}~\ref%
{lrst}, yields stochastic ordering of the \textit{tilted measures}.

\begin{lemma}
\label{tilt-lr} Let $P_{1},P_{2}$ have densities $f_{1},f_{2}$ on $R^{2}$,
and suppose 
\begin{equation}
P_{1}\;\leq _{\mathrm{lr}}\;P_{2},  \label{808}
\end{equation}%
i.e.\ $\frac{f_{2}}{f_{1}}$ is coordinatewise non-decreasing. Define $\tilde{%
P}_{i}$ as in Definition~\ref{tilt}. Then 
\begin{equation}
\tilde{P}_{1}\;\leq _{\mathrm{lr}}\;\tilde{P}_{2}.  \label{809}
\end{equation}
\end{lemma}

\begin{proof}
\noindent The tilted densities are 
\begin{equation}
\tilde{f}_{i}(x_{1},x_{2})=\frac{x_{1}x_{2}}{E^{P_{i}}[X_{1}X_{2}]}%
f_{i}(x_{1},x_{2}),\qquad i=1,2.  \label{810}
\end{equation}%
Hence their density ratio is 
\begin{equation}
\frac{\tilde{f}_{2}(x_{1},x_{2})}{\tilde{f}_{1}(x_{1},x_{2})}=\frac{\frac{%
x_{1}x_{2}}{E^{P_{2}}[X_{1}X_{2}]}f_{2}(x_{1},x_{2})}{\frac{x_{1}x_{2}}{%
E^{P_{1}}[X_{1}X_{2}]}f_{1}(x_{1},x_{2})}=\frac{E^{P_{1}}[X_{1}X_{2}]}{%
E^{P_{2}}[X_{1}X_{2}]}\,\frac{f_{2}(x_{1},x_{2})}{f_{1}(x_{1},x_{2})}.
\label{811}
\end{equation}%
The prefactor $\frac{E^{P_{1}}[X_{1}X_{2}]}{E^{P_{2}}[X_{1}X_{2}]}$ is a
positive constant, independent of $(x_{1},x_{2})$. Thus the coordinatewise
monotonicity of $\frac{f_{2}}{f_{1}}$ is preserved: if $\frac{f_{2}}{f_{1}}$
is non-decreasing in each coordinate, then so is $\frac{\tilde{f}_{2}}{%
\tilde{f}_{1}}$. Hence $\tilde{P}_{1}\leq _{\mathrm{lr}}\tilde{P}_{2}$.
\hfill
\end{proof}

Combining \textit{Lemma}~\ref{tilt-lr} with \textit{Theorem}~\ref{lrst}
gives the desired stochastic ordering of the \textit{tilted measures}.

\begin{claim}
\noindent \label{tilt-st} Assume $P_{1}\leq _{\mathrm{lr}}P_{2}$ and define $%
\tilde{P}_{i}$ as in Definition~\ref{tilt}. Then 
\begin{equation}
\tilde{P}_{1}\;\leq _{\mathrm{st}}\;\tilde{P}_{2}.  \label{812}
\end{equation}%
Equivalently, for every bounded coordinatewise non-decreasing $\varphi
:R^{2}\rightarrow R$, 
\begin{equation}
E^{\tilde{P}_{1}}[\varphi (X_{1},X_{2})]\;\leq \;E^{\tilde{P}_{2}}[\varphi
(X_{1},X_{2})].  \label{813}
\end{equation}
\end{claim}

\begin{proof}
\noindent By \textit{Lemma}~\ref{tilt-lr} we have $\tilde{P}_{1}\leq _{%
\mathrm{lr}}\tilde{P}_{2}$. By \textit{Theorem}~\ref{lrst}, this implies $%
\tilde{P}_{1}\leq _{\mathrm{st}}\tilde{P}_{2}$, which is exactly the stated
inequality for all coordinatewise non-decreasing $\varphi $.
\end{proof}

We now apply the tilting argument to the specific test functions. The key
observation is that these test functions are indicator functions of upper
sets, which are coordinatewise non-decreasing.

\begin{theorem}
\noindent \label{LR} Let $C_{1},C_{2}$ be copulas on $[0,1]^{2}$ with
associated joint distributions $P_{i}=F_{C_{i}}$ on $R^{2}$, having
densities $f_{i}$. Assume:

\begin{enumerate}
\item $0<E^{P_{i}}[X_{1}X_{2}]<\infty $ for $i=1,2$;

\item $P_{1}\leq _{\mathrm{lr}}P_{2}$, i.e.\ $\frac{f_{2}}{f_{1}}$ is
coordinatewise non-decreasing.
\end{enumerate}

Then for all $s_{1},s_{2}\in R$, 
\begin{equation}
\frac{E^{F_{C_{1}}}(X_{1}X_{2}1_{\{X_{1}\geq s_{1}\}}1_{\{X_{2}\geq s_{2}\}})%
}{E^{F_{C_{1}}}(X_{1}X_{2})}\leq \frac{E^{F_{C_{2}}}(X_{1}X_{2}1_{\{X_{1}%
\geq s_{1}\}}1_{\{X_{2}\geq s_{2}\}})}{E^{F_{C_{2}}}(X_{1}X_{2})}.
\label{814}
\end{equation}%
Moreover, setting $s_{1}=0$ or $s_{2}=0$ yields the marginal forms%
\begin{eqnarray}
\frac{E^{F_{C_{1}}}(X_{1}X_{2}1_{\{X_{1}\geq s\}})}{E^{F_{C_{1}}}(X_{1}X_{2})%
} &\leq &\frac{E^{F_{C_{2}}}(X_{1}X_{2}1_{\{X_{1}\geq s\}})}{%
E^{F_{C_{2}}}(X_{1}X_{2})}  \label{815} \\
\frac{E^{F_{C_{1}}}(X_{1}X_{2}1_{\{X_{2}\geq s\}})}{E^{F_{C_{1}}}(X_{1}X_{2})%
} &\leq &\frac{E^{F_{C_{2}}}(X_{1}X_{2}1_{\{X_{2}\geq s\}})}{%
E^{F_{C_{2}}}(X_{1}X_{2})}.  \label{816}
\end{eqnarray}
\end{theorem}

\begin{proof}
Fix $(s_{1},s_{2})\in R^{2}$ and consider the upper set 
\begin{equation}
A_{s_{1},s_{2}}=\{(x_{1},x_{2})\colon x_{1}\geq s_{1},\ x_{2}\geq s_{2}\}.
\label{817}
\end{equation}%
The indicator 
\begin{equation}
\varphi
_{s_{1},s_{2}}(x_{1},x_{2})=1_{A_{s_{1},s_{2}}}(x_{1},x_{2})=1_{\{x_{1}\geq
s_{1}\}}1_{\{x_{2}\geq s_{2}\}}  \label{818}
\end{equation}%
is clearly coordinatewise non-decreasing in $(x_{1},x_{2})$.

Let $\tilde{P}_{i}$ be the \textit{tilted measures} of \textit{Definition~%
\ref{tilt}}. By \textit{Lemma}~\ref{ratio-tilt}, 
\begin{equation}
E^{\tilde{P}_{i}}[\varphi _{s_{1},s_{2}}(X_{1},X_{2})]=\frac{%
E^{P_{i}}[X_{1}X_{2}\,1_{\{X_{1}\geq s_{1},X_{2}\geq s_{2}\}}]}{%
E^{P_{i}}[X_{1}X_{2}]}.  \label{819}
\end{equation}%
By \textit{Claim}~\ref{tilt-st}, applied with $\varphi =\varphi
_{s_{1},s_{2}}$, we have 
\begin{equation}
E^{\tilde{P}_{1}}[\varphi _{s_{1},s_{2}}(X_{1},X_{2})]\;\leq \;E^{\tilde{P}%
_{2}}[\varphi _{s_{1},s_{2}}(X_{1},X_{2})].  \label{820}
\end{equation}%
Substituting back the ratio representation yields exactly (\ref{814}).

To obtain (\ref{815}), we take $s_{2}=0$ and observe that the same argument
applies with test function $\psi _{s}(x_{1},x_{2})=1_{\{x_{1}\geq s\}}$,
which is again coordinatewise non-decreasing. The case (\ref{816}) follows
analogously with $\psi _{s}(x_{1},x_{2})=1_{\{x_{2}\geq s\}}$.
\end{proof}

\textit{Theorem}~\ref{LR} is a clean and robust result, but it requires the
strong assumption $P_{1}\leq _{lr}P_{2}$. In many classical parametric
families with $TP_{2}$ densities (e.g.,\ certain \textit{Gaussian}, \textit{%
Frank}, or \textit{Clayton} copulas), the dependence parameter orders the
family both in concordance and in \textit{LR-order}; see \cite{[24]} and 
\cite{[32]}. In such cases, \textit{Theorem}~\ref{LR} applies directly.
However, \textit{concordance order} $P_{1}\leq _{c}P_{2}$ does not in
general imply $P_{1}\leq _{lr}P_{2}$, nor is \textit{LR-order} preserved
automatically by smoothing.

The likelihood ratio argument above gives the desired ratio inequality under 
$P_{1}\leq _{lr}P_{2}$. Our original goal, however, was to work under the
weaker assumption $W\leq C_{1}\leq C_{2}\leq M$, i.e.\ \textit{concordance
order}. It is natural to ask whether smoothing (convolution with a TP$_{2}$
kernel) can bridge this gap. We now clarify what is and is not available.

\begin{definition}
\label{smoothing-kernel} For $\varepsilon >0$, let $k_{\varepsilon
}:[0,1]^{2}\rightarrow \lbrack 0,\infty )$ be a continuous kernel such that:

\begin{enumerate}
\item[(i)] \emph{(doubly stochastic)} for every $s,u\in \lbrack 0,1]$, 
\begin{equation}
\int_{0}^{1}k_{\varepsilon }(u,s)\,du=1,\qquad \int_{0}^{1}k_{\varepsilon
}(u,s)\,ds=1;  \label{821}
\end{equation}

\item[(ii)] \emph{(approximate identity)} for every continuous $%
g:[0,1]\rightarrow R$, 
\begin{equation}
\int_{0}^{1}g(s)\,k_{\varepsilon }(u,s)\,ds\longrightarrow g(u)\qquad \text{%
uniformly in }u\in \lbrack 0,1]\ \text{as }\varepsilon \downarrow 0;
\label{822}
\end{equation}

\item[(iii)] \emph{($\mathrm{TP}_{2}$)} the kernel $k_{\varepsilon }$ is 
\textit{totally positive of order two}: for all $u_{1}\leq u_{2}$ and $%
s_{1}\leq s_{2}$, 
\begin{equation}
k_{\varepsilon }(u_{1},s_{1})\,k_{\varepsilon }(u_{2},s_{2})\geq
k_{\varepsilon }(u_{1},s_{2})\,k_{\varepsilon }(u_{2},s_{1}).  \label{822.1}
\end{equation}
\end{enumerate}
\end{definition}

\begin{definition}
\label{smoothed-copula-clean} Let $C$ be an \textit{absolutely continuous
copula} on $[0,1]^{2}$ with density $c$. For $\varepsilon >0$, define 
\begin{equation}
c_{\varepsilon }(u,v)=\int_{0}^{1}\int_{0}^{1}c(s,t)\,k_{\varepsilon
}(u,s)\,k_{\varepsilon }(v,t)\,ds\,dt,\qquad (u,v)\in \lbrack 0,1]^{2},
\label{822.2}
\end{equation}%
and then define the smoothed copula 
\begin{equation}
C_{\varepsilon }(u,v)=\int_{0}^{u}\int_{0}^{v}c_{\varepsilon }(x,y)\,dx\,dy.
\label{822.3}
\end{equation}
\end{definition}

\begin{lemma}
\label{smoothing-properties-clean} Let $C$ be an absolutely continuous
copula with density $c$, and let $C_{\varepsilon }$ be defined by (\ref%
{822.2})--(\ref{822.3}). Then, for each $\varepsilon >0$:

\begin{enumerate}
\item[(1)] $C_\varepsilon$ is a copula with continuous density $%
c_\varepsilon $;

\item[(2)] if $C_{1}\leq _{c}C_{2}$ in concordance order and the smoothing
kernels are $TP_{2}$, then 
\begin{equation}
C_{1,\varepsilon }\leq _{c}C_{2,\varepsilon };  \label{822.4}
\end{equation}

\item[(3)] if $c$ is continuous, then $c_{\varepsilon }\rightarrow c$
uniformly on $[0,1]^{2}$ as $\varepsilon \downarrow 0$, and consequently 
\begin{equation}
C_{\varepsilon }(u,v)\rightarrow C(u,v)\qquad \text{uniformly on }[0,1]^{2}.
\label{822.5}
\end{equation}%
More generally, if $c\in L^{1}([0,1]^{2})$, then $c_{\varepsilon
}\rightarrow c$ in $L^{1}$, and hence $C_{\varepsilon }(u,v)\rightarrow
C(u,v)$ at every continuity point of $C$.
\end{enumerate}
\end{lemma}

\begin{proof}
\noindent \emph{Proof of (1).} Since $c\geq 0$ and $k_{\varepsilon }\geq 0$,
we have $c_{\varepsilon }\geq 0$. Because $k_{\varepsilon }$ is continuous
and $c\in L^{1}([0,1]^{2})$, the map $(u,v)\mapsto c_{\varepsilon }(u,v)$ is
continuous by dominated convergence.

We next verify that $c_{\varepsilon }$ has uniform marginals. Using \textit{%
Fubini's theorem}, the first doubly stochastic identity in \textit{%
Definition~\ref{smoothing-kernel}}, and the fact that $c$ is a copula
density, we obtain 
\begin{equation}
\int_{0}^{1}c_{\varepsilon
}(u,v)\,dv=\int_{0}^{1}\int_{0}^{1}c(s,t)\,k_{\varepsilon }(u,s)\left(
\int_{0}^{1}k_{\varepsilon }(v,t)\,dv\right)
ds\,dt=\int_{0}^{1}\int_{0}^{1}c(s,t)\,k_{\varepsilon }(u,s)\,ds\,dt.
\label{822.6}
\end{equation}%
Since $\int_{0}^{1}c(s,t)\,dt=1$ for a.e.\ $s$, this becomes 
\begin{equation}
\int_{0}^{1}c_{\varepsilon }(u,v)\,dv=\int_{0}^{1}k_{\varepsilon
}(u,s)\,ds=1.  \label{822.7}
\end{equation}%
Similarly, 
\begin{equation}
\int_{0}^{1}c_{\varepsilon }(u,v)\,du=1.  \label{822.8}
\end{equation}%
Hence (\ref{822.7}) defines a grounded $2$-increasing function with uniform
marginals, i.e.\ a copula, and its density is exactly $c_{\varepsilon }$.

\medskip \noindent \emph{Proof of (2).} For bivariate distributions, \textit{%
concordance order} coincides with \textit{supermodular order} on copulas.
Let $T_{\varepsilon }$ denote the linear operator 
\begin{equation}
(T_{\varepsilon }\phi )(s,t)=\int_{0}^{1}\int_{0}^{1}\phi
(u,v)\,k_{\varepsilon }(u,s)\,k_{\varepsilon }(v,t)\,du\,dv.  \label{822.9}
\end{equation}%
Then 
\begin{equation}
\int_{\lbrack 0,1]^{2}}\phi (u,v)\,c_{i,\varepsilon
}(u,v)\,du\,dv=\int_{[0,1]^{2}}(T_{\varepsilon }\phi
)(s,t)\,c_{i}(s,t)\,ds\,dt.  \label{822.10}
\end{equation}%
When $k_{\varepsilon }$ is $\mathrm{TP}_{2}$, the mapping method for \textit{%
supermodular order} implies that $T_{\varepsilon }$ preserves
supermodularity; see \cite[Chapter~3.9 and Section~4.3.2]{[28]}. Therefore,
if $C_{1}\leq _{c}C_{2}$, then for every bounded supermodular test function $%
\phi $, 
\begin{equation}
\int \phi \,dC_{1,\varepsilon }=\int T_{\varepsilon }\phi \,dC_{1}\leq \int
T_{\varepsilon }\phi \,dC_{2}=\int \phi \,dC_{2,\varepsilon }.
\label{822.11}
\end{equation}%
Hence $C_{1,\varepsilon }\leq _{c}C_{2,\varepsilon }$.

\medskip \noindent \emph{Proof of (3).} If $c$ is continuous on the compact
square $[0,1]^{2}$, then it is uniformly continuous. Applying the
one-dimensional approximate-identity property in each coordinate yields 
\begin{equation}
c_{\varepsilon }(u,v)\rightarrow c(u,v)\qquad \text{uniformly on }[0,1]^{2}.
\label{822.12}
\end{equation}%
Integrating twice, we obtain 
\begin{equation}
\sup_{(u,v)\in \lbrack 0,1]^{2}}|C_{\varepsilon }(u,v)-C(u,v)|\leq \Vert
c_{\varepsilon }-c\Vert _{L^{1}([0,1]^{2})}\longrightarrow 0.  \label{822.13}
\end{equation}

If only $c\in L^{1}([0,1]^{2})$, then standard approximate-identity theory
gives $c_{\varepsilon }\rightarrow c$ in $L^{1}$; for standard background on
approximate identities and convolution smoothing, see \cite[Chapter 2]{[73]}
and \cite[Chapter 2]{[74]};see also \cite[Section~1.5]{[72]}. Consequently,
for every $(u,v)\in \lbrack 0,1]^{2}$, 
\begin{equation}
|C_{\varepsilon }(u,v)-C(u,v)|\leq \int_{0}^{1}\int_{0}^{1}|c_{\varepsilon
}(s,t)-c(s,t)|\,ds\,dt=\Vert c_{\varepsilon }-c\Vert
_{L^{1}([0,1]^{2})}\rightarrow 0.  \label{822.14}
\end{equation}%
In particular, the convergence holds at every continuity point of $C$.
\end{proof}

\begin{remark}
\textit{Lemma}~\ref{smoothing-properties-clean} does not assert that $%
C_{1,\varepsilon }\leq _{lr}C_{2,\varepsilon }$. In general, \textit{%
concordance} or \textit{supermodular order} does not upgrade to \textit{%
LR-order} under smoothing. A simple counterexample is the pair $(\Pi ,M)$ of
independence and comonotonic copulas: $\Pi \leq _{c}M$, but their smoothed
densities typically have nonmonotone ratio, so $\Pi \not\leq _{lr}M$ (nor
their smoothed versions). Thus any use of \textit{LR-order} in the smoothed
setting must be explicitly assumed or verified within a specific parametric
family.
\end{remark}

Given \textit{Lemma}~\ref{smoothing-properties-clean}, we might hope to
combine it with \textit{Theorem}~\ref{LR} to deduce the ratio inequality
under mere concordance. This is not possible in full generality, but the
following conditional result is correct and useful.

Let for an upper set $A\subset \lbrack 0,1]^{2}$ (e.g.\ $A_{s_{1},s_{2}}$), 
\begin{equation}
\phi (x)=x_{1}x_{2}1_{A}(x),\qquad \psi (x)=x_{1}x_{2}.  \label{823}
\end{equation}

\begin{claim}
\label{smoothed-ratio} Suppose $C_{1}\leq _{c}C_{2}$, and that for some $%
\varepsilon >0$ the smoothed copulas $C_{1,\varepsilon }$ and $%
C_{2,\varepsilon }$ (with common marginals) satisfy 
\begin{equation}
P_{1,\varepsilon }\leq _{lr}P_{2,\varepsilon },  \label{823.0}
\end{equation}%
where $P_{i,\varepsilon }$ is the joint distribution associated with $%
C_{i,\varepsilon }$. Then 
\begin{equation}
\frac{E^{C_{1,\varepsilon }}[\phi ]}{E^{C_{1,\varepsilon }}[\psi ]}\;\leq \;%
\frac{E^{C_{2,\varepsilon }}[\phi ]}{E^{C_{2,\varepsilon }}[\psi ]}.
\label{823.1}
\end{equation}
\end{claim}

\begin{proof}
\noindent For fixed $\varepsilon $, the copulas $C_{i,\varepsilon }$ admit
continuous densities $f_{i,\varepsilon }$. Since they are copulas, they have
uniform marginals. By assumption $P_{1,\varepsilon }\leq
_{lr}P_{2,\varepsilon }$. Applying \textit{Theorem}~\ref{LR} to $%
C_{1,\varepsilon },C_{2,\varepsilon }$ with the test set $A$ gives exactly (%
\ref{823.1}).
\end{proof}

\begin{remark}
\textit{Claim}~\ref{smoothed-ratio} requires an extra LR assumption on the
smoothed pair $(C_{1,\varepsilon },C_{2,\varepsilon })$, which is not
implied by concordance alone. However, it is satisfied in many structured
families where a single dependence parameter orders the copulas both in
concordance and \textit{LR-order} (e.g.\ certain \textit{Gaussian} or 
\textit{Archimedean copula} families with $TP_{2}$ densities; see \cite{[32]}
and \cite{[28]}).
\end{remark}

We now sketch how the ratio inequality can be extended to singular copulas
such as the \textit{Fr\'{e}chet bounds} $W$ and $M$ in settings where an 
\textit{LR-ordered} approximating family exists.

\begin{lemma}
\label{ratio-limit} Let $P_{n}\Rightarrow P$ be a sequence of probability
measures on $[0,1]^{2}$ with common marginals, and suppose $%
E^{P_{n}}[X_{1}X_{2}]\rightarrow E^{P}[X_{1}X_{2}]>0$. Let 
\begin{equation*}
\phi (x_{1},x_{2})=x_{1}x_{2}1_{A}(x_{1},x_{2}),\qquad \psi
(x_{1},x_{2})=x_{1}x_{2},
\end{equation*}%
for an upper set $A\subset \lbrack 0,1]^{2}$. Assume that the boundary of $A$
has $P$-measure zero. Then 
\begin{equation}
\frac{E^{P_{n}}[\phi ]}{E^{P_{n}}[\psi ]}\longrightarrow \frac{E^{P}[\phi ]}{%
E^{P}[\psi ]}\qquad \text{as }n\rightarrow \infty .  \label{823.2}
\end{equation}
\end{lemma}

\begin{proof}
\noindent Since $[0,1]^{2}$ is compact and $\psi (x_{1},x_{2})=x_{1}x_{2}$
is continuous and bounded, weak convergence implies $E^{P_{n}}[\psi
]\rightarrow E^{P}[\psi ]$. For the numerator, $\phi $ is bounded and its
set of discontinuities is contained in the boundary of $A$. By assumption, $%
P $ assigns zero mass to that boundary, so by the \textit{Portmanteau theorem%
} $E^{P_{n}}[\phi ]\rightarrow E^{P}[\phi ]$. The ratio convergence follows
from convergence of numerator and denominator with a positive limit
denominator.
\end{proof}

\begin{lemma}
\label{main-comp} Let $C_{1}\leq _{c}C_{2}$ be copulas on $[0,1]^{2}$, and
let $A\subset \lbrack 0,1]^{2}$ be an upper set. Suppose there exist
families of copulas $\{C_{i,\varepsilon }:\varepsilon >0\}$, $i=1,2$, such
that:

\begin{enumerate}
\item For each $\varepsilon >0$, $C_{i,\varepsilon }$ has a continuous
density and $C_{i,\varepsilon }\Rightarrow C_{i}$ as $\varepsilon \downarrow
0$;

\item For each $\varepsilon >0$, $C_{1,\varepsilon }\leq
_{lr}C_{2,\varepsilon }$;

\item The boundary of $A$ has zero mass under $C_{i}$.
\end{enumerate}

Then 
\begin{equation}
\frac{E^{C_{1}}[X_{1}X_{2}1_{A}]}{E^{C_{1}}[X_{1}X_{2}]}\;\leq \;\frac{%
E^{C_{2}}[X_{1}X_{2}1_{A}]}{E^{C_{2}}[X_{1}X_{2}]}.  \label{824}
\end{equation}
\end{lemma}

\begin{proof}
For each fixed $\varepsilon >0$, \textit{Claim}~\ref{smoothed-ratio}
(applied to $C_{1,\varepsilon },C_{2,\varepsilon }$) gives 
\begin{equation}
R_{1,\varepsilon }\leq R_{2,\varepsilon },\qquad R_{i,\varepsilon }=\frac{%
E^{C_{i,\varepsilon }}[\phi ]}{E^{C_{i,\varepsilon }}[\psi ]},  \label{824.1}
\end{equation}%
with $\phi ,\psi $ as above. By assumptions and \textit{Lemma}~\ref%
{ratio-limit}, $R_{i,\varepsilon }\rightarrow R_{i}$ where 
\begin{equation}
R_{i}=\frac{E^{C_{i}}[\phi ]}{E^{C_{i}}[\psi ]}=\frac{%
E^{C_{i}}[X_{1}X_{2}1_{A}]}{E^{C_{i}}[X_{1}X_{2}]}.  \label{824.2}
\end{equation}%
Taking $\varepsilon \downarrow 0$ in the inequality $R_{1,\varepsilon }\leq
R_{2,\varepsilon }$ yields $R_{1}\leq R_{2}$, i.e.\ (\ref{824}).
\end{proof}

\begin{remark}
\textit{Lemma}~\ref{main-comp} shows that, in any setting where we can
construct \textit{LR-ordered} smooth approximations $C_{i,\varepsilon }$
converging to $C_{i}$, the ratio inequality extends to potentially singular
limits (such as the \textit{Fr\'{e}chet bounds} $W$ and $M$). This
encompasses many parametric $TP_{2}$ families where $W$ and $M$ arise as
extreme parameter values (e.g.\ \textit{Gaussian copulas} with correlation $%
\rho \rightarrow \pm 1$), although a detailed verification is model-specific
and beyond the scope of this appendix.
\end{remark}

In the specific context of the main body, we consider copulas $C_{1},C_{2}$
such that 
\begin{equation}
W\leq C_{1}\leq C_{2}\leq M,  \label{824.3}
\end{equation}%
and we are interested in the special upper sets 
\begin{equation}
A_{s_{1},s_{2}}=\{(x_{1},x_{2}):x_{1}\geq s_{1},x_{2}\geq s_{2}\},\quad
A_{s}^{(1)}=\{(x_{1},x_{2}):x_{1}\geq s\},\quad
A_{s}^{(2)}=\{(x_{1},x_{2}):x_{2}\geq s\}.  \label{824.4}
\end{equation}%
Under the additional structural assumptions (in particular, the existence of
suitable \textit{LR-ordered} smooth approximations of the \textit{Fr\'{e}%
chet bounds} generated by $TP_{2}$ kernels), \textit{Lemma}~\ref{main-comp}
yields the following.

\begin{claim}
\label{com-claim2} Assume $W\leq C_{1}\leq C_{2}\leq M$ and that the
structural assumptions of \textit{Lemma}~\ref{main-comp} hold for the upper
sets indicated below. Then for all $s,s_{1},s_{2}\in R$,%
\begin{eqnarray}
\frac{E^{F_{C_{1}}}(X_{1}X_{2}1_{\{X_{1}\geq s_{1}\}}1_{\{X_{2}\geq s_{2}\}})%
}{E^{F_{C_{1}}}(X_{1}X_{2})} &\leq &\frac{E^{F_{C_{2}}}(X_{1}X_{2}1_{\{X_{1}%
\geq s_{1}\}}1_{\{X_{2}\geq s_{2}\}})}{E^{F_{C_{2}}}(X_{1}X_{2})}
\label{225a} \\
\frac{E^{F_{C_{1}}}(X_{1}X_{2}1_{\{X_{1}\geq s\}})}{E^{F_{C_{1}}}(X_{1}X_{2})%
} &\leq &\frac{E^{F_{C_{2}}}(X_{1}X_{2}1_{\{X_{1}\geq s\}})}{%
E^{F_{C_{2}}}(X_{1}X_{2})}  \label{225b} \\
\frac{E^{F_{C_{1}}}(X_{1}X_{2}1_{\{X_{2}\geq s\}})}{E^{F_{C_{1}}}(X_{1}X_{2})%
} &\leq &\frac{E^{F_{C_{2}}}(X_{1}X_{2}1_{\{X_{2}\geq s\}})}{%
E^{F_{C_{2}}}(X_{1}X_{2})}.  \label{225c}
\end{eqnarray}
\end{claim}

\begin{remark}
The inequalities (\ref{225a})--(\ref{225c}) are exactly the desired
comparisons for the numerators and denominators of (\ref{220})--(\ref{222}).
The key point is to make explicit which parts follow from general \textit{%
concordance/supermodular order} (\textit{Claim} \ref{com-claim1}) and which
parts require additional \textit{LR-type} structure (\textit{Theorem}~\ref%
{LR}, \textit{Lemma}~\ref{main-comp}).
\end{remark}

Finally, the ratio inequalities (\ref{225b})--(\ref{225c}) imply the
following monotonicity properties for the one-dimensional marginals $%
L_{i}^{n}$ and their inverses. We state these as a claim, since the detailed
definitions of $L_{i,+}^{n},L_{i}^{n}$, and $L_{i,-}^{n}$ are given in the
body of the paper.

\begin{claim}
\label{com-claim4} For $i=1,2$ and each $n\geq 0$, the following
inequalities hold%
\begin{eqnarray}
L_{i+}^{n}(x) &\leq &L_{i}^{n}(x)\leq L_{i-}^{n}(x)  \label{232a} \\
L_{i-}^{n,-1}(x) &\leq &L_{i}^{n,-1}(x)\leq L_{i+}^{n,-1}(x).  \label{232b}
\end{eqnarray}
\end{claim}

\begin{proof}
Take $C_{2}=M$ and $C=$ $C_{1}$ in \textit{Claim }{\footnotesize \ref%
{com-claim2}}, so $W\leq $ $C\leq M$. Using (\ref{225b}) together with (\ref%
{221}), and similarly (\ref{225c}) with (\ref{222}), applied iteratively for 
$n=0,1,\dots $, and carefully tracking the right marginals, yields
inequalities of the form 
\begin{eqnarray}
1-L_{1-}^{0}(F_{1}(x)) &\leq &1-L_{1}^{0}(F_{1}(x))\leq
1-L_{1+}^{0}(F_{1}(x))  \label{232c} \\
1-L_{2-}^{0}(F_{2}(x)) &\leq &1-L_{2}^{0}(F_{2}(x))\leq
1-L_{2+}^{0}(F_{2}(x))  \notag \\
&&...  \notag \\
1-L_{1-}^{n}(L_{1}^{n-1}(x)) &\leq &1-L_{1}^{n}(L_{1}^{n-1}(x))\leq
1-L_{1+}^{n}(L_{1}^{n-1}(x))  \notag \\
1-L_{2-}^{n}(L_{2}^{n-1}(x)) &\leq &1-L_{2}^{n}(L_{2}^{n-1}(x))\leq
1-L_{2+}^{n}(L_{2}^{n-1}(x)).  \notag
\end{eqnarray}

These inequalities translate directly into (\ref{232a}), and the
order-preserving nature of inversion for strictly increasing maps yields (%
\ref{232b}).
\end{proof}

\newpage

\begin{flushleft}
{\Large Appendix C}

{\large Appendix C.1}
\end{flushleft}

This appendix is dedicated to the analysis of the iterative equation (\ref%
{42.4}). We investigate properties of this equation that enable us to
establish its convergence and to approximate its limit. While the technical
analysis herein is self-contained, several of the ideas and techniques will
also be of service in the subsequent \textit{Appendix D}.

\begin{flushleft}
{\large Appendix C.1.1}

$L_{n}$\textbf{\ dynamics and fixed points}
\end{flushleft}

We start with some remarks on the notation. For convenience we denote below
any of the marginal distributions $L_{1-}^{n}(x)$ or $L_{2-}^{n}(x)$ by $%
L_{n}(x)$ since the properties proved hold for both of them. Thus, our
setting becomes

\begin{equation}
L_{n+1}(x)=\frac{\dint\nolimits_{0}^{x}L^{n,-1}(u)\left(
1-L^{n,-1}(u)\right) du}{\dint\nolimits_{0}^{1}L^{n,-1}(u)\left(
1-L^{n,-1}(u)\right) du},with\text{ }L_{0}(x)=\frac{\dint%
\nolimits_{0}^{x}F^{-1}(u)\left( 1-F^{-1}(u)\right) du}{\dint%
\nolimits_{0}^{1}F^{-1}(u)\left( 1-F^{-1}(u)\right) du}.  \label{170a}
\end{equation}

For brevity we will sometimes write 
\begin{equation}
w(x)=x(1-x),\qquad x\in \lbrack 0,1],  \label{170b}
\end{equation}%
and 
\begin{equation}
T_{n}(x)=L_{n}^{-1}(x),\qquad x\in \lbrack 0,1].  \label{170c}
\end{equation}

We now turn to structural properties of the iterates $L_{n}$ themselves. The
first key step is a single-crossing property.

\begin{lemma}
\label{l-cross} For each $n\geq 0$, the function $L_{n}$ has two fixed
points at $0$ and $1$, and a unique fixed point in $(0,1)$. If we denote the
latter by $c_{n}$, then%
\begin{equation}
L_{n}(x)<x\quad \text{for }x\in (0,c_{n}),\qquad L_{n}(x)>x\quad \text{for }%
x\in (c_{n},1).  \label{170f}
\end{equation}

Moreover, there exists a closed interval $O_{n}\subset (0,1)$ such that $%
c_{n}\in O_{n}$ and 
\begin{equation}
L_{n}^{\prime }(x)\geq 1\quad \text{for all }x\in O_{n}.  \label{170g}
\end{equation}
\end{lemma}

\begin{proof}
Introducing shorthand notation, we may write (\ref{170a}) as%
\begin{equation}
L_{n+1}(x)=\frac{\dint\nolimits_{0}^{x}w[T_{n}(u)]du}{\dint%
\nolimits_{0}^{1}w[T_{n}(u)]du},  \label{170}
\end{equation}

where $T_{n}(x)=L_{n}^{-1}(x)$ and $w(x)=x(1-x)$. Clearly, $T_{n}(x):[0,1]$ $%
\rightarrow \lbrack 0,1]$ and it is strictly increasing in $[0,1].$ Also, $%
w(x):[0,1]\rightarrow \lbrack 0,\frac{1}{4}]$ and it is strictly increasing
in $[0,\frac{1}{2}]$ and strictly decreasing in $[\frac{1}{2},1]$, with a
unique global maximum at $x=\tfrac{1}{2}$.

Let $h_{n}(x)=x-L_{n}(x)$. Differentiating $h_{n+1}$ and applying the 
\textit{Lagrange's Mean Value Theorem} in its integral form, yields%
\begin{equation}
h_{n+1}^{^{\prime }}(x)=1-\frac{w[T_{n}(x)]}{\dint%
\nolimits_{0}^{1}w[T_{n}(u)]du}=\frac{w[T_{n}(\xi _{n})]-w[T_{n}(x)]}{%
\dint\nolimits_{0}^{1}w[T_{n}(u)]du},  \label{171}
\end{equation}

where $\xi _{n}\in (0,1)$ is chosen so that%
\begin{equation}
I_{n}=\dint\nolimits_{0}^{1}w[T_{n}(u)]du=w[T_{n}(\xi _{n})].  \label{171b}
\end{equation}

The sign of $h_{n+1}^{\prime }(x)$ is the sign of%
\begin{equation}
w(T_{n}(\xi _{n}))-w(T_{n}(x))=\left[ T_{n}(x)-T_{n}(\xi _{n})\right] \left[
T_{n}(x)+T_{n}(\xi _{n})-1\right] .  \label{171a}
\end{equation}

Because $T_{n}$ is strictly increasing and $w$ is unimodal with peak at $%
\tfrac{1}{2}$, there exist two points $c_{n+1}^{(1)}$ and $c_{n+1}^{(2)}$ in 
$(0,1)$ such that $h_{n+1}$ is 
\begin{equation}
\text{increasing on }[0,c_{n+1}^{(1)}],\quad \text{decreasing on }%
[c_{n+1}^{(1)},c_{n+1}^{(2)}],\quad \text{increasing on }[c_{n+1}^{(2)},1],
\label{171c}
\end{equation}

with%
\begin{equation}
c_{n+1}^{(1)}=\xi _{n}<L_{n}[1-T_{n}(\xi _{n})]=c_{n+1}^{(2)}\quad \text{%
when }\xi _{n}<L_{n}\!(\tfrac{1}{2}),  \label{171d}
\end{equation}

and%
\begin{equation}
c_{n+1}^{(1)}=L_{n}[1-T_{n}(\xi _{n})]<\xi _{n}=c_{n+1}^{(2)}\quad \text{%
when }\xi _{n}>L_{n}\!(\tfrac{1}{2}).  \label{171f}
\end{equation}

We cannot have $c_{n+1}^{(1)}=c_{n+1}^{(2)}$ since by the \textit{Mean Value
Theorem} and the fact that $w(x)$ attains a maximum of $\frac{1}{4}$ that
would imply a degenerate distribution for $L_{n}(x)$ (forced to be a
constant), which cannot happen even if $F$ is such due to the integration in
(\ref{170}). Since $h_{n+1}(0)=0$ and $h_{n+1}(1)=0,$ the just derived
monotonicity pattern of $h_{n+1}$ by the \textit{Bolzano's Intermediate
Value Theorem} leads to the existence of a unique zero of $h_{n+1}(x)$ in $%
(c_{n+1}^{(1)},c_{n+1}^{(2)})$; we denote this unique interior zero by $%
c_{n+1}$. This yields the claimed single-crossing property.

Differentiating once more we find%
\begin{equation}
h_{n+1}^{^{^{\prime \prime }}}(x)=\frac{2L^{n,-1}(x)-1}{(L^{n})^{^{\prime
}}(L^{n,-1}(x))\dint\nolimits_{0}^{1}w(T_{n}(u))du}.  \label{172}
\end{equation}

Thus $h_{n+1}^{\prime }$ starts at $1$ at $x=0$, decreases on $[0,L_{n}(%
\tfrac{1}{2})]$, then increases on $[L_{n}(\tfrac{1}{2}),1]$, with 
\begin{equation}
h_{n+1}^{\prime }\!(L_{n}(\tfrac{1}{2}))=1-\frac{1}{4\int_{0}^{1}w(T_{n}(u))%
\,du}<0,  \label{172aa}
\end{equation}

and returns to $1$ at $x=1$. On each of the intervals%
\begin{equation}
(0,L_{n}(\tfrac{1}{2})),\qquad (L_{n}(\tfrac{1}{2}),1),  \label{172aaa}
\end{equation}

it attains the zeros $c_{n+1}^{(1)}$ and $c_{n+1}^{(2)}$ found above. Hence 
\begin{equation}
h_{n+1}^{\prime }(x)\leq 0\quad \text{for }x\in \lbrack
c_{n+1}^{(1)},c_{n+1}^{(2)}],  \label{172bbb}
\end{equation}%
which is equivalent to 
\begin{equation}
L_{n+1}^{\prime }(x)\geq 1\quad \text{for }x\in \lbrack
c_{n+1}^{(1)},c_{n+1}^{(2)}].  \label{172b1}
\end{equation}

Hence the interval $O_{n}$ is $[c_{n}^{(1)},c_{n}^{(2)}]$.
\end{proof}

The proof of the preceding lemma also yields the following immediate
corollary, which will be essential for our subsequent analysis.

\begin{corollary}
\label{l-cross-c} The inequality $L_{n}^{\prime }(x)>1$ holds for all $x$ in
the interval $(c_{n}^{(1)},c_{n}^{(2)})$.
\end{corollary}

The next step is to show that the family $(L_{n})_{n\geq 0}$ is
equicontinuous and uniformly bounded on $[0,1]$, which follows from an
explicit derivative bound. We first prove this claim under the assumption
that the crossing points are uniformly bounded, and subsequently relax this
restriction.

\begin{claim}
\textbf{\label{M} }Assume, in addition to the crossing pattern in Lemma~\ref%
{l-cross}, that the interior crossing points stay away from the endpoints,
i.e. that there exists $\varepsilon >0$ such that 
\begin{equation}
c_{n}\in \lbrack \varepsilon ,1-\varepsilon ]\qquad \text{for all }n\geq 0.
\label{499}
\end{equation}%
Then there exists a constant $M<\infty $, independent of $n$, such that 
\begin{equation}
0\leq L_{n}^{\prime }(x)\leq M\qquad \text{for all }x\in \lbrack 0,1],\
n\geq 0.  \label{500}
\end{equation}%
Consequently, each $L_{n}$ is $M$-Lipschitz: 
\begin{equation}
|L_{n}(x)-L_{n}(y)|\leq M|x-y|\qquad \text{for all }x,y\in \lbrack 0,1],\
n\geq 0.  \label{501}
\end{equation}
\end{claim}

\begin{proof}
\noindent Let 
\begin{equation}
w(y)=y(1-y).  \label{501a}
\end{equation}%
Since $T_{n}(x)\in \lbrack 0,1]$ and $w$ attains its maximum $1/4$ on $[0,1]$%
, we have 
\begin{equation}
0\leq w(T_{n}(x))\leq \frac{1}{4}\qquad \text{for all }x\in \lbrack 0,1],\
n\geq 0.  \label{501b}
\end{equation}

We first prove that the normalizing constants 
\begin{equation}
I_{n}=\int_{0}^{1}w(T_{n}(u))\,du  \label{501c}
\end{equation}%
are uniformly bounded away from zero. Suppose, to the contrary, that this is
not the case. Then there exists a subsequence, still denoted by $n_{k}$,
such that 
\begin{equation}
I_{n_{k}}\rightarrow 0.  \label{502a}
\end{equation}%
Since each $L_{n}$ is a distribution function on $[0,1]$, \textit{Helly's
selection principle }(see \cite[Chapter 5]{[68]} and \cite[Chapter 36.5]%
{[69]}) gives a further subsequence, again denoted by $L_{n_{k}}$, and a
distribution function $L^{\ast }$ such that 
\begin{equation}
L_{n_{k}}(x)\rightarrow L^{\ast }(x)  \label{502b}
\end{equation}%
at every continuity point $x$ of $L^{\ast }$.

We now identify the possible form of $L^{\ast }$. Since 
\begin{equation}
I_{n_{k}}=\int_{0}^{1}T_{n_{k}}(u)(1-T_{n_{k}}(u))\,du\rightarrow 0
\label{502bb}
\end{equation}%
and 
\begin{equation}
0\leq T_{n_{k}}(u)(1-T_{n_{k}}(u))\leq \frac{1}{4},  \label{502bbb}
\end{equation}%
we may pass to a further subsequence such that 
\begin{equation}
T_{n_{k}}(u)(1-T_{n_{k}}(u))\rightarrow 0\qquad \text{for a.e. }u\in (0,1).
\label{502c}
\end{equation}%
Thus every a.e. limit of the increasing functions $T_{n_{k}}$ takes only the
values $0$ and $1$. Since an increasing $\{0,1\}$-valued function is a
threshold function, there exists $p\in \lbrack 0,1]$ such that the limiting
inverse has the form 
\begin{equation}
T^{\ast }(u)=%
\begin{cases}
0, & 0<u<p, \\ 
1, & p<u<1.%
\end{cases}
\label{502d}
\end{equation}%
Equivalently, the corresponding limiting distribution function is 
\begin{equation}
L^{\ast }(x)=%
\begin{cases}
0, & x<0, \\ 
p, & 0\leq x<1, \\ 
1, & x\geq 1.%
\end{cases}
\label{502e}
\end{equation}%
In particular, 
\begin{equation}
L^{\ast }(x)=p\qquad \text{for every }x\in (0,1).  \label{502f}
\end{equation}

Let $c_{n}\in (0,1)$ denote the unique interior fixed point from \textit{%
Lemma~\ref{l-cross}}. By passing to a further subsequence if necessary, and
using $(\ref{499})$, we may assume that 
\begin{equation}
c_{n_{k}}\rightarrow c\qquad \text{for some }c\in \lbrack \varepsilon
,1-\varepsilon ].  \label{502g}
\end{equation}%
By the crossing pattern, 
\begin{equation}
L_{n}(x)<x\quad \text{for }x\in (0,c_{n}),\qquad L_{n}(x)>x\quad \text{for }%
x\in (c_{n},1).  \label{502h}
\end{equation}

We now derive a contradiction.

\noindent If $p=0$, choose $x\in (c,1)$. This is possible because $c\leq
1-\varepsilon <1$. Then, for all sufficiently large $k$, we have $%
x>c_{n_{k}} $, and hence 
\begin{equation*}
L_{n_{k}}(x)>x.
\end{equation*}%
Passing to the limit at the interior continuity point $x$ of $L^{\ast }$,
and using $(\ref{502f})$, gives 
\begin{equation*}
0=L^{\ast }(x)\geq x,
\end{equation*}%
which is impossible.

If $p=1$, choose $x\in (0,c)$. This is possible because $c\geq \varepsilon
>0 $. Then, for all sufficiently large $k$, we have $x<c_{n_{k}}$, and hence 
\begin{equation*}
L_{n_{k}}(x)<x.
\end{equation*}%
Passing to the limit gives 
\begin{equation*}
1=L^{\ast }(x)\leq x,
\end{equation*}%
which is impossible.

\noindent Finally, suppose $0<p<1$. If $p<c$, choose $x\in (p,c)$. Then $%
x<c_{n_{k}}$ for all sufficiently large $k$, so 
\begin{equation*}
L_{n_{k}}(x)<x.
\end{equation*}%
Passing to the limit gives 
\begin{equation*}
p=L^{\ast }(x)\leq x,
\end{equation*}%
which contradicts $x<p$. If $p>c$, choose $x\in (c,p)$. Then $x>c_{n_{k}}$
for all sufficiently large $k$, so 
\begin{equation*}
L_{n_{k}}(x)>x.
\end{equation*}%
Passing to the limit gives 
\begin{equation*}
p=L^{\ast }(x)\geq x,
\end{equation*}%
which contradicts $x>p$. If $p=c$, choose $x\in (0,c)$. Then $x<c_{n_{k}}$
for all sufficiently large $k$, and passing to the limit gives $p\leq x$,
which is impossible because $x<c=p$.

\noindent Thus all possible values of $p\in \lbrack 0,1]$ lead to
contradictions. Therefore our assumption $(\ref{502a})$ was false. Hence
there exists a constant $C>0$ such that 
\begin{equation}
I_{n}\geq C>0\qquad \text{for all }n\geq 0.  \label{502}
\end{equation}

\noindent It follows that 
\begin{equation}
0\leq L_{n+1}^{\prime }(x)=\frac{w(T_{n}(x))}{I_{n}}\leq \frac{1/4}{C}=M_{1}
\label{503}
\end{equation}%
for all $x\in \lbrack 0,1]$ and all $n\geq 0$.

\noindent It remains only to include $L_{0}$. From the definition of $L_{0}$%
, 
\begin{equation*}
L_{0}(x)=\frac{\int_{0}^{x}F^{-1}(u)(1-F^{-1}(u))\,du}{%
\int_{0}^{1}F^{-1}(u)(1-F^{-1}(u))\,du},
\end{equation*}%
we have 
\begin{equation*}
L_{0}^{\prime }(x)=\frac{F^{-1}(x)(1-F^{-1}(x))}{%
\int_{0}^{1}F^{-1}(u)(1-F^{-1}(u))\,du}.
\end{equation*}%
Since $0\leq F^{-1}(x)\leq 1$, the numerator is bounded by $1/4$. Therefore 
\begin{equation}
0\leq L_{0}^{\prime }(x)\leq \frac{1/4}{\int_{0}^{1}F^{-1}(u)(1-F^{-1}(u))%
\,du}=M_{0}.  \label{503a}
\end{equation}%
Set 
\begin{equation}
M=\max \{M_{0},M_{1}\}.  \label{503b}
\end{equation}%
Then 
\begin{equation*}
0\leq L_{n}^{\prime }(x)\leq M
\end{equation*}%
for all $x\in \lbrack 0,1]$ and all $n\geq 0$.

\noindent By the \textit{Mean Value Theorem}, for any $x,y\in \lbrack 0,1]$
and every $n\geq 0$, there exists $\xi $ between $x$ and $y$ such that 
\begin{equation}
|L_{n}(x)-L_{n}(y)|=|L_{n}^{\prime }(\xi )|\,|x-y|\leq M|x-y|.  \label{504}
\end{equation}%
This proves the \textit{uniform Lipschitz bound}. \hfill \hfill
\end{proof}

\begin{corollary}
The family $(L_{n})_{n\geq 0}$ is equicontinuous and uniformly bounded on $%
[0,1]$.
\end{corollary}

\begin{proof}
\noindent Equicontinuity is immediate from the \textit{uniform} \textit{%
Lipschitz bound}: given $\varepsilon >0$, set $\delta =\frac{\varepsilon }{M}
$. Then for any $n$ and any $x,y$ with $|x-y|<\delta $ we have 
\begin{equation}
|L_{n}(x)-L_{n}(y)|\leq M|x-y|<\varepsilon .  \label{505}
\end{equation}%
Uniform boundedness follows since each $L_{n}$ maps $[0,1]$ into $[0,1]$.
\hfill

We may conclude that the derivative formula $L_{n+1}^{\prime }(x)=\frac{%
w(T_{n}(x))}{I_{n}}$ shows that the slope of $L_{n+1}$ at any point is a
ratio of a local value of $w(T_{n})$ to its global average $I_{n}$. As $w$
is uniformly bounded and $I_{n}$ is uniformly separated from zero, the
slopes cannot explode. Thus no $L_{n}$ can develop arbitrarily steep spikes:
the whole family is \textit{uniformly Lipschitz} and hence equicontinuous.
\end{proof}

By the \textit{Arzel\`{a}--Ascoli's theorem} (see, e.g., \cite[Theorem 11.28]%
{[45]} or \cite[Theorem 14]{[46]}) we obtain the following compactness
result.

\begin{claim}
\label{AA} The family $(L_{n})_{n\geq 0}$ is relatively compact in $C([0,1])$
with the uniform topology. That is, for every sequence $\{L_{n_{k}}%
\}_{n_{k}=0}^{\infty }$ there exists a subsequence $\{L_{n_{k_{j}}}%
\}_{n_{k_{j}}\in N}$ and a continuous increasing function $L^{\ast
}:[0,1]\rightarrow \lbrack 0,1]$ such that 
\begin{equation}
\sup_{x\in \lbrack 0,1]}|L_{n_{k_{j}}}(x)-L^{\ast }(x)|\longrightarrow
0\quad \text{as }j\rightarrow \infty .  \label{506}
\end{equation}%
Moreover, $L^{\ast }(0)=0$ and $L^{\ast }(1)=1$.
\end{claim}

\begin{remark}
\label{AAr} At this stage we do \emph{not} claim that the full sequence $%
\{L_{n}\}_{n=0}^{\infty }$ converges. We only know that every sequence of
indices admits a subsequence along which $L_{n}$ converges uniformly to some
limit $L^{\ast }$, and we will now derive strong structural properties of
any such $L^{\ast }$.
\end{remark}

\begin{lemma}
\label{inverse} Let $\{f_{k}\}_{k=0}^{\infty }$ be a sequence of strictly
increasing continuous maps from $[0,1]$ onto $[0,1]$ that converges
uniformly to a strictly increasing continuous map $f:[0,1]\rightarrow
\lbrack 0,1]$. Then the inverses $f_{k}^{-1}$ converge uniformly to $f^{-1}$
on $[0,1]$.
\end{lemma}

\begin{proof}
\noindent This is a standard result about strictly monotone homeomorphisms
on a compact interval; for completeness we sketch the argument. Fix $%
\varepsilon >0$. By uniform continuity of $f^{-1}$ there exists $\delta >0$
such that $|u-v|<\delta $ implies $|f^{-1}(u)-f^{-1}(v)|<\varepsilon $.

The uniform convergence $f_{k}\rightarrow f$ implies there exists $K$ such
that for all $k\geq K$, 
\begin{equation}
\sup_{x\in \lbrack 0,1]}|f_{k}(x)-f(x)|<\delta .  \label{507}
\end{equation}%
Fix $y\in \lbrack 0,1]$ and $k\geq K$. Let $x_{k}=f_{k}^{-1}(y)$ and $%
x=f^{-1}(y)$. Then 
\begin{equation}
|f(x_{k})-f(x)|\leq
|f(x_{k})-f_{k}(x_{k})|+|f_{k}(x_{k})-f(x)|=|f(x_{k})-f_{k}(x_{k})|+|y-f(x)|<\delta +0=\delta .
\label{508}
\end{equation}%
Hence $|x_{k}-x|=|f^{-1}(f(x_{k}))-f^{-1}(f(x))|<\varepsilon $. Since $y$
and $k$ were arbitrary (for $k\geq K$), this yields uniform convergence $%
f_{k}^{-1}\rightarrow f^{-1}$ on $[0,1]$.
\end{proof}

We now analyze the locations where $L_{n}$ crosses the diagonal. For each $%
n\geq 0$, \textit{Lemma~\ref{l-cross}} provides a unique point 
\begin{equation}
c_{n}\in (0,1)  \label{508a}
\end{equation}%
such that $L_{n}(c_{n})=c_{n}$ and $L_{n}(x)<x$ for $x<c_{n}$, $L_{n}(x)>x$
for $x>c_{n}$.

Since $\{c_{n}\}_{n=0}^{\infty }\subset \lbrack 0,1]$ is bounded, it admits
convergent subsequences. Our next goal is to derive, subsequence by
subsequence, a mean-value identity satisfied by any such limit point.

\begin{claim}
\label{subseq-MV} Let $\{n_{k}\}_{k=1}^{\infty }$ be a strictly increasing
sequence of indices with $n_{k}\rightarrow \infty $. Suppose that 
\begin{equation}
c_{n_{k}}\longrightarrow c^{\ast }\in (0,1)  \label{508b}
\end{equation}%
along this subsequence. Then there exists a further subsequence (still
denoted $n_{k}$) and a strictly increasing continuous map $L^{\ast
}:[0,1]\rightarrow \lbrack 0,1]$ with inverse $T^{\ast }=(L^{\ast })^{-1}$
such that 
\begin{equation*}
L_{n_{k}-1}\rightarrow L^{\ast },\qquad T_{n_{k}-1}\rightarrow T^{\ast
}\quad \text{uniformly on }[0,1],
\end{equation*}%
and $c^{\ast }$ satisfies the mean-value equation 
\begin{equation}
\int_{0}^{c^{\ast }}w(T^{\ast }(u))\,du=c^{\ast }\int_{0}^{1}w(T^{\ast
}(u))\,du.  \label{509}
\end{equation}
\end{claim}

\begin{proof}
\noindent Fix a subsequence $\{n_{k}\}_{k=0}^{\infty }$ with $%
c_{n_{k}}\rightarrow c\in (0,1)$. Since $n_{k}\rightarrow \infty $, the
shifted indices $m_{k}=n_{k}-1$ also satisfy $m_{k}\rightarrow \infty $.

By \textit{Claim~\ref{AA}}, the family $(L_{n})_{n\geq 0}$ is relatively
compact in $C([0,1])$. Hence from the sequence $\{L_{m_{k}}\}_{k=1}^{\infty
} $ we can extract a further subsequence (not relabelled) such that 
\begin{equation}
L_{m_{k}}\rightarrow L^{\ast }\quad \text{uniformly on }[0,1]  \label{510}
\end{equation}%
for some continuous increasing $L^{\ast }:[0,1]\rightarrow \lbrack 0,1]$
with $L^{\ast }(0)=0$, $L^{\ast }(1)=1$. By \textit{Lemma~\ref{inverse}},
the inverses 
\begin{equation}
T_{m_{k}}=L_{m_{k}}^{-1}  \label{511}
\end{equation}%
then converge uniformly to $T^{\ast }=(L^{\ast })^{-1}$ on $[0,1]$.

For each $k$ we have $L_{n_{k}}(c_{n_{k}})=c_{n_{k}}$. By the
representation~(\ref{170}) applied with $n=m_{k}=n_{k}-1$, 
\begin{equation}
L_{n_{k}}(x)=\frac{\int_{0}^{x}w(T_{n_{k}-1}(u))\,du}{%
\int_{0}^{1}w(T_{n_{k}-1}(u))\,du}=\frac{\int_{0}^{x}w(T_{m_{k}}(u))\,du}{%
\int_{0}^{1}w(T_{m_{k}}(u))\,du},  \label{512}
\end{equation}%
where $T_{m_{k}}=L_{m_{k}}^{-1}$. Evaluating at $x=c_{n_{k}}$ and using $%
L_{n_{k}}(c_{n_{k}})=c_{n_{k}}$ gives 
\begin{equation}
c_{n_{k}}=\frac{\int_{0}^{c_{n_{k}}}w(T_{m_{k}}(u))\,du}{%
\int_{0}^{1}w(T_{m_{k}}(u))\,du}.  \label{513}
\end{equation}%
Equivalently, 
\begin{equation}
\int_{0}^{c_{n_{k}}}w(T_{m_{k}}(u))\,du=c_{n_{k}}\int_{0}^{1}w(T_{m_{k}}(u))%
\,du.  \label{514}
\end{equation}

By uniform convergence $T_{m_{k}}\rightarrow T^{\ast }$ and continuity of $w$%
, we have uniform convergence 
\begin{equation*}
w\circ T_{m_{k}}\rightarrow w\circ T^{\ast }\quad \text{on }[0,1].
\end{equation*}%
Thus, for any fixed $x\in \lbrack 0,1]$, 
\begin{equation}
\int_{0}^{x}w(T_{m_{k}}(u))\,du\longrightarrow \int_{0}^{x}w(T^{\ast
}(u))\,du,\quad \int_{0}^{1}w(T_{m_{k}}(u))\,du\longrightarrow
\int_{0}^{1}w(T^{\ast }(u))\,du.  \label{515}
\end{equation}%
Moreover, $c_{n_{k}}\rightarrow c$ by assumption.

We now pass to the limit in~(\ref{513}). For the left-hand side we use
continuity of 
\begin{equation}
x\mapsto \int_{0}^{x}w(T^{\ast }(u))\,du  \label{516}
\end{equation}%
and the convergence $c_{n_{k}}\rightarrow c^{\ast }$ together with~(\ref{515}%
). For the right-hand side, combine $c_{n_{k}}\rightarrow c$ with~(\ref{515}%
). The limit of~(\ref{514}) as $k\rightarrow \infty $ is therefore 
\begin{equation}
\int_{0}^{c^{\ast }}w(T^{\ast }(u))\,du=c^{\ast }\int_{0}^{1}w(T^{\ast
}(u))\,du,  \label{517}
\end{equation}%
which is exactly~(\ref{509}). \medskip
\end{proof}

\begin{remark}
Claim~\ref{subseq-MV} is purely subsequential: we fix one convergent
subsequence $\{c_{n_{k}}\}_{k=1}^{\infty }$, and from it we extract a
companion subsequence of $\{L_{n_{k}-1}\}_{k=1}^{\infty }$ with limit $%
L^{\ast }$. There is no claim that different subsequences of $%
\{c_{n}\}_{n=0}^{\infty }$ must share the same limit $c^{\ast }$, nor that
different subsequential limits $L^{\ast }$ coincide.
\end{remark}

To understand the mean-value equation~(\ref{509}), we analyze the shape of
the weight function 
\begin{equation}
w^{\ast }(u)=w(T^{\ast }(u)),\qquad u\in \lbrack 0,1].  \label{518}
\end{equation}

\begin{lemma}
\label{unimodal-subseq} Let $L^{\ast }$ and $T^{\ast }=(L^{\ast })^{-1}$ be
as in Lemma~\ref{subseq-MV}, and define 
\begin{equation}
w^{\ast }(u)=w(T^{\ast }(u))=T^{\ast }(u)(1-T^{\ast }(u)).  \label{519}
\end{equation}%
Then:

\begin{enumerate}
\item $w^{\ast }$ is continuous on $[0,1]$, satisfies $w^{\ast }(0)=w^{\ast
}(1)=0$, and is strictly positive on $(0,1)$;

\item there exists a unique $u_{0}\in (0,1)$ such that $T^{\ast }(u_{0})=%
\tfrac{1}{2}$ and 
\begin{equation}
w^{\ast }\text{ is strictly increasing on }[0,u_{0}],\quad w^{\ast }\text{
is strictly decreasing on }[u_{0},1].  \label{520}
\end{equation}%
In particular, $w^{\ast }$ is strictly unimodal on $[0,1]$ with a single
peak at $u_{0}$.
\end{enumerate}
\end{lemma}

\begin{proof}
Since $L^{\ast }$ is continuous, strictly increasing, and maps $[0,1]$ onto $%
[0,1]$, its inverse $T^{\ast }$ is also continuous, strictly increasing,
with $T^{\ast }(0)=0$ and $T^{\ast }(1)=1$. The function $w(y)=y(1-y)$ is
continuous on $[0,1]$, equals $0$ at $y=0$ and $y=1$, and is strictly
positive on $(0,1)$.

Thus $w^{\ast }=w\circ T^{\ast }$ is continuous on $[0,1]$, satisfies 
\begin{equation}
w^{\ast }(0)=w(0)=0,\quad w^{\ast }(1)=w(1)=0,  \label{521}
\end{equation}%
and is strictly positive on $(0,1)$ because $T^{\ast }(u)\in (0,1)$ for $%
u\in (0,1)$ and $w(y)>0$ for $y\in (0,1)$. This proves (1).

For (2), note that $T^{\ast }$ is strictly increasing and onto $[0,1]$, so
there exists a unique $u_{0}\in (0,1)$ such that $T^{\ast }(u_{0})=\tfrac{1}{%
2}$. Since $T^{\ast }$ is strictly increasing, we have 
\begin{equation}
T^{\ast }(u)<\tfrac{1}{2}\quad \text{for }u<u_{0},\qquad T^{\ast }(u)>\tfrac{%
1}{2}\quad \text{for }u>u_{0}.  \label{522}
\end{equation}%
The kernel $w$ is strictly increasing on $[0,\tfrac{1}{2}]$ and strictly
decreasing on $[\tfrac{1}{2},1]$. Therefore, for $u_{1}<u_{2}\leq u_{0}$ we
have 
\begin{equation}
T^{\ast }(u_{1})<T^{\ast }(u_{2})\leq \tfrac{1}{2}\ \Longrightarrow \
w(T^{\ast }(u_{1}))<w(T^{\ast }(u_{2})),  \label{850}
\end{equation}%
so $w^{\ast }$ is strictly increasing on $[0,u_{0}]$. Similarly, for $%
u_{0}\leq u_{1}<u_{2}$ we have 
\begin{equation}
\tfrac{1}{2}\leq T^{\ast }(u_{1})<T^{\ast }(u_{2})\ \Longrightarrow \
w(T^{\ast }(u_{1}))>w(T^{\ast }(u_{2})),  \label{851}
\end{equation}%
so $w^{\ast }$ is strictly decreasing on $[u_{0},1]$. Thus $w^{\ast }$ is
strictly unimodal with a unique maximum at $u_{0}$. \medskip
\end{proof}

With this shape information we can now analyze the mean-value equation~(\ref%
{517}) for a general strictly unimodal weight.

\begin{claim}
\label{MV-unique-subseq} Let $f:[0,1]\rightarrow \lbrack 0,\infty )$ be
continuous, strictly positive on $(0,1)$, and strictly unimodal in the sense
that there exists $u_{0}\in (0,1)$ such that $f$ is strictly increasing on $%
[0,u_{0}]$ and strictly decreasing on $[u_{0},1]$. Define 
\begin{equation}
F(c)=\int_{0}^{c}f(u)\,du-c\int_{0}^{1}f(u)\,du,\qquad 0\leq c\leq 1.
\label{852}
\end{equation}%
Then the equation 
\begin{equation}
\int_{0}^{c}f(u)\,du=c\int_{0}^{1}f(u)\,du  \label{853}
\end{equation}%
has at most one solution $c\in (0,1)$.
\end{claim}

\begin{proof}
The function $F$ defined in~(\ref{852}) is continuously differentiable, with 
\begin{equation}
F(0)=0,\quad F(1)=0,\qquad F^{\prime }(c)=f(c)-\int_{0}^{1}f(u)\,du=f(c)-C,
\label{854}
\end{equation}%
where $C=\int_{0}^{1}f(u)\,du>0$. Thus~(\ref{853}) is equivalent to $F(c)=0$.

By strict unimodality, the graph of $f$ increases up to a single maximum and
then decreases. Therefore the horizontal line $y=C$ can intersect the graph
of $f$ in at most two points in $(0,1)$. In other words, the equation 
\begin{equation}
f(c)=C  \label{855}
\end{equation}%
has at most two solutions in $(0,1)$. Equivalently, $F^{\prime }(c)=0$ has
at most two zeros in $(0,1)$.

Suppose, by contradiction, that $F(c)=0$ has two distinct solutions in $%
(0,1) $, say $0<c_{1}<c_{2}<1$, in addition to $c=0$ and $c=1$. By \textit{%
Rolle's Theorem}, there exist points 
\begin{equation}
c_{1}^{\prime }\in (0,c_{1}),\quad c_{2}^{\prime }\in (c_{1},c_{2}),\quad
c_{3}^{\prime }\in (c_{2},1)  \label{856}
\end{equation}%
such that 
\begin{equation}
F^{\prime }(c_{1}^{\prime })=F^{\prime }(c_{2}^{\prime })=F^{\prime
}(c_{3}^{\prime })=0.  \label{857}
\end{equation}%
Thus $F^{\prime }$ would have at least three distinct zeros in $(0,1)$,
contradicting the fact that $F^{\prime }$ can have at most two zeros. Hence $%
F(c)=0$ has at most one solution in $(0,1)$. \medskip
\end{proof}

We can now combine \textit{Claims~\ref{subseq-MV}, \ref{unimodal-subseq},
and \ref{MV-unique-subseq}} to characterise the subsequential limits of $%
\{c_{n}\}_{n=0}^{\infty }$.

\begin{corollary}
\label{subseq-c} Let $\{n_{k}\}_{k=0}^{\infty }$ be a strictly increasing
sequence with $n_{k}\rightarrow \infty $, such that $c_{n_{k}}\rightarrow
c^{\ast }\in (0,1)$. Then there exists a subsequential limit $T^{\ast }$ of
the inverses $\{T_{n_{k}-1}\}_{k=0}^{\infty }$ such that $c^{\ast }$ is the
unique solution in $(0,1)$ of 
\begin{equation}
\int_{0}^{c^{\ast }}w(T^{\ast }(u))\,du=c^{\ast }\int_{0}^{1}w(T^{\ast
}(u))\,du.  \label{858}
\end{equation}%
In particular, for this fixed subsequence and its associated limit $T^{\ast
} $, the interior crossing points $c_{n_{k}}$ cannot converge to two
different values.
\end{corollary}

\begin{proof}
By \textit{Claim~\ref{subseq-MV}}, there exists a further subsequence of $%
\{n_{k}\}_{k=0}^{\infty }$ and a limit $T^{\ast }$ such that~(\ref{517})
holds. By \textit{Lemma~\ref{unimodal-subseq}}, the function 
\begin{equation}
f(u)=w(T^{\ast }(u))  \label{859}
\end{equation}%
is continuous, strictly positive on $(0,1)$, and strictly unimodal.
Therefore \textit{Claim~\ref{MV-unique-subseq}} applies and shows that the
mean-value equation~(\ref{858}) has at most one solution $c^{\ast }\in (0,1)$%
.

Since $c_{n_{k}}\rightarrow c^{\ast }$ and~(\ref{517}) holds with this $c$,
we see that $c^{\ast }$ is indeed a solution of~(\ref{858}); by uniqueness,
no other limit in $(0,1)$ is possible for this subsequence and this $T^{\ast
}$. \medskip
\end{proof}

\begin{remark}
\textit{Corollary~\ref{subseq-c}} should be read carefully. It says: Fix a
convergent subsequence $\{c_{n_{k}}\}_{k=0}^{\infty }$ with limit $c^{\ast }$%
. \ Extract from $\{n_{k}\}_{k\geq 0}$ a further subsequence along which $%
T_{n_{k}-1}$ converges to some $T^{\ast }$. For this particular $T^{\ast }$,
the mean-value equation~(\ref{858}) has a unique interior solution, and the
limit of $\{c_{n_{k}}\}_{k=0}^{\infty }$ must be that solution.

What it does not claim is that:

\begin{itemize}
\item different subsequences $\{n_{k}\}_{k\geq 0}$ and $\{m_{\ell }\}_{l\geq
0}$ produce the same limit $T^{\ast }$; or

\item the corresponding mean-value points $c^{\ast }$ must coincide across
all subsequences.
\end{itemize}
\end{remark}

\begin{corollary}
\label{cross-limit} It holds%
\begin{equation*}
L^{\ast }(x)<x\ (x<c^{\ast }),\qquad L^{\ast }(x)>x\ (x>c^{\ast }).
\end{equation*}

as well as 
\begin{equation*}
T^{\ast }(x)>x\ (x<c^{\ast }),\qquad T^{\ast }(x)<x\ (x>c^{\ast }).
\end{equation*}
\end{corollary}

\begin{proof}
Fix $\varepsilon >0$ small. For $x\leq c^{\ast }-\varepsilon $ and $k$ large
we have $x<c_{n_{k}}$, hence by \textit{Lemma~\ref{l-cross}} we have $%
L_{n_{k}}(x)<x$; passing to the limit gives $L^{\ast }(x)\leq x$. Similarly,
for $x\geq c^{\ast }+\varepsilon $ and $k$ large we have $x>c_{n_{k}}$ and
thus $L_{n_{k}}(x)>x$, so $L^{\ast }(x)\geq x$. Letting $\varepsilon
\downarrow 0$ and using continuity of $L^{\ast }$ yields $L^{\ast }(x)<x$
for $x<c^{\ast }$ and $L^{\ast }(x)>x$ for $x>c^{\ast }$.
\end{proof}

\begin{remark}
At this stage of the argument, it is still logically possible that different
subsequences of $\{c_{n}\}_{n=0}^{\infty }$ converge to different limits $%
c^{\ast (1)},c^{\ast (2)},\dots $, each associated with its own
subsequential limit $T^{\ast (i)},i=1,2,...$ and its own mean-value
equation. The task of the subsequent analysis is precisely to exclude this
possibility and show that all such constants must in fact coincide.
\end{remark}

\begin{flushleft}
\textbf{Symmetry defect of }$L_{n}$
\end{flushleft}

In this section, we characterize the symmetry of $L_{n}$ with respect to $%
\frac{1}{2}$. We begin with several general observations before proceeding
to more complex derivations.

\begin{lemma}
\label{defect-inverse-sign} Let $L:[0,1]\rightarrow \lbrack 0,1]$ be a
continuous strictly increasing map with inverse $T=L^{-1}$. Define the
symmetry defects 
\begin{equation}
D_{L}(x)=L(x)+L(1-x)-1,\qquad x\in \lbrack 0,1],  \label{1200}
\end{equation}%
and 
\begin{equation}
D_{T}(u)=T(u)+T(1-u)-1,\qquad u\in \lbrack 0,1].  \label{1201}
\end{equation}%
Then the following implications hold 
\begin{equation}
D_{L}(x)\leq 0\ \forall x\in \lbrack 0,1]\quad \Longrightarrow \quad
D_{T}(u)\geq 0\ \forall u\in \lbrack 0,1],  \label{1202}
\end{equation}%
and 
\begin{equation}
D_{L}(x)\geq 0\ \forall x\in \lbrack 0,1]\quad \Longrightarrow \quad
D_{T}(u)\leq 0\ \forall u\in \lbrack 0,1].  \label{1203}
\end{equation}%
In particular, whenever $D_{L}$ has a constant sign on $[0,1]$, $D_{T}$ has
the opposite constant sign on $[0,1]$.
\end{lemma}

\begin{proof}
\noindent Assume first that 
\begin{equation}
L(x)+L(1-x)\leq 1\qquad \forall x\in \lbrack 0,1].  \label{1204}
\end{equation}%
Fix $u\in \lbrack 0,1]$ and set $x=T(u)$, so that $u=L(x)$. Then (\ref{1204}%
) gives 
\begin{equation}
L(1-x)\leq 1-L(x)=1-u.  \label{1205}
\end{equation}%
Since $T$ is increasing, 
\begin{equation}
1-x=T(L(1-x))\leq T(1-u).  \label{1206}
\end{equation}%
Therefore 
\begin{equation}
T(u)+T(1-u)=x+T(1-u)\geq x+(1-x)=1,  \label{1207}
\end{equation}%
that is, 
\begin{equation}
D_{T}(u)=T(u)+T(1-u)-1\geq 0.  \label{1208}
\end{equation}%
This proves (\ref{1202}). The proof of (\ref{1203}) is identical, with all
inequalities reversed.
\end{proof}

Going back to the usual indexed notation we have the following:\hfill

\begin{lemma}
\label{defect-shape} Assume that the initial defect 
\begin{equation}
D_{0}(x)=L_{0}(x)+L_{0}(1-x)-1  \label{1209}
\end{equation}%
has a constant sign on $[0,1]$. Then, for every $n\geq 0$, the defect 
\begin{equation}
D_{n}(x)=L_{n}(x)+L_{n}(1-x)-1,\qquad x\in \lbrack 0,1],  \label{1210}
\end{equation}%
has a constant sign on $[0,1]$, the sign alternates with $n$, and $|D_{n}|$
is nondecreasing on $[0,\tfrac{1}{2}]$.
\end{lemma}

\begin{proof}
\noindent For brevity write 
\begin{equation}
S_{n}(x)=T_{n}(x)+T_{n}(1-x)-1,\qquad x\in \lbrack 0,1],  \label{1211}
\end{equation}%
where again $T_{n}=L_{n}^{-1}$. By \textit{Lemma~\ref{defect-inverse-sign}}, 
$S_{n}$ has the opposite constant sign to $D_{n}$.

Differentiating 
\begin{equation}
D_{n+1}(x)=L_{n+1}(x)+L_{n+1}(1-x)-1  \label{1212}
\end{equation}%
and using 
\begin{equation}
L_{n+1}^{\prime }(x)=\frac{w(T_{n}(x))}{\int_{0}^{1}w(T_{n}(u))\,du},\qquad
w(y)=y(1-y),  \label{1213}
\end{equation}%
we obtain 
\begin{align}
D_{n+1}^{\prime }(x)& =L_{n+1}^{\prime }(x)-L_{n+1}^{\prime }(1-x)
\label{1214} \\
& =\frac{w(T_{n}(x))-w(T_{n}(1-x))}{\int_{0}^{1}w(T_{n}(u))\,du}.  \notag
\end{align}%
Since 
\begin{equation}
w(a)-w(b)=(a-b)(1-a-b),  \label{1215}
\end{equation}%
we get 
\begin{equation}
D_{n+1}^{\prime }(x)=\frac{(T_{n}(1-x)-T_{n}(x))(T_{n}(x)+T_{n}(1-x)-1)}{%
\int_{0}^{1}w(T_{n}(u))\,du}.  \label{1216}
\end{equation}%
Hence, for $x\in \lbrack 0,\tfrac{1}{2}]$, because $T_{n}$ is increasing, 
\begin{equation}
T_{n}(1-x)-T_{n}(x)\geq 0,  \label{1217}
\end{equation}%
and therefore 
\begin{equation}
\func{sgn}(D_{n+1}^{\prime }(x))=\func{sgn}(S_{n}(x)),\qquad x\in \lbrack 0,%
\tfrac{1}{2}].  \label{1218}
\end{equation}

Now assume inductively that $D_{n}$ has a constant sign on $[0,1]$. Then $%
S_{n}$ has the opposite constant sign on $[0,1]$, so by (\ref{1218}) the
derivative $D_{n+1}^{\prime }$ has that opposite sign on $[0,\tfrac{1}{2}]$.
Since 
\begin{equation}
D_{n+1}(0)=L_{n+1}(0)+L_{n+1}(1)-1=0,  \label{1219}
\end{equation}

for every $x\in \lbrack 0,\tfrac{1}{2}]$ we have 
\begin{equation}
D_{n+1}(x)=D_{n+1}(0)+\int_{0}^{x}D_{n+1}^{\prime
}(t)\,dt=\int_{0}^{x}D_{n+1}^{\prime }(t)\,dt.  \label{1219.1}
\end{equation}%
Thus, if $D_{n+1}^{\prime }$ has a constant sign on $[0,\tfrac{1}{2}]$, then 
$D_{n+1}$ has the same constant sign on $[0,\tfrac{1}{2}]$. By symmetry, 
\begin{equation}
D_{n+1}(1-x)=D_{n+1}(x),\qquad x\in \lbrack 0,1],  \label{1220}
\end{equation}%
hence $D_{n+1}$ has that constant sign on all of $[0,1]$. This proves that
the sign alternates.

Moreover, on $[0,\tfrac{1}{2}]$, the sign-corrected function 
\begin{equation}
\varepsilon _{n}\,D_{n}(x),\qquad \varepsilon _{n}=-\func{sgn}(D_{n}(x))\in
\{-1,+1\},  \label{1221}
\end{equation}%
has nonnegative derivative by (\ref{1218}). Hence $\varepsilon _{n}D_{n}$ is
nondecreasing on $[0,\tfrac{1}{2}]$, i.e. 
\begin{equation}
|D_{n}(x)|\ \text{is nondecreasing on }[0,\tfrac{1}{2}].  \label{1222}
\end{equation}%
The induction is complete. \hfill
\end{proof}

\begin{claim}
\label{An-monotone} Assume the hypotheses of \textit{Lemma~\ref{defect-shape}%
}. Define 
\begin{equation}
A_{n}=\int_{0}^{1}|D_{n}(x)|\,dx,  \label{1223}
\end{equation}%
and let 
\begin{equation}
E_{n}=\int_{0}^{1}(1-L_{n}(x))\,dx  \label{1224}
\end{equation}%
be the expectation of the random variable with distribution function $L_{n}$%
. Then 
\begin{equation}
A_{n}=|1-2E_{n}|  \label{1225}
\end{equation}%
for every $n$, and the sequence $\{A_{n}\}_{n\geq 0}$ is nonincreasing: 
\begin{equation}
A_{n+1}\leq A_{n}\qquad \text{for all }n\geq 0.  \label{1226}
\end{equation}
\end{claim}

\begin{proof}
Since $D_{n}$ has a constant sign on $[0,1]$ by \textit{Lemma~\ref%
{defect-shape}}, we have 
\begin{equation}
A_{n}=\left\vert \int_{0}^{1}D_{n}(x)\,dx\right\vert .  \label{1227}
\end{equation}%
Also, 
\begin{align}
\int_{0}^{1}D_{n}(x)\,dx& =\int_{0}^{1}(L_{n}(x)+L_{n}(1-x)-1)\,dx  \notag \\
& =2\int_{0}^{1}L_{n}(x)\,dx-1  \notag \\
& =1-2E_{n},  \label{1228}
\end{align}%
because 
\begin{equation}
E_{n}=\int_{0}^{1}(1-L_{n}(x))\,dx.  \label{1229}
\end{equation}%
This proves (\ref{1225}).

We now prove (\ref{1226}). For brevity write 
\begin{equation}
L=L_{n},\qquad T=T_{n},\qquad g(x)=1-L(x),\qquad E=E_{n}.  \label{1230}
\end{equation}%
Also set as before 
\begin{equation}
I=\int_{0}^{1}w(T(u))\,du.  \label{1231}
\end{equation}%
From the iterative equation defining $L_{n+1}$, 
\begin{equation}
L_{n+1}(x)=\frac{\int_{0}^{x}w(T(u))\,du}{I},  \label{1232}
\end{equation}

we have%
\begin{align}
E_{n+1}& =\int_{0}^{1}(1-L_{n+1}(x))\,dx=1-\int_{0}^{1}L_{n+1}(x)\,dx  \notag
\\
& =1-\frac{1}{I}\int_{0}^{1}\left( \int_{0}^{x}w(T(u))\,du\right) dx  \notag
\\
& =1-\frac{1}{I}\int_{0}^{1}\left( \int_{u}^{1}dx\right) w(T(u))\,du  \notag
\\
& =1-\frac{1}{I}\int_{0}^{1}(1-u)w(T(u))\,du.  \label{1233}
\end{align}%
Therefore 
\begin{equation}
I\,(E+E_{n+1}-1)=IE-\int_{0}^{1}(1-u)w(T(u))\,du.  \label{1234}
\end{equation}

We next rewrite $I$ and the numerator in (\ref{1233}) in terms of $g$. Using
the change of variables $u=L(x)$, so that $du=L^{\prime }(x)\,dx$ and $%
T(u)=x $, we obtain 
\begin{equation}
I=\int_{0}^{1}x(1-x)L^{\prime }(x)\,dx=\int_{0}^{1}(1-2x)g(x)\,dx.
\label{1235}
\end{equation}%
Similarly, 
\begin{align}
\int_{0}^{1}(1-u)w(T(u))\,du& =\int_{0}^{1}g(x)\,x(1-x)L^{\prime }(x)\,dx 
\notag \\
& =\frac{1}{2}\int_{0}^{1}(1-2x)g(x)^{2}\,dx.  \label{1236}
\end{align}

Now define on $[0,\tfrac{1}{2}]$ 
\begin{equation}
r(x)=g(x)+g(1-x)-1=-D_{n}(x),  \label{1237}
\end{equation}%
\begin{equation}
d(x)=g(x)-g(1-x),  \label{1238}
\end{equation}%
and 
\begin{equation}
p(x)=(1-2x)\,d(x).  \label{1239}
\end{equation}%
Since $g$ is decreasing, $d(x)\geq 0$ on $[0,\tfrac{1}{2}]$. Moreover, if $%
0\leq x\leq y\leq \tfrac{1}{2}$, then 
\begin{equation}
d(x)-d(y)=(g(x)-g(y))+(g(1-y)-g(1-x))\geq 0,  \label{1240}
\end{equation}%
so $d$ is nonincreasing on $[0,\tfrac{1}{2}]$. Therefore $p$ is nonnegative
and nonincreasing on $[0,\tfrac{1}{2}]$.

Pairing $x$ with $1-x$ in (\ref{1235}) gives 
\begin{equation}
I=\int_{0}^{1/2}p(x)\,dx.  \label{1241}
\end{equation}%
Likewise, pairing $x$ with $1-x$ in (\ref{1236}) yields 
\begin{align}
\int_{0}^{1}(1-u)w(T(u))\,du& =\frac{1}{2}%
\int_{0}^{1/2}(1-2x)(g(x)^{2}-g(1-x)^{2})\,dx  \notag \\
& =\frac{1}{2}\int_{0}^{1/2}p(x)(1+r(x))\,dx.  \label{1242}
\end{align}%
Furthermore, 
\begin{equation}
E-\frac{1}{2}=\int_{0}^{1/2}r(x)\,dx.  \label{1243}
\end{equation}

We now substitute (\ref{1241}), (\ref{1242}) and the identity 
\begin{equation}
E=\frac{1}{2}+\int_{0}^{1/2}r(x)\,dx  \label{1244}
\end{equation}%
into (\ref{1234}). This gives 
\begin{align}
I\,(E+E_{n+1}-1)& =I\left( \frac{1}{2}+\int_{0}^{1/2}r(x)\,dx\right) -\frac{1%
}{2}\int_{0}^{1/2}p(x)(1+r(x))\,dx  \notag \\
& =\frac{1}{2}I+I\int_{0}^{1/2}r(x)\,dx-\frac{1}{2}\int_{0}^{1/2}p(x)\,dx-%
\frac{1}{2}\int_{0}^{1/2}p(x)r(x)\,dx.  \label{1245}
\end{align}%
Using (\ref{1241}), the two scalar terms cancel 
\begin{equation}
\frac{1}{2}I-\frac{1}{2}\int_{0}^{1/2}p(x)\,dx=0.  \label{1246}
\end{equation}%
Hence 
\begin{equation}
I\,(E+E_{n+1}-1)=\left( \int_{0}^{1/2}r(x)\,dx\right) \left(
\int_{0}^{1/2}p(x)\,dx\right) -\frac{1}{2}\int_{0}^{1/2}p(x)r(x)\,dx.
\label{1247}
\end{equation}

Let 
\begin{equation}
\sigma =\func{sgn}\!\left( E-\frac{1}{2}\right) .  \label{1248}
\end{equation}

Since $r=-D_{n}$ and $D_{n}$ has a constant sign, $r$ also has a constant
sign on $[0,\tfrac{1}{2}]$. Moreover, 
\begin{equation}
E-\frac{1}{2}=\int_{0}^{1/2}r(x)\,dx.  \label{1248.1}
\end{equation}%
Hence, if $E\neq \tfrac{1}{2}$, the sign of $r$ coincides with 
\begin{equation}
\sigma =\func{sgn}\!\left( E-\frac{1}{2}\right) ,  \label{1248.2}
\end{equation}%
and therefore 
\begin{equation}
\sigma r(x)=|r(x)|\qquad \text{for all }x\in \lbrack 0,\tfrac{1}{2}].
\label{1248.3}
\end{equation}%
If $E=\tfrac{1}{2}$, then $\int_{0}^{1/2}r(x)\,dx=0$; since $r$ has a
constant sign, this implies $r\equiv 0$, and the argument is trivial. Thus
the function 
\begin{equation}
q(x)=\sigma r(x)=|r(x)|  \label{1249}
\end{equation}%
is nonnegative on $[0,\tfrac{1}{2}]$. By \textit{Lemma~\ref{defect-shape}}, $%
|D_{n}|$ is nondecreasing on $[0,\tfrac{1}{2}]$, hence $q=|r|=|D_{n}|$ is
also nondecreasing on $[0,\tfrac{1}{2}]$.

Multiplying (\ref{1247}) by $\sigma $ we obtain 
\begin{equation}
I\,\sigma (E+E_{n+1}-1)=\left( \int_{0}^{1/2}q(x)\,dx\right) \left(
\int_{0}^{1/2}p(x)\,dx\right) -\frac{1}{2}\int_{0}^{1/2}p(x)q(x)\,dx.
\label{1250}
\end{equation}

Now $p$ is nonnegative and nonincreasing on $[0,\tfrac{1}{2}]$, whereas $q$
is nonnegative and nondecreasing on $[0,\tfrac{1}{2}]$. Hence the \textit{%
reverse Chebyshev inequality }(see \cite{[66]} and \cite[Chapter 9]{[32]})
on the interval $[0,\tfrac{1}{2}]$ yields 
\begin{equation}
\int_{0}^{1/2}p(x)q(x)\,dx\leq 2\left( \int_{0}^{1/2}p(x)\,dx\right) \left(
\int_{0}^{1/2}q(x)\,dx\right) .  \label{1251}
\end{equation}%
Substituting (\ref{1251}) into (\ref{1250}) gives 
\begin{equation}
I\,\sigma (E+E_{n+1}-1)\geq 0.  \label{1252}
\end{equation}%
Since $I>0$, we conclude that 
\begin{equation}
\left( E_{n}-\frac{1}{2}\right) \left( E_{n}+E_{n+1}-1\right) \geq 0.
\label{1253}
\end{equation}

Finally, by \textit{Lemma~\ref{defect-shape}}, the sign of $D_{n}$
alternates with $n$, hence the sign of $r=-D_{n}$, and therefore also the
sign of $E_{n}-\frac{1}{2}$, alternates with $n$. Thus 
\begin{equation}
|E_{n}-\tfrac{1}{2}|-|E_{n+1}-\tfrac{1}{2}|=\sigma \left(
E_{n}+E_{n+1}-1\right) \geq 0.  \label{1254}
\end{equation}%
Therefore 
\begin{equation}
|E_{n+1}-\tfrac{1}{2}|\leq |E_{n}-\tfrac{1}{2}|,  \label{1255}
\end{equation}%
and by (\ref{1227}), 
\begin{equation}
A_{n+1}=2|E_{n+1}-\tfrac{1}{2}|\leq 2|E_{n}-\tfrac{1}{2}|=A_{n}.
\label{1256}
\end{equation}%
This proves (\ref{1226}). \hfill
\end{proof}

\begin{corollary}
\label{An-strict} Assume the hypotheses of \textit{Claim~\ref{An-monotone}}.
Fix $n\geq 0$ and define, on $[0,\tfrac{1}{2}]$, 
\begin{equation}
p_{n}(x)=(1-2x)(L_{n}(1-x)-L_{n}(x)),  \label{1257}
\end{equation}%
and 
\begin{equation}
q_{n}(x)=|D_{n}(x)|=|L_{n}(x)+L_{n}(1-x)-1|.  \label{1258}
\end{equation}%
If

\begin{enumerate}
\item $A_{n}>0$,

\item $p_{n}$ is strictly decreasing on $[0,\tfrac{1}{2}]$,

\item $q_{n}$ is strictly increasing on $[0,\tfrac{1}{2}]$,
\end{enumerate}

then 
\begin{equation}
\left( E_{n}-\frac{1}{2}\right) \left( E_{n}+E_{n+1}-1\right) >0,
\label{1259}
\end{equation}%
and consequently 
\begin{equation}
A_{n+1}<A_{n}.  \label{1260}
\end{equation}%
In particular, if the above three conditions hold for every index $n$ with $%
A_{n}>0$, then $\{A_{n}\}_{n\geq 0}$ is strictly decreasing until it reaches 
$0$.
\end{corollary}

\begin{proof}
\noindent We use the notation introduced in the proof of \textit{Claim~\ref%
{An-monotone}}. There we obtained 
\begin{equation}
I\,\sigma (E_{n}+E_{n+1}-1)=\left( \int_{0}^{1/2}q_{n}(x)\,dx\right) \left(
\int_{0}^{1/2}p_{n}(x)\,dx\right) -\frac{1}{2}\int_{0}^{1/2}p_{n}(x)q_{n}(x)%
\,dx,  \label{1261}
\end{equation}%
where 
\begin{equation}
I=\int_{0}^{1}w(T_{n}(u))\,du>0,\qquad \sigma =\func{sgn}\!\left( E_{n}-%
\frac{1}{2}\right) .  \label{1262}
\end{equation}

By assumption, $p_{n}$ is nonnegative and strictly decreasing on $[0,\tfrac{1%
}{2}]$, while $q_{n}$ is nonnegative and strictly increasing on $[0,\tfrac{1%
}{2}]$. Moreover, $A_{n}>0$ implies 
\begin{equation}
\int_{0}^{1/2}q_{n}(x)\,dx>0,  \label{1263}
\end{equation}%
so $q_{n}$ is not identically zero.

Hence the \textit{reverse Chebyshev inequality} is strict 
\begin{equation}
\int_{0}^{1/2}p_{n}(x)q_{n}(x)\,dx<2\left( \int_{0}^{1/2}p_{n}(x)\,dx\right)
\left( \int_{0}^{1/2}q_{n}(x)\,dx\right) .  \label{1264}
\end{equation}%
Substituting (\ref{1264}) into (\ref{1261}) yields 
\begin{equation}
I\,\sigma (E_{n}+E_{n+1}-1)>0.  \label{1265}
\end{equation}%
Since $I>0$, we conclude that 
\begin{equation}
\left( E_{n}-\frac{1}{2}\right) \left( E_{n}+E_{n+1}-1\right) >0.
\label{1266}
\end{equation}

Finally, as in the proof of \textit{Claim~\ref{An-monotone}}, the sign of $%
E_{n}-\tfrac{1}{2}$ alternates with $n$, and 
\begin{equation}
A_{n}=2\left\vert E_{n}-\frac{1}{2}\right\vert .  \label{1267}
\end{equation}%
Therefore (\ref{1266}) is equivalent to 
\begin{equation}
\left\vert E_{n+1}-\frac{1}{2}\right\vert <\left\vert E_{n}-\frac{1}{2}%
\right\vert ,  \label{1268}
\end{equation}%
which in turn gives 
\begin{equation}
A_{n+1}<A_{n}.  \label{1269}
\end{equation}%
This proves the claim. \hfill
\end{proof}

\begin{remark}
\label{strict-mon-remark} The additional assumptions in \textit{Corollary~%
\ref{An-strict}} are natural from the point of view of the geometry of the
defect. Indeed, on $[0,\tfrac{1}{2}]$ the quantity 
\begin{equation}
q_{n}(x)=|D_{n}(x)|=|L_{n}(x)+L_{n}(1-x)-1|  \label{1270}
\end{equation}%
measures the magnitude of the defect, and the inductive sign pattern proved
in \textit{Lemma~\ref{defect-shape}} shows that $q_{n}$ is nondecreasing on $%
[0,\tfrac{1}{2}]$. Thus the additional strict monotonicity assumption on $%
q_{n}$ simply excludes the degenerate situation in which the defect remains
locally flat on a nontrivial subinterval.

Likewise, 
\begin{equation}
p_{n}(x)=(1-2x)(L_{n}(1-x)-L_{n}(x)),\qquad x\in \lbrack 0,\tfrac{1}{2}],
\label{1271}
\end{equation}%
is the geometric weight that appears after symmetrizing the identities for $%
I_{n}$ and $E_{n+1}$ on the half-interval. The factor $1-2x$ decreases
strictly from $1$ to $0$, while $L_{n}(1-x)-L_{n}(x)\geq 0$ measures the
separation between the two symmetric branches of $L_{n}$. Hence the
condition that $p_{n}$ be strictly decreasing means that this weighted
branch-separation does not contain a plateau compensating the decay of $1-2x$%
.

Under these two strict half-interval shape conditions, the reverse \textit{%
Chebyshev inequality} used in the proof \textit{Claim~\ref{An-monotone}}
becomes strict, and therefore the decay of $A_{n}$ is strict whenever $%
A_{n}>0$. Numerically, these hypotheses are precisely what is observed in
the examples considered in this appendix: the defect profile $q_{n}$ grows
strictly from the endpoint $0$ to its extremal value at $x=\tfrac{1}{2}$,
whereas the weighted branch-separation $p_{n}$ decays strictly to $0$ as $%
x\uparrow \tfrac{1}{2}$.
\end{remark}

\begin{flushleft}
\textbf{Vanishing of the symmetry defect of }$L_{n}$\textbf{\ in the
one-sign regime }
\end{flushleft}

We now demonstrate the convergence of the symmetry defect to zero, given
certain assumptions to be detailed in the next section. This convergence
ensures the symmetry of the subsequential limits about $\frac{1}{2}$.

\begin{claim}
\label{An-vanishing} Assume that there exists an index $N\geq 0$ such that,
for every $n\geq N$, the defect 
\begin{equation}
D_{n}(x)=L_{n}(x)+L_{n}(1-x)-1  \label{1271.1}
\end{equation}%
has a constant sign on $[0,1]$. Then 
\begin{equation}
A_{n}=\int_{0}^{1}|D_{n}(x)|\,dx\longrightarrow 0\qquad \text{as }%
n\rightarrow \infty .  \label{1272}
\end{equation}
\end{claim}

\begin{proof}
\noindent Fix $n\geq N$, and define as before on $[0,\tfrac{1}{2}]$ 
\begin{equation}
q_{n}(x)=|D_{n}(x)|,  \label{1273}
\end{equation}%
\begin{equation}
d_{n}(x)=L_{n}(1-x)-L_{n}(x),  \label{1274}
\end{equation}%
and 
\begin{equation}
p_{n}(x)=(1-2x)\,d_{n}(x).  \label{1275}
\end{equation}

Since $D_{n}$ has a constant sign for every $n\geq N$, the same argument as
in \textit{Lemma~\ref{defect-shape}} shows that $q_{n}=|D_{n}|$ is
nonnegative and nondecreasing on $[0,\tfrac{1}{2}]$ for every $n\geq N$. On
the other hand, the proof of \textit{Claim~\ref{An-monotone}} shows that $%
d_{n}$ is nonnegative and nonincreasing on $[0,\tfrac{1}{2}]$, hence so is $%
p_{n}$.

Moreover, 
\begin{equation}
A_{n}=2\int_{0}^{1/2}q_{n}(x)\,dx.  \label{1276}
\end{equation}

We now extract from the proof of \textit{Claim~\ref{An-monotone}} an exact
representation of $A_{n+1}$ as a weighted average of $q_{n}$. Indeed, for $%
n\geq N$, equations (\ref{1261}) and (\ref{1225}) give 
\begin{equation}
I_{n}\,\sigma _{n}(E_{n}+E_{n+1}-1)=\left( \int_{0}^{1/2}q_{n}(x)\,dx\right)
\left( \int_{0}^{1/2}p_{n}(x)\,dx\right) -\frac{1}{2}%
\int_{0}^{1/2}p_{n}(x)q_{n}(x)\,dx,  \label{1277}
\end{equation}%
where 
\begin{equation}
I_{n}=\int_{0}^{1}w(T_{n}(u))\,du=\int_{0}^{1/2}p_{n}(x)\,dx>0  \label{1278}
\end{equation}%
and 
\begin{equation}
\sigma _{n}=\func{sgn}\!\left( E_{n}-\frac{1}{2}\right) .  \label{1279}
\end{equation}%
Since $D_{n}$ has a constant sign and the sign alternates with $n$, 
\begin{equation}
\sigma _{n}(E_{n}+E_{n+1}-1)=\left\vert E_{n}-\frac{1}{2}\right\vert
-\left\vert E_{n+1}-\frac{1}{2}\right\vert =\frac{A_{n}-A_{n+1}}{2}.
\label{1280}
\end{equation}%
Using also 
\begin{equation}
\int_{0}^{1/2}q_{n}(x)\,dx=\frac{A_{n}}{2},  \label{1281}
\end{equation}%
\begin{equation}
A_{n+1}=\frac{\int_{0}^{1/2}p_{n}(x)q_{n}(x)\,dx}{\int_{0}^{1/2}p_{n}(x)\,dx}%
,\qquad n\geq N.  \label{1282}
\end{equation}

Next, introduce the two probability measures on $[0,\tfrac{1}{2}]$ 
\begin{equation}
d\nu (x)=2\,dx,  \label{1283}
\end{equation}%
and 
\begin{equation}
d\lambda (x)=4(1-2x)\,dx.  \label{1284}
\end{equation}%
Then (\ref{1276}) becomes 
\begin{equation}
A_{n}=\int_{0}^{1/2}q_{n}\,d\nu .  \label{1285}
\end{equation}

We claim that 
\begin{equation}
A_{n+1}\leq \int_{0}^{1/2}q_{n}\,d\lambda \leq A_{n},\qquad n\geq N.
\label{1286}
\end{equation}

For the first inequality, apply the same \textit{reverse Chebyshev} argument
as in the proof of \textit{Claim~\ref{An-monotone}} on $[0,\tfrac{1}{2}]$ to
the nonincreasing function $d_{n}$ and the nondecreasing function $q_{n}$
with respect to the base measure $(1-2x)\,dx$ to get 
\begin{equation}
\frac{\int_{0}^{1/2}(1-2x)d_{n}(x)q_{n}(x)\,dx}{\int_{0}^{1/2}(1-2x)d_{n}(x)%
\,dx}\leq \frac{\int_{0}^{1/2}(1-2x)q_{n}(x)\,dx}{\int_{0}^{1/2}(1-2x)\,dx}.
\label{1287}
\end{equation}%
By (\ref{1275}) and (\ref{1282}), this is exactly 
\begin{equation}
A_{n+1}\leq 4\int_{0}^{1/2}(1-2x)q_{n}(x)\,dx=\int_{0}^{1/2}q_{n}\,d\lambda .
\label{1288}
\end{equation}%
For the second inequality, observe that the distribution functions of $\nu $
and $\lambda $ are 
\begin{equation}
F_{\nu }(x)=2x,\qquad F_{\lambda }(x)=4x-4x^{2},\qquad 0\leq x\leq \frac{1}{2%
},  \label{1289}
\end{equation}%
and 
\begin{equation}
F_{\lambda }(x)-F_{\nu }(x)=2x(1-2x)\geq 0.  \label{1290}
\end{equation}%
Thus $\lambda $ is stochastically smaller than $\nu $, and because $q_{n}$
is nondecreasing, 
\begin{equation}
\int_{0}^{1/2}q_{n}\,d\lambda \leq \int_{0}^{1/2}q_{n}\,d\nu =A_{n}.
\label{1291}
\end{equation}%
This proves (\ref{1286}).

By the same argument as in \textit{Claim~\ref{An-monotone}}, applied from
the index $N$ onward, the tail $\{A_{n}\}_{n\geq N}$ is nonincreasing. Hence
it converges: 
\begin{equation*}
A_{n}\downarrow a\qquad \text{for some }a\geq 0.
\end{equation*}%
By (\ref{1286}), 
\begin{equation}
\int_{0}^{1/2}q_{n}\,d\lambda \longrightarrow a.  \label{1292}
\end{equation}

Now the sequence $\{q_{n}\}_{n\geq N}$ consists of bounded nondecreasing
functions on $[0,\tfrac{1}{2}]$, with $0\leq q_{n}\leq 1$. By \textit{%
Helly's selection principle }there exists a subsequence $\{q_{n_{k}}\}_{k%
\geq 1}$ and a bounded nondecreasing function $q$ on $[0,\tfrac{1}{2}]$ such
that 
\begin{equation*}
q_{n_{k}}(x)\longrightarrow q(x)
\end{equation*}%
for every continuity point of $q$, hence almost everywhere on $[0,\tfrac{1}{2%
}]$. By dominated convergence applied to (\ref{1285}) and (\ref{1292}), 
\begin{equation}
\int_{0}^{1/2}q\,d\nu =a,\qquad \int_{0}^{1/2}q\,d\lambda =a.  \label{1293}
\end{equation}%
Therefore 
\begin{equation}
\int_{0}^{1/2}q\,d\nu -\int_{0}^{1/2}q\,d\lambda =0.  \label{1294}
\end{equation}

Since $q$ is nondecreasing, its distributional derivative $dq$ is a
nonnegative \textit{Stieltjes measure}. Integration by parts for monotone
functions yields 
\begin{equation}
\int_{0}^{1/2}q\,d\nu -\int_{0}^{1/2}q\,d\lambda =\int_{[0,1/2]}(F_{\lambda
}(x)-F_{\nu }(x))\,dq(x).  \label{1295}
\end{equation}%
But 
\begin{equation}
F_{\lambda }(x)-F_{\nu }(x)=2x(1-2x)>0\qquad \text{for }0<x<\frac{1}{2},
\label{1296}
\end{equation}%
and $dq\geq 0$, so (\ref{1294}) implies 
\begin{equation}
dq=0\qquad \text{on }(0,\tfrac{1}{2}).  \label{1296.1}
\end{equation}%
Hence $q$ is constant on $[0,\tfrac{1}{2}]$. Since 
\begin{equation}
q_{n}(0)=|D_{n}(0)|=0\qquad \text{for every }n,  \label{1296.2}
\end{equation}%
we have $q(0)=0$, and therefore 
\begin{equation}
q\equiv 0.  \label{196.21}
\end{equation}%
Thus (\ref{1293}) gives $a=0$.

We have shown that every convergent subsequence of $\{A_{n}\}_{n>N}$ has
limit $0$. Since $\{A_{n}\}_{n>N}$ itself converges, its limit must be $0$,
i.e. 
\begin{equation}
A_{n}\longrightarrow 0.  \label{196.22}
\end{equation}%
This proves (\ref{1272}). \hfill
\end{proof}

\begin{corollary}
\label{subseq-symmetric} Assume the hypotheses of \textit{Claim~\ref%
{An-vanishing}}. Let $\{n_{k}\}_{k\geq 1}$ be any sequence of integers with $%
n_{k}\rightarrow \infty $, and suppose that 
\begin{equation}
L_{n_{k}}(x)\longrightarrow L^{\ast }(x)  \label{1296.3}
\end{equation}%
uniformly on $[0,1]$. Then 
\begin{equation}
L^{\ast }(x)+L^{\ast }(1-x)-1=0\qquad \text{for every }x\in \lbrack 0,1].
\label{1298}
\end{equation}

In particular, under any of the sufficient hypotheses on the starting law $F$
given earlier in this appendix for which the one-sign regime holds from the
start, Claim~\ref{An-vanishing} applies with $N=0$. Hence, by the
compactness result \textit{Claim~\ref{AA}}, every subsequential limit of $%
\{L_{n}\}_{n\geq 0}$ in $C([0,1])$ is symmetric.
\end{corollary}

\begin{proof}
By \textit{Claim~\ref{An-vanishing}}, 
\begin{equation}
A_{n}=\int_{0}^{1}|D_{n}(x)|\,dx\longrightarrow 0.  \label{1299}
\end{equation}%
Along the subsequence $(n_{k})$, define 
\begin{equation}
D_{n_{k}}(x)=L_{n_{k}}(x)+L_{n_{k}}(1-x)-1.  \label{1300}
\end{equation}%
Since $L_{n_{k}}\rightarrow L^{\ast }$ uniformly on $[0,1]$, we also have 
\begin{equation}
D_{n_{k}}\longrightarrow D^{\ast }  \label{1301}
\end{equation}%
uniformly on $[0,1]$, where 
\begin{equation}
D^{\ast }(x)=L^{\ast }(x)+L^{\ast }(1-x)-1.  \label{1302}
\end{equation}%
Therefore 
\begin{equation}
\int_{0}^{1}|D^{\ast }(x)|\,dx=\lim_{k\rightarrow \infty
}\int_{0}^{1}|D_{n_{k}}(x)|\,dx=\lim_{k\rightarrow \infty }A_{n_{k}}=0.
\label{1303}
\end{equation}%
Since $D^{\ast }$ is continuous on $[0,1]$, it follows that 
\begin{equation}
D^{\ast }(x)=0\qquad \forall x\in \lbrack 0,1].  \label{1303.1}
\end{equation}%
This proves (\ref{1298}). \hfill
\end{proof}

\begin{corollary}
\label{crossings-half} Assume the hypotheses of \textit{Claim~\ref%
{An-vanishing}}. Assume moreover the earlier one-crossing framework: for
each $n$, $L_{n}$ has a unique crossing point $c_{n}\in (0,1)$ with the
diagonal, 
\begin{equation}
L_{n}(c_{n})=c_{n},  \label{1304}
\end{equation}%
Then every cluster point of $(c_{n})$ is equal to $\tfrac{1}{2}$.
Consequently, 
\begin{equation}
c_{n}\longrightarrow \frac{1}{2}.  \label{1307}
\end{equation}
\end{corollary}

\begin{proof}
Let $c^{\ast }$ be any cluster point of $\{c_{n}\}_{n\geq 0}$. Then there
exists a subsequence $\{n_{k}\}_{k\geq 0}$ such that 
\begin{equation}
c_{n_{k}}\rightarrow c^{\ast }.  \label{1308}
\end{equation}%
By the compactness result \textit{Claim~\ref{AA}}, after passing to a
further subsequence if necessary, we may assume that 
\begin{equation}
L_{n_{k}}\rightarrow L^{\ast }  \label{1309}
\end{equation}%
uniformly on $[0,1]$, for some continuous increasing function $L^{\ast }$
with $L^{\ast }(0)=0$ and $L^{\ast }(1)=1$. By \textit{Corollary~\ref%
{subseq-symmetric}}, $L^{\ast }$ is symmetric: 
\begin{equation}
L^{\ast }(x)+L^{\ast }(1-x)-1=0\qquad \text{for every }x\in \lbrack 0,1].
\label{1310}
\end{equation}%
Moreover, the crossing relation passes to the limit. Indeed, 
\begin{equation}
L_{n_{k}}(c_{n_{k}})=c_{n_{k}},  \label{1311}
\end{equation}%
and 
\begin{equation}
|L^{\ast }(c^{\ast })-c^{\ast }|\leq |L^{\ast }(c^{\ast })-L^{\ast
}(c_{n_{k}})|+|L^{\ast }(c_{n_{k}})-L_{n_{k}}(c_{n_{k}})|+|c_{n_{k}}-c^{\ast
}|.  \label{1312}
\end{equation}%
The first term tends to $0$ by continuity of $L^{\ast }$, the second term
tends to $0$ by uniform convergence, and the third term tends to $0$ by
construction. Thus 
\begin{equation}
L^{\ast }(c^{\ast })=c^{\ast }.  \label{1313}
\end{equation}
\end{proof}

By symmetry, 
\begin{equation}
L^{\ast }(1-c^{\ast })=1-L^{\ast }(c^{\ast })=1-c^{\ast }.  \label{1314}
\end{equation}%
Thus both $c^{\ast }$ and $1-c^{\ast }$ are crossing points of $L^{\ast }$
with the diagonal. By the one-crossing assumption inherited in the limiting
framework, this forces 
\begin{equation}
c^{\ast }=1-c^{\ast },  \label{1314.1}
\end{equation}%
and hence 
\begin{equation}
c^{\ast }=\frac{1}{2}.  \label{1314.2}
\end{equation}%
Therefore every cluster point of $\{c_{n}\}_{n\geq 0}$ is equal to $\tfrac{1%
}{2}$, and (\ref{1307}) follows.

It is important to note that we can release some of the underlying
assumptions.

\begin{remark}
\noindent \textbf{\label{endpoint-separation-release} }In the proof of 
\textit{Claim~\ref{M}} at the start of the appendix, we relied on the
auxiliary assumption 
\begin{equation*}
c_{n}\in \lbrack \varepsilon ,1-\varepsilon ]\qquad (n\geq 0).
\end{equation*}%
We now show that, in the regimes in which \textit{Claim~\ref{An-vanishing}}
applies, this assumption is in fact not needed. The convergence%
\begin{equation}
A_{n}=\int_{0}^{1}|D_{n}(x)|\,dx\longrightarrow 0,\qquad
D_{n}(x)=L_{n}(x)+L_{n}(1-x)-1.  \label{505a}
\end{equation}

was proved in \textit{Claim~\ref{An-vanishing}} without using \textit{Claim~%
\ref{M}}, \textit{Arzel\`{a}--Ascoli} compactness, or uniform subsequential
convergence of $L_{n}$. Indeed, its proof uses only the one-sign structure
of $D_{n}$, the monotonicity properties of $D_{n}$, \textit{Helly selection}
for the auxiliary monotone functions, and dominated convergence. Therefore (%
\ref{505a}) may be used before the \textit{uniform Lipschitz} and \textit{%
Arzel\`{a}--Ascoli} compactness arguments.
\end{remark}

\begin{claim}
\noindent \textbf{\label{In-noncollapse-from-An} }Under the hypotheses of
Claim~\ref{An-vanishing}, the normalizing constants 
\begin{equation}
I_{n}=\int_{0}^{1}T_{n}(u)(1-T_{n}(u))\,du,\qquad T_{n}=L_{n}^{-1},
\label{505c}
\end{equation}%
are uniformly bounded away from zero. That is, there exists a constant $C>0$
such that 
\begin{equation}
I_{n}\geq C>0\qquad \text{for all }n\geq 0.  \label{505d}
\end{equation}%
Consequently, in the situation covered by Claim~\ref{An-vanishing}, the
assumption $(\ref{499})$ in Claim~\ref{M} may be omitted.
\end{claim}

\begin{proof}
\noindent Suppose, to the contrary, that the conclusion is false. Then there
exists a subsequence, still denoted by $n_{k}$, such that 
\begin{equation}
I_{n_{k}}\rightarrow 0.  \label{505e}
\end{equation}

Let $\mu _{n}$ be the probability measure on $[0,1]$ with distribution
function $L_{n}$. Since $[0,1]$ is compact, \textit{Helly's selection
principle}, or equivalently \textit{Prokhorov compactness }(see, e.g.,%
\textit{\ }\cite[Chapter 1]{[68]} and \cite[Chapter 23]{[75]}) on a compact
metric space, gives a further subsequence, again denoted by $n_{k}$, and a
probability measure $\mu ^{\ast }$ on $[0,1]$ such that 
\begin{equation}
\mu _{n_{k}}\Rightarrow \mu ^{\ast }.  \label{505f}
\end{equation}%
Let $L^{\ast }$ be the distribution function of $\mu ^{\ast }$. Then 
\begin{equation}
L_{n_{k}}(x)\rightarrow L^{\ast }(x)  \label{505g}
\end{equation}%
at every continuity point $x$ of $L^{\ast }$.

We next identify the possible weak limit. Since $T_{n}=L_{n}^{-1}$ is the
quantile function corresponding to $\mu _{n}$, we have 
\begin{equation}
I_{n}=\int_{0}^{1}T_{n}(u)(1-T_{n}(u))\,du=\int_{[0,1]}x(1-x)\,d\mu _{n}(x).
\label{505h}
\end{equation}%
The function $x\mapsto x(1-x)$ is continuous and bounded on $[0,1]$.
Therefore, from $(\ref{505f})$, 
\begin{equation}
\int_{\lbrack 0,1]}x(1-x)\,d\mu _{n_{k}}(x)\longrightarrow \int_{\lbrack
0,1]}x(1-x)\,d\mu ^{\ast }(x).  \label{505i}
\end{equation}%
Together with $(\ref{505e})$, this gives 
\begin{equation}
\int_{\lbrack 0,1]}x(1-x)\,d\mu ^{\ast }(x)=0.  \label{505j}
\end{equation}%
Since $x(1-x)>0$ for $x\in (0,1)$, the measure $\mu ^{\ast }$ is supported
on the two endpoint points $\{0,1\}$. Hence there exists $p\in \lbrack 0,1]$
such that 
\begin{equation}
\mu ^{\ast }=p\delta _{0}+(1-p)\delta _{1}.  \label{505k}
\end{equation}%
Equivalently, 
\begin{equation}
L^{\ast }(x)=%
\begin{cases}
0, & x<0, \\ 
p, & 0\leq x<1, \\ 
1, & x\geq 1.%
\end{cases}
\label{505l}
\end{equation}%
In particular, 
\begin{equation}
L^{\ast }(x)=p\qquad \text{for every }x\in (0,1).  \label{505m}
\end{equation}

We now use the vanishing of the symmetry defect. For every $x\in (0,1)$,
both $x$ and $1-x$ are continuity points of $L^{\ast }$, because $L^{\ast }$
is constant on $(0,1)$. Hence 
\begin{equation*}
L_{n_{k}}(x)\rightarrow p,\qquad L_{n_{k}}(1-x)\rightarrow p,
\end{equation*}%
and therefore 
\begin{equation}
D_{n_{k}}(x)=L_{n_{k}}(x)+L_{n_{k}}(1-x)-1\longrightarrow 2p-1\qquad (0<x<1).
\label{505n}
\end{equation}%
By \textit{Fatou's lemma} and $(\ref{1272})$, 
\begin{equation}
\int_{0}^{1}|2p-1|\,dx\leq \liminf_{k\rightarrow \infty
}\int_{0}^{1}|D_{n_{k}}(x)|\,dx=0.  \label{505o}
\end{equation}%
Thus 
\begin{equation}
2p-1=0,\qquad p=\frac{1}{2}.  \label{505p}
\end{equation}%
Consequently the only possible denominator-collapsing limit under $(\ref%
{1272})$ is 
\begin{equation}
\mu ^{\ast }=\frac{1}{2}\delta _{0}+\frac{1}{2}\delta _{1},  \label{505q}
\end{equation}%
or, equivalently, 
\begin{equation}
L^{\ast }(x)=\frac{1}{2}\qquad \text{for }0<x<1.  \label{505r}
\end{equation}

It remains to show that this is incompatible with the crossing pattern. Let $%
c_{n}\in (0,1)$ be the unique interior fixed point from \textit{Lemma~\ref%
{l-cross}}, so that 
\begin{equation}
L_{n}(x)<x\quad (0<x<c_{n}),\qquad L_{n}(x)>x\quad (c_{n}<x<1).  \label{505s}
\end{equation}%
Since $c_{n_{k}}\in \lbrack 0,1]$, by passing to a further subsequence if
necessary we may assume 
\begin{equation}
c_{n_{k}}\rightarrow c\qquad \text{for some }c\in \lbrack 0,1].  \label{505t}
\end{equation}

\noindent If $c<1$, choose 
\begin{equation*}
x\in (\max \{c,1/2\},1).
\end{equation*}%
Then $x>c_{n_{k}}$ for all sufficiently large $k$. By the crossing pattern, 
\begin{equation*}
L_{n_{k}}(x)>x
\end{equation*}%
for all sufficiently large $k$. Passing to the limit and using $(\ref{505r})$%
, we obtain 
\begin{equation*}
\frac{1}{2}\geq x,
\end{equation*}%
which contradicts the choice $x>1/2$.

\noindent If $c=1$, choose $x\in (0,1/2)$. Then $x<c_{n_{k}}$ for all
sufficiently large $k$. By the crossing pattern, 
\begin{equation*}
L_{n_{k}}(x)<x
\end{equation*}%
for all sufficiently large $k$. Passing to the limit and using $(\ref{505r})$%
, we obtain 
\begin{equation*}
\frac{1}{2}\leq x,
\end{equation*}%
which contradicts the choice $x<1/2$.

Both cases lead to contradictions. Hence the assumption $(\ref{505e})$ is
impossible. Therefore 
\begin{equation*}
\inf_{n\geq 0}I_{n}>0.
\end{equation*}%
This proves the claim. \hfill
\end{proof}

\begin{corollary}
\textbf{\label{M-unconditional-under-An} }In every regime of Appendix~C in
which Claim~\ref{An-vanishing} applies, the auxiliary assumption $(\ref{499}%
) $ in Claim~\ref{M} is superfluous. More precisely, under the hypotheses of
Claim~\ref{An-vanishing}, the conclusion of Claim~\ref{M} holds without
assuming 
\begin{equation*}
c_{n}\in \lbrack \varepsilon ,1-\varepsilon ].
\end{equation*}
\end{corollary}

\begin{proof}
\textit{\noindent Claim~\ref{In-noncollapse-from-An}} gives a constant $C>0$
such that 
\begin{equation*}
I_{n}\geq C>0\qquad (n\geq 0).
\end{equation*}%
Since 
\begin{equation*}
0\leq T_{n}(x)(1-T_{n}(x))\leq \frac{1}{4},
\end{equation*}%
the iteration formula yields 
\begin{equation*}
0\leq L_{n+1}^{\prime }(x)=\frac{T_{n}(x)(1-T_{n}(x))}{I_{n}}\leq \frac{1/4}{%
C}.
\end{equation*}%
The same direct estimate applies to $L_{0}$ from its defining formula. Hence
the derivative bound and the \textit{Lipschitz} conclusion of \textit{Claim~%
\ref{M}} follow without the $c_{n}\in \lbrack \varepsilon ,1-\varepsilon ]$
assumption. \hfill
\end{proof}

\begin{flushleft}
\textbf{Variation-diminishing structures} \textbf{and} \textbf{classes of
starting laws for which the symmetry-defect contraction applies}
\end{flushleft}

This section establishes sufficient conditions for the validity of the
constant sign assumptions regarding the initial defect. Consequently, these
also constitute sufficient conditions for the symmetry of the subsequential
limits of $L_{n}$ about $\frac{1}{2}$.

We first study the number of sign changes of the symmetry defect in a
broader context 
\begin{equation}
D_{n}(x)=L_{n}(x)+L_{n}(1-x)-1,\qquad x\in \lbrack 0,1],  \label{1315}
\end{equation}%
and we relate the sign-change dynamics to classical variation-diminishing
theory in the sense of Karlin. Then we formulate explicit sufficient
conditions on the starting distribution $F$ ensuring either immediate
entrance into the one-sign regime or a strict reduction of sign changes at
the first iteration.

\begin{definition}
\label{Z-signchanges} Let $f$ be a continuous real-valued function on an
interval $(a,b)$. We denote by 
\begin{equation}
Z(f;(a,b))  \label{1316}
\end{equation}%
the maximal integer $m\geq 0$ for which there exist points 
\begin{equation}
a<x_{1}<\cdots <x_{m+1}<b  \label{1317}
\end{equation}%
such that 
\begin{equation}
f(x_{i})f(x_{i+1})<0,\qquad i=1,\dots ,m.  \label{1318}
\end{equation}%
Thus $Z(f;(a,b))$ is the number of strict sign changes of $f$ on $(a,b)$.
Whenever the interval is $(0,\tfrac{1}{2})$ we write simply 
\begin{equation}
Z(f)=Z\!\left( f;\left( 0,\tfrac{1}{2}\right) \right) .  \label{1319}
\end{equation}
\end{definition}

\begin{remark}
\label{bibliographic-VD} The variation-diminishing theorem below for totally
nonnegative kernels goes back to the classical theory of total positivity; a
standard reference is \cite{[36]}, especially the material in Ch.~5. A
direct statistical treatment of variation-diminishing transformations is
given in \cite{[64]}. For modern background on totally positive matrices and
kernels, see \cite{[65]}.
\end{remark}

We start with an introductory lemma for the \textit{Volterra kernel}.

\begin{lemma}
\label{Volterra-kernel-TN} Let 
\begin{equation}
K(x,u)=1_{\{0\leq u\leq x\leq \tfrac{1}{2}\}},\qquad (x,u)\in \left[ 0,%
\tfrac{1}{2}\right] ^{2}.  \label{1320}
\end{equation}%
Then $K$ is a totally nonnegative kernel on $\left[ 0,\tfrac{1}{2}\right]
^{2}$. Equivalently, for every $m\geq 1$ and every strictly increasing
tuples 
\begin{equation}
0<x_{1}<\cdots <x_{m}\leq \frac{1}{2},\qquad 0<u_{1}<\cdots <u_{m}\leq \frac{%
1}{2},  \label{1321}
\end{equation}%
the determinant 
\begin{equation}
\det \!(K(x_{i},u_{j}))_{i,j=1}^{m}  \label{1321.1}
\end{equation}%
is nonnegative. In fact, each such determinant is either $0$ or $1$.
\end{lemma}

\begin{proof}
Fix $m$ and strictly increasing tuples $x_{1}<\cdots <x_{m}$ and $%
u_{1}<\cdots <u_{m}$. Set 
\begin{equation}
A_{ij}=K(x_{i},u_{j})=1_{\{u_{j}\leq x_{i}\}}.  \label{1322}
\end{equation}%
For each row index $i$, the sequence $A_{i1},\dots ,A_{im}$ consists of an
initial block of $1$'s followed by a block of $0$'s, because the $u_{j}$ are
increasing. Hence each row has the form 
\begin{equation}
(\underbrace{1,\dots ,1}_{k_{i}\ \text{entries}},0,\dots ,0)  \label{1323}
\end{equation}%
for some integer $k_{i}\in \{0,\dots ,m\}$. Since the $x_{i}$ are
increasing, the row parameters satisfy 
\begin{equation}
0\leq k_{1}\leq k_{2}\leq \cdots \leq k_{m}\leq m.  \label{1324}
\end{equation}

If $k_{i}=k_{i+1}$ for some $i$, then rows $i$ and $i+1$ are identical, so 
\begin{equation}
\det A=0.  \label{1325}
\end{equation}%
Therefore a nonzero determinant is possible only when 
\begin{equation}
k_{i}=i\qquad \text{for every }i=1,\dots ,m.  \label{1326}
\end{equation}%
In that case $A$ is lower triangular with all diagonal entries equal to $1$,
hence 
\begin{equation}
\det A=1.  \label{1327}
\end{equation}%
Thus every minor is either $0$ or $1$, in particular nonnegative.

Now we can formulate the \textit{Karlin's variation-diminishing theorem} for
the \textit{Volterra kernel.}\hfill
\end{proof}

\begin{theorem}
\label{VD-Volterra} Let 
\begin{equation}
(\mathcal{K}f)(x)=\int_{0}^{1/2}K(x,u)f(u)\,du=\int_{0}^{x}f(u)\,du,\qquad
0\leq x\leq \frac{1}{2},  \label{1328}
\end{equation}%
where $K$ is the kernel from \textit{Lemma~\ref{Volterra-kernel-TN}}. Then 
\begin{equation}
Z(\mathcal{K}f)\leq Z(f)  \label{1329}
\end{equation}%
for every continuous function $f$ on $[0,\tfrac{1}{2}]$.
\end{theorem}

\begin{proof}
By \textit{Lemma~\ref{Volterra-kernel-TN}}, the kernel $K$ is totally
nonnegative. The variation-diminishing theorem for totally nonnegative
kernels therefore applies; see \textit{Remark~\ref{bibliographic-VD}}. This
yields (\ref{1329}). \hfill
\end{proof}

\begin{claim}
\label{VD-applied} For every $n\geq 0$, define 
\begin{equation}
S_{n}(u)=T_{n}(u)+T_{n}(1-u)-1,\qquad 0\leq u\leq 1,  \label{1330}
\end{equation}%
and 
\begin{equation}
G_{n}(u)=w(T_{n}(u))-w(T_{n}(1-u)),\qquad w(y)=y(1-y),\qquad 0\leq u\leq 
\frac{1}{2}.  \label{1331}
\end{equation}%
Then the following hold.

\begin{enumerate}
\item For $0\leq x\leq \tfrac{1}{2}$, 
\begin{equation}
D_{n+1}(x)=\frac{1}{I_{n}}\int_{0}^{x}G_{n}(u)\,du,\qquad
I_{n}=\int_{0}^{1}w(T_{n}(v))\,dv>0.  \label{C1-Dn1-primitive}
\end{equation}

\item For $0<u<\tfrac{1}{2}$, 
\begin{equation}
G_{n}(u)=(T_{n}(1-u)-T_{n}(u))S_{n}(u),  \label{C1-Gn-factor}
\end{equation}%
hence 
\begin{equation}
Z(G_{n})=Z(S_{n}).  \label{C1-ZGn-ZSn}
\end{equation}

\item We have 
\begin{equation}
Z(S_{n})=Z(D_{n}).  \label{C1-ZSn-ZDn}
\end{equation}

\item Consequently, 
\begin{equation}
Z(D_{n+1})\leq Z(D_{n})\qquad \text{for every }n\geq 0.  \label{C1-ZDn-mon}
\end{equation}
\end{enumerate}
\end{claim}

\begin{proof}
For the first statement, recall that 
\begin{equation}
L_{n+1}(x)=\frac{\int_{0}^{x}w(T_{n}(u))\,du}{I_{n}}.  \label{1332}
\end{equation}%
Hence for $0\leq x\leq \tfrac{1}{2}$, 
\begin{align}
D_{n+1}(x)& =L_{n+1}(x)+L_{n+1}(1-x)-1  \label{1332.1} \\
& =\frac{1}{I_{n}}\left(
\int_{0}^{x}w(T_{n}(u))\,du+\int_{0}^{1-x}w(T_{n}(u))\,du-%
\int_{0}^{1}w(T_{n}(u))\,du\right)  \notag \\
& =\frac{1}{I_{n}}\left(
\int_{0}^{x}w(T_{n}(u))\,du-\int_{1-x}^{1}w(T_{n}(u))\,du\right) .  \notag
\end{align}%
Changing variables $v=1-u$ in the second integral, we obtain 
\begin{equation}
D_{n+1}(x)=\frac{1}{I_{n}}\int_{0}^{x}(w(T_{n}(u))-w(T_{n}(1-u)))\,du,
\label{1333}
\end{equation}%
which is (\ref{C1-Dn1-primitive}).

For the second statement, use 
\begin{equation}
w(a)-w(b)=(a-b)(1-a-b).  \label{1334}
\end{equation}%
With $a=T_{n}(u)$ and $b=T_{n}(1-u)$ this gives 
\begin{align}
G_{n}(u)& =w(T_{n}(u))-w(T_{n}(1-u))  \label{1335} \\
& =(T_{n}(u)-T_{n}(1-u))(1-T_{n}(u)-T_{n}(1-u))  \notag \\
& =(T_{n}(1-u)-T_{n}(u))(T_{n}(u)+T_{n}(1-u)-1),  \notag
\end{align}%
which is exactly (\ref{C1-Gn-factor}). Since $T_{n}$ is strictly increasing,
the factor $T_{n}(1-u)-T_{n}(u)$ is strictly positive on $(0,\tfrac{1}{2})$,
so $G_{n}$ and $S_{n}$ have the same sign pattern there. Hence (\ref%
{C1-ZGn-ZSn}) holds.

For the third statement, let $x\in (0,1)$ and set $u=L_{n}(x)$. Then 
\begin{equation}
S_{n}(u)=T_{n}(u)+T_{n}(1-u)-1=x+T_{n}(1-L_{n}(x))-1.  \label{1336}
\end{equation}%
Therefore 
\begin{equation}
S_{n}(L_{n}(x))=T_{n}(1-L_{n}(x))-(1-x).  \label{1337}
\end{equation}%
Because $T_{n}$ is increasing, the sign of this quantity is the sign of 
\begin{equation}
(1-L_{n}(x))-L_{n}(1-x)=-(L_{n}(x)+L_{n}(1-x)-1)=-D_{n}(x).  \label{1338}
\end{equation}%
Thus 
\begin{equation}
\func{sgn}(S_{n}(L_{n}(x)))=-\func{sgn}(D_{n}(x))\qquad (x\in (0,1),\
D_{n}(x)\neq 0).  \label{1339}
\end{equation}%
Since $L_{n}$ is a strictly increasing homeomorphism of $[0,1]$, composition
with $L_{n}$ preserves the number of sign changes. Hence (\ref{C1-ZSn-ZDn})
follows.

Finally, by \textit{Theorem~\ref{VD-Volterra}} and (\ref{C1-Dn1-primitive}), 
\begin{equation}
Z(D_{n+1})\leq Z(G_{n}).  \label{1340}
\end{equation}%
Using (\ref{C1-ZGn-ZSn}) and (\ref{C1-ZSn-ZDn}) we conclude that 
\begin{equation}
Z(D_{n+1})\leq Z(D_{n}),  \label{1341}
\end{equation}%
which is (\ref{C1-ZDn-mon}). \hfill
\end{proof}

\begin{remark}
\label{no-strict-from-kernel} The preceding argument gives only the
non-strict inequality 
\begin{equation}
Z(D_{n+1})\leq Z(D_{n}).  \label{1342}
\end{equation}%
It does not give the strict inequality 
\begin{equation}
Z(D_{n+1})<Z(D_{n})\qquad \text{whenever }Z(D_{n})>0.  \label{1343}
\end{equation}%
There are two reasons for this.

First, the \textit{Volterra kernel} $K(x,u)=1_{\{u\leq x\}}$ is totally
nonnegative, but it is not strictly totally positive. Indeed, for instance,
if 
\begin{equation}
0<u_{1}<u_{2}<x_{1}<x_{2}<\frac{1}{2},  \label{1344}
\end{equation}%
then 
\begin{equation}
\det 
\begin{pmatrix}
K(x_{1},u_{1}) & K(x_{1},u_{2}) \\ 
K(x_{2},u_{1}) & K(x_{2},u_{2})%
\end{pmatrix}%
=\det 
\begin{pmatrix}
1 & 1 \\ 
1 & 1%
\end{pmatrix}%
=0.  \label{1345}
\end{equation}%
Thus the strict versions of \textit{Karlin's theorems} do not apply directly
to this kernel.

Second, strict loss of sign changes is false in general for the \textit{%
Volterra operator} even on smooth inputs. For example, on $[0,\tfrac{1}{2}]$
define 
\begin{equation}
f(u)=u-\frac{1}{8}.  \label{1346}
\end{equation}%
Then $f$ is $C^{\infty }$ and has exactly one sign change: 
\begin{equation}
Z(f)=1.  \label{1347}
\end{equation}%
However 
\begin{equation}
(\mathcal{K}f)(x)=\int_{0}^{x}\left( u-\frac{1}{8}\right) \,du=\frac{x^{2}}{2%
}-\frac{x}{8}=x\left( \frac{x}{2}-\frac{1}{8}\right) ,  \label{1348}
\end{equation}%
which also has exactly one sign change on $(0,\tfrac{1}{2})$. Hence 
\begin{equation}
Z(\mathcal{K}f)=1=Z(f).  \label{1349}
\end{equation}

Therefore strict reduction of sign changes cannot be deduced from the kernel
alone. The weight $w(y)=y(1-y)$ does not alter this conclusion at the level
of the \textit{Karlin argument}, because $w$ enters only through the input
profile 
\begin{equation}
G_{n}(u)=w(T_{n}(u))-w(T_{n}(1-u)),  \label{1350}
\end{equation}%
while the transform from $G_{n}$ to $D_{n+1}$ remains the same \textit{%
Volterra operator}.
\end{remark}

\begin{claim}
\label{strict-drop-masses} Let $f$ be a continuous function on $[0,a]$, $a>0$%
, with finitely many sign changes on $(0,a)$. Suppose that the zeros of $f$
on $[0,a]$ satisfy 
\begin{equation*}
0=\xi _{0}<\xi _{1}<\cdots <\xi _{m}<\xi _{m+1}=a,
\end{equation*}%
and that $f$ has constant sign on each nodal interval 
\begin{equation*}
I_{k}=(\xi _{k-1},\xi _{k}),\qquad k=1,\dots ,m+1,
\end{equation*}%
with alternating signs from one interval to the next.

Define the nodal masses 
\begin{equation}
M_{k}=\int_{\xi _{k-1}}^{\xi _{k}}f(u)\,du,\qquad k=1,\dots ,m+1.
\label{1351}
\end{equation}%
Assume moreover that 
\begin{equation}
|M_{1}|\geq |M_{2}|\geq \cdots \geq |M_{m+1}|.  \label{1352}
\end{equation}%
Then the primitive 
\begin{equation*}
F(x)=\int_{0}^{x}f(u)\,du
\end{equation*}%
satisfies 
\begin{equation}
Z(F)<Z(f).  \label{1353}
\end{equation}
\end{claim}

\begin{proof}
Because the signs of $f$ alternate from lobe to lobe, the numbers $M_{k}$
alternate in sign as well. Suppose, for definiteness, that $M_{1}>0$; the
other case is identical after multiplying by $-1$. Then 
\begin{equation*}
M_{1}>0,\quad M_{2}<0,\quad M_{3}>0,\ \dots
\end{equation*}%
and the monotonicity (\ref{1352}) implies that the partial sums 
\begin{equation*}
S_{k}=M_{1}+\cdots +M_{k},\qquad k=1,\dots ,m+1,
\end{equation*}%
are all nonnegative. More precisely, each $S_{k}$ has the sign of $M_{1}$
unless an equality of consecutive absolute nodal masses produces a zero
partial sum. In either case, the sequence $\{S_{k}\}_{k\geq 0}$ cannot
alternate signs.

Now $F(\xi _{k})=S_{k}$. Since the values $F(\xi _{k})$ do not alternate
signs, the primitive $F$ cannot alternate sign across all nodal intervals of 
$f$. Therefore $F$ must have strictly fewer sign changes than $f$, proving (%
\ref{1353}). \hfill
\end{proof}

\begin{corollary}
\label{strict-drop-iterates} Fix $n\geq 0$. Let the zeros of 
\begin{equation}
G_{n}(u)=w(T_{n}(u))-w(T_{n}(1-u))  \label{1354}
\end{equation}%
on $(0,\tfrac{1}{2})$ be 
\begin{equation}
0=\xi _{0}<\xi _{1}<\cdots <\xi _{m}<\xi _{m+1}=\frac{1}{2},  \label{1355}
\end{equation}%
and define the corresponding nodal masses 
\begin{equation}
M_{k}^{(n)}=\int_{\xi _{k-1}}^{\xi _{k}}G_{n}(u)\,du,\qquad k=1,\dots ,m+1.
\label{1356}
\end{equation}%
If 
\begin{equation}
|M_{1}^{(n)}|\geq |M_{2}^{(n)}|\geq \cdots \geq |M_{m+1}^{(n)}|,
\label{1357}
\end{equation}%
then 
\begin{equation}
Z(D_{n+1})<Z(D_{n}).  \label{1358}
\end{equation}
\end{corollary}

\begin{proof}
By \textit{Claim~\ref{VD-applied}}, 
\begin{equation}
D_{n+1}(x)=\frac{1}{I_{n}}\int_{0}^{x}G_{n}(u)\,du\qquad \text{on }\left[ 0,%
\frac{1}{2}\right] ,  \label{1359}
\end{equation}%
and 
\begin{equation}
Z(G_{n})=Z(D_{n}).  \label{1360}
\end{equation}%
Since multiplication by the positive constant $I_{n}^{-1}$ does not affect
sign changes, \textit{Claim~\ref{strict-drop-masses}} applied to $f=G_{n}$
yields 
\begin{equation}
Z(D_{n+1})<Z(G_{n})=Z(D_{n}).  \label{1361}
\end{equation}
\end{proof}

The preceding corollary is not, by itself, a condition on the starting law.
Rather, it is a stepwise criterion: whenever the nodal masses of the current
profile $G_{n}$ are ordered as in (\ref{1352}), the next iterate loses at
least one sign change. Thus, if this criterion can be verified at every
iterate before the one-sign regime is reached, then the process enters the
one-sign regime after finitely many steps. If the condition fails at some
iterate, the corollary gives no conclusion at that step.\hfill

\begin{claim}
\label{start-law-HF} Let $Q=F^{-1}$ be the quantile function of the starting
distribution $F$ on $[0,1]$, and define on $[0,\tfrac{1}{2}]$ 
\begin{equation}
H_{F}(u)=Q(u)+Q(1-u)-1,  \label{1362}
\end{equation}%
\begin{equation}
W_{F}(u)=Q(1-u)-Q(u),  \label{1363}
\end{equation}%
and 
\begin{equation}
I_{F}=\int_{0}^{1}Q(t)(1-Q(t))\,dt.  \label{1364}
\end{equation}%
Assume that $I_{F}>0$. Then

\begin{enumerate}
\item $W_{F}(u)\geq 0$ and $W_{F}$ is nonincreasing on $[0,\tfrac{1}{2}]$;

\item the initial defect 
\begin{equation}
D_{0}(x)=L_{0}(x)+L_{0}(1-x)-1  \label{1365}
\end{equation}%
satisfies 
\begin{equation}
D_{0}(x)=\frac{1}{I_{F}}\int_{0}^{x}W_{F}(u)\,H_{F}(u)\,du,\qquad 0\leq
x\leq \frac{1}{2};  \label{1366}
\end{equation}

\item if, in addition, $Q$ is continuous and strictly increasing on $[0,1]$,
then 
\begin{equation}
Z(D_{0})\leq Z(H_{F}).  \label{1367}
\end{equation}
\end{enumerate}
\end{claim}

\begin{proof}
Since $Q$ is increasing, 
\begin{equation}
W_{F}(u)=Q(1-u)-Q(u)\geq 0\qquad \left( 0\leq u\leq \frac{1}{2}\right) .
\label{1368}
\end{equation}%
If $0\leq u<v\leq \tfrac{1}{2}$, then 
\begin{equation}
Q(1-u)\geq Q(1-v),\qquad Q(u)\leq Q(v),  \label{1369}
\end{equation}%
so 
\begin{equation}
W_{F}(u)=Q(1-u)-Q(u)\geq Q(1-v)-Q(v)=W_{F}(v).  \label{1370}
\end{equation}%
Thus $W_{F}$ is nonincreasing.

Next, by the definition of $L_{0}$, 
\begin{equation}
L_{0}(x)=\frac{\int_{0}^{x}Q(t)(1-Q(t))\,dt}{I_{F}}.  \label{1371}
\end{equation}%
Exactly as in the proof of \textit{Claim~\ref{VD-applied}}, for $0\leq x\leq 
\tfrac{1}{2}$, 
\begin{align}
D_{0}(x)& =\frac{1}{I_{F}}\left(
\int_{0}^{x}Q(u)(1-Q(u))\,du-\int_{1-x}^{1}Q(u)(1-Q(u))\,du\right)  \notag \\
& =\frac{1}{I_{F}}\int_{0}^{x}(Q(u)(1-Q(u))-Q(1-u)(1-Q(1-u)))\,du.
\label{1372}
\end{align}%
Using 
\begin{equation}
a(1-a)-b(1-b)=(b-a)(a+b-1),  \label{1373}
\end{equation}%
with $a=Q(u)$ and $b=Q(1-u)$, we obtain 
\begin{equation}
Q(u)(1-Q(u))-Q(1-u)(1-Q(1-u))=(Q(1-u)-Q(u))(Q(u)+Q(1-u)-1),  \label{1374}
\end{equation}%
which proves (\ref{1366}).

Assume now that $Q$ is continuous and strictly increasing on $[0,1]$. Then $%
H_{F}$ and $W_{F}$ are continuous on $[0,\tfrac{1}{2}]$, and 
\begin{equation}
W_{F}(u)>0\qquad \text{for every }0<u<\frac{1}{2}.  \label{1375}
\end{equation}%
Hence $W_{F}H_{F}$ is continuous on $[0,\tfrac{1}{2}]$, and by \textit{%
Theorem~\ref{VD-Volterra}} applied to 
\begin{equation}
D_{0}(x)=\frac{1}{I_{F}}\int_{0}^{x}(W_{F}H_{F})(u)\,du,  \label{1376}
\end{equation}%
we obtain 
\begin{equation}
Z(D_{0})\leq Z(W_{F}H_{F}).  \label{1377}
\end{equation}%
Since $W_{F}(u)>0$ on $(0,\tfrac{1}{2})$, the functions $W_{F}H_{F}$ and $%
H_{F}$ have the same sign changes on $(0,\tfrac{1}{2})$, and therefore 
\begin{equation}
Z(W_{F}H_{F})=Z(H_{F}).  \label{1378}
\end{equation}%
This proves (\ref{1367}). \hfill \hfill
\end{proof}

\begin{corollary}
\label{HF-one-sign} If 
\begin{equation}
H_{F}(u)=Q(u)+Q(1-u)-1  \label{1393}
\end{equation}%
has a constant sign on $[0,\tfrac{1}{2}]$, then $D_{0}$ has a constant sign
on $[0,1]$. Consequently, by \textit{Lemma~\ref{defect-shape}}, all later
defects $D_{n}$ have a constant sign on $[0,1]$, with alternating sign in $n$%
.
\end{corollary}

\begin{proof}
If $H_{F}$ has a constant sign, then so does $W_{F}H_{F}$ because $W_{F}\geq
0$ on $[0,\tfrac{1}{2}]$. Formula (\ref{1366}) then shows that $%
D_{0}^{\prime }$ has a fixed sign on $[0,\tfrac{1}{2}]$. Since $D_{0}(0)=0$
and $D_{0}(1-x)=D_{0}(x)$, it follows that $D_{0}$ has a constant sign on $%
[0,1]$. The persistence statement is then exactly \textit{Lemma~\ref%
{defect-shape}}. \hfill
\end{proof}

\begin{corollary}
\label{two-lobe} Assume that $Q$ is continuous and strictly increasing on $%
[0,1]$. Assume that $H_{F}$ has exactly one zero 
\begin{equation}
0<\xi <\frac{1}{2}  \label{1394}
\end{equation}%
on $(0,\tfrac{1}{2})$. If 
\begin{equation}
\left\vert \int_{0}^{\xi }W_{F}(u)H_{F}(u)\,du\right\vert \geq \left\vert
\int_{\xi }^{1/2}W_{F}(u)H_{F}(u)\,du\right\vert ,  \label{1395}
\end{equation}%
then $D_{0}$ has a constant sign on $[0,1]$.

A simpler sufficient condition is 
\begin{equation}
\int_{0}^{\xi }|H_{F}(u)|\,du\geq \int_{\xi }^{1/2}|H_{F}(u)|\,du,
\label{1396}
\end{equation}%
because $W_{F}$ is nonincreasing on $[0,\tfrac{1}{2}]$.
\end{corollary}

\begin{proof}
If $H_{F}$ has exactly one zero, then $W_{F}H_{F}$ has exactly two nodal
intervals on $(0,\tfrac{1}{2})$ because $W_{F}(u)>0$ for every $0<u<\tfrac{1%
}{2}$. Condition (\ref{1395}) says exactly that the first nodal mass
dominates the second in absolute value. Therefore \textit{Claim~\ref%
{strict-drop-masses}} implies that the primitive (\ref{1366}) has no sign
change on $(0,\tfrac{1}{2})$, i.e. $D_{0}$ has a constant sign there, and
hence on all of $[0,1]$ by symmetry.

To obtain (\ref{1396}) as a sufficient condition, note that $W_{F}$ is
nonincreasing on $[0,\tfrac{1}{2}]$. Thus 
\begin{equation}
\left\vert \int_{0}^{\xi }W_{F}(u)H_{F}(u)\,du\right\vert \geq W_{F}(\xi
)\int_{0}^{\xi }|H_{F}(u)|\,du,  \label{1397}
\end{equation}%
whereas 
\begin{equation}
\left\vert \int_{\xi }^{1/2}W_{F}(u)H_{F}(u)\,du\right\vert \leq W_{F}(\xi
)\int_{\xi }^{1/2}|H_{F}(u)|\,du.  \label{1398}
\end{equation}%
Hence (\ref{1396}) implies (\ref{1395}). \hfill
\end{proof}

\begin{corollary}
\label{general-lobes} Assume that $Q$ is continuous and strictly increasing
on $[0,1]$. Assume that the zeros of $H_{F}$ on $(0,\tfrac{1}{2})$ are 
\begin{equation}
0=\xi _{0}<\xi _{1}<\cdots <\xi _{m}<\xi _{m+1}=\frac{1}{2},  \label{1399}
\end{equation}%
and define the lobe areas 
\begin{equation}
A_{k}(F)=\int_{\xi _{k-1}}^{\xi _{k}}|H_{F}(u)|\,du,\qquad k=1,\dots ,m+1.
\label{1400}
\end{equation}%
If 
\begin{equation}
A_{1}(F)\geq A_{2}(F)\geq \cdots \geq A_{m+1}(F),  \label{1401}
\end{equation}%
then 
\begin{equation}
Z(D_{0})<Z(H_{F}).  \label{1402}
\end{equation}
\end{corollary}

\begin{proof}
Let 
\begin{equation}
M_{k}(F)=\int_{\xi _{k-1}}^{\xi _{k}}W_{F}(u)H_{F}(u)\,du  \label{1403}
\end{equation}%
be the weighted nodal masses of $W_{F}H_{F}$. Since $W_{F}$ is nonincreasing
on $[0,\tfrac{1}{2}]$, we have 
\begin{equation}
|M_{k}(F)|=\int_{\xi _{k-1}}^{\xi _{k}}W_{F}(u)|H_{F}(u)|\,du\geq W_{F}(\xi
_{k})\int_{\xi _{k-1}}^{\xi _{k}}|H_{F}(u)|\,du=W_{F}(\xi _{k})A_{k}(F),
\label{1404}
\end{equation}%
and also 
\begin{equation}
|M_{k+1}(F)|\leq W_{F}(\xi _{k})\int_{\xi _{k}}^{\xi
_{k+1}}|H_{F}(u)|\,du=W_{F}(\xi _{k})A_{k+1}(F).  \label{1405}
\end{equation}%
Therefore (\ref{1400}) implies 
\begin{equation}
|M_{1}(F)|\geq |M_{2}(F)|\geq \cdots \geq |M_{m+1}(F)|.  \label{1406}
\end{equation}%
Applying \textit{Claim~\ref{strict-drop-masses}} to 
\begin{equation}
f(u)=W_{F}(u)H_{F}(u),  \label{1407}
\end{equation}%
and using (\ref{1366}), yields 
\begin{equation}
Z(D_{0})<Z(W_{F}H_{F})=Z(H_{F}),  \label{1408}
\end{equation}%
because $W_{F}(u)>0$ for every $0<u<\tfrac{1}{2}$. \hfill
\end{proof}

\begin{claim}
\label{prob-meaning} Assume that $F$ is continuous and strictly increasing
on $[0,1]$, so that $Q=F^{-1}$ is its genuine inverse. Let 
\begin{equation}
F^{\sharp }(x)=1-F(1-x),\qquad x\in \lbrack 0,1],  \label{1409}
\end{equation}%
be the cdf of the reflected random variable $1-X$ when $X\sim F$. Then the
following are equivalent:

\begin{enumerate}
\item 
\begin{equation}
Q(u)+Q(1-u)-1\leq 0\qquad \forall u\in \lbrack 0,1];  \label{1410}
\end{equation}

\item 
\begin{equation}
F(x)\geq F^{\sharp }(x)=1-F(1-x)\qquad \forall x\in \lbrack 0,1];
\label{1411}
\end{equation}

\item 
\begin{equation}
X\preceq _{\mathrm{st}}1-X,  \label{1412}
\end{equation}%
that is, $X$ is stochastically dominated by its reflection.
\end{enumerate}

Likewise, 
\begin{equation}
Q(u)+Q(1-u)-1\geq 0\ \forall u  \label{1413}
\end{equation}%
is equivalent to 
\begin{equation}
F(x)\leq 1-F(1-x)\ \forall x,  \label{1414}
\end{equation}%
that is, $1-X\preceq _{\mathrm{st}}X$.
\end{claim}

\begin{proof}
We prove the first equivalence; the second is identical with all
inequalities reversed.

Assume first that 
\begin{equation}
Q(u)+Q(1-u)-1\leq 0\qquad \forall u\in \lbrack 0,1].  \label{1415}
\end{equation}%
Fix $x\in \lbrack 0,1]$ and put $u=F(x)$. Then $x=Q(u)$, and the assumed
inequality gives 
\begin{equation}
Q(1-u)\leq 1-Q(u)=1-x.  \label{1416}
\end{equation}%
Applying the increasing map $F$ to both sides, we obtain 
\begin{equation}
1-u\leq F(1-x).  \label{1417}
\end{equation}%
Since $u=F(x)$, this is 
\begin{equation}
F(x)\geq 1-F(1-x)=F^{\sharp }(x).  \label{1418}
\end{equation}

Conversely, assume 
\begin{equation}
F(x)\geq 1-F(1-x)\qquad \forall x\in \lbrack 0,1].  \label{1419}
\end{equation}%
Fix $u\in \lbrack 0,1]$ and put $x=Q(u)$. Then $u=F(x)$, so 
\begin{equation}
u\geq 1-F(1-x).  \label{1420}
\end{equation}%
Hence 
\begin{equation}
1-u\leq F(1-x).  \label{1421}
\end{equation}%
Applying the increasing map $Q=F^{-1}$, 
\begin{equation}
Q(1-u)\leq 1-x=1-Q(u),  \label{1422}
\end{equation}%
that is, 
\begin{equation}
Q(u)+Q(1-u)-1\leq 0.  \label{1423}
\end{equation}%
This proves the equivalence.

The equivalence with stochastic order is the standard definition: 
\begin{equation}
X\preceq _{\mathrm{st}}Y\quad \Longleftrightarrow \quad F_{X}(x)\geq
F_{Y}(x)\ \forall x.  \label{1424}
\end{equation}%
With $Y=1-X$, the cdf of $Y$ is exactly $F^{\sharp }$. \hfill
\end{proof}

\begin{corollary}
\label{density-conditions} Assume that $F$ has a continuous positive density 
$f$ on $(0,1)$, and define 
\begin{equation}
G_{F}(x)=F(x)+F(1-x)-1,\qquad x\in \lbrack 0,1].  \label{1425}
\end{equation}%
Then:

\begin{enumerate}
\item If $f(x)-f(1-x)$ changes sign at most once on $(0,1)$, then $G_{F}$
has a constant sign on $[0,1]$.

\item Consequently, by \textit{Claim~\ref{prob-meaning}}, 
\begin{equation}
H_{F}(u)=Q(u)+Q(1-u)-1  \label{1426}
\end{equation}%
has a constant sign on $[0,1]$, and by \textit{Corollary~\ref{HF-one-sign}}
the first defect $D_{0}$ has a constant sign on $[0,1]$.

\item A sufficient condition for the hypothesis in \emph{(1)} is that the
ratio 
\begin{equation}
x\longmapsto \frac{f(x)}{f(1-x)}  \label{1427}
\end{equation}%
be monotone on $(0,1)$.
\end{enumerate}
\end{corollary}

\begin{proof}
Since 
\begin{equation}
G_{F}(0)=0,\qquad G_{F}(1)=0,  \label{1428}
\end{equation}%
and 
\begin{equation}
G_{F}^{\prime }(x)=f(x)-f(1-x),  \label{1429}
\end{equation}%
the first claim is immediate: if $G_{F}^{\prime }$ changes sign at most
once, then $G_{F}$ is first monotone and then monotone in the opposite
direction; since it vanishes at both endpoints, it cannot cross the
horizontal axis in the interior, and therefore has a constant sign on $[0,1]$%
.

The second statement then follows from \textit{Claim~\ref{prob-meaning}} and 
\textit{Corollary~\ref{HF-one-sign}}.

For the third statement, observe that 
\begin{equation}
f(x)-f(1-x)=f(1-x)\left( \frac{f(x)}{f(1-x)}-1\right) .  \label{1430}
\end{equation}%
Since $f(1-x)>0$, the sign changes of $f(x)-f(1-x)$ are exactly the sign
changes of $\frac{f(x)}{f(1-x)}-1$. If the ratio is monotone, then this
difference changes sign at most once.
\end{proof}

\begin{claim}
\label{sufficient-conditions-summary} Let $Q=F^{-1}$ and $H_{F}$, $W_{F}$ be
as in \textit{Claim~\ref{start-law-HF}}. The following conditions give
explicit sufficient hypotheses on the starting law $F$: items \textup{(i)}, 
\textup{(ii)} and \textup{(iv)} imply that the initial defect $D_{0}$ has a
constant sign on $[0,1]$, whereas item \textup{(iii)} gives the strict
initial sign-change reduction 
\begin{equation}
Z(D_{0})<Z(H_{F}).  \label{1378.1}
\end{equation}

\begin{enumerate}
\item \textbf{Immediate one-sign regime.} If 
\begin{equation}
H_{F}(u)=Q(u)+Q(1-u)-1  \label{1379}
\end{equation}%
has a constant sign on $[0,\tfrac{1}{2}]$, then $D_{0}$ has a constant sign
on $[0,1]$.

\item \textbf{Two-lobe dominance.} Assume moreover that $Q$ is continuous
and strictly increasing on $[0,1]$. Suppose $H_{F}$ has exactly one zero 
\begin{equation}
0<\xi <\frac{1}{2}  \label{1380}
\end{equation}%
on $(0,\tfrac{1}{2})$. If 
\begin{equation}
\left\vert \int_{0}^{\xi }W_{F}(u)H_{F}(u)\,du\right\vert \geq \left\vert
\int_{\xi }^{1/2}W_{F}(u)H_{F}(u)\,du\right\vert ,  \label{1381}
\end{equation}%
then $D_{0}$ has a constant sign on $[0,1]$. A simpler sufficient condition
is 
\begin{equation}
\int_{0}^{\xi }|H_{F}(u)|\,du\geq \int_{\xi }^{1/2}|H_{F}(u)|\,du.
\label{1382}
\end{equation}

\item \textbf{General decreasing-lobe criterion.} Assume moreover that $Q$
is continuous and strictly increasing on $[0,1]$. Suppose the zeros of $%
H_{F} $ on $(0,\tfrac{1}{2})$ are 
\begin{equation}
0=\xi _{0}<\xi _{1}<\cdots <\xi _{m}<\xi _{m+1}=\frac{1}{2},  \label{1383}
\end{equation}%
and define the lobe areas 
\begin{equation}
A_{k}(F)=\int_{\xi _{k-1}}^{\xi _{k}}|H_{F}(u)|\,du,\qquad k=1,\dots ,m+1.
\label{1384}
\end{equation}%
If 
\begin{equation}
A_{1}(F)\geq A_{2}(F)\geq \cdots \geq A_{m+1}(F),  \label{1385}
\end{equation}%
then 
\begin{equation}
Z(D_{0})<Z(H_{F}).  \label{1386}
\end{equation}

\item \textbf{Density-level criterion.} If $F$ has a continuous positive
density $f$ on $(0,1)$ and the function 
\begin{equation}
x\mapsto f(x)-f(1-x)  \label{1387}
\end{equation}%
changes sign at most once on $(0,1)$, then $H_{F}$ has a constant sign on $%
[0,1]$, hence $D_{0}$ has a constant sign on $[0,1]$. A sufficient condition
for this is that 
\begin{equation}
x\longmapsto \frac{f(x)}{f(1-x)}  \label{1388}
\end{equation}%
be monotone on $(0,1)$.
\end{enumerate}
\end{claim}

\begin{proof}
Statement \textup{(i)} is exactly \textit{Corollary~\ref{HF-one-sign}}.

Statement \textup{(ii)} is exactly \textit{Corollary~\ref{two-lobe}}.

Statement \textup{(iii)} is exactly \textit{Corollary~\ref{general-lobes}}.

For \textup{(iv)}, define 
\begin{equation}
G_{F}(x)=F(x)+F(1-x)-1,\qquad x\in \lbrack 0,1].  \label{1389}
\end{equation}%
Then 
\begin{equation}
G_{F}(0)=0,\qquad G_{F}(1)=0,\qquad G_{F}^{\prime }(x)=f(x)-f(1-x).
\label{1390}
\end{equation}%
If $G_{F}^{\prime }$ changes sign at most once, then $G_{F}$ is first
monotone and then monotone in the opposite direction; since it vanishes at
both endpoints, it cannot cross the horizontal axis in the interior, and
therefore has a constant sign on $[0,1]$.

By Claim\textit{~\ref{prob-meaning}}, this is equivalent to $%
H_{F}(u)=Q(u)+Q(1-u)-1$ having a constant sign on $[0,1]$, and thus on $[0,%
\tfrac{1}{2}]$. Hence \textup{(i)} applies and yields that $D_{0}$ has a
constant sign.

Finally, if 
\begin{equation}
x\longmapsto \frac{f(x)}{f(1-x)}  \label{1391}
\end{equation}%
is monotone, then 
\begin{equation}
f(x)-f(1-x)=f(1-x)\left( \frac{f(x)}{f(1-x)}-1\right)  \label{1392}
\end{equation}%
changes sign at most once, because $f(1-x)>0$. Thus the first part of 
\textup{(iv)} applies. \hfill
\end{proof}

\begin{remark}
\label{sufficient-conditions-discussion} The sufficient conditions of 
\textit{Claim~\ref{sufficient-conditions-summary}} admit the following
interpretation.

\begin{enumerate}
\item The strongest condition is the one-sign condition on 
\begin{equation}
H_{F}(u)=F^{-1}(u)+F^{-1}(1-u)-1.  \label{1431}
\end{equation}%
Under the regularity assumptions of \textit{Claim~\ref{prob-meaning}}, this
is equivalent to a global stochastic ordering between the starting law and
its reflection.

\item The two-lobe condition is the sharpest practically useful criterion
when $H_{F}$ changes sign exactly once on $(0,\tfrac{1}{2})$. It says that
the first lobe, after weighting by 
\begin{equation}
W_{F}(u)=F^{-1}(1-u)-F^{-1}(u),  \label{1432}
\end{equation}%
dominates the second one. Because $W_{F}$ is decreasing, the simpler
unweighted area comparison 
\begin{equation}
\int_{0}^{\xi }|H_{F}(u)|\,du\geq \int_{\xi }^{1/2}|H_{F}(u)|\,du
\label{1433}
\end{equation}%
already suffices.

\item In the multiple-lobe case, the monotone decrease of the unweighted
lobe areas of $H_{F}$ implies a monotone decrease of the weighted nodal
masses of $W_{F}H_{F}$, and this is enough to force strict loss of sign
changes at the first step.

\item At the density level, one-sided skewness relative to the reflection,
as expressed by the one-crossing property of 
\begin{equation}
f(x)-f(1-x),  \label{1434}
\end{equation}%
or more concretely by monotonicity of the ratio 
\begin{equation}
f(x)/f(1-x),  \label{1435}
\end{equation}%
is sufficient to enter the one-sign regime immediately.
\end{enumerate}

These conditions are all sufficient, not necessary. They are intended as
checkable hypotheses on the starting law $F$ which guarantee that the
process either begins in the one-sign regime or strictly reduces the number
of sign changes at the first step.
\end{remark}

\begin{remark}
\label{what-is-proved-and-open} The results above establish the following
rigorous chain.

\begin{enumerate}
\item The defect iteration is variation diminishing in the sense that 
\begin{equation}
Z(D_{n+1})\leq Z(D_{n})\qquad \forall n\geq 0.  \label{1436}
\end{equation}

\item Strict reduction of sign changes does not follow from Karlin theory
alone, because the \textit{Volterra kernel} is totally nonnegative but not
strictly totally positive.

\item Nevertheless, strict reduction can be forced by explicit nodal-mass
conditions, either at a general iterate via \textit{Corollary~\ref%
{strict-drop-iterates}}, or directly at the first step from the starting law
via \textit{Claim~\ref{sufficient-conditions-summary}}.

\item If the process reaches the one-sign regime at some step $N$, then the
same argument as in \textit{Lemma~\ref{defect-shape}} implies that all
subsequent defects $D_{n}$, $n\geq N$, have a constant sign, alternating in $%
n$, and therefore the scalar symmetry defect 
\begin{equation}
A_{n}=\int_{0}^{1}|D_{n}(x)|\,dx  \label{1437}
\end{equation}%
is available for the contraction analysis developed above.
\end{enumerate}

What remains open, in full generality, is a theorem ensuring that every
starting law enters the one-sign regime after finitely many steps. The
results of the present subsection provide rigorous sufficient conditions on $%
F$ for that entrance to occur immediately or after a provable strict
reduction of sign changes at the first step.
\end{remark}

\begin{flushleft}
\textbf{Properties of the compound maps }$\Phi _{n}(x)$
\end{flushleft}

We consider now the compound maps built from the inverses:

\begin{equation}
\Phi _{n}(x)=L_{0}^{-1}\circ L_{1}^{-1}\circ \cdots \circ
L_{n}^{-1}(x),\qquad x\in \lbrack 0,1]  \label{172a}
\end{equation}

and list several important properties of them.

Each $\Phi _{n}$ is a continuous strictly increasing self-map of $[0,1]$.
The following single-crossing structure for $\Phi _{n}$ is inherited from
that of the $L_{n}$.

\begin{lemma}
\label{Phi} \bigskip For each $n\geq 0$, the function $\Phi _{n}(x)$has two
fixed points at $0$ and $1$, and a unique fixed point in $(0,1)$. If we
denote the latter by $\phi _{n}$ then%
\begin{equation}
\Phi _{n}(x)>x\quad \text{for }x\in (0,\phi _{n}),\qquad \Phi _{n}(x)<x\quad 
\text{for }x\in (\phi _{n},1).  \label{172a1}
\end{equation}

Moreover, there exists a closed interval $I_{n}\subset (0,1)$ such that $%
\phi _{n}\in I_{n}$ and \ \ \ 
\begin{equation}
\Phi _{n}^{^{\prime }}(x)\leq 1\quad \text{for all }x\in I_{n}.
\label{172a2}
\end{equation}
\end{lemma}

\begin{proof}
We can notice that similar behavior to the one described in this claim holds
for the inverse $L_{n}^{-1}(x)$ in terms of the crossing and its general
shape by considering the previous claim and the fact that $L_{n}^{-1}(x)$ is
a mirror image of $L_{n}(x)$ with respect to the diagonal $x$. The current
claim essentially says that the composite function $\Phi _{n}(x)$ inherits
very generally behavior of all the inverses $L_{n}(x)$. It is not at all
obvious how exactly this should happen, and the proof below serves for that
purpose.

We will make the proof by induction. Initially, we consider the composition $%
L^{0,-1}(L^{1,-1}(x))$. We introduce now the deviations $\overline{h}%
_{i}(x)=x-L^{i,-1}(x)$ for $i=0,1$ and the one for the composite function $%
\overline{h}_{\Phi _{1}}(x)=x-\Phi _{1}(x)=x-L^{0,-1}(L^{1,-1}(x)).$ We will
also keep to the notation for the respective fixed points as well as their
properties from the previous claims.

First, we will prove that $\Phi _{1}(x)$ has a unique fixed point in $(0,1)$%
. We notice that we can write%
\begin{equation}
\overline{h}_{\Phi
_{1}}(x)=x-L^{0,-1}(L^{1,-1}(x))=x-L^{1,-1}(x)+L^{1,-1}(x)-L^{0,-1}(L^{1,-1}(x))=%
\overline{h}_{1}(x)+\overline{h}_{0}(L^{1,-1}(x)).  \label{173}
\end{equation}

We can define now $\overline{c}_{0}$ by $\overline{c}_{0}=L_{1}(c_{0})$.
Then for $\overline{h}_{1}$ we have: 
\begin{equation}
\overline{h}_{0}(L^{1,-1}(x))>0\quad \text{for }L^{1,-1}(x)>c_{0}\quad
\Longleftrightarrow \quad \overline{h}_{0}(L^{1,-1}(x))>0\quad \text{for }%
x\in (\overline{c}_{0},1),  \label{174}
\end{equation}%
and 
\begin{equation}
\overline{h}_{0}(L^{1,-1}(x))<0\quad \text{for }L^{1,-1}(x)<c_{0}\quad
\Longleftrightarrow \quad \overline{h}_{0}(L^{1,-1}(x))<0\quad \text{for }%
x\in (0,\overline{c}_{0}).  \label{175}
\end{equation}%
Without loss of generality, we can assume that the unique zero of $\overline{%
h}_{1}$ satisfies $c_{1}<\overline{c}_{0}$, i.e., $c_{1}<L_{1}(c_{0})$ (the
proof for the other case is analogous).

Now we have the following analysis for $\overline{h}_{\Phi _{1}}(x)=%
\overline{h}_{1}(x)+\overline{h}_{0}(L^{1,-1}(x))$ depending on $x:$

\begin{itemize}
\item \textbf{On\ }$(0,c_{1})$\textbf{:} Since $\overline{h}_{1}(x)$ is
negative for $x<c_{1}$ by the crossing property, and since $\overline{h}%
_{0}(L^{1,-1}(x))$ is also negative by the crossing property (because $%
x<c_{1}<\overline{c}_{0}$ implies $L^{1,-1}(x)<L^{1,-1}(c_{1})\leq L^{1,-1}(%
\overline{c}_{0})=c_{0}$), the sum of these two terms must be negative.
Therefore,$\overline{h}_{\Phi _{1}}(x)=\overline{h}_{1}(x)+\overline{h}%
_{0}(L^{1,-1}(x))$ $<0$;

\item \textbf{At }$x=c_{1}$\textbf{:} By definition, $\overline{h}%
_{1}(c_{1})=0$. Since $c_{1}<\overline{c}_{0}$ implies $%
L^{1,-1}(c_{1})<c_{0} $, we have $\overline{h}_{0}(L^{1,-1}(x))<0$. Thus, $%
\overline{h}_{\Phi _{1}}(x)=\overline{h}_{1}(x)+\overline{h}%
_{0}(L^{1,-1}(x)) $ $<0;$

\item \textbf{On }$(c_{1},\overline{c}_{0})$\textbf{:} In this interval, the
two components of $\overline{h}_{\Phi _{1}}(x)$ have opposing signs. Since $%
x>c_{1}$, we have $\overline{h}_{1}(x)>0$; conversely, since $x<\overline{c}%
_{0}$, it follows that $\overline{h}_{0}(L^{1,-1}(x))<0$. The existence of a
root is guaranteed by the \textit{Intermediate Value Theorem}. As
established previously, $h_{\Phi _{1}}(c_{1})<0$. Furthermore, $h_{\Phi
_{1}}(\overline{c}_{0})>0$ (as we will show in the next section). Given this
change in sign, there must be at least one point $\phi _{1}\in $ $(c_{1},%
\overline{c}_{0})$ such that $\overline{h}_{\Phi _{1}}(\phi _{1})=0$. The
uniqueness of this root follows from the strict monotonicity of $\overline{h}%
_{\Phi _{1}}(x)$, a property inherited from $\overline{h}_{0}$ and $%
\overline{h}_{1}$. At this unique point $\phi _{1}$, the positive
contribution from $\overline{h}_{1}$ is precisely balanced by the negative
contribution from $\overline{h}_{0}$;

\item \textbf{At }$x=\overline{c}_{0}$\textbf{:} By definition, $\overline{h}%
_{0}(\overline{c}_{0})=0$. Also, for $x>c_{1}$ we already have $\overline{h}%
_{1}(x)>0$. Thus, $\overline{h}_{\Phi _{1}}(x)=\overline{h}_{1}(x)+\overline{%
h}_{0}(L^{1,-1}(x))$ $>0;$

\item \textbf{On }$(\overline{c}_{0},1)$\textbf{:} For $x>\overline{c}_{0}$,
we have $L^{1,-1}(x)>c_{0}$ and hence $\overline{h}_{0}(L^{1,-1}(x))>0$.
Also, for $x>c_{1}$ we already have $\overline{h}_{1}(x)>0$. Therefore, $%
\overline{h}_{\Phi _{1}}(x)=\overline{h}_{1}(x)+\overline{h}%
_{0}(L^{1,-1}(x))>0.$
\end{itemize}

Second, we will prove that $\Phi _{1}(x)>x$ holds in the interval $(0,\phi
_{1})$ and $\Phi _{1}(x)<x$ holds in the interval $(\phi _{1},1).$ For that
purpose, we want to show that the equation $\overline{h}_{\Phi
_{1}}^{^{\prime }}(x)=0$ has exactly two solutions in $(0,1)$.

Regarding the existence, we already know from above that $\overline{h}_{\Phi
_{1}}(\phi _{1})=0$. This allows to have:

\begin{itemize}
\item On the interval $[0,\phi _{1}]$, since $\overline{h}_{\Phi
_{1}}(0)=0\quad $and$\quad \overline{h}_{\Phi _{1}}(\phi _{1})=0,$ the 
\textit{Rolle's Theorem} guarantees the existence of some $\phi
_{1}^{(1)}\in (0,\phi _{1})$ such that $\overline{h}_{\Phi _{1}}^{^{\prime
}}(\phi _{1}^{(1)})=0;$

\item Similarly, on the interval $[\phi _{1},1]$, since $\overline{h}_{\Phi
_{1}}(\phi _{1})=0\quad $and$\quad \overline{h}_{\Phi _{1}}(1)=0$, there
exists $\phi _{1}^{(2)}\in (\phi _{1},1)$ such that $\overline{h}_{\Phi
_{1}}^{^{\prime }}(\phi _{1}^{(2)})=0.$
\end{itemize}

Regarding the uniqueness, we use a contradiction argument. Take first the
interval $(0,\phi _{1})$. Let's assume that there exist two distinct points $%
a$ and $b$ in it such that $\overline{h}_{\Phi _{1}}^{^{\prime }}(a)=%
\overline{h}_{\Phi _{1}}^{^{\prime }}(b)=0$ holds. Then again by the \textit{%
Rolle's Theorem}, there is a $c\in (a,b)$ with $\overline{h}_{\Phi
_{1}}^{^{\prime \prime }}(c)=0$. But we know that $\overline{h}_{\Phi
_{1}}^{^{\prime }}(x)=\overline{h}_{1}^{^{\prime }}(x)+\overline{h}%
_{0}^{\prime }(L^{1,-1}(x))\,[L^{1,-1}(x)]^{^{\prime }}$ and also $\overline{%
h}_{\Phi _{1}}^{^{\prime \prime }}(x)=\overline{h}_{1}^{^{\prime \prime
}}(x)+\overline{h}_{0}^{\prime \prime }(L^{1,-1}(x))\,\left[
[L^{1,-1}(x)]^{\prime }\right] ^{2}+\overline{h}_{0}^{\prime
}(L^{1,-1}(x))\,[L^{1,-1}(x)]^{^{\prime \prime }}$. Since each deviation
function $\overline{h}_{0}$ and $\overline{h}_{1}$ has exactly one local
maximum on $(0,c_{0})$ and $(0,c_{1})$, their derivatives are strictly
monotone in these intervals. This prevents $\overline{h}_{\Phi
_{1}}^{^{\prime \prime }}(x)$ to be zero and thus we get a contradiction.
Hence, there is at most one solution to $\overline{h}_{\Phi _{1}}^{^{\prime
}}(x)=0$ in $(0,\phi _{1})$. The argument for the interval $(\phi _{1},1)$
is the same.

A direct computation gives%
\begin{eqnarray}
\overline{h}_{\Phi _{1}}^{^{\prime }}(x) &=&1-(L^{0,-1})^{^{\prime
}}(L^{1,-1}(x))(L^{1,-1})^{\prime }(x)=1-\frac{1}{(L_{0}{}^{^{\prime
}})(L^{0,-1}(L^{1,-1}(x)))}\frac{1}{(L_{1}{}^{^{\prime }})(L^{1,-1}(x))} 
\notag \\
&=&1-\frac{\dint\nolimits_{0}^{1}w[F^{-1}(L^{0,-1}(L^{1,-1}(x)))]du}{%
w[F^{-1}(L^{0,-1}(L^{1,-1}(x)))]}\frac{\dint%
\nolimits_{0}^{1}w[L^{0,-1}(L^{1,-1}(x))]du}{w[L^{0,-1}(L^{1,-1}(x))]}.
\label{175.1}
\end{eqnarray}

Hence we have $\overline{h}_{\Phi _{1}}^{^{\prime }}(0)=-\infty $ and $%
\overline{h}_{\Phi _{1}}^{^{\prime }}(1)=-\infty $. We can also notice that
around $0.5$ there is mass concentration which produces at least one
positive value for $h_{\Phi _{1}}^{^{\prime }}(x)$. Therefore at $\phi
_{1}^{(1)}$ and $\phi _{1}^{(2)}$, the function $h_{\Phi _{1}}^{^{\prime
}}(x)$ changes its sign which determine the behavior $\Phi _{n}(x)$ and its
crossing properties. Namely, as hinted at the beginning, the positioning of
the $\Phi _{1}(x)$ above and below the diagonal is analogous to the
situation of $L^{n,-1}(x)$ for any $n$. The crossing pattern is: $\Phi
_{1}(x)>x$ for $x\in (0,\phi _{1})$ and $\Phi _{1}(x)<x$ for $x\in (\phi
_{1},1)\,$, while at each of these intervals the points $\phi _{1}^{(1)}$
and $\phi _{1}^{(2)}$ just produce a maximal distance between $\Phi _{1}(x)$
and the diagonal. Furthermore, the described behavior of $\overline{h}_{\Phi
_{1}}^{^{\prime }}(x)$ gives that $\overline{h}_{\Phi _{1}}^{^{\prime
}}(x)>0,$ and thus $\Phi _{1}^{^{\prime }}(x)<1$, hold for $x\in (\phi
_{1}^{(1)},\phi _{1}^{(2)})$ and $\overline{h}_{\Phi _{1}}^{^{\prime
}}(x)<0, $ and thus $\Phi _{1}^{^{\prime }}(x)>1,$ hold for $x\in \lbrack
0,\phi _{1}^{(1)})\cup (\phi _{1}^{(2)},1]$ with the corresponding zeros
attained at $\phi _{1}^{(1)}$ and $\phi _{1}^{(2)}$. So the interval $I_{1}$
is $[\phi _{1}^{(1)},\phi _{1}^{(2)}]$.

Now we can move to the case $n$. From $\Phi
_{n}(x)=L^{0,-1}(...(L^{n-1,-1}(L^{n,-1}(x)))$ we have $\Phi _{n}(x)=\Phi
_{n-1}(L^{n,-1}(x))$. Either inductively, or simply by change of notation,
we can notice that everything proved above also holds for $\Phi _{n}(x)$.
This is essentially due to the again appearing bifunctional composite form
of $\Phi _{n}(x)$ which was the case also for $\Phi
_{1}(x)=L^{0,-1}(L^{1,-1}(x))$. Exactly, it allows to make the induction
step by assuming for $n$ that $\Phi _{n}(x)$ obeys the conditions of the
claim and then prove them for $n+1$. The proof will verbally reiterate the
proof above for $n=1$ with a change of notation to reflect the fact that
here we work with $\Phi _{n-1}$ instead of $\Phi _{0}\equiv $ $L^{0,-1}$.

We may also finally notice that if we set $\Psi _{n}(x)=\Phi _{n}^{-1}(x)$
then $\Psi _{n}(x)=L_{n}(...(L_{1}(L_{0}(x)))$. The function $\Psi _{n}(x)$
resembles the crossing behavior of $L_{n}(x)$. Namely, $\Psi _{n}(x)$ has
two fixed points at $0$ and $1$. Additionally, it has a unique fixed point
in $(0,1)$. If we denote the latter by $\psi _{n}$, then $\Psi _{n}(x)<x$
holds in the interval $(0,\psi _{n})$ and $\Psi _{n}(x)>x$ holds in the
interval $(\psi _{n},1)$. There is also a closed interval $I_{n}\subset
(0,1) $ such that $\psi _{n}\in I_{n}$ and the derivative satisfies $\Psi
_{n}^{^{\prime }}(x)\geq 1$ for all $x\in I_{n}$.

The proof of the preceding lemma also yields the following immediate
corollary.
\end{proof}

\begin{corollary}
\label{Phi-c} The inequality $\Phi _{n}^{^{\prime }}(x)<1$ holds for all $x$
in the interval $(\phi _{n}^{(1)},\phi _{n}^{(2)})$.
\end{corollary}

We now relate the evolution of the $\Phi _{n}$ to that of the crossing
points $c_{n}$ from \textit{Lemma~\ref{l-cross}}.

\begin{lemma}
\label{dir} For every $n\geq 0$ and $x\in (0,1)$, 
\begin{equation}
sgn(\Phi _{n+1}(x)-\Phi _{n}(x))=sgn(T_{n+1}(x)-x)=sgn(c_{n+1}-x).
\label{526}
\end{equation}
\end{lemma}

\begin{proof}
\noindent By definition, 
\begin{equation}
\Phi _{n+1}(x)=\Phi _{n}(T_{n+1}(x)).  \label{526.1}
\end{equation}%
Since $\Phi _{n}$ is strictly increasing, the sign of $\Phi _{n+1}(x)-\Phi
_{n}(x)$ is the same as the sign of $T_{n+1}(x)-x$, i.e. 
\begin{equation}
sgn(\Phi _{n+1}(x)-\Phi _{n}(x))=sgn(T_{n+1}(x)-x).  \label{526.2}
\end{equation}

It remains to relate the sign of $T_{n+1}(x)-x$ to the position of $x$
relative to $c_{n+1}$. Because $T_{n+1}$ is the inverse of $L_{n+1}$ and $%
L_{n+1}$ is strictly increasing, we have 
\begin{equation}
T_{n+1}(x)>x\iff L_{n+1}(T_{n+1}(x))>L_{n+1}(x)\iff x>L_{n+1}(x).
\label{528}
\end{equation}%
By \textit{Lemma~\ref{l-cross}}, $L_{n+1}(x)<x$ for $x<c_{n+1}$ and $%
L_{n+1}(x)>x$ for $x>c_{n+1}$. Thus 
\begin{equation}
T_{n+1}(x)>x\iff x<c_{n+1}.  \label{529}
\end{equation}%
Similarly, $T_{n+1}(x)<x$ if and only if $x>c_{n+1}$. Therefore 
\begin{equation}
sgn(T_{n+1}(x)-x)=sgn(c_{n+1}-x),  \label{530}
\end{equation}%
which combined with the first identity gives (\ref{526}).
\end{proof}

We may observe that \textit{Lemma} \ref{dir} gives that as $n$ grows, the
evolution of $\Phi _{n}(x)$ is tightly controlled by the motion of the $%
c_{n} $'s. Next, we relate the latter to the $\phi _{n}$'s.

\begin{lemma}
\label{alignment} For each $n\geq 0$, the fixed point $\phi _{n+1}$ lies
between $\phi _{n}$ and $c_{n+1}$ 
\begin{equation}
\min \{\phi _{n},c_{n+1}\}\;\leq \;\phi _{n+1}\;\leq \;\max \{\phi
_{n},c_{n+1}\}.  \label{531}
\end{equation}
\end{lemma}

\begin{proof}
Define $k_{n+1}(x)=-\overline{h}_{\Phi _{n}}=\Phi _{n+1}(x)-x$. Then $%
k_{n+1} $ is continuous, strictly negative on $(\phi _{n+1},1)$ and strictly
positive on $(0,\phi _{n+1})$, and has a unique zero at $x=\phi _{n+1}$. We
evaluate $k_{n+1}$ at $\phi _{n}$ and $c_{n+1}$.

At $x=\phi _{n}$ we have 
\begin{equation}
k_{n+1}(\phi _{n})=\Phi _{n+1}(\phi _{n})-\phi _{n}=\Phi _{n+1}(\phi
_{n})-\Phi _{n}(\phi _{n}),  \label{532}
\end{equation}%
so by \textit{Lemma}~\ref{dir} with $x=\phi _{n}$, 
\begin{equation}
sgn(k_{n+1}(\phi _{n}))=sgn(\Phi _{n+1}(\phi _{n})-\Phi _{n}(\phi
_{n}))=sgn(c_{n+1}-\phi _{n}).  \label{533}
\end{equation}

At $x=c_{n+1}$ we get 
\begin{equation}
k_{n+1}(c_{n+1})=\Phi _{n+1}(c_{n+1})-c_{n+1}=\Phi
_{n}(T_{n+1}(c_{n+1}))-c_{n+1}=\Phi _{n}(c_{n+1})-c_{n+1}.  \label{534}
\end{equation}%
By the single-crossing property for $\Phi _{n}$ from \textit{Lemma} \ref{Phi}%
, we have 
\begin{equation}
sgn(k_{n+1}(c_{n+1}))=sgn(\Phi _{n}(c_{n+1})-c_{n+1})=sgn(\phi _{n}-c_{n+1}).
\label{535}
\end{equation}

Thus $k_{n+1}(\phi _{n})$ and $k_{n+1}(c_{n+1})$ have opposite signs. Since $%
k_{n+1}$ is continuous and has a unique zero $\phi _{n+1}$, it follows that $%
\phi _{n+1}$ lies strictly between $\phi _{n}$ and $c_{n+1}$, and this is
exactly (\ref{531}).
\end{proof}

\begin{remark}
\textit{Lemma}~\ref{alignment} formalizes the idea that the new fixed point
of the compound map $\Phi _{n+1}$ is \textquotedblleft
squeezed\textquotedblright\ between the previous compound fixed point $\phi
_{n}$ and the fresh base fixed point $c_{n+1}$. Over many iterations, this
repeated squeezing forces the sequences $\{\phi _{n}\}_{n=0}^{\infty }$ and $%
\{c_{n}\}_{n=0}^{\infty }$ to track each other closely.
\end{remark}

We have even a stronger result, which comes as a corollary of \textit{Lemma} %
\ref{Phi}.

\begin{corollary}
\label{Phi-strong-align} For every $n\geq 0$, the unique interior fixed
point $\phi _{n+1}\in (0,1)$ of $\Phi _{n+1}$ satisfies 
\begin{equation}
\min \{\,L_{n+1}(c_{n}),\,c_{n+1}\,\}\ \leq \ \phi _{n+1}\ \leq \ \max
\{\,L_{n+1}(c_{n}),\,c_{n+1}\,\}.  \label{536}
\end{equation}
\end{corollary}

\begin{proof}
\bigskip This is exactly the localization of the unique zero of the
deviation function $k_{n+1}(x)=\Phi _{n+1}(x)-x$ that is obtained inside the
proof of \textit{Lemma} \ref{Phi}. Indeed, in that proof we analyze the sign
contributions in $k_{n+1}$ and shows (after possibly exchanging the roles of
the two endpoints) that $k_{n+1}(c_{n+1})\quad $and$\quad
k_{n+1}(L_{n+1}(c_{n}))$ have opposite signs. Since $k_{n+1}$ is continuous
and has a unique zero in $(0,1)$ (namely $\phi _{n+1}$, by \textit{Lemma} %
\ref{Phi}), the \textit{Intermediate Value Theorem} implies that this zero
must lie between the two points $c_{n+1}$ and $L_{n+1}(c_{n})$. This yields (%
\ref{536}).
\end{proof}

\begin{flushleft}
\textbf{Local equicontinuity and subsequential uniform convergence of }$\Phi
_{n}(x)$
\end{flushleft}

We begin by establishing a basic contraction principle for increasing
self-maps with a single interior fixed point. The proof relies on two key
results: a \textit{Fejer-type inequality} (see \cite{[51]}) and a \textit{%
1-Lipschitz bound}.

\begin{lemma}
\label{Fejer} Let $f:[0,1]\rightarrow \lbrack 0,1]$ be continuous, strictly
increasing, with a unique interior fixed point $p\in (0,1)$ and
single-crossing pattern 
\begin{equation}
f(x)>x\ \ (x<p),\qquad f(x)<x\ \ (x>p).  \label{545}
\end{equation}%
Then:

\begin{enumerate}
\item For every $x\in \lbrack 0,1]$ 
\begin{equation}
|f(x)-p|\leq |x-p|,  \label{546}
\end{equation}

with strict inequality if $x\neq p$;

\item If $x,y$ lie on the same side of $p$ (both $\leq p$ or both $\geq p$),
then 
\begin{equation*}
|f(x)-f(y)|\leq |x-y|.
\end{equation*}
\end{enumerate}
\end{lemma}

\begin{proof}
\noindent If $x=p$ the inequality is trivial. Suppose $x>p$. Then $f(x)\in
\lbrack p,x)$, hence $0\leq f(x)-p<x-p$, so $|f(x)-p|<|x-p|$. If $x<p$, then 
$f(x)\in (x,p]$, so $0\leq p-f(x)<p-x$ and again $|f(x)-p|<|x-p|$. That
proves the first part of the lemma.

For the second part, suppose $p\leq x<y$; the case $x<y\leq p$ is analogous.
Then $f(x)\geq p$ and $f(y)\leq y$. Also $f$ increasing implies $f(x)\leq
f(y)$. Thus $0\leq f(y)-f(x)\leq y-x$. So 
\begin{equation}
|f(y)-f(x)|\leq |y-x|.  \label{546.1}
\end{equation}
\end{proof}

We can now establish subsequential convergence of the compound maps $\Phi
_{n}$.

\begin{lemma}
\label{Phi-1Lip} For every $n\geq 0$ and all $x,y\in \lbrack 0,1]$, 
\begin{equation}
|\Phi _{n}(x)-\Phi _{n}(y)|\leq |x-y|.  \label{860}
\end{equation}%
In particular, the family $(\Phi _{n})_{n\geq 0}$ is equicontinuous and
uniformly bounded on $[0,1]$.
\end{lemma}

\begin{proof}
Fix $n$. By \textit{Lemma~\ref{Phi}}, $\Phi _{n}$ is continuous, strictly
increasing, and has a unique interior fixed point $\phi _{n}$ with the
single-crossing pattern 
\begin{equation}
\Phi _{n}(x)>x\ (x<\phi _{n}),\qquad \Phi _{n}(x)<x\ (x>\phi _{n}).
\label{861}
\end{equation}%
Apply \textit{Lemma~\ref{Fejer}} with $f=\Phi _{n}$ and $p=\phi _{n}$. If $%
x,y$ lie on the same side of $\phi _{n}$, then \textit{Lemma~\ref{Fejer} }%
gives 
\begin{equation}
|\Phi _{n}(x)-\Phi _{n}(y)|\leq |x-y|.  \label{870}
\end{equation}%
If $x<\phi _{n}<y$, then by triangle inequality and \textit{Lemma~\ref{Fejer}
}on each side, 
\begin{equation}
|\Phi _{n}(x)-\Phi _{n}(y)|\leq |\Phi _{n}(x)-\Phi _{n}(\phi _{n})|+|\Phi
_{n}(\phi _{n})-\Phi _{n}(y)|\leq |x-\phi _{n}|+|y-\phi _{n}|=|x-y|.
\label{871}
\end{equation}%
This proves (\ref{860}). Uniform boundedness is clear since $\Phi
_{n}([0,1])\subset \lbrack 0,1]$. \medskip
\end{proof}

Next, we prove an auxiliary result using the fact that all the crossing
point of the subsequential limits of $L_{n}$ are the same.

\begin{claim}
\noindent \label{Ln-cn-shift} Assume that 
\begin{equation}
c_{n}\longrightarrow c^{\ast }\in (0,1).  \label{541a}
\end{equation}%
Then 
\begin{equation}
\left\vert L_{n+1}(c_{n})-c_{n+1}\right\vert \longrightarrow 0\qquad \text{%
as }n\rightarrow \infty .  \label{541}
\end{equation}
\end{claim}

\begin{proof}
Suppose, by contradiction, that the above conclusion is false. Then there
exist $\varepsilon _{0}>0$ and a strictly increasing sequence of indices $%
\{n_{k}\}_{k\geq 1}$ such that 
\begin{equation}
\left\vert L_{n_{k}+1}(c_{n_{k}})-c_{n_{k}+1}\right\vert \geq \varepsilon
_{0}\qquad \text{for all }k.  \label{542}
\end{equation}

By \textit{Claim \ref{AA}}, the family $\{L_{n}\}_{n\geq 0}$ is relatively
compact in $C([0,1])$. Hence, after passing to a further subsequence (not
relabeled), there exists a continuous increasing function $L^{\ast
}:[0,1]\rightarrow \lbrack 0,1]$ such that 
\begin{equation}
L_{n_{k}+1}\longrightarrow L^{\ast }\quad \text{uniformly on }[0,1].
\label{543}
\end{equation}

Since $c_{n}\rightarrow c^{\ast }$ by assumption, we also have 
\begin{equation}
c_{n_{k}}\longrightarrow c^{\ast },\qquad c_{n_{k}+1}\longrightarrow c^{\ast
}.  \label{544}
\end{equation}

For each $k$, the point $c_{n_{k}+1}$ is a fixed point of $L_{n_{k}+1}$,
hence 
\begin{equation}
L_{n_{k}+1}(c_{n_{k}+1})=c_{n_{k}+1}.  \label{544a}
\end{equation}%
Passing to the limit using (\ref{543}) yields 
\begin{equation}
L^{\ast }(c^{\ast })=c^{\ast }.  \label{544b}
\end{equation}

Again using uniform convergence and $c_{n_{k}}\rightarrow c^{\ast }$, we
obtain 
\begin{equation}
L_{n_{k}+1}(c_{n_{k}})\longrightarrow L^{\ast }(c^{\ast })=c^{\ast }.
\label{545c}
\end{equation}%
Together with $c_{n_{k}+1}\rightarrow c^{\ast }$, this implies 
\begin{equation}
\left\vert L_{n_{k}+1}(c_{n_{k}})-c_{n_{k}+1}\right\vert \longrightarrow 0,
\label{545d}
\end{equation}%
which contradicts (\ref{542}). Therefore, (\ref{541}) holds.
\end{proof}

We now use the convergence of $c_{n}$ and \textit{Lemma ~\ref{alignment}} to
deduce convergence of $\phi _{n}$.

\begin{claim}
\label{cluster} The cluster set of $\{\phi _{n}\}_{n\geq 0}$ coincides with
the cluster set of $\{c_{n}\}_{n\geq 0}$. In particular, if $%
c_{n}\rightarrow c^{\ast }\in (0,1)$, then 
\begin{equation}
\phi _{n}\rightarrow \phi ^{\ast }=c^{\ast }.  \label{536a}
\end{equation}
\end{claim}

\begin{proof}
The sequence $\{\phi _{n}\}_{n\geq 0}$ is contained in $[0,1]$, hence has at
least one cluster point. Let $\phi $ be an arbitrary cluster point of $%
\{\phi _{n}\}_{n\geq 0}^{\infty }$. Then there exists a strictly increasing
sequence of indices $\{n_{k}\}_{k\geq 1}$ such that 
\begin{equation}
\phi _{n_{k}}\rightarrow \phi .  \label{537}
\end{equation}

Since $c_{n}\rightarrow c^{\ast }$ by assumption, we also have 
\begin{equation}
c_{n_{k}+1}\rightarrow c^{\ast }.  \label{538}
\end{equation}

Next, from \textit{Corollary \ref{Phi-strong-align}, }we have%
\begin{equation}
\ \left\vert \phi _{n+1}-c_{n+1}\right\vert \ \leq \,\left\vert
L_{n+1}(c_{n})-\,c_{n+1}\right\vert \,\   \label{545e}
\end{equation}

and thus also%
\begin{equation}
\ \left\vert \phi _{n_{k}+1}-c_{n_{k}+1}\right\vert \ \leq \,\left\vert
L_{n_{k}+1}(c_{n_{k}})-\,c_{n_{k}+1}\right\vert \,.\   \label{545f}
\end{equation}

Passing to the limit along $k\rightarrow \infty $ and using (\ref{541}) from 
\textit{Claim \ref{Ln-cn-shift}} yields $\phi =c^{\ast }$. Consequently, the
sequence $\,\{\phi _{n}\}_{n=0}^{\infty }$ cannot oscillate between distinct
cluster points; it must converge to the unique point $c^{\ast }$. Thus, the
cluster set is the singleton $\{c^{\ast }\}$, and $\phi _{n}${}$%
\longrightarrow c^{\ast }$.
\end{proof}

Next, we establish the pointwise convergence of $\Phi _{n}(x)$ using a
monotonicity argument that relies on \textit{Lemma}~\ref{dir} and \textit{%
Lemma}~\ref{crossings-half}.

\begin{lemma}
\label{Phi-pointwise} For every fixed $x\in (0,1)$, the sequence $\{\Phi
_{n}(x)\}_{n=0}^{\infty }$ converges pointwise 
\begin{equation}
\lim_{n\rightarrow \infty }\Phi _{n}(x)=\Phi _{\infty }(x),  \label{546a}
\end{equation}

where we denoted the limit by $\Phi _{\infty }(x)$.
\end{lemma}

\begin{proof}
We consider two cases.

\medskip \noindent \textbf{Case 1:}\emph{\ $x\neq $ }$c^{\ast }$\emph{.} By 
\textit{Lemma}~\ref{dir}, 
\begin{equation}
sgn(\Phi _{n+1}(x)-\Phi _{n}(x))=sgn(c_{n+1}-x).  \label{547}
\end{equation}

Since by \textit{Lemma}~\ref{crossings-half} $c_{n}\rightarrow c^{\ast }$,
with $c^{\ast }=\frac{1}{2}$, there exists $N$ such that for all $n\geq N$, 
\begin{equation*}
\func{sgn}(c_{n+1}-x)=\func{sgn}(c^{\ast }-x),
\end{equation*}%
i.e.,\ the sign of $c_{n+1}-x$ does not change for $n\geq N$. For $n$
sufficiently large it is constant: positive if $x<c^{\ast }$ and negative if 
$x>c^{\ast }$. Thus, for large $n$, the sequence $\{\Phi
_{n}(x)\}_{n=0}^{\infty }$ is monotone. Being bounded in $[0,1]$, it must
converge. Let 
\begin{equation}
\lim_{n\rightarrow \infty }\Phi _{n}(x)=\Phi _{\infty }(x).  \label{547a}
\end{equation}%
We still need to identify $\Phi _{\infty }(x)$.

\medskip \noindent \textbf{Case 2:}\emph{\ $x=$ }$c^{\ast }$\emph{.}
Applying \textit{Lemma}~\ref{Fejer} to $f=\Phi _{n}$ and $p=\phi _{n}$ with $%
x=\phi ^{\ast }=c^{\ast }$ yields 
\begin{equation}
|\Phi _{n}(\phi ^{\ast })-\phi _{n}|\leq |\phi ^{\ast }-\phi _{n}|.
\label{548}
\end{equation}%
Hence 
\begin{equation}
|\Phi _{n}(c^{\ast })-c^{\ast }|=|\Phi _{n}(\phi ^{\ast })-\phi ^{\ast
}|\leq |\Phi _{n}(\phi ^{\ast })-\phi _{n}|+|\phi _{n}-\phi ^{\ast }|\leq
2|\phi _{n}-\phi ^{\ast }|.  \label{549}
\end{equation}%
Since $\phi _{n}\rightarrow \phi ^{\ast }$, the right-hand side tends to $0$%
, and thus $\Phi _{n}(c^{\ast })\rightarrow c^{\ast }.$

Combining the two cases shows that for every $x\in (0,1)$ the limit $%
\lim_{n\rightarrow \infty }\Phi _{n}(x)$ exists.
\end{proof}

\begin{remark}
\label{convrem}Far from $c^{\ast }$, the evolution of $\Phi _{n}(x)$ is
essentially monotone: once the crossing points $c_{n}$ have moved past $x$
in one direction, the increments $\Phi _{n+1}(x)-\Phi _{n}(x)$ keep the same
sign. At $x=\phi ^{\ast }$, the fixed points $\phi _{n}$ act as anchors:
each $\Phi _{n}$ pulls $x$ towards $\phi _{n}$, and as $\phi _{n}\rightarrow
\phi ^{\ast }$, this pull becomes more and more concentrated, forcing $\Phi
_{n}(\phi ^{\ast })$ to converge to $\phi ^{\ast }$ as well.
\end{remark}

\textit{Remark} \ref{convrem} leads us to conjecture a stronger result: (i)
the convergence of $\Phi _{n}(x)$ to $\Phi _{\infty }(x)$ is uniform, (ii) $%
c^{\ast }=\phi ^{\ast }=\frac{1}{2}$, (iii) $\Phi _{\infty }(x)$ is the
constant $\phi ^{\ast }$.

We now present a general argument for subsequential convergence that does
not rely on the previously established pointwise convergence, after which we
will establish the connection between these two modes of convergence within
the context of our problem.

\begin{lemma}
\label{Phi-AA-subseq} Let $\{n_{k}\}_{k\geq 0}$ be any strictly increasing
sequence with $n_{k}\rightarrow \infty $. Then there exists a further
subsequence (still denoted $\{n_{k}\}_{k\geq 0}$) and a map 
\begin{equation}
\Phi ^{\ast }:(0,1)\rightarrow (0,1)  \label{872}
\end{equation}%
such that for every $0<\delta <\tfrac{1}{2}$, 
\begin{equation}
\sup_{x\in \lbrack \delta ,\,1-\delta ]}\left\vert \Phi _{n_{k}}(x)-\Phi
^{\ast }(x)\right\vert \longrightarrow 0\qquad (k\rightarrow \infty ).
\label{873}
\end{equation}%
In particular, $\Phi ^{\ast }$ is continuous and increasing on $(0,1)$.
\end{lemma}

Moreover, the fixed points $\{\phi _{n_{k}}\}_{k\geq 0}$ have a further
subsequence (still denoted $\{\phi _{n_{k}}\}_{k\geq 0}$) converging to some 
$\phi ^{\ast }\in \lbrack 0,1]$. If in addition $\phi ^{\ast }\in (0,1)$,
then 
\begin{equation}
\Phi ^{\ast }(\phi ^{\ast })=\phi ^{\ast }.  \label{874}
\end{equation}

\begin{proof}
By \textit{Lemma \ref{Phi-1Lip}}, each $\Phi _{n}:[0,1]\rightarrow \lbrack
0,1]$ is $1$\textit{--Lipschitz}, hence the family $\{\Phi _{n}\}_{n\geq 0}$
is equicontinuous and uniformly bounded on $[0,1]$.

\smallskip \noindent \textbf{Step 1: Uniform convergence on one compact
subinterval.} Fix $0<\delta <\tfrac{1}{2}$ and set $I_{\delta }=[\delta
,1-\delta ]$. The restriction family $\{\Phi _{n_{k}}|_{I_{\delta
}}\}_{k\geq 0}$ is equicontinuous and uniformly bounded on the compact
interval $I_{\delta }$. By \textit{Arzel\`{a}--Ascoli}, there exists a
subsequence of indices $\{n_{k}^{(\delta )}\}_{k\geq 0}$ and a continuous
function $\Phi ^{(\delta )}:I_{\delta }\rightarrow \lbrack 0,1]$ such that 
\begin{equation}
\sup_{x\in I_{\delta }}\left\vert \Phi _{n_{k}^{(\delta )}}(x)-\Phi
^{(\delta )}(x)\right\vert \rightarrow 0\qquad (k\rightarrow \infty ).
\label{875}
\end{equation}%
Since each $\Phi _{n}$ is increasing, the uniform limit $\Phi ^{(\delta )}$
is also increasing.

\smallskip \noindent \textbf{Step 2: Choosing a nested exhaustion of $(0,1)$.%
} Define 
\begin{equation}
\delta _{m}=2^{-m}\qquad (m\geq 2),\qquad I_{m}=I_{\delta _{m}}=[\delta
_{m},\,1-\delta _{m}].  \label{875a}
\end{equation}%
Then $I_{m}$ are compact intervals with 
\begin{equation}
I_{2}\subset I_{3}\subset I_{4}\subset \cdots ,\qquad \bigcup_{m\geq
2}I_{m}=(0,1).  \label{875b}
\end{equation}%
(Indeed, for any $x\in (0,1)$ choose $m$ large so that $\delta
_{m}<x<1-\delta _{m}$.)

\smallskip \noindent \textbf{Step 3: Iterated extractions.} We now build a
chain of subsequences by induction on $m$.

\begin{itemize}
\item Start with the original subsequence $\{n_{k}^{(1)}\}_{k\geq
0}=\{n_{k}\}_{k\geq 0}$;

\item For $m=2$: apply Step 1 with $\delta =\delta _{2}$ to the sequence $%
\{\Phi _{n_{k}^{(1)}}\}_{k\geq 0}$ on $I_{2}$. Obtain a subsequence $%
\{n_{k}^{(2)}\}_{k\geq 0}\subset \{n_{k}^{(1)}\}_{k\geq 0}$ and a continuous
increasing limit $\Phi ^{(2)}:I_{2}\rightarrow \lbrack 0,1]$ such that 
\begin{equation}
\sup_{x\in I_{2}}\left\vert \Phi _{n_{k}^{(2)}}(x)-\Phi ^{(2)}(x)\right\vert
\rightarrow 0;  \label{875c}
\end{equation}

\item For $m=3$: apply Step 1 with $\delta =\delta _{3}$ to the sequence $%
\{\Phi _{n_{k}^{(2)}}\}_{k\geq 0}$ on $I_{3}$. Obtain a subsequence $%
\{n_{k}^{(3)}\}_{k\geq 0}\subset \{n_{k}^{(2)}\}_{k\geq 0}$ and a continuous
increasing limit $\Phi ^{(3)}:I_{3}\rightarrow \lbrack 0,1]$ such that 
\begin{equation}
\sup_{x\in I_{3}}\left\vert \Phi _{n_{k}^{(3)}}(x)-\Phi ^{(3)}(x)\right\vert
\rightarrow 0;  \label{875d}
\end{equation}

\item Continue inductively: having constructed $\{n_{k}^{(m-1)}\}_{k\geq 0}$%
, apply Step 1 with $\delta =\delta _{m}$ on $I_{m}$ to obtain a further
subsequence $\{n_{k}^{(m)}\}_{k\geq 0}\subset \{n_{k}^{(m-1)}\}_{k\geq 0}$
and a continuous increasing limit $\Phi ^{(m)}:I_{m}\rightarrow \lbrack 0,1]$
such that 
\begin{equation}
\sup_{x\in I_{m}}\left\vert \Phi _{n_{k}^{(m)}}(x)-\Phi ^{(m)}(x)\right\vert
\rightarrow 0.  \label{875e}
\end{equation}
\end{itemize}

Thus we have nested subsequences 
\begin{equation}
\{n_{k}^{(2)}\}\supset \{n_{k}^{(3)}\}\supset \cdots  \label{875f}
\end{equation}%
and uniform convergence on $I_{m}$ along $\{n_{k}^{(m)}\}$.

\smallskip \noindent \textbf{Step 4: The diagonal subsequence (Cantor
Diagonalization).}

The subsequences constructed above are chosen nested, that is, 
\begin{equation}
\{n_{k}^{(m+1)}\}_{k\geq 0}\subset \{n_{k}^{(m)}\}_{k\geq 0}\qquad \text{for
every }m\geq 2.  \label{875f1}
\end{equation}%
After discarding finitely many terms if necessary, define the diagonal
subsequence $\{n_{k}^{\mathrm{diag}}\}_{k\geq 0}$ by 
\begin{equation}
n_{k}^{\mathrm{diag}}=n_{k}^{(k+2)},\qquad k\geq 0.  \label{875g}
\end{equation}%
Fix $m\geq 2$. If $k\geq m-2$, then $k+2\geq m$, and by the nestedness of
the subsequences we have 
\begin{equation}
n_{k}^{\mathrm{diag}}=n_{k}^{(k+2)}\in \{n_{j}^{(k+2)}:j\geq 0\}\subset
\{n_{j}^{(m)}:j\geq 0\}.  \label{875h}
\end{equation}%
Thus, for every fixed $m\geq 2$, the tail of the diagonal subsequence $%
\{n_{k}^{\mathrm{diag}}\}_{k\geq 0}$ is a subsequence of $%
\{n_{k}^{(m)}\}_{k\geq 0}$. Therefore, using (\ref{875e}), we obtain 
\begin{equation}
\sup_{x\in I_{m}}\left\vert \Phi _{n_{k}^{\mathrm{diag}}}(x)-\Phi
^{(m)}(x)\right\vert \longrightarrow 0\qquad (k\rightarrow \infty ).
\label{875j}
\end{equation}%
Consequently, the diagonal subsequence converges uniformly on each compact
interval $I_{m}$. Since the intervals $I_{m}$ exhaust $(0,1)$, this gives
locally uniform convergence on $(0,1)$.

\smallskip \noindent \textbf{Step 5: Consistency of the limits on overlaps.}
Fix $m<\ell $. Then $I_{m}\subset I_{\ell }$. We claim that $\Phi ^{(\ell
)}=\Phi ^{(m)}$ on $I_{m}$. Indeed, along the diagonal subsequence we have
by (\ref{875e}) 
\begin{equation}
\Phi _{n_{k}^{\mathrm{diag}}}\rightarrow \Phi ^{(m)}\ \text{ uniformly on }%
I_{m},\qquad \Phi _{n_{k}^{\mathrm{diag}}}\rightarrow \Phi ^{(\ell )}\ \text{
uniformly on }I_{\ell },  \label{875k}
\end{equation}%
hence also uniformly on $I_{m}\subset I_{\ell }$. Uniform limits on the same
set are unique, so $\Phi ^{(\ell )}=\Phi ^{(m)}$ on $I_{m}$.

\textbf{Step 6: Definition of $\Phi ^{\ast }$ and proof of (\ref{873}).} By
Step 5, the family $\{\Phi ^{(m)}\}_{m\geq 2}$ is compatible on overlaps.
Define $\Phi ^{\ast }:(0,1)\rightarrow (0,1)$ by 
\begin{equation}
\Phi ^{\ast }(x)=\Phi ^{(m)}(x)\quad \text{whenever }x\in I_{m}.
\label{875l}
\end{equation}%
This is well-defined: if $x\in I_{m}\cap I_{\ell }$, Step 5 gives $\Phi
^{(m)}(x)=\Phi ^{(\ell )}(x)$.

Now fix an arbitrary $0<\delta <\tfrac{1}{2}$. Choose $m\geq 2$ such that $%
\delta _{m}\leq \delta $. Then $[\delta ,1-\delta ]\subset I_{m}$, and by (%
\ref{875j}), 
\begin{equation}
\sup_{x\in \lbrack \delta ,1-\delta ]}\left\vert \Phi _{n_{k}^{\mathrm{diag}%
}}(x)-\Phi ^{\ast }(x)\right\vert \leq \sup_{x\in I_{m}}\left\vert \Phi
_{n_{k}^{\mathrm{diag}}}(x)-\Phi ^{(m)}(x)\right\vert \rightarrow 0,
\label{875z}
\end{equation}%
which is exactly (\ref{873}) (after relabelling $n_{k}^{\mathrm{diag}}$ as $%
n_{k}$).

Continuity and monotonicity of $\Phi ^{\ast }$ on $(0,1)$ now follow because
on each $I_{m}$ the function $\Phi ^{\ast }$ agrees with the continuous
increasing function $\Phi ^{(m)}$.

\smallskip \noindent \textbf{Step 7: Subsequence convergence of fixed points
and the conditional fixed-point identity.} Since each $\phi _{n_{k}}\in
(0,1) $, the sequence $\{\phi _{n_{k}}\}$ is bounded in $[0,1]$, hence has a
convergent sub-subsequence (still denoted $\{\phi _{n_{k}}\}$) with $\phi
_{n_{k}}\rightarrow \phi ^{\ast }\in \lbrack 0,1]$.

Assume now that $\phi ^{\ast }\in (0,1)$. Choose $\delta >0$ such that $\phi
^{\ast }\in \lbrack \delta ,1-\delta ]$. Then for all large $k$, $\phi
_{n_{k}}\in \lbrack \delta ,1-\delta ]$ as well. By (\ref{873}), we have
uniform convergence $\Phi _{n_{k}}\rightarrow \Phi ^{\ast }$ on $[\delta
,1-\delta ]$. Therefore, 
\begin{equation}
\left\vert \Phi _{n_{k}}(\phi _{n_{k}})-\Phi ^{\ast }(\phi ^{\ast
})\right\vert \leq \left\vert \Phi _{n_{k}}(\phi _{n_{k}})-\Phi ^{\ast
}(\phi _{n_{k}})\right\vert +\left\vert \Phi ^{\ast }(\phi _{n_{k}})-\Phi
^{\ast }(\phi ^{\ast })\right\vert \rightarrow 0,  \label{875x}
\end{equation}%
using uniform convergence for the first term and continuity of $\Phi ^{\ast
} $ for the second. Since $\Phi _{n_{k}}(\phi _{n_{k}})=\phi _{n_{k}}$ for
all $k$, the left-hand side also equals $|\phi _{n_{k}}-\Phi ^{\ast }(\phi
^{\ast })|\rightarrow 0$, hence $\Phi ^{\ast }(\phi ^{\ast })=\phi ^{\ast }$%
, which is (\ref{874}).

We must note that the \textit{Cantor diagonalization} was needed because
uniform convergence on one compact interval does not automatically give a
single subsequence that converges uniformly on all compact intervals, and
the diagonal argument is exactly what makes one subsequence work on the
whole exhaustion of $(0,1)$.
\end{proof}

\begin{remark}
For every $n$ we have $\Phi _{n}(0)=0$ and $\Phi _{n}(1)=1$. We later prove
that $\Phi _{n}(x)\rightarrow \tfrac{1}{2}$ for each $x\in (0,1)$. Thus any $%
\Phi ^{\ast }$ limit map equals $\tfrac{1}{2}$ on $(0,1)$ but has endpoint
values $0,1$, hence it is discontinuous at $0$ and $1$. Therefore uniform
convergence on $[0,1]$ is impossible, and the correct compactness notion is
uniform convergence on compact subsets of $(0,1)$ (i.e. local uniform
convergence).
\end{remark}

By combining \textit{Lemma \ref{Phi-AA-subseq}} and \textit{Lemma \ref%
{Phi-pointwise}}, we obtain the following corollary.

\begin{corollary}
\label{uniform} The sequence $\{\Phi _{n}\}_{n\geq 0}$ converges to $\Phi
_{\infty }$ locally uniformly on $(0,1)$. Moreover, the fixed points $\phi
_{n}$ converge to $\phi ^{\ast }$. If $\phi ^{\ast }\in (0,1)$, then 
\begin{equation}
\Phi _{\infty }(\phi ^{\ast })=\phi ^{\ast }.  \label{875v}
\end{equation}
\end{corollary}

\begin{proof}
Fix any compact interval $[\delta ,1-\delta ]\subset (0,1)$. By \textit{%
Lemma~\ref{Phi-1Lip}}, the family $\{\Phi _{n}\}_{n\geq 0}$ is
equicontinuous and uniformly bounded on $[\delta ,1-\delta ]$.

Let $\{n_{k}\}_{k\geq 0}$ be any sequence with $n_{k}\rightarrow \infty $.
By \textit{Lemma~\ref{Phi-AA-subseq}}, there exists a subsequence (still
denoted $n_{k}$) and a continuous increasing function $\Phi ^{\ast }$ such
that 
\begin{equation}
\sup_{x\in \lbrack \delta ,1-\delta ]}|\Phi _{n_{k}}(x)-\Phi ^{\ast
}(x)|\rightarrow 0.  \label{875n}
\end{equation}

On the other hand, by \textit{Lemma~\ref{Phi-pointwise}}, the full sequence $%
\Phi _{n}(x)$ converges pointwise to $\Phi _{\infty }(x)$ for every $x\in
(0,1)$. Since uniform convergence implies pointwise convergence, we must
have 
\begin{equation}
\Phi ^{\ast }(x)=\Phi _{\infty }(x)\quad \text{for all }x\in \lbrack \delta
,1-\delta ].  \label{875m}
\end{equation}

Since the choice of subsequence was arbitrary, every subsequence admits a
further subsequence converging uniformly on $[\delta ,1-\delta ]$ to the
same limit $\Phi _{\infty }$. This implies that the full sequence $\Phi _{n}$
converges uniformly on $[\delta ,1-\delta ]$. As $\delta >0$ was arbitrary,
the convergence is locally uniform on $(0,1)$.

The convergence $\phi _{n}\rightarrow \phi ^{\ast }$ was established in 
\textit{Claim~\ref{cluster}}. If $\phi ^{\ast }\in (0,1)$, then by
continuity of $\Phi _{\infty }$, 
\begin{equation}
\Phi _{\infty }(\phi ^{\ast })=\lim_{n\rightarrow \infty }\Phi _{n}(\phi
_{n})=\lim_{n\rightarrow \infty }\phi _{n}=\phi ^{\ast }.  \label{876p}
\end{equation}
\end{proof}

\begin{flushleft}
\textbf{\ Convergence to a constant limit of }$\Phi _{n}$\textbf{\ }
\end{flushleft}

\noindent Our next goal is to further characterize the limits of the
sequences $\{\Phi _{n}\}_{n=0}^{\infty }$ and $\{\phi _{n}\}_{n\geq
0}^{\infty }$.

\begin{lemma}
\noindent \label{Phi-eq} Let $[\delta ,\eta ]\subset (0,1)$ be fixed. Then
for any sequence of indices $\{n_{k}\}_{k=0}^{\infty }$ with $%
n_{k}\rightarrow \infty ,$ there exists a subsequence (still denoted $n_{k}$%
) and a limit inverse map $T^{\ast }$ such that 
\begin{equation}
\Phi _{\infty }(x)=\Phi _{\infty }(T^{\ast }(x))\quad \text{for all }x\in
\lbrack \delta ,\eta ].  \label{583}
\end{equation}
\end{lemma}

\begin{proof}
By \textit{Claim~\ref{AA}}, from the sequence $\{n_{k}\}$ we may extract a
subsequence (not relabeled) such that 
\begin{equation}
L_{n_{k}}\rightarrow L^{\ast }\quad \text{uniformly on }[0,1],  \label{584}
\end{equation}%
and therefore 
\begin{equation}
T_{n_{k}}=L_{n_{k}}^{-1}\rightarrow T^{\ast }=(L^{\ast })^{-1}\quad \text{%
uniformly on }[0,1].  \label{585}
\end{equation}

For each $k$ and $x\in \lbrack \delta ,\eta ]$ we have the exact identity 
\begin{equation}
\Phi _{n_{k}}(x)=\Phi _{n_{k}-1}(T_{n_{k}}(x)).  \label{586}
\end{equation}

Since $T^{\ast }$ is continuous and maps $(0,1)$ into itself, the image $%
T^{\ast }([\delta ,\eta ])$ is a compact subset of $(0,1)$. Hence there
exist numbers $0<\delta ^{\prime }<\eta ^{\prime }<1$ such that 
\begin{equation}
T^{\ast }([\delta ,\eta ])\subset (\delta ^{\prime },\eta ^{\prime }).
\label{587}
\end{equation}

By uniform convergence $T_{n_{k}}\rightarrow T^{\ast }$ on $[0,1]$, there
exists $K$ such that for all $k\geq K$, 
\begin{equation}
T_{n_{k}}(x)\in \lbrack \delta ^{\prime },\eta ^{\prime }]\quad \text{for
all }x\in \lbrack \delta ,\eta ].  \label{588}
\end{equation}

By \textit{Corollary~\ref{uniform}}, $\Phi _{n}\rightarrow \Phi _{\infty }$
uniformly on $[\delta ^{\prime },\eta ^{\prime }]$. Therefore, for $x\in
\lbrack \delta ,\eta ]$, 
\begin{equation}
\Phi _{n_{k}-1}(T_{n_{k}}(x))\rightarrow \Phi _{\infty }(T^{\ast
}(x)),\qquad \Phi _{n_{k}}(x)\rightarrow \Phi _{\infty }(x).  \label{588a}
\end{equation}

Passing to the limit in the identity (\ref{586}) yields 
\begin{equation}
\Phi _{\infty }(x)=\Phi _{\infty }(T^{\ast }(x)),\qquad x\in \lbrack \delta
,\eta ].  \label{588b}
\end{equation}%
which proves the claim.
\end{proof}

We next consider the convergence of orbits under a cluster inverse $T^{\ast
} $ and then use it to show that $\Phi _{\infty }$ must be constant.

\begin{lemma}
\label{orbit} Let $T^{\ast }$ be a cluster inverse. Then $T^{\ast }$ has a
unique interior fixed point $c^{\ast }$, and for any $x\in (0,1)$, 
\begin{equation}
T^{\ast \circ m}(x)\longrightarrow c^{\ast }\quad \text{as }m\rightarrow
\infty .  \label{589}
\end{equation}
\end{lemma}

\begin{proof}
\noindent By \textit{Corollary} \ref{cross-limit}, the limit map $L^{\ast }$
has a unique interior fixed point $c^{\ast }$ with $L^{\ast }(x)<x$ for $%
x<c^{\ast }$ and $L^{\ast }(x)>x$ for $x>c^{\ast }$. Inverting, $T^{\ast
}(x)>x$ for $x<c^{\ast }$ and $T^{\ast }(x)<x$ for $x>c^{\ast }$. Thus the
orbit $\{T^{\ast \circ m}(x)\}_{m=0}^{\infty }$, i.e. $T^{\ast }(\overset{m-2%
}{\overbrace{...}}(T^{\ast }(x)))$ with $m\geq 0$, is monotone and bounded
for each $x$, hence convergent (see \cite[Chapter 3.2]{[18]} for details on
orbits' definition), and the limit must be the unique fixed point $c^{\ast }$%
.

Moreover, by the \textit{Mean Value Theorem} combined with \textit{Lemma} %
\ref{l-cross} and \textit{Corollary \ref{l-cross-c}}, there is a
neighborhood $U$ of $c^{\ast }$ and $q\in (0,1)$ such that 
\begin{equation*}
|T^{\ast }(y)-c^{\ast }|\leq q|y-c^{\ast }|\quad \text{for all }y\in U.
\end{equation*}%
For any $x\in (0,1)$, monotonicity of $T^{\ast }$ and its fixed-point
structure imply (see \cite[Chapter 5.2]{[18]} for details) that $T^{\ast
\circ m}(x)$ eventually enters $U$ (if $x<c^{\ast }$, the orbit increases
into $U$; if $x>c^{\ast }$, it decreases into $U$). Once inside $U$, the
contraction estimate ensures exponential convergence to $c^{\ast }$. Thus $%
T^{\ast \circ m}(x)\rightarrow c^{\ast }$ for all $x\in (0,1)$.
\end{proof}

\begin{lemma}
\label{Phi-const-direct} The limit map $\Phi _{\infty }$ is constant on $%
(0,1)$ and equal to $c^{\ast }$ 
\begin{equation}
\Phi _{\infty }(x)\equiv c^{\ast }=\frac{1}{2},\quad x\in (0,1).  \label{590}
\end{equation}
\end{lemma}

\begin{proof}
\noindent Fix an arbitrary compact interval $[\delta ,\eta ]\subset (0,1)$.
By \textit{Lemma}~\ref{uniform}, $\Phi _{n}\rightarrow \Phi _{\infty }$
uniformly on $[\delta ,\eta ]$.

Let $\{n_{k}\}_{k=0}^{\infty }$ be any sequence of indices with $%
n_{k}\rightarrow \infty $. By \textit{Claim} \ref{AA} we may extract a
subsequence (still denoted $n_{k}$) such that $L_{n_{k}}\rightarrow L^{\ast
} $ and $T_{n_{k}}\rightarrow T^{\ast }$ uniformly, with $T^{\ast }$ the
inverse of a limit $L^{\ast }$ satisfying (\ref{510}). By \textit{Lemma}~\ref%
{Phi-eq}, 
\begin{equation}
\Phi _{\infty }(x)=\Phi _{\infty }(T^{\ast }(x)),\quad x\in \lbrack \delta
,\eta ].  \label{591}
\end{equation}%
Iterating this relation, for all $m\in N$, 
\begin{equation}
\Phi _{\infty }(x)=\Phi _{\infty }(T^{\ast \circ m}(x)),\quad x\in \lbrack
\delta ,\eta ].  \label{592}
\end{equation}%
By \textit{Lemma}~\ref{orbit}, $T^{\ast \circ m}(x)\rightarrow c^{\ast }$
for every $x\in \lbrack \delta ,\eta ]$. By continuity of $\Phi _{\infty }$ (%
\textit{Corollary}~\textit{\ref{uniform}}), 
\begin{equation}
\Phi _{\infty }(x)=\lim_{m\rightarrow \infty }\Phi _{\infty }(T^{\ast \circ
m}(x))=\Phi _{\infty }(c^{\ast })  \label{593}
\end{equation}%
for all $x\in \lbrack \delta ,\eta ]$. Thus $\Phi _{\infty }$ is constant on 
$[\delta ,\eta ]$, with value $c=\Phi _{\infty }(c^{\ast })$.

Since $[\delta ,\eta ]\subset (0,1)$ was arbitrary, we conclude that $\Phi
_{\infty }(x)\equiv c$ on all of $(0,1)$. Finally, evaluating at $x=\phi
^{\ast }$ and using \textit{Lemma}~\ref{Phi-pointwise} (which gives $\Phi
_{n}(\phi ^{\ast })\rightarrow \phi ^{\ast }$), we obtain 
\begin{equation}
c=\Phi _{\infty }(\phi ^{\ast })=\lim_{n\rightarrow \infty }\Phi _{n}(\phi
^{\ast })=\phi ^{\ast }.  \label{594}
\end{equation}%
Hence $\Phi _{\infty }(x)\equiv \phi ^{\ast }$ for all $x\in (0,1)$.
\end{proof}

\begin{flushleft}
{\large Appendix C.1.2}
\end{flushleft}

In the previous appendix, we extensively analyzed the function $L_{n}(x)$
generated by the \textit{Fr\'{e}chet-Hoeffding} \textit{lower-bound}
iteration (\ref{170a}). We established several of its key properties and
characterized its asymptotic behavior. Specifically, we proved the
subsequential uniform convergence of the sequence $\{L_{n}(x)\}_{n\geq 0}$
on $[0,1]$, as well as the convergence of its crossing points, $c_{n}$, to $%
\frac{1}{2}$. We also considered the corresponding sequence of its iterated
inverses, $\{\Phi _{n}(x)\}_{n\geq 0}$, proving the convergence of their
crossing points, $\phi _{n}$, to $\frac{1}{2}$ and, more generally, the
locally uniform convergence of the sequence itself on $(0,1)$ to $\frac{1}{2}
$.

These results provide important insights into the mechanics of the iterative
equation (\ref{5.1}), particularly under conditions of extreme negative
dependence. Furthermore, many of the demonstrated techniques will prove
useful for analyzing the general $RR_{2}$ case and for making relevant
comparisons. Nevertheless, significant questions remain regarding the full
convergence of $\{L_{n}(x)\}_{n\geq 0}$ and the explicit form of its
subsequential or full limits.

In this appendix we complete the convergence analysis of the sequence $%
\{L_{n}\}_{n\geq 0}$. We assume throughout all the structural facts
established earlier in the paper, in particular those from \textit{%
Appendix~C.1.1} concerning monotonicity, convexity/concavity on the relevant
subintervals, and the already proved symmetry-defect convergence

\begin{equation}
\Vert L_{n}(x)+L_{n}(1-x)-1\Vert _{L^{\infty }([0,1])}\longrightarrow 0.
\label{1600}
\end{equation}

We also assume relative compactness of $\{L_{n}\}_{n\geq 0}$ in $C([0,1])$.

The proof is organized as follows.

\medskip \noindent \textbf{Outline:}

\begin{enumerate}
\item We first reduce the problem from the original sequence $%
\{L_{n}\}_{n\geq 0}$ to a one-sided profile $\{R_{n}\}_{n\geq 0}$ adapted to
symmetry. As every cluster point of $\{L_{n}\}_{n\geq 0}$ is symmetric, it
becomes uniquely encoded by its reduced profile.

\item We then pass from $\{R_{n}\}_{n\geq 0}$ to an autonomous dynamical
system on a new sequence $\{A_{n}\}_{n\geq 0}$ and compute the exact \textit{%
Fr\'{e}chet derivative} of the corresponding nonlinear map $T$.

\item The derivative formula yields an explicit pointwise coefficient 
\begin{equation}
K_{n}(x)=\frac{(1-\tau _{n}(x))p_{n}(x)+\tau _{n}(x)(1-p_{n}(x))}{M_{n}},
\label{1601}
\end{equation}%
whose strict inequality $K_{n}(x)<1$ is the exact contraction criterion.

\item The inequality $K_{n}(x)<1$ is proved by a complete branch analysis
according to whether $p_{n}(x)\gtrless \frac{1}{2}$.

\item Finally, we present two ways of closing the argument.

\begin{itemize}
\item[(i)] a direct orbitwise contraction proof, based on the tail
coefficients $q_{n}=\sup_{x}K_{n}(x)$;

\item[(ii)] a rigorous compactness/cluster-set proof, which is the argument
we regard as conceptually definitive.
\end{itemize}
\end{enumerate}

\begin{flushleft}
\textbf{The reduced profile }$R_{n}$\textbf{\ and why it is the right
variable}
\end{flushleft}

Define 
\begin{equation}
\gamma (x)=1-\sqrt{1-x},\qquad \eta (x)=2x-x^{2}=\gamma ^{-1}(x),\qquad x\in
\lbrack 0,1].  \label{1602}
\end{equation}%
Following the discussion in \textit{Appendix~C.1.1}, we define the reduced
profile of $L_{n}$ by 
\begin{equation}
R_{n}(x)=2L_{n}\!\left( \frac{1-\sqrt{1-x}}{2}\right) ,\qquad x\in \lbrack
0,1].  \label{1603}
\end{equation}

The map 
\begin{equation}
x\longmapsto \alpha (x)=\frac{1-\sqrt{1-x}}{2}  \label{1604}
\end{equation}%
sends $[0,1]$ onto $[0,\frac{1}{2}]$. Thus $R_{n}$ is exactly the left half
of $L_{n}$, reparametrized to the whole unit interval and rescaled by the
factor $2$. This is a useful variable because the symmetry relation 
\begin{equation}
L(x)=1-L(1-x)  \label{1605}
\end{equation}%
determines the whole function from its left half. In particular, if $L$ is
symmetric and $R$ is defined by (\ref{1603}), then $L$ can be reconstructed
from $R$ by 
\begin{equation}
L(x)=%
\begin{cases}
\dfrac{1}{2}\,R\!(4x(1-x)), & 0\leq x\leq \dfrac{1}{2}, \\[1.2ex] 
1-\dfrac{1}{2}\,R\!(4x(1-x)), & \dfrac{1}{2}\leq x\leq 1.%
\end{cases}
\label{1606}
\end{equation}%
Indeed, if $x\in \lbrack 0,1/2]$ then $4x(1-x)\in \lbrack 0,1]$ and 
\begin{equation}
\frac{1-\sqrt{1-4x(1-x)}}{2}=x,  \label{1607}
\end{equation}%
hence $R(4x(1-x))=2L(x)$. For $x\in \lbrack 1/2,1]$, symmetry yields 
\begin{equation}
L(x)=1-L(1-x)=1-\frac{1}{2}R\!(4x(1-x)).  \label{1608}
\end{equation}

Thus, once the limit is known to be symmetric, convergence of $R_n$ is
equivalent to convergence of $L_n$.

The reason this reduction is valid in the present problem is that the
symmetry defect has already been proved to vanish uniformly. Therefore every
subsequential limit of $(L_{n})$ is symmetric. This fact is precisely what
allows we to focus on the reduced profiles $R_{n}$ without losing
information.

\begin{lemma}
\label{R-convergence-implies-L-convergence} Assume that $R_{n}\rightarrow
R_{\infty }$ uniformly on $[0,1]$ and that (\ref{1600}) holds. Then $%
L_{n}\rightarrow L_{\infty }$ uniformly on $[0,1]$, where $L_{\infty }$ is
the symmetric function determined by (\ref{1606}) with $R=R_{\infty }$.
\end{lemma}

\begin{proof}
For $x\in \lbrack 0,\frac{1}{2}]$ we have exactly 
\begin{equation}
L_{n}(x)=\frac{1}{2}\,R_{n}\!(4x(1-x)).  \label{1609}
\end{equation}%
Hence 
\begin{equation}
\sup_{0\leq x\leq 1/2}|L_{n}(x)-L_{\infty }(x)|\leq \frac{1}{2}\Vert
R_{n}-R_{\infty }\Vert _{\infty }\longrightarrow 0.  \label{1610}
\end{equation}

Now let $x\in \lbrack \frac{1}{2},1]$. Set 
\begin{equation}
\varepsilon _{n}(x)=L_{n}(x)+L_{n}(1-x)-1.  \label{1611}
\end{equation}%
Then 
\begin{equation}
L_{n}(x)=1-L_{n}(1-x)+\varepsilon _{n}(x)=1-\frac{1}{2}R_{n}\!(4x(1-x))+%
\varepsilon _{n}(x),  \label{1612}
\end{equation}%
because $1-x\in \lbrack 0,\frac{1}{2}]$. Since $L_{\infty }$ is symmetric, 
\begin{equation}
L_{\infty }(x)=1-\frac{1}{2}R_{\infty }\!(4x(1-x)).  \label{1613}
\end{equation}%
Therefore 
\begin{equation}
|L_{n}(x)-L_{\infty }(x)|\leq \frac{1}{2}\Vert R_{n}-R_{\infty }\Vert
_{\infty }+\Vert \varepsilon _{n}\Vert _{\infty }.  \label{1614}
\end{equation}%
Taking the supremum over $x\in \lbrack \frac{1}{2},1]$ and using (\ref{1600}%
) gives 
\begin{equation}
\sup_{1/2\leq x\leq 1}|L_{n}(x)-L_{\infty }(x)|\leq \frac{1}{2}\Vert
R_{n}-R_{\infty }\Vert _{\infty }+\Vert L_{n}(\cdot )+L_{n}(1-\cdot )-1\Vert
_{\infty }\rightarrow 0.  \label{1615}
\end{equation}%
Combining the two half-interval estimates yields the desired uniform
convergence. \hfill
\end{proof}

\begin{flushleft}
\textbf{The autonomous map on }$A_{n}$
\end{flushleft}

The sequence $\{R_{n}\}_{n\geq 0}$ is not autonomous in its most immediate
form. The correct autonomous variable is obtained by integrating the inverse
profile.

Define 
\begin{equation}
A_{n}(x)=\frac{\int_{0}^{x}R_{n}^{-1}(s)\,ds}{\int_{0}^{1}R_{n}^{-1}(s)\,ds}%
,\qquad x\in \lbrack 0,1].  \label{1616}
\end{equation}%
Then, by the exact identity established in the discussion above, 
\begin{equation}
R_{n+1}(x)=A_{n}(\gamma (x)),\qquad x\in \lbrack 0,1].  \label{1617}
\end{equation}%
Equivalently, 
\begin{equation}
A_{n}(x)=R_{n+1}(\eta (x)),\qquad x\in \lbrack 0,1].  \label{1618}
\end{equation}

Now define the nonlinear map 
\begin{equation}
(TA)(x)=\frac{\int_{0}^{x}\eta (A^{-1}(s))\,ds}{\int_{0}^{1}\eta
(A^{-1}(s))\,ds},\qquad x\in \lbrack 0,1].  \label{1619}
\end{equation}%
Then from (\ref{1617}) and (\ref{1618}) we obtain 
\begin{equation}
A_{n+1}=T(A_{n}).  \label{1620}
\end{equation}%
Thus the convergence problem has been reduced to the iteration of the
autonomous map $T$.

For later use, introduce 
\begin{equation}
q_{n}=A_{n}^{-1},\qquad p_{n}(x)=\eta (q_{n}(x)),\qquad
M_{n}=\int_{0}^{1}p_{n}(s)\,ds,\qquad \tau _{n}(x)=A_{n+1}(x).  \label{1621}
\end{equation}%
By construction, 
\begin{equation}
p_{n}(x)=R_{n+1}^{-1}(x),\qquad \tau _{n}(x)=R_{n+2}(\eta (x)).  \label{1622}
\end{equation}

\begin{flushleft}
\textbf{Admissible profiles and basic properties}
\end{flushleft}

Let $\mathcal{A}$ denote the class of all functions $A\in C([0,1])$ such that

\begin{enumerate}
\item $A(0)=0$, $A(1)=1$;

\item $A$ is strictly increasing;

\item $A$ is convex;

\item $A(x)\le x$ on $[0,1]$.
\end{enumerate}

All iterates $A_{n}$ belong to $\mathcal{A}$ by the results already
established for $L_{n}$ and $R_{n}$ in \textit{Appendix~C.1.1}.

If $A\in \mathcal{A}$, define 
\begin{equation}
q=A^{-1},\qquad p(x)=\eta (q(x)),\qquad M=\int_{0}^{1}p(s)\,ds,\qquad \tau
=TA.  \label{1623}
\end{equation}%
Then:

\begin{enumerate}
\item $q$ is increasing and concave;

\item since $A(x)\le x$, one has $q(x)\ge x$;

\item since $\eta(t)=2t-t^2$ is increasing and concave on $[0,1]$, the
composition $p=\eta\circ q$ is increasing and concave;

\item $p(0)=0$, $p(1)=1$;

\item $p(x)\geq \eta (x)=2x-x^{2}$ on $[0,1]$, and therefore 
\begin{equation}
M=\int_{0}^{1}p(s)\,ds\geq \int_{0}^{1}(2s-s^{2})\,ds=\frac{2}{3}.
\label{1624}
\end{equation}
\end{enumerate}

We shall use (\ref{1624}) systematically in what follows.

\begin{flushleft}
\textbf{Fr\'{e}chet derivative of }$T$
\end{flushleft}

This is the common conceptual core of both proofs. Let $A\in \mathcal{A}$,
and define $q,p,M,\tau $ as above. Then 
\begin{equation}
\tau (x)=\frac{\int_{0}^{x}p(s)\,ds}{M}.  \label{1625}
\end{equation}

Differentiating (\ref{1619}) with respect to $A$ yields the exact \textit{Fr%
\'{e}chet derivative}.

\begin{lemma}
\label{frechet} Let $A\in \mathcal{A}$ and $h\in C([0,1])$. Then 
\begin{equation}
DT_{A}[h](x)=\frac{-\int_{0}^{q(x)}(2-2t)\,h(t)\,dt+\tau
(x)\int_{0}^{1}(2-2t)\,h(t)\,dt}{M}.  \label{1626}
\end{equation}%
Consequently, 
\begin{equation}
\Vert DT_{A}\Vert _{\infty \rightarrow \infty }\leq \sup_{x\in \lbrack 0,1]}%
\frac{(1-\tau (x))p(x)+\tau (x)(1-p(x))}{M}.  \label{1627}
\end{equation}%
For the indexed orbit this becomes 
\begin{equation}
K_{n}(x)=\frac{(1-\tau _{n}(x))p_{n}(x)+\tau _{n}(x)(1-p_{n}(x))}{M_{n}}%
,\qquad \Vert DT_{A_{n}}\Vert _{\infty \rightarrow \infty }\leq \sup_{x\in
\lbrack 0,1]}K_{n}(x).  \label{1628}
\end{equation}
\end{lemma}

\begin{proof}
\noindent Let $A_{\varepsilon }=A+\varepsilon h$ and $q_{\varepsilon
}=A_{\varepsilon }^{-1}$. The standard differentiation formula for inverses
gives 
\begin{equation}
\left. \frac{d}{d\varepsilon }\right\vert _{\varepsilon =0}q_{\varepsilon
}(x)=-h(q(x)).  \label{1629}
\end{equation}%
Since $p_{\varepsilon }(x)=\eta (q_{\varepsilon }(x))$, we obtain 
\begin{equation}
\left. \frac{d}{d\varepsilon }\right\vert _{\varepsilon =0}p_{\varepsilon
}(x)=(2-2q(x))\,\left. \frac{d}{d\varepsilon }\right\vert _{\varepsilon
=0}q_{\varepsilon }(x)=-(2-2q(x))h(q(x)).  \label{1630}
\end{equation}%
Writing (\ref{1619}) as 
\begin{equation}
T(A)(x)=\frac{N_{A}(x)}{D_{A}},\qquad N_{A}(x)=\int_{0}^{x}p(s)\,ds,\qquad
D_{A}=\int_{0}^{1}p(s)\,ds=M,  \label{1631}
\end{equation}%
and differentiating the quotient yields 
\begin{equation}
DT_{A}[h](x)=\frac{\delta N_{A}[h](x)}{M}-\frac{N_{A}(x)}{M^{2}}\delta
D_{A}[h].  \label{1632}
\end{equation}%
Since $q$ is increasing and $p=\eta \circ q$, the change of variables $%
s=A(t) $ gives 
\begin{equation}
\delta N_{A}[h](x)=-\int_{0}^{q(x)}(2-2t)h(t)\,dt,\qquad \delta
D_{A}[h]=-\int_{0}^{1}(2-2t)h(t)\,dt.  \label{1633}
\end{equation}%
Substituting this into the quotient rule yields (\ref{1626}).

To derive (\ref{1627}), fix $x\in \lbrack 0,1]$. Then from (\ref{1626}), 
\begin{equation}
|DT_{A}[h](x)|\leq \frac{1}{M}\left( \int_{0}^{q(x)}(2-2t)|h(t)|\,dt+\tau
(x)\int_{q(x)}^{1}(2-2t)|h(t)|\,dt\right) .  \label{1634}
\end{equation}%
Taking $\Vert h\Vert _{\infty }\leq 1$ and using 
\begin{equation}
\int_{0}^{q(x)}(2-2t)\,dt=\eta (q(x))=p(x),\qquad
\int_{q(x)}^{1}(2-2t)\,dt=1-p(x),  \label{1635}
\end{equation}%
we obtain 
\begin{equation}
|DT_{A}[h](x)|\leq \frac{(1-\tau (x))p(x)+\tau (x)(1-p(x))}{M}.  \label{1636}
\end{equation}%
Taking the supremum over $x$ proves (\ref{1627}). The indexed formula (\ref%
{1628}) is just the specialization $A=A_{n}$. \hfill
\end{proof}

\begin{remark}
\label{functional-analytic-input} The use of the \textit{Fr\'{e}chet
derivative} in the sequel relies on standard \textit{Banach-space} calculus
and fixed-point theory. In particular, we use the \textit{Banach-space
mean-value theorem} and the contraction principles in the form of \cite[%
Theorem~9.9 (mean value theorem), Theorem~9.27 (contraction principle),
Theorem~9.29 (uniform contraction principle)]{[64]}. The exact derivative
formula (\ref{1626}) is the bridge between the nonlinear dynamics of $T$ and
the contraction mechanism.
\end{remark}

\begin{flushleft}
\textbf{Branch criterion}
\end{flushleft}

By (\ref{1628}), to prove strict contractivity it is enough to show 
\begin{equation}
K_{n}(x)<1.  \label{1637}
\end{equation}%
Equivalently, 
\begin{equation}
(1-\tau _{n}(x))p_{n}(x)+\tau _{n}(x)(1-p_{n}(x))<M_{n}.  \label{1638}
\end{equation}%
Writing $p=p_{n}(x)$, $\tau =\tau _{n}(x)$, $M=M_{n}$, this becomes 
\begin{equation}
p+\tau (1-2p)<M.  \label{1639}
\end{equation}%
Therefore:

\begin{enumerate}
\item if $p>\frac{1}{2}$, then (\ref{1638}) is equivalent to 
\begin{equation}
\tau >\frac{p-M}{2p-1};  \label{1640}
\end{equation}

\item if $p<\frac{1}{2}$, then (\ref{1638}) is equivalent to 
\begin{equation}
\tau <\frac{M-p}{1-2p}.  \label{1641}
\end{equation}
\end{enumerate}

We now prove (\ref{1640}) and (\ref{1641}).

\begin{flushleft}
\textbf{The branch }$p_{n}(x)>\frac{1}{2}$
\end{flushleft}

Fix $n$ and $x\in \lbrack 0,1]$, and abbreviate 
\begin{equation}
p=p_{n}(x),\qquad M=M_{n},\qquad \tau =\tau _{n}(x).  \label{1642}
\end{equation}%
We assume 
\begin{equation}
p>\frac{1}{2}.  \label{1643}
\end{equation}%
We prove (\ref{1640}).

\begin{lemma}
\label{concavity-left} For $0\leq s\leq x$ we have 
\begin{equation}
p_{n}(s)\geq \frac{s}{x}\,p.  \label{1644}
\end{equation}%
Consequently, 
\begin{equation}
\int_{0}^{x}p_{n}(s)\,ds\geq \frac{xp}{2}.  \label{1645}
\end{equation}
\end{lemma}

\begin{proof}
\noindent Since $p_{n}$ is concave and $p_{n}(0)=0$, for every $s\in \lbrack
0,x]$ we can write 
\begin{equation*}
s=\frac{s}{x}x+\left( 1-\frac{s}{x}\right) 0.
\end{equation*}%
Concavity gives 
\begin{equation*}
p_{n}(s)\geq \frac{s}{x}p_{n}(x)+\left( 1-\frac{s}{x}\right) p_{n}(0)=\frac{s%
}{x}p.
\end{equation*}%
Integrating from $0$ to $x$ yields (\ref{1645}). \hfill
\end{proof}

\begin{lemma}
\label{concavity-right} For $x\leq s\leq 1$ we have 
\begin{equation}
p_{n}(s)\geq p+\frac{1-p}{1-x}(s-x).  \label{1646}
\end{equation}%
Consequently, 
\begin{equation}
\int_{x}^{1}p_{n}(s)\,ds\geq \frac{(1-x)(p+1)}{2}.  \label{1647}
\end{equation}
\end{lemma}

\begin{proof}
Again by concavity, the graph of $p_{n}$ lies above the chord joining $(x,p)$
and $(1,1)$. The equation of this chord is 
\begin{equation*}
\ell (s)=p+\frac{1-p}{1-x}(s-x),\qquad s\in \lbrack x,1].
\end{equation*}%
Therefore $p_{n}(s)\geq \ell (s)$ on $[x,1]$. Integrating, 
\begin{equation*}
\int_{x}^{1}p_{n}(s)\,ds\geq \int_{x}^{1}\ell (s)\,ds=p(1-x)+\frac{1-p}{1-x}%
\cdot \frac{(1-x)^{2}}{2}=\frac{(1-x)(p+1)}{2},
\end{equation*}%
which is (\ref{1647}). \hfill
\end{proof}

Combining (\ref{1645}) and (\ref{1647}), we obtain 
\begin{equation}
M=\int_{0}^{1}p_{n}(s)\,ds\geq \frac{xp}{2}+\frac{(1-x)(p+1)}{2}=\frac{1+p-x%
}{2}.  \label{1648}
\end{equation}%
Hence 
\begin{equation}
x\geq 1+p-2M.  \label{1649}
\end{equation}%
Since 
\begin{equation}
\tau =\frac{1}{M}\int_{0}^{x}p_{n}(s)\,ds,  \label{1650}
\end{equation}%
(\ref{1645}) and (\ref{1649}) imply 
\begin{equation}
\tau \geq \frac{xp}{2M}\geq \frac{p(1+p-2M)}{2M}.  \label{1651}
\end{equation}

Thus it is enough to prove 
\begin{equation}
\frac{p(1+p-2M)}{2M}>\frac{p-M}{2p-1}.  \label{1652}
\end{equation}%
After multiplying by the positive quantity $2M(2p-1)$, this is equivalent to 
\begin{equation}
G(p,M)=2p^{3}+(1-4M)p^{2}-p+2M^{2}>0.  \label{1653}
\end{equation}

We now check this carefully.

\begin{lemma}
\label{G-positive} If 
\begin{equation}
M\geq \frac{2}{3},\qquad p\in (M,1],  \label{1654}
\end{equation}%
then 
\begin{equation}
G(p,M)=2p^{3}+(1-4M)p^{2}-p+2M^{2}>0.  \label{1655}
\end{equation}
\end{lemma}

\begin{proof}
\noindent For fixed $M$, consider $G(\cdot ,M)$ on $[M,1]$. We have 
\begin{equation}
\partial _{p}G(p,M)=6p^{2}+2(1-4M)p-1,  \label{1656}
\end{equation}%
\begin{equation}
\partial _{pp}G(p,M)=12p+2-8M.  \label{1657}
\end{equation}%
Since $p\geq M\geq \frac{2}{3}$, 
\begin{equation}
\partial _{pp}G(p,M)\geq 12M+2-8M=4M+2>0.  \label{1658}
\end{equation}%
So $G(\cdot ,M)$ is strictly convex on $[M,1]$, hence has a unique minimizer
there. We must show that this minimum is positive.

\textbf{Case 1: $M\geq \frac{7}{8}$.} Then 
\begin{equation}
\partial _{p}G(1,M)=7-8M\leq 0.  \label{1659}
\end{equation}%
Since $\partial _{p}G(\cdot ,M)$ is increasing, it follows that 
\begin{equation}
\partial _{p}G(p,M)\leq 0\qquad \text{for all }p\in \lbrack M,1].
\label{1660}
\end{equation}%
Hence $G(\cdot ,M)$ is decreasing on $[M,1]$ and its minimum is attained at $%
p=1$: 
\begin{equation}
G(1,M)=2(1-M)^{2}>0.  \label{1661}
\end{equation}

\textbf{Case 2: $\frac{2}{3}\leq M<\frac{7}{8}$.} Now 
\begin{equation}
\partial _{p}G(M,M)=-2M^{2}+2M-1<0,\qquad \partial _{p}G(1,M)=7-8M>0.
\label{1662}
\end{equation}%
Thus the unique minimizer $p_{\ast }(M)$ lies in $(M,1)$ and is
characterized by 
\begin{equation}
\partial _{p}G(p_{\ast }(M),M)=0.  \label{1663}
\end{equation}%
Solving the quadratic equation gives 
\begin{equation}
p_{\ast }(M)=\frac{4M-1+\sqrt{16M^{2}-8M+7}}{6}.  \label{1664}
\end{equation}%
Define 
\begin{equation}
H(M)=G(p_{\ast }(M),M).  \label{1665}
\end{equation}%
A direct simplification yields 
\begin{equation}
H(M)=\frac{-64M^{3}+156M^{2}-48M+10-(16M^{2}-8M+7)^{3/2}}{54}.  \label{1666}
\end{equation}%
Differentiating twice one finds 
\begin{equation}
H^{\prime \prime }(M)=-\frac{4}{9\sqrt{16M^{2}-8M+7}}(64M^{2}-32M+16+(16M-13)%
\sqrt{16M^{2}-8M+7}).  \label{1667}
\end{equation}%
For $M\in \lbrack \frac{2}{3},\frac{7}{8}]$ the bracket is strictly
positive, hence $H^{\prime \prime }(M)<0$. Therefore $H$ is concave on $[%
\frac{2}{3},\frac{7}{8}]$, so its minimum on this interval is attained at
one of the endpoints.

At $M=\frac{7}{8}$ we have $p_{\ast }(\frac{7}{8})=1$, hence 
\begin{equation}
H\!\left( \frac{7}{8}\right) =G\!\left( 1,\frac{7}{8}\right) =\frac{1}{32}>0.
\label{1668}
\end{equation}%
At $M=\frac{2}{3}$ one obtains 
\begin{equation}
H\!\left( \frac{2}{3}\right) =\frac{766-79\sqrt{79}}{1458}>0.  \label{1669}
\end{equation}%
Thus $H(M)>0$ on $[\frac{2}{3},\frac{7}{8}]$.

Combining the two cases proves $G(p,M)>0$ for all $M\geq \frac{2}{3}$ and $%
p\in (M,1]$. \hfill
\end{proof}

By (\ref{1624}), the hypotheses of \textit{Lemma~\ref{G-positive}} are
satisfied. Consequently (\ref{1640}) holds whenever $p_{n}(x)>\frac{1}{2}$.

\begin{flushleft}
\textbf{The branch }$p_{n}(x)<\frac{1}{2}$
\end{flushleft}

We now prove (\ref{1641}). Fix again $n$ and $x\in \lbrack 0,1]$, and write 
\begin{equation}
p=p_{n}(x),\qquad M=M_{n},\qquad \tau =tau_{n}(x).  \label{1670}
\end{equation}%
Assume 
\begin{equation}
p<\frac{1}{2}.  \label{1671}
\end{equation}%
We need to show 
\begin{equation}
\tau <\frac{M-p}{1-2p}.  \label{1672}
\end{equation}

Since $A_{n+1}$ is convex, increasing, and satisfies $A_{n+1}(0)=0$, $%
A_{n+1}(1)=1$, it lies below the diagonal: 
\begin{equation}
\tau _{n}(x)=A_{n+1}(x)\leq x.  \label{1673}
\end{equation}%
Also 
\begin{equation}
p_{n}(x)=\eta (A_{n}^{-1}(x))\geq \eta (x)=2x-x^{2}.  \label{1674}
\end{equation}%
Since $\gamma =\eta ^{-1}$ is increasing, this implies 
\begin{equation}
x\leq \gamma (p)=1-\sqrt{1-p}.  \label{1675}
\end{equation}%
Hence 
\begin{equation}
\tau \leq x\leq \gamma (p).  \label{1676}
\end{equation}%
Thus it is enough to prove 
\begin{equation}
\gamma (p)<\frac{M-p}{1-2p}.  \label{1677}
\end{equation}%
Because $M\geq \frac{2}{3}$, it is enough to prove 
\begin{equation}
\gamma (p)<\frac{\frac{2}{3}-p}{1-2p},\qquad 0<p<\frac{1}{2}.  \label{1678}
\end{equation}%
Set $u=\sqrt{1-p}\in \left( \frac{1}{\sqrt{2}},1\right) $. Then $p=1-u^{2}$,
and the inequality becomes 
\begin{equation}
1-u<\frac{u^{2}-\frac{1}{3}}{2u^{2}-1}.  \label{1679}
\end{equation}%
Since $2u^{2}-1>0$, this is equivalent to 
\begin{equation}
6u^{3}-3u^{2}-3u+2>0.  \label{1680}
\end{equation}%
The cubic on the left has derivative 
\begin{equation}
18u^{2}-6u-3,  \label{1681}
\end{equation}%
whose unique zero in $\left( \frac{1}{\sqrt{2}},1\right) $ is 
\begin{equation}
u_{\ast }=\frac{1+\sqrt{7}}{6}.  \label{1682}
\end{equation}%
A direct evaluation gives 
\begin{equation}
6u_{\ast }^{3}-3u_{\ast }^{2}-3u_{\ast }+2=\frac{13}{9}-\frac{7\sqrt{7}}{18}%
>0.  \label{1683}
\end{equation}%
Since the cubic is positive at $u=1/\sqrt{2}$ and at $u=1$, it is positive
throughout $\left( \frac{1}{\sqrt{2}},1\right) $. This proves (\ref{1641}).

\begin{flushleft}
\textbf{Conclusion of the branch analysis and proof of the }$L_{n}$\textbf{\
convergence}
\end{flushleft}

We have proved:

\begin{claim}
\label{pointwise-Kn} There exists $N$ such that for every $n\geq N$ and
every $x\in \lbrack 0,1]$, 
\begin{equation}
K_{n}(x)<1.  \label{1684}
\end{equation}%
Equivalently, 
\begin{equation}
q_{n}=\sup_{x\in \lbrack 0,1]}K_{n}(x)<1\qquad (n\geq N).  \label{1685}
\end{equation}
\end{claim}

\begin{proof}
\noindent The two branch inequalities (\ref{1640}) and (\ref{1641}) show
that (\ref{1638}) holds for every $x$, hence $K_{n}(x)<1$. Continuity of $%
K_{n}$ on the compact interval $[0,1]$ implies $q_{n}<1$.
\end{proof}

We now can formulate the following

\begin{theorem}
The sequence $\{L_{n}\}_{n\geq 0}$ converges uniformly on $[0,1].$
\end{theorem}

\begin{proof}
Because $\{L_{n}\}_{n\geq 0}$ is relatively compact in $C([0,1])$, the
transformed sequence $\{R_{n}\}_{n\geq 0}$ is also relatively compact in $%
C([0,1])$ by (\ref{1603}). Then, using (\ref{1617}), 
\begin{equation}
A_{n}(x)=R_{n+1}(\eta (x)),  \label{1686}
\end{equation}%
so $\{A_{n}\}_{n\geq 0}$ is relatively compact as well.

Let $\Omega $ be the cluster set of $\{A_{n}\}_{n\geq 0}$ in $C([0,1])$: 
\begin{equation}
\Omega =\left\{ A\in C([0,1])\,:\,\exists n_{k}\rightarrow \infty \text{
with }A_{n_{k}}\rightarrow A\text{ uniformly on }[0,1]\right\} .
\label{1687}
\end{equation}%
Then $\Omega $ is nonempty and compact.

\textbf{Step 1: every cluster point is admissible.} Since each $A_{n}$
belongs to $\mathcal{A}$ and the defining properties of $\mathcal{A}$ are
preserved under uniform limits, every $A\in \Omega $ belongs to $\mathcal{A}$%
. In particular, $A$ is continuous, strictly increasing, convex, and
satisfies $A(0)=0$, $A(1)=1$.

\textbf{Step 2: continuity of the inverse and of $T$.} Let $A_{k}\rightarrow
A$ uniformly in $C([0,1])$, with $A_{k},A\in \mathcal{A}$. Since $A$ is
strictly increasing and continuous on $[0,1]$, its inverse $A^{-1}$ is
continuous. We claim that 
\begin{equation}
A_{k}^{-1}\rightarrow A^{-1}\qquad \text{uniformly on }[0,1].  \label{1688}
\end{equation}%
Indeed, fix $\varepsilon >0$. Since $A$ is strictly increasing and
continuous, 
\begin{equation}
m_{\varepsilon }=\min \{|A(u)-A(v)|:\ |u-v|\geq \varepsilon ,\ u,v\in
\lbrack 0,1]\}>0.  \label{1689}
\end{equation}%
If $\Vert A_{k}-A\Vert _{\infty }<m_{\varepsilon }/2$ and $y\in \lbrack 0,1]$%
, let $x=A^{-1}(y)$, $x_{k}=A_{k}^{-1}(y)$. If $|x_{k}-x|\geq \varepsilon $,
then by the definition of $m_{\varepsilon }$, 
\begin{equation}
|A(x_{k})-A(x)|\geq m_{\varepsilon }.  \label{1690}
\end{equation}%
But 
\begin{equation}
|A(x_{k})-A(x)|=|A(x_{k})-A_{k}(x_{k})|\leq \Vert A-A_{k}\Vert _{\infty }<%
\frac{m_{\varepsilon }}{2},  \label{1691}
\end{equation}%
a contradiction. Hence $|x_{k}-x|<\varepsilon $ uniformly in $y$, proving $%
A_{k}^{-1}\rightarrow A^{-1}$ uniformly.

It follows that the maps 
\begin{equation}
A\mapsto q=A^{-1},\qquad A\mapsto p=\eta \circ q,\qquad A\mapsto
M=\int_{0}^{1}p,\qquad A\mapsto \tau =TA  \label{1692}
\end{equation}%
are continuous on $\mathcal{A}$ in the sup norm. Consequently, the
coefficient 
\begin{equation}
K(A,x)=\frac{(1-\tau (x))p(x)+\tau (x)(1-p(x))}{M}  \label{1693}
\end{equation}%
is jointly continuous in $(A,x)\in \mathcal{A}\times \lbrack 0,1]$.

\textbf{Step 3: index-free form of the branch analysis.} Let $A\in \Omega $.
Define 
\begin{equation}
q=A^{-1},\qquad p=\eta (q),\qquad M=\int_{0}^{1}p,\qquad \tau =TA.
\label{1694}
\end{equation}%
Exactly the same arguments as in the indexed proof apply: $p$ is increasing
and concave, $M\geq 2/3$, and the two branch inequalities proved above show
that 
\begin{equation}
K(A,x)<1\qquad \text{for every }x\in \lbrack 0,1].  \label{1695}
\end{equation}%
Thus 
\begin{equation}
K(A,x)<1\qquad \forall (A,x)\in \Omega \times \lbrack 0,1].  \label{1696}
\end{equation}

\textbf{Step 4: compactness yields a uniform gap.} Since $\Omega \times
\lbrack 0,1]$ is compact and $K$ is continuous, the maximum 
\begin{equation}
q_{\ast }=\max_{A\in \Omega }\max_{x\in \lbrack 0,1]}K(A,x)  \label{1697}
\end{equation}%
is attained. Because $K(A,x)<1$ everywhere on $\Omega \times \lbrack 0,1]$,
we have 
\begin{equation}
q_{\ast }<1.  \label{1698}
\end{equation}

By continuity of $K$, there exists an open neighborhood $\mathcal{U}$ of $%
\Omega $ in $C([0,1])$ such that 
\begin{equation}
\sup_{A\in \mathcal{U}}\sup_{x\in \lbrack 0,1]}K(A,x)\leq q  \label{1699}
\end{equation}%
for some $q\in (q_{\ast },1)$.

\textbf{Step 5: the orbit eventually enters the contraction neighborhood.}
We claim that there exists $N_{0}$ such that 
\begin{equation}
A_{n}\in \mathcal{U}\qquad \text{for all }n\geq N_{0}.  \label{1700}
\end{equation}%
If not, there would exist a subsequence $A_{n_{k}}\notin \mathcal{U}$. By
relative compactness, this subsequence has a uniformly convergent
subsubsequence with limit in $\Omega $, contradicting the fact that $%
\mathcal{U}$ is a neighborhood of $\Omega $.

\textbf{Step 6: tail contraction and convergence.} For $n\geq N_{0}$, the
segment joining $A_{n}$ and $A_{n+1}$ lies in a convex neighborhood on which
the derivative bound is at most $q<1$. By the \textit{Banach-space} \textit{%
mean-value theorem}, 
\begin{equation}
\Vert A_{n+2}-A_{n+1}\Vert _{\infty }=\Vert T(A_{n+1})-T(A_{n})\Vert
_{\infty }\leq q\Vert A_{n+1}-A_{n}\Vert _{\infty }.  \label{1701}
\end{equation}%
By iteration, 
\begin{equation}
\Vert A_{n+k+1}-A_{n+k}\Vert _{\infty }\leq q^{k}\Vert A_{n+1}-A_{n}\Vert
_{\infty },\qquad k\geq 0.  \label{1702}
\end{equation}%
Summing the geometric series shows that $\{A_{n}\}_{n\geq 0}$ is \textit{%
Cauchy} in $C([0,1])$, hence converges uniformly to some $A_{\infty }\in
C([0,1])$.

Now (\ref{1617}) implies 
\begin{equation}
R_{n+1}(x)=A_{n}(\gamma (x))\longrightarrow A_{\infty }(\gamma
(x))=R_{\infty }(x)  \label{1703}
\end{equation}%
uniformly on $[0,1]$. Finally, \textit{Lemma~\ref%
{R-convergence-implies-L-convergence}} and the symmetry-gap convergence (\ref%
{1600}) imply that 
\begin{equation}
L_{n}\longrightarrow L_{\infty }  \label{1704}
\end{equation}%
uniformly on $[0,1]$.

This completes the proof.
\end{proof}

\begin{remark}
Alternatively, if we take a closer look at the preceeding proofs, we can
reason in the following way.

By Claim~\ref{pointwise-Kn}, for every $n\geq N$, 
\begin{equation}
q_{n}=\sup_{x\in \lbrack 0,1]}K_{n}(x)<1.  \label{1705}
\end{equation}%
The preceding algebra shows that equality in the two branch inequalities is
excluded throughout the tail. In particular, the tail coefficients cannot
accumulate at $1$. Hence there exists $q\in (0,1)$ and an index, still
denoted by $N$, such that 
\begin{equation}
q_{n}\leq q\qquad \text{for all }n\geq N.  \label{1706}
\end{equation}%
Now the Banach-space mean-value theorem and the contraction principle imply
that the tail map is uniformly contractive in the sup norm. Therefore $%
\{A_{n}\}_{n\geq 0}$ is a Cauchy sequence in $C([0,1])$ and converges
uniformly to a limit $A_{\infty }$. By (\ref{1617}), 
\begin{equation}
R_{n+1}(x)=A_{n}(\gamma (x))\longrightarrow A_{\infty }(\gamma
(x))=R_{\infty }(x)  \label{1707}
\end{equation}%
uniformly on $[0,1]$. Finally, Lemma~\ref%
{R-convergence-implies-L-convergence} and the symmetry-gap convergence (\ref%
{1600}) yield 
\begin{equation*}
L_{n}\longrightarrow L_{\infty }
\end{equation*}%
uniformly on $[0,1]$, where $L_{\infty }$ is the symmetric function
reconstructed from $R_{\infty }$ by (\ref{1606}).
\end{remark}

\begin{flushleft}
\textbf{Final conclusion for the \textit{Fr\'{e}chet-Hoeffding lower-bound}
iteration}
\end{flushleft}

We now turn to an advisable but yet not crucial for our analysis final
objective: identifying possible closed-form expressions for the limit of $%
\{L_{n}\}_{n\geq 0}$, a task clearly achieved for the \textit{upper-bound}
case in the forthcoming \textit{Appendix C.2}.

We have the following

\begin{lemma}
\label{simpl} The iterative equation (\ref{170a}) simplifies to a form of
the iterative equation (\ref{104}) from \cite{[3]}.
\end{lemma}

\begin{proof}
Before going to the proof of the proposition, we start with some
simplifications. Substitute a new function $H^{n}(x):[-\frac{1}{2},\frac{1}{2%
}]\rightarrow \lbrack 0,1]$ such that $H^{n,-1}(x)=\frac{1}{2}-L^{n,-1}(x)$.
Since a direct differentiation in (\ref{170a}) gives that $L^{n}(x)$ is
monotonous, and thus also $H^{n}(x)$ and $H^{n,-1}(x)$, inverting the
latter, gives $L^{n}(x)=H^{n}(\frac{1}{2}-x)$. Now we can get for (\ref{170a}%
) in terms of $H^{n}$ 
\begin{equation}
H^{n+1}(\frac{1}{2}-x)=\frac{\dint\nolimits_{0}^{x}\left[ \frac{1}{4}-\left(
H^{n,-1}(u\right) )^{2}\right] du}{\dint\nolimits_{0}^{1}\left[ \frac{1}{4}%
-\left( H^{n,-1}(u\right) )^{2}\right] du},\text{ with }H^{0}(\frac{1}{2}-x)=%
\frac{\dint\nolimits_{0}^{x}F^{-1}(u)\left( 1-F^{-1}(u)\right) du}{%
\dint\nolimits_{0}^{1}F^{-1}(u)\left( 1-F^{-1}(u)\right) du}.  \label{92}
\end{equation}

Next, to simplify further, we would like to substitute a new function $%
K^{n,-1}(x)$ for the integrand $\frac{1}{4}-\left( H^{n,-1}(x\right) )^{2}$.
We consider two cases based on the monotonicity of the function $\frac{1}{4}%
-\left( H^{n,-1}(x\right) )^{2}$, so that we can take its proper inverse and
thus find $K^{n}(x)$. That would allow to effectively see what new equation (%
\ref{92}) gets transformed to. A right control of the domain and range of $%
K^{n,-1}(x)$ is needed.

In the first case, substitute initially a new function $K_{1}^{n,-1}(x)$
such that $K_{1}^{n,-1}(x)=\frac{1}{4}-\left( H^{n,-1}(x\right) )^{2}$ and
require $K_{1}^{n,-1}(x)$ be decreasing. This monotonicity is valid only for 
$H^{n,-1}(x)\in \lbrack 0,\frac{1}{2}].$ For the latter interval, we get
that it holds $K_{1}^{n,-1}(x)\in \lbrack 0,\frac{1}{4}]$. Since $%
H^{n,-1}(x) $ is decreasing ($H^{n}(x)=L^{n}(\frac{1}{2}-x)$) and $%
H^{n,-1}(x)=\frac{1}{2}$ for $x=0$, for the range $[0,\frac{1}{2}]$ of $%
H^{n,-1}(x)$ we get the domain $x\in \lbrack 0,H^{n}(0)]$. Note that since
we do not know for which $x$ it holds $H^{n,-1}(x)=0,$ we have put in the
domain the point $H^{n}(0)\footnote{{\footnotesize For }$H^{n}(.)$, 
{\footnotesize respectively }$L^{n}(.)$, {\footnotesize the only known
points, independent from the starting distribution }$F(.),${\footnotesize \
are derived by the defining equations (\ref{170a}) and (\ref{92}) and they
are }$0${\footnotesize \ and }$1${\footnotesize \ for }$L^{n}(.)$%
{\footnotesize \ and }$\pm \frac{1}{2}${\footnotesize \ for }$H^{n}(.)$%
{\footnotesize . More precisely, we have }$L^{n}(0)=0${\footnotesize , }$%
L^{n}(1)=1${\footnotesize \ and }$H^{n}\left( -\frac{1}{2}\right) =1$%
{\footnotesize , }$H^{n}\left( \frac{1}{2}\right) =0.${\footnotesize \ }}$.
In terms of $K_{1}^{n,-1}(x)$, $H^{n,-1}(x)=0$ implies $K_{1}^{n,-1}(x)=%
\frac{1}{4}$ and so for the domain of $K_{1}^{n,-1}(x)$ we get $x\in \lbrack
0,K_{1}^{n}(\frac{1}{4})]$. Again, since we do not know for which $x$ it
holds $K_{1}^{n,-1}(x)=\frac{1}{4}$, we have put in the domain $K_{1}^{n}(%
\frac{1}{4})$. The derivations imply that effectively we can define $%
K_{1}^{n,-1}(x):[0,K_{1}^{n}(\frac{1}{4})]\rightarrow \lbrack 0,\frac{1}{4}%
], $or equivalently $K_{1}^{n}(x):[0,\frac{1}{4}]\rightarrow \lbrack
0,K_{1}^{n}(\frac{1}{4})],$ with $K_{1}^{n,-1}(x)=\frac{1}{4}-\left(
H^{n,-1}(x\right) )^{2}$. Additionally, for $x\in \lbrack 0,K_{1}^{n}(\frac{1%
}{4})]$ we have $H^{n,-1}(x)=\sqrt{\frac{1}{4}-K_{1}^{n,-1}(x)}$. If we take
inverses in the latter and consider the range of $\sqrt{\frac{1}{4}%
-K_{1}^{n,-1}(x)},$ for $x\in \lbrack 0,\frac{1}{2}]$ we have $%
H^{n}(x)=K_{1}^{n}(\frac{1}{4}-x^{2})$ as well as for $x\in \lbrack 0,\frac{1%
}{4}]$ we have $K_{1}^{n}(x)=H^{n}\left( \sqrt{\frac{1}{4}-x}\right) .$

In the second case, substitute initially a new function $K_{2}^{n,-1}(x)$
such that $K_{2}^{n,-1}(x)=\frac{1}{4}-\left( H^{n,-1}(x\right) )^{2}$ and
require $K_{2}^{n,-1}(x)$ be increasing. This monotonicity is valid only for 
$H^{n,-1}(x)\in \lbrack -\frac{1}{2},0].$ For the latter interval we get
that again it holds $K_{2}^{n,-1}(x)\in \lbrack 0,\frac{1}{4}]$. Since $%
H^{n,-1}(x)$ is decreasing ($H^{n}(x)=L^{n}(\frac{1}{2}-x)$) and $%
H^{n,-1}(x)=-\frac{1}{2}$ for $x=1$, for the range $[-\frac{1}{2},0]$ of $%
H^{n,-1}(x)$ we get the domain $x\in \lbrack H^{n}(0),1]$. Since we do not
know for which $x$ it holds $H^{n,-1}(x)=0,$ we have put in the domain the
point $H^{n}(0)$. In terms of $K_{2}^{n,-1}(x)$, $H^{n,-1}(x)=0$ again
implies $K_{2}^{n,-1}(x)=\frac{1}{4}$ and so for the domain of $%
K_{2}^{n,-1}(x)$ we get $x\in \lbrack K_{2}^{n}(\frac{1}{4}),1]$. Again,
since we do not know for which $x$ it holds $K_{2}^{n,-1}(x)=\frac{1}{4}$,
we have put in the domain $K_{2}^{n}(\frac{1}{4})$. The derivations imply
that effectively we can define $K_{2}^{n,-1}(x):[K_{2}^{n}(\frac{1}{4}%
),1]\rightarrow \lbrack 0,\frac{1}{4}],$or equivalently $K_{2}^{n}(x):[0,%
\frac{1}{4}]\rightarrow \lbrack K_{2}^{n}(\frac{1}{4}),1],$ with $%
K_{2}^{n,-1}(x)=\frac{1}{4}-\left( H^{n,-1}(x\right) )^{2}$. Additionally,
for $x\in \lbrack K_{2}^{n}(\frac{1}{4}),1]$ we have $H^{n,-1}(x)=-\sqrt{%
\frac{1}{4}-K_{2}^{n,-1}(x)}$. If we take inverses in the latter and
consider the range of $\sqrt{\frac{1}{4}-K_{2}^{n,-1}(x)},$ for $x\in
\lbrack -\frac{1}{2},0]$ we have $H^{n}(x)=K_{2}^{n}(\frac{1}{4}-x^{2})$ as
well as for $x\in \lbrack 0,\frac{1}{4}]$ we have $K_{2}^{n}(x)=H^{n}\left( -%
\sqrt{\frac{1}{4}-x}\right) .$

Now, we can rewrite (\ref{92}) in terms of $K^{n}(.)$. Before doing this, we
should note that effectively we have distinguished between the two cases
above based on the integrand in (\ref{92}). Within the process, we specially
considered the range and domain of $H^{n}(.)$ and $K^{n}(.)$ and of their
inverses. All that was done up to iteration $n$. Moving to iteration $n+1$,
however, we have to be careful. The range and domain of $H^{n+1}(.)$ and $%
K^{n+1}(.)$ on the left hand side of (\ref{92}) have to be consistent with
their counterparts on the right hand side of the equation coming from the
integrand in which inverses of $H^{n}(.)$ and $K^{n}(.)$ participate.

We consider two new cases based on the order of $\frac{1}{2}$ and $H^{n}(0)$%
: 1) $\frac{1}{2}\leq $ $H^{n}(0)$ (with $H^{n}(0)=K_{1}^{n}(\frac{1}{4}%
)=K_{2}^{n}(\frac{1}{4})=L^{n}(\frac{1}{2})$ clearly being valid as well).
Three sub-cases occur based on the domain: a) $x\in \lbrack 0,\frac{1}{2}]$,
b) $x\in \lbrack \frac{1}{2},H^{n}(0)]$, and c) $x\in \lbrack H^{n}(0),1]$;
and 2) $H^{n}(0)\leq \frac{1}{2}$ (with $H^{n}(0)=K_{1}^{n}(\frac{1}{4}%
)=K_{2}^{n}(\frac{1}{4})=L^{n}(\frac{1}{2})$ clearly being valid as well).
Three sub-cases occur again: a) $x\in \lbrack 0,H^{n}(0)]$, b) $x\in \lbrack
H^{n}(0),\frac{1}{2}]$, and c) $x\in \lbrack \frac{1}{2},1]$. Within each of
them, we shall consider the range consistency. For space consideration, we
won't show these details and we will go directly to the next step.

In that skipped analysis, it turns out that we are allowed not to
differentiate explicitly between $K_{1}(.)$ and $K_{2}(.)$ in each of the
two cases above across their sub-cases. This is due to the fact that we can
take the union of the the two cases and their sub-cases in terms of the
domain both of $K(.)$ (iteration $n+1)$ and $K^{-1}(.)$ (iteration $n).$
Since the latter was the distinguishing feature for the sub-cases, i.e. the
subscripts usage, writing the equations in simpler uniform way is possible%
\begin{equation}
K^{n+1}\left( x(1-x)\right) =\left\{ \frac{\dint%
\nolimits_{0}^{x}K^{n,-1}(u)du}{\dint\nolimits_{0}^{1}K^{n,-1}(u)du}%
,K^{0}(x(1-x))=\frac{\dint\nolimits_{0}^{x}F^{-1}(u)\left(
1-F^{-1}(u)\right) du}{\dint\nolimits_{0}^{1}F^{-1}(u)\left(
1-F^{-1}(u)\right) du},x\in \lbrack 0,1].\right.  \label{111}
\end{equation}

Effectively we had the substitutions%
\begin{eqnarray}
L^{n}(\frac{1}{2}-x) &=&H^{n}(x)=K^{n}(\frac{1}{4}-x^{2}),x\in \lbrack -%
\frac{1}{2},\frac{1}{2}]  \label{112} \\
L^{n}\left( x\right) &=&K^{n}(x(1-x)),x\in \lbrack 0,1].  \label{113}
\end{eqnarray}

We can further note that there seems to be a shortcut approach for going
directly from (\ref{92}) to (\ref{111}) by just posing initially the
(unindexed) $K^{n,-1}(x):[0,1]\rightarrow \lbrack 0,\frac{1}{4}]$ by $%
K^{n,-1}(x)=\frac{1}{4}-\left( H^{n,-1}(x\right) )^{2}$. Then we invert two
times\footnote{{\footnotesize First the compound function }$\left(
H^{n,-1}(x\right) )^{2}${\footnotesize \ and second }$H^{n,-1}(x)$%
{\footnotesize \ intself. In this way, we do not need to specify whether we
have }$H^{n,-1}(x)=-\sqrt{\frac{1}{4}-K^{n,-1}(x)}${\footnotesize \ or }$%
H^{n,-1}(x)=\sqrt{\frac{1}{4}-K^{n,-1}(x)}${\footnotesize \ at the first
step, since somehow the effect is cancelled out in the second one.\ }} and
get $H^{n}(x)=K^{n}(\frac{1}{4}-x^{2})$ and from the latter also $%
L^{n}(x)=H^{n}(\frac{1}{2}-x)=K^{n}(\frac{1}{4}-(\frac{1}{2}%
-x)^{2})=K^{n}(x(1-x))$. Yet, this approach is only heuristic, since it does
not consider well the domain and range effects we discussed. Ignoring them
is inappropriate since there could appear impossible cases and the union
argument above not to work as fluently as it did.

We proceed now with (\ref{111}). Based on the monotonicity of the function $%
x\longmapsto x(1-x)$ on $x\in \lbrack 0,1]$, we have two cases%
\begin{eqnarray}
L^{n}\left( x\right) &=&K_{\ast }^{n}(x(1-x)),x\in \lbrack 0,\frac{1}{2}]
\label{136} \\
L^{n}\left( x\right) &=&K_{\ast \ast }^{n}(x(1-x)),x\in \lbrack \frac{1}{2}%
,1],  \label{137}
\end{eqnarray}

where on $x\in \lbrack 0,\frac{1}{4}]$ holds%
\begin{eqnarray}
K_{\ast }^{n+1}\left( x\right) &=&\frac{\dint\nolimits_{0}^{\varphi _{\ast
}(x)}K_{\ast }^{n,-1}(u)du}{\underset{\mu ^{\ast }}{\underbrace{%
\dint\nolimits_{0}^{K_{\ast }^{n}(\frac{1}{4})}K_{\ast
}^{n,-1}(u)du+\dint\nolimits_{K_{\ast }^{n}(\frac{1}{4})}^{1}K_{\ast \ast
}^{n,-1}(u)du}}},K_{\ast }^{0}(x)=\frac{\dint\nolimits_{0}^{\varphi _{\ast
}(x)}F^{-1}(u)\left( 1-F^{-1}(u)\right) du}{\dint\nolimits_{0}^{1}F^{-1}(u)%
\left( 1-F^{-1}(u)\right) du}  \label{130} \\
K_{\ast \ast }^{n+1}\left( x\right) &=&\frac{\dint\nolimits_{0}^{\varphi
_{\ast \ast }(x)}K_{\ast \ast }^{n,-1}(u)du}{\underset{\mu ^{\ast \ast }}{%
\underbrace{\dint\nolimits_{0}^{K_{\ast \ast }^{n}(\frac{1}{4})}K_{\ast
}^{n,-1}(u)du+\dint\nolimits_{K_{\ast \ast }^{n}(\frac{1}{4})}^{1}K_{\ast
\ast }^{n,-1}(u)du}}},K_{\ast \ast }^{0}(x)=\frac{\dint\nolimits_{0}^{%
\varphi _{\ast \ast }(x)}F^{-1}(u)\left( 1-F^{-1}(u)\right) du}{%
\dint\nolimits_{0}^{1}F^{-1}(u)\left( 1-F^{-1}(u)\right) du}  \label{131}
\end{eqnarray}

We used that for $x\in \lbrack 0,\frac{1}{2}],$ the function $x(1-x)$ is
monotonically increasing, and for $x\in \lbrack \frac{1}{2},1]$ it is
monotonically decreasing, which allows us to invert it. This gives rise to
the functions: $\varphi _{\ast }(x):[0,\frac{1}{4}]\rightarrow \lbrack 0,%
\frac{1}{2}]$ satisfying $\varphi _{\ast }(x)=\frac{1-\sqrt{1-4x}}{2}$ and $%
\varphi _{\ast \ast }(x):[0,\frac{1}{4}]\rightarrow \lbrack \frac{1}{2},1]$
satisfying $\varphi _{\ast \ast }(x)=\frac{1+\sqrt{1-4x}}{2}$. Effectively,
we had the substitutions%
\begin{eqnarray}
K_{\ast }^{n}(x) &=&L^{n}\left( \frac{1-\sqrt{1-4x}}{2}\right) ,x\in \lbrack
0,\frac{1}{4}]  \label{133} \\
K_{\ast \ast }^{n}(x) &=&L^{n}\left( \frac{1+\sqrt{1-4x}}{2}\right) ,x\in
\lbrack 0,\frac{1}{4}].  \label{134}
\end{eqnarray}

Additionally, we have the special points: $K_{\ast }^{n}(0)=0,K_{\ast \ast
}^{n}(0)=1$, and $K_{\ast }^{n}(\frac{1}{4})=K_{\ast \ast }^{n}(\frac{1}{4}%
)=L^{n}(\frac{1}{2})$.

Equations (\ref{130}) and (\ref{131}) allow to find $L^{n}\left( x\right) $
for $x\in \lbrack 0,1]$ by uniting the two cases. We have to solve each of
the equations (\ref{130}) and (\ref{131}) in a stand-alone way at its domain
and find the $K_{\ast }^{n}(x)$ and $K_{\ast \ast }^{n}(x)$. Then for $x\in
\lbrack 0,\frac{1}{2}],$ we plug $x(1-x)$ into $K_{\ast }^{n}(x)$ to find $%
L^{n}\left( x\right) =K_{\ast }^{n}(x(1-x))$. For $x\in \lbrack \frac{1}{2}%
,1],$ we plug $x(1-x)$ into $K_{\ast \ast }^{n}(x)$ to find $L^{n}\left(
x\right) =K_{\ast \ast }^{n}(x(1-x))$.

We can observe that (\ref{130}) and (\ref{131}) share a similar functional
form with the only difference coming from the respective domains and the
upper integral limit functions. Additionally, we can notice the similarity
of the two equations to equation (\ref{104}), i.e., the main equation
handled in \cite{[3]}. Here the upper integral limits are the functions $%
\varphi _{\ast }(x)$ and $\varphi _{\ast \ast }(x)$, while in the
aforementioned cases is just $x$. \textit{\ }
\end{proof}

\begin{remark}
Even in the simplified form provided by \textit{Lemma~\ref{simpl}},
obtaining a closed-form expression for either the limit of \textit{(\ref%
{170a})} or the solution to the functional equation \textit{(\ref{510})}
remains a non-trivial task.
\end{remark}

\begin{remark}
Differentiating (\ref{130}) and (\ref{131}) and then substituting $h^{\ast
}(x):[0,\frac{1}{4}]\longrightarrow \lbrack 0,\frac{1}{4}]$ such that $%
h^{\ast }(x)=K_{\ast }^{n,-1}(\varphi _{\ast }(x))$, leads to the following
functional equation%
\begin{equation}
(h^{\ast })^{^{\prime }}(x)=\frac{\mu ^{\ast }}{h^{\ast }(h^{\ast }(x))}%
\frac{\varphi _{\ast }^{^{\prime }}(x)}{\varphi _{\ast }^{^{\prime
}}(h^{\ast }(x))},  \label{762}
\end{equation}

with $h^{\ast }(0)=0$, $h^{\ast }(\frac{1}{4})=\frac{1}{4}$, and $(h^{\ast
})^{^{\prime }}(\frac{1}{4})=4\mu ^{\ast }$.

Analogously by substituting $h^{\ast \ast }(x):[0,\frac{1}{4}%
]\longrightarrow \lbrack \frac{1}{4},1]$ such that $h^{\ast \ast
}(x)=K_{\ast \ast }^{n,-1}(\varphi _{\ast }(x))$ we get%
\begin{equation}
(h^{\ast \ast })^{^{\prime }}(x)=\frac{\mu ^{\ast \ast }}{h^{\ast \ast
}(h^{\ast \ast }(x))}\frac{\varphi _{\ast \ast }^{^{\prime }}(x)}{\varphi
_{\ast \ast }^{^{\prime }}(h^{\ast \ast }(x))},  \label{763}
\end{equation}

with $h^{\ast \ast }(0)=1$, $h^{\ast \ast }(\frac{1}{4})=\frac{1}{4}$, and $%
(h^{\ast \ast })^{^{\prime }}(\frac{1}{4})=4\mu ^{\ast \ast }$.

Under further regularity/dynamical conditions (e.g., $h^{\ast }$ and $%
h^{\ast \ast }$ analytic on $(0,\frac{1}{4}]$ (see \cite{[44]}), increasing
on $(0,\frac{1}{4}]$, or $\frac{1}{4}$ being attracting, the two
functional--differential equations typically become well-posed
existence/uniqueness problems. Yet, even the methods in \cite{[44]} do not
provide straightforward path to find a closed form solution.
\end{remark}

\begin{example}
The ansatz function $G_{-}(x)$, defined by 
\begin{equation}
G_{-}(x)=\left\{ 
\begin{array}{c}
\frac{1-\sqrt{1-[4x(1-x)]^{\frac{\sqrt{5}+1}{2}}}}{2},0\leq x\leq \frac{1}{2}
\\ 
\frac{1+\sqrt{1-[4x(1-x)]^{\frac{\sqrt{5}+1}{2}}}}{2},\frac{1}{2}<x\leq 1 \\ 
0,x<0 \\ 
1,x>1%
\end{array}%
\right.  \label{764}
\end{equation}

provides a strong approximation for the limiting analytic function. This
conclusion is based on heuristic reasoning, supported by empirical
experiments, that involves approximating the terms $\varphi _{\ast
}^{^{\prime }}(x)$, $\varphi _{\ast \ast }^{^{\prime }}(x)$, $\varphi _{\ast
}^{^{\prime }}(h^{\ast }(x))$, and $\varphi _{\ast \ast }^{^{\prime
}}(h^{\ast \ast }(x))$ and the ratios participating in \textit{(\ref{762})}
and \textit{(\ref{763}).}\hfill
\end{example}

We conclude the appendix with several plots illustrating the dynamics and
providing further visual intuition. We take as in \textit{Section 4.3} $%
F_{1} $ $\thicksim $ $Lognormal(0.2$, $0.5)$. \textit{Figures C1.1-2} plot
the evolution of the d.f.s of the marginals and that of the compounded
inverses. They are counterparts to the plots of Section 4.3.

\begin{center}
\begin{minipage}[t]{0.48\linewidth}\centering
    \includegraphics[width=\linewidth]{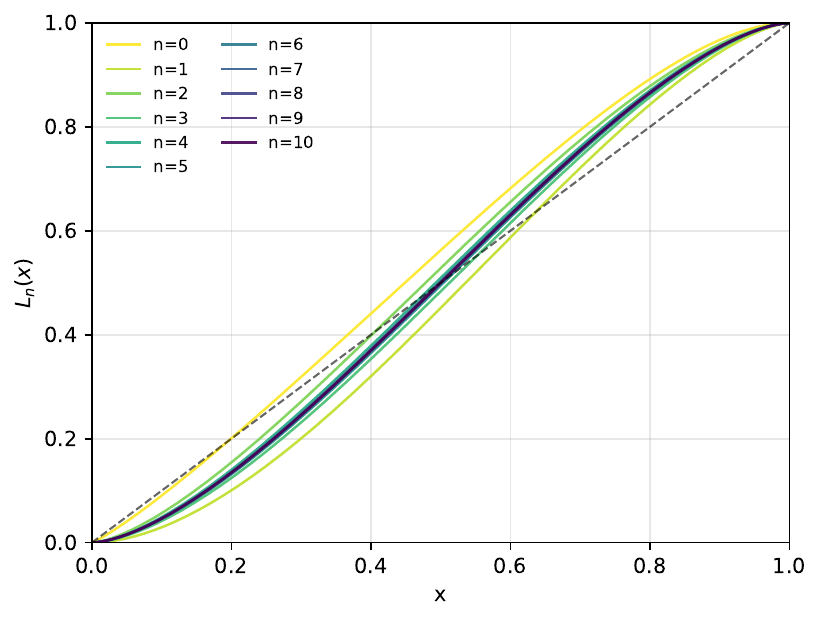}
\captionof{figure}[Figure C1.1:: $L^{n}(x)$ evolution]{Figure C1.1:\\ $L^{n}(x)$ evolution}
    \label{fig:C1_1}
  \end{minipage}\hfill 
\begin{minipage}[t]{0.48\linewidth}\centering
    \includegraphics[width=\linewidth]{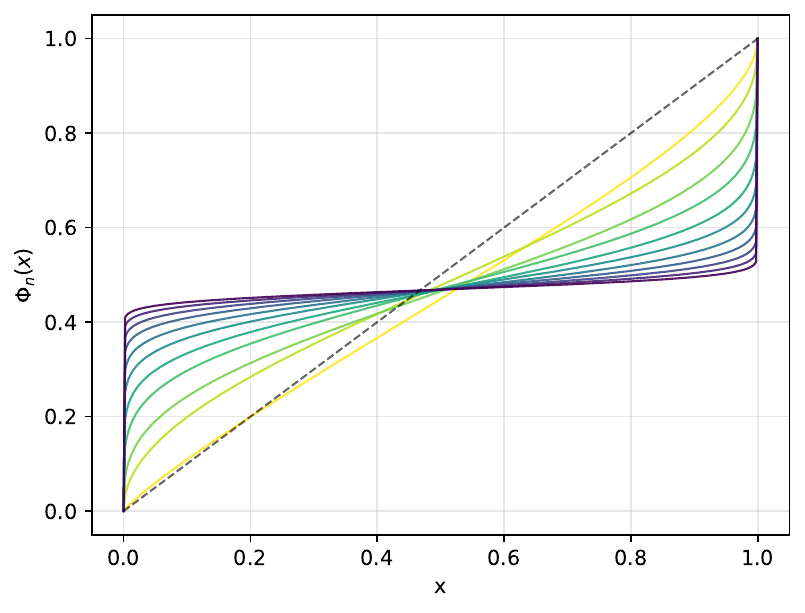}
\captionof{figure}[Figure C1.2:: Compounds of the inverse marginal d.f.s - $\Phi _{n}(x)$]{Figure C1.2:\\ Compounds of the inverse marginal d.f.s - $\Phi _{n}(x)$}
    \label{fig:C1_2}
  \end{minipage}
\end{center}

We can clearly see the difference. The crossings do not tend monotonically
to the right as the case of $RR_{2}$ density was, they rather oscillate
towards $0.5$. Exactly it is expected to be the limit of $\Phi _{n}(x)$ as
we discussed.

\begin{flushleft}
{\large Appendix C.2}
\end{flushleft}

Hereinafter, our focus is on the iterative equation (\ref{43}). For
convenience we denote below any of the marginal distributions $L_{1+}^{n}(x)$
or $L_{2+}^{n}(x)$ by $L_{n}(x)$ since the properties proved hold for both
of them. Thus, our setting becomes:

\begin{claim}
\label{major} Keeping to the posed assumptions in the main text, for the
iterative map%
\begin{equation}
L^{n+1}(x)=\frac{\dint\nolimits_{0}^{x}\left( L^{n,-1}(u)\right) ^{2}du}{%
\dint\nolimits_{0}^{1}\left( L^{n,-1}(u)\right) ^{2}du},\text{ with }%
L^{0}(x)=\frac{\dint\nolimits_{0}^{x}\left( F^{-1}(u)\right) ^{2}du}{%
\dint\nolimits_{0}^{1}\left( F^{-1}(u)\right) ^{2}du}  \label{561}
\end{equation}

a uniform convergence of $L^{n}(x)$ holds towards the distribution function $%
G_{+}(x)$%
\begin{equation}
G_{+}(x)=\left\{ 
\begin{array}{c}
x^{2},0\leq x\leq 1 \\ 
0,x<0 \\ 
1,x>1.%
\end{array}%
\right. .  \label{562}
\end{equation}
\end{claim}

\begin{proof}
We can do the proof by induction verbally following the logic of \cite{[3]}.
For space considerations it won't be presented. We will just point out that
again we can find majorizing polynomials such for $0\leq x\leq 1$ holds 
\begin{eqnarray}
0 &\leq &F(x)\leq 1  \label{563} \\
0 &\leq &L_{0}(x)\leq x  \notag \\
x^{3} &\leq &L_{1}(x)\leq x  \notag \\
x^{3} &\leq &L_{2}(x)\leq x^{\frac{5}{3}}  \notag \\
&&...  \notag \\
x^{\alpha _{n+1}} &\leq &L_{n}(x)\leq x^{\alpha _{n}},n\text{ - }odd  \notag
\\
x^{\alpha _{n}} &\leq &L_{n}(x)\leq x^{\alpha _{n+1}},n\text{ - }even,\text{ 
}  \notag
\end{eqnarray}

where the sequence $\alpha _{1}=1$, $\alpha _{2}=3$, $\alpha _{3}=\frac{5}{3}
$,... is given recursively by:%
\begin{equation}
\alpha _{n+1}=1+\frac{2}{\alpha _{n}}.  \label{564}
\end{equation}

The sequence (\ref{564}) is convergent (bounded and monotonous in even and
odd members, sharing a joint candidate limit). Solving the equation $\alpha
=1+\frac{2}{\alpha }$ gives us:%
\begin{equation}
\underset{n\rightarrow +\infty }{\lim }L_{n}(x)=x^{2}\text{ for }0\leq x\leq
1.  \label{565}
\end{equation}

The point-wise convergence of the sequence of functions $L_{n}(x)$ in the
unit interval (compact set), together with their continuity and monotonicity
in $x$, makes the continuity of the limiting function a necessary and
sufficient condition for uniform convergence in the unit interval. Thus the
sequence of functions $L_{n}(x)$ are not only point-wise convergent but also
uniformly convergent in $[0,1]$.
\end{proof}

\begin{remark}
For the \textit{Fr\'{e}chet-Hoeffding} upper-bound, we find that $L_{n}(x)$
has no crossing points in (0,1). This follows from the subdiagonal pattern
established by the polynomial majorization from \textit{Claim} \ref{major}
and is in contrast to the behavior observed for the lower-bound case in 
\textit{Appendix C.1}.
\end{remark}

\begin{lemma}
Under the \textit{Fr\'{e}chet-Hoeffding} \textit{upper-bound }iteration (\ref%
{562}), the compound maps 
\begin{equation}
\Phi _{n}=T_{0}\circ T_{1}\circ \cdots \circ T_{n},  \label{566}
\end{equation}%
where again $T_{n}$ denotes the inverse of $L_{n}(x)$, converge pointwise
and locally uniformly on $(0,1)$ to the constant map 
\begin{equation}
\Phi _{\infty }(x)=1\qquad x\in (0,1).  \label{567}
\end{equation}
\end{lemma}

\begin{proof}
From (\ref{563}) follows%
\begin{eqnarray}
x^{\frac{1}{\alpha _{n}}} &\leq &T_{n}(x)\leq x^{\frac{1}{\alpha _{n+1}}},n%
\text{ - }odd  \label{569} \\
x^{\frac{1}{\alpha _{n+1}}} &\leq &T_{n}(x)\leq x^{\frac{1}{\alpha _{n}}},n%
\text{ - }even.  \notag
\end{eqnarray}

We may note that all powers of $x$ are less than $1$. Therefore%
\begin{equation}
\Phi _{n}=T_{0}\circ T_{1}\circ \cdots \circ T_{n}\geq
\prod\nolimits_{j=0}^{n}x^{\frac{1}{\gamma _{j}}}=x^{\varepsilon
^{n}}\rightarrow 1,  \label{700}
\end{equation}

where $\exists $ $\varepsilon $ and $\gamma _{j}$ such that for $\forall $ $%
j $ holds $\alpha _{j}<\gamma _{j}<\varepsilon \in (0,1).$ Additionally, $%
T^{n}(x)\leq 1$ for all $n$ and thus $\Phi _{n}(x)\leq 1$ as well$.$ The
squeeze gives the result of the theorem.
\end{proof}

\bigskip \newpage

\begin{flushleft}
\bigskip {\Large Appendix D}
\end{flushleft}

In this appendix, we will prove that the following sequences of compound
functions 
\begin{eqnarray}
\Phi _{n}^{1}(x) &=&(L_{1}^{0,-1}\circ L_{1}^{1,-1}\circ \dots \circ
L_{1}^{n-1,-1}\circ L_{1}^{n,-1})(x)  \label{300} \\
\Phi _{n}^{2}(x) &=&(L_{2}^{0,-1}\circ L_{2}^{1,-1}\circ \dots \circ
L_{2}^{n-1,-1}\circ L_{2}^{n,-1})(x)  \notag
\end{eqnarray}

are uniformly convergent to constants for $x\in \lbrack 0,1]$.

We divide the proof in several steps:

In \textit{Appendix D.1}, we assume that the starting density $%
f_{12}(x_{1},x_{2})$ is $TP_{2}$. That is $x_{1}\rightarrow
E[X_{2}^{F}|X_{1}^{F}=x_{1}]$ is non-decreasing\footnote{{\footnotesize %
Without loss of generality }$x_{2}\rightarrow E[X_{1}^{F}|X_{2}^{F}=x_{2}]$%
{\footnotesize \ as well. These are standard properties as elaborated in 
\cite{[36]} and \cite{[38]}. More specialized ones can be traced in \cite%
{[32]}, \cite{[39]}, and \cite{[40]}.}}. Then we prove:

\begin{enumerate}
\item $TP_{2}$\emph{\ preservation: }$l_{12}^{n}(x_{1},x_{2})$ is $TP_{2}$
for $n\geq 0;$

\item \emph{Subdiagonality:} $L_{1}^{n}(x)$ and $L_{2}^{n}(x)$ lie strictly
below the diagonal $x$ $\rightarrow $ $x$ on $(0,1)$ for $n\geq 0;$

\item \emph{Uniform} c\emph{onvergence of }$\Phi _{n}^{i}(x)$\emph{, }$i=1,2$%
\emph{: }The following pointwise convergence holds%
\begin{equation}
\lim_{n\rightarrow +\infty }\Phi _{n}^{i}(x)=\left\{ 
\begin{array}{c}
1,\text{ }x>0 \\ 
0,\text{ }x=0.%
\end{array}%
\right.  \label{301}
\end{equation}%
Moreover, for every fixed $\delta $ $\in $ $(0,1)$, the convergence $\Phi
_{n}^{i}(x)\rightarrow 1$ is uniform in $x\in \lbrack \delta ,1]$.
\end{enumerate}

In \textit{Appendix D.2}, we assume that the starting density $%
f_{12}(x_{1},x_{2})$ is $RR_{2}$. That is $x_{1}\rightarrow
E[X_{2}^{F}|X_{1}^{F}=x_{1}]$ is non-increasing. We prove:

\begin{enumerate}
\item $(S)RR_{2}$\emph{\ preservation:} $l_{12}^{n}(x_{1},x_{2})$ is $%
(S)RR_{2}$ for $n\geq 0;$

\item \emph{Single-crossing property of}\textbf{\ }$L_{i}^{n}$ for $%
i=1,2:L_{i}^{n}(x)=x$ has at most one solution in $(0,1)$ for $i=1,2;$

\item \emph{Extension of the analysis from the Fr\'{e}chet--Hoeffding
lower-bound case to the general }$RR_{2}$\emph{\ setting: }isolating the
required additional assumptions, adapting the methodology from \textit{%
Appendix C}, and providing the necessary theoretical extensions;

\item \emph{Uniform} c\emph{onvergence of }$\Phi _{n}^{i}(x)$\emph{, }$i=1,2$%
\emph{: }The following pointwise convergence holds%
\begin{equation}
\lim_{n_{k}\rightarrow +\infty }\Phi _{n}^{i}(x)=\left\{ 
\begin{array}{c}
c,\text{ }x>0 \\ 
0,\text{ }x=0.%
\end{array}%
\right.  \label{302}
\end{equation}%
Moreover, for every fixed $\delta $ $\in $ $(0,1)$, the convergence $\Phi
_{n}^{i}(x)\rightarrow 1$ is uniform in $x\in \lbrack \delta ,1]$.
\end{enumerate}

In \textit{Appendix D.3}, we provide a summary of the results from \textit{%
Appendices D.1-D.2} to be used in the main text. In \textit{Appendix D.4, }%
we consider with the multivariate case.

\begin{flushleft}
{\large Appendix D.1}

\textbf{Notation and preliminaries}
\end{flushleft}

If as before $L_{n}$ is any bivariate d.f. on $[0,1]^{2}$ with density $%
l_{n}(x_{1},x_{2})=\frac{\partial ^{2}}{\partial x_{1}\partial x_{2}}%
L_{n}(x_{1},x_{2})$, let also following the notational logic from the main
text 
\begin{eqnarray}
I_{n}\;
&=&\;\int_{0}^{1}\int_{0}^{1}u_{1}u_{2}\,dL_{n}(u_{1},u_{2})\;=\;E[%
\,X_{1}^{L_{n}}X_{2}^{L_{n}}\,]  \label{303} \\
I_{F}
&=&\int_{0}^{1}\int_{0}^{1}u_{1}u_{2}\,dF(u_{1},u_{2})\;=\;E[%
\,X_{1}^{F}X_{2}^{F}\,]  \label{303.1}
\end{eqnarray}

and 
\begin{equation}
\mu _{1}^{n}\;=\;E[\,X_{1}^{L_{n}}\,],\quad \mu
_{2}^{n}\;=\;E[\,X_{2}^{L_{n}}\,].  \label{304}
\end{equation}

It is time also to give a formal definition for the \textit{total positivity
of order two participating in the formulation of the main theorem from the
main text.}

\begin{definition}
\label{def-tp2} A density $h(u,v)$ on $[0,1]^{2}$ is \textit{totally
positive of order two} ($TP_{2}$) if for all $0\leq u_{1}<u_{2}\leq 1$ and $%
0\leq v_{1}<v_{2}\leq 1$, 
\begin{equation}
h(u_{1},v_{1})\,h(u_{2},v_{2})\;\geq \;h(u_{1},v_{2})\,h(u_{2},v_{1}).
\label{307}
\end{equation}%
Equivalently (under standard regularity so that conditional expectation and
density exist), for $H$ a bivariate distribution with density $h$, the
conditional mean $x\mapsto m_{2|1}^{H}(x)=E[\,X_{2}^{H}\mid X_{1}^{H}=x\,]$
is non-decreasing.
\end{definition}

\begin{remark}
Under $TP_{2}$ the \textit{likelihood--ratio order} is monotone: for every $%
v_{1}<v_{2},$ the map%
\begin{equation}
u\mapsto \frac{h(u,v_{2})}{h(u,v_{1})}  \label{307.1}
\end{equation}

is non-decreasing in $u$. This can serve as an alternative definition for $%
TP_{2}.$
\end{remark}

\begin{flushleft}
\textbf{Main theorem (}$TP_{2}$\textbf{\ case)}
\end{flushleft}

We will prove that the map from (\ref{5.1}) preserves the $TP_{2}$ property.
Then will we prove in a main theorem of the appendix the subdiagonality.

\begin{lemma}
\label{tp-preservation} Let $h(u_{1},u_{2})$ be a $TP_{2}$ density on $%
[0,1]^{2}$ and let $A,B:[0,1]\rightarrow \lbrack 0,1]$ be two non-decreasing
differentiable bijections. Define 
\begin{equation*}
k(x_{1},x_{2})\;=\;A(x_{1})\,B(x_{2})\;h[A(x_{1}),\,B(x_{2})]\;A^{\prime
}(x_{1})\,B^{\prime }(x_{2}),\quad (x_{1},x_{2})\in \lbrack 0,1]^{2}.
\end{equation*}%
Then $k(x_{1},x_{2})$ is also $TP_{2}$ on $[0,1]^{2}$.
\end{lemma}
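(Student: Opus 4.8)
The plan is to verify the defining two‑by‑two inequality for $k$ directly and, after cancelling a common nonnegative factor, reduce it to the same inequality for $h$ evaluated at transformed arguments. Fix $0\le x_1<y_1\le 1$ and $0\le x_2<y_2\le 1$, and write $a=A(x_1)$, $a'=A(y_1)$, $b=B(x_2)$, $b'=B(y_2)$. Substituting the definition of $k$ into the two products appearing in the $TP_2$ inequality, one sees that both $k(x_1,x_2)\,k(y_1,y_2)$ and $k(x_1,y_2)\,k(y_1,x_2)$ contain the identical factor
\[
A(x_1)\,A(y_1)\,B(x_2)\,B(y_2)\,A'(x_1)\,A'(y_1)\,B'(x_2)\,B'(y_2)\;\ge\;0,
\]
so that, wherever this factor is strictly positive, the required inequality $k(x_1,x_2)\,k(y_1,y_2)\ge k(x_1,y_2)\,k(y_1,x_2)$ is equivalent to
\[
h(a,b)\,h(a',b')\;\ge\;h(a,b')\,h(a',b).
\]

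Next I would observe that, since $A$ and $B$ are nondecreasing bijections of $[0,1]$ onto itself, they are in fact strictly increasing, hence $a<a'$ and $b<b'$; therefore the last display is exactly the $TP_2$ inequality for $h$ at the admissible pair $a<a'$, $b<b'$, which holds by hypothesis. This already establishes that $k$ is $TP_2$ on the set where the common factor above is nonzero.

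Finally I would dispose of the degenerate cases. A nondecreasing bijection of $[0,1]$ fixes the endpoints, so $A(x_1)=0$ forces $x_1=0$ (and similarly for $B$), while $A'$ or $B'$ may additionally vanish at isolated points. In any such case one of the eight factors entering the common term is zero; then $k$ vanishes identically along the corresponding horizontal or vertical line, so both sides of the $TP_2$ inequality equal $0$ and the inequality holds trivially. Combining this with the previous step yields $TP_2$ of $k$ on all of $[0,1]^2$.

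I do not expect a genuine obstacle here: the only points requiring care are the bookkeeping of which factors are common to the two products, and the separate handling of the boundary and of critical points of $A,B$; everything else is a one‑line reduction to the assumed $TP_2$ property of $h$. (Equivalently, one may note that $TP_2$ is log‑supermodularity of the density, that $A(x_1)B(x_2)$ and $A'(x_1)B'(x_2)$ are log‑modular, and that $h(A(x_1),B(x_2))$ is log‑supermodular because coordinatewise monotone reparametrisations preserve log‑supermodularity; then $k$, being a product of log‑supermodular functions, is log‑supermodular. The determinant computation above is the most transparent way to see this.)
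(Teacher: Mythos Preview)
Your proof is correct and is essentially identical to the paper's: both factor $k(x_1,x_2)k(y_1,y_2)$ and $k(x_1,y_2)k(y_1,x_2)$ to isolate the common nonnegative term $A(x_1)A(y_1)B(x_2)B(y_2)A'(x_1)A'(y_1)B'(x_2)B'(y_2)$, then reduce to the $TP_2$ inequality for $h$ at the ordered transformed points $A(x_1)<A(y_1)$, $B(x_2)<B(y_2)$. Your version is slightly more careful in explicitly treating the degenerate case where the common factor vanishes, and the supplementary log-supermodularity remark is a nice alternative viewpoint, but the core argument is the same.
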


\begin{proof}
\noindent Since $A,B$ are non-decreasing, the map $(x_{1},x_{2})\mapsto
(u_{1},u_{2})=(A(x_{1}),B(x_{2}))$ preserves the \textit{rectangular ordering%
}. Thus for any $0\leq x_{1}<x_{1}^{\prime }\leq 1$ and $0\leq
x_{2}<x_{2}^{\prime }\leq 1$, we have 
\begin{equation}
A(x_{1})<A(x_{1}^{\prime }),\quad B(x_{2})<B(x_{2}^{\prime }).  \label{308}
\end{equation}%
Because $h$ is $TP_{2}$, 
\begin{equation}
h[A(x_{1}),B(x_{2})]h[A(x_{1}^{\prime }),B(x_{2}^{\prime })]\;\geq
\;h[A(x_{1}),B(x_{2}^{\prime })]\,h[A(x_{1}^{\prime }),B(x_{2})]  \label{309}
\end{equation}%
Multiplying both sides by the nonnegative factor 
\begin{equation}
A(x_{1})\,A(x_{1}^{\prime })\,A^{\prime }(x_{1})\,A^{\prime }(x_{1}^{\prime
})\;\times \;B(x_{2})\,B(x_{2}^{\prime })\,B^{\prime }(x_{2})\,B^{\prime
}(x_{2}^{\prime }),  \label{310}
\end{equation}%
we obtain exactly 
\begin{equation}
k(x_{1},x_{2})\,k(x_{1}^{\prime },x_{2}^{\prime })\;\geq
\;k(x_{1},x_{2}^{\prime })\,k(x_{1}^{\prime },x_{2}),  \label{311}
\end{equation}%
showing that $k$ is $TP_{2}$.
\end{proof}

\begin{theorem}
\label{subdiag} Assume the original density $f_{12}(u_{1},u_{2})$ is $TP_{2}$
on $[0,1]^{2}$. Then $l_{0}(x_{1},x_{2})$, the density of $L_{0}$, is $%
TP_{2} $. Moreover, for $n=0$ we have: 
\begin{equation}
L_{i}^{0}(x)\;\leq \;x,\quad \forall \,x\in (0,1),\quad i=1,2.  \label{312}
\end{equation}%
Consequently, for all $n\geq 1$, $L_{i}^{n}(x)<x$ on $(0,1)$.
\end{theorem}

\begin{proof}
\bigskip \textbf{(1) } A direct differentiation in (\ref{5.1}) gives:%
\begin{equation}
l_{n+1}(x_{1},x_{2})=\frac{L_{1}^{n,-1}(x_{1})L_{2}^{n,-1}(x_{2})}{%
\int_{0}^{1}\int_{0}^{1}u_{1}u_{2}dL_{n}(u_{1},u_{2})}%
l_{n}(L_{1}^{n,-1}(x_{1}),L_{2}^{n,-1}(x_{2}))(L_{1}^{n,-1})^{^{\prime
}}(x_{1})(L_{2}^{n,-1})^{^{\prime }}(x_{2}).  \label{313}
\end{equation}

Since $l_{n}$ ($f_{12}$) is $TP_{2}$ and $L_{i}^{-1}$ ($F_{i}^{-1}$) are
non-decreasing, the $TP_{2}$ preservation \textit{Lemma} {\footnotesize \ref%
{tp-preservation}} implies that $l_{0}(y_{1},y_{2})$ is $TP_{2}$ on $%
[0,1]^{2}$.

\medskip \noindent \textbf{(2) }We now prove that if $L_{n}$ has $TP_{2}$
density $l_{n}$, then $L_{i}^{n+1}(x)<x$ on $(0,1)$ for $i=1,2$. Fix $n\geq
0 $ and suppose $l_{n}$ is $TP_{2}$. Then the conditional mean 
\begin{equation}
m_{2\mid 1}^{n}(x_{1})\;=\;E[\,X_{2}^{L_{n}}\mid X_{1}^{L_{n}}=x_{1}]
\label{314}
\end{equation}%
is a (weakly) non-decreasing function of $x_{1}$ on $[0,1]$. Define 
\begin{equation}
g_{1}^{n}(x_{1})\;=\;x_{1}m_{2\mid
1}^{n}(x_{1})\;=\;x_{1}\,E[\,X_{2}^{L_{n}}\mid X_{1}^{L_{n}}=x_{1}].
\label{315}
\end{equation}%
Since $m_{2\mid 1}^{n}(x_{1})\geq 0$ and is non-decreasing, and $x_{1}\geq 0$%
, the product $g_{n}(x_{1})$ is also non-decreasing on $[0,1]$. Note
moreover $g_{n}(x_{1})$ is not almost surely constant under $L_{n}$, because 
$X_{1}^{L_{n}}$ has support $(0,1)$ and $m_{2\mid 1}^{n}$ varies strictly on
a set of positive measure ($TP_{2}$ densities on a rectangle cannot yield
a.s.\ constant conditional mean unless degenerate).

By the properties of the conditional expectations, we can write for the
first marginal 
\begin{equation}
L_{\,1}^{n+1}(x)\;=\;\frac{x\,E[\,X_{1}^{L_{n}}X_{2}^{L_{n}}\mid
X_{1}^{L_{n}}\leq L_{1}^{n,-1}(x)\,]}{E[\,X_{1}^{L_{n}}X_{2}^{L_{n}}]}\;=\;%
\frac{xE[g_{n}(X_{1}^{L_{n}})\mid X_{1}^{L_{n}}\leq L_{1}^{n,-1}(x)\,]}{%
E[g_{n}(X_{1}^{L_{n}})]}.  \label{316}
\end{equation}%
Since $g_{n}(X_{1}^{L_{n}})$ is non-decreasing in $X_{1}^{L_{n}}$ and $%
0<L_{1}^{n,-1}(x)<1$, it follows from the well-known fact that if $Y$ is a
real random variable and $\phi $ is non-decreasing, then for any $y_{0}$
with $P(Y\leq y_{0})\in (0,1)$, 
\begin{equation}
E[\phi (Y)\mid Y\leq y_{0}]\;\leq \;E[\phi (Y)].  \label{317}
\end{equation}%
Applying this with $Y=X_{1}^{L_{n}}$ under $L_{n}$ and $\phi =g$, we get 
\begin{equation}
E[g_{n}(X_{1}^{L_{n}})\mid X_{1}^{L_{n}}\leq L_{1}^{n,-1}(x)\,]\;\leq
\;E[g_{n}(X_{1}^{L_{n}})],  \label{318}
\end{equation}%
hence 
\begin{equation}
L_{1}^{n+1}(x)\;\leq \;x,\quad \forall \,x\in (0,1).  \label{319}
\end{equation}%
By symmetry (or by the identical argument with coordinates being swapped), 
\begin{equation}
L_{2}^{n+1}(x)\;\leq \;x,\quad \forall \,x\in (0,1).  \label{320}
\end{equation}%
This completes the proof with an obvious induction.
\end{proof}

\begin{theorem}
\label{Phi-conv-TP2} If the density $f_{12}$ is $TP_{2}$ (Totally Positive
of order 2), then the following hold for $i=1,2$:

\begin{enumerate}
\item The sequence of functions $\Phi _{n}^{i}$ converges pointwise to $1$
on $[0,1]$, that is, 
\begin{equation}
\lim_{n\rightarrow +\infty }\Phi _{n}^{i}(x)=\left\{ 
\begin{array}{c}
1,\text{ }x>0 \\ 
0,\text{ }x=0;%
\end{array}%
\right.  \label{320a}
\end{equation}

\item Furthermore, for any $\delta $ $\in (0,1)$, the convergence is uniform
on the interval $[\delta ,1].$
\end{enumerate}
\end{theorem}

\begin{proof}
By \textit{Theorem \ref{subdiag} we know that }$L_{i}^{n}(x)$ is subdiagonal 
\textit{for }$i=1,2$, i.e., $L_{i}^{n}(x)\leq x$. This combined with the
results from \textit{Appendix B,} and equation (\ref{232a}) in particular,
give even the more precise inequality:%
\begin{equation}
L_{i+}^{n}(x)\leq L_{i}^{n}(x)\leq \min [L_{i-}^{n}(x),x].  \label{402}
\end{equation}

However, folk knowledge based on basic geometry\footnote{{\footnotesize Can
easily be formalized by standard approximation theory.}} gives that due to
the subdiagonality, $L_{i}^{n}(x)\leq x^{\gamma n}$ holds for some $\gamma
_{n}>1$. This is enough for our purposes. So we get%
\begin{eqnarray}
L_{i+}^{n}(x) &\leq &L_{i}^{n}(x)\leq x^{\gamma n}  \label{403} \\
x^{\frac{1}{\gamma n}} &\leq &L_{i}^{n,-1}(x)\leq L_{i+}^{n,-1}(x).  \notag
\end{eqnarray}

Therefore%
\begin{equation}
\Phi _{n}^{i}(x)=L_{i}^{0,-1}(...(L_{i}^{n-1,-1}(L_{i}^{n,-1}(x)))\geq
\prod\nolimits_{j=0}^{n}x^{\frac{1}{\gamma _{j}}}=x^{\varepsilon
^{n}}\rightarrow 1,  \label{404}
\end{equation}

where $\exists $ $\varepsilon $ such that for $\forall $ $j$ holds $\gamma
_{j}<\varepsilon \in (0,1).$ Additionally, $L_{i}^{n,-1}(x)\leq 1$ for all $%
n $ and thus $\Phi _{n}^{i}(x)\leq 1$ as well$.$ The squeeze gives the
result of the theorem.
\end{proof}

\begin{flushleft}
{\large Appendix D.2}

\textbf{Notation and preliminaries}
\end{flushleft}

We keep to the notation of the previous appendix. We may note that we can
further write for the conditional mean in the form 
\begin{equation}
m_{2\mid 1}^{n}(x_{1})=\frac{\int_{0}^{1}u_{2}\,l_{n}(x_{1},u_{2})\,du_{2}}{%
l_{1}^{n}(x_{1})}.  \label{321}
\end{equation}

\begin{definition}
\label{def-rr2} A density $h(u,v)$ on $[0,1]^{2}$ is \textit{reverse regular
of order 2} ($RR_{2}$) if for all $0\leq u_{1}<u_{2}\leq 1$ and $0\leq
v_{1}<v_{2}\leq 1$, 
\begin{equation}
h(u_{1},v_{1})\,h(u_{2},v_{2})\;\leq \;h(u_{1},v_{2})\,h(u_{2},v_{1}).
\label{322}
\end{equation}%
Equivalently (under standard regularity so that conditional expectation and
density exist), for $H$ a bivariate distribution with density $h$, the
conditional mean $x\mapsto m_{2|1}^{H}(x)=E[\,X_{2}^{H}\mid X_{1}^{H}=x\,]$
is non-increasing.
\end{definition}

\begin{definition}
\label{def-srr2} A density $h(u,v)$ on $[0,1]^{2}$ is \textit{strictly
reverse regular of order 2} ($SRR_{2}$) if 
\begin{equation}
h(u_{1},v_{1})\,h(u_{2},v_{2})\;<\;h(u_{1},v_{2})\,h(u_{2},v_{1})
\label{322.1}
\end{equation}%
for all $0\leq u_{1}<u_{2}\leq 1$ and $0\leq v_{1}<v_{2}\leq 1$ such that
all four points lie in a region where $h>0$.

Equivalently (under standard regularity so that conditional expectation and
density exist), for $H$ a bivariate distribution with density $h$, the
conditional mean $x\mapsto m_{2|1}^{H}(x)=E[\,X_{2}^{H}\mid X_{1}^{H}=x\,]$
is strictly decreasing on every compact subinterval of $(0,1)$.
\end{definition}

\begin{remark}
Under $RR_{2}$ the \textit{likelihood--ratio order} is monotone: for every $%
v_{1}<v_{2},$ the map%
\begin{equation}
u\mapsto \frac{h(u,v_{2})}{h(u,v_{1})}  \label{307.2}
\end{equation}

is non-increasing in $u$. This can serve as an alternative definition for $%
RR_{2}.$
\end{remark}

\begin{remark}
Under $SRR_{2}$ the \textit{likelihood--ratio order} is strictly monotone:
for every $v_{1}<v_{2}$ and every compact interval $I\subset (0,1)$ with $%
\inf_{u\in I}h(u,v_{i})>0$ (no vanishing sections), $i=1,2$, the map%
\begin{equation}
u\mapsto \frac{h(u,v_{2})}{h(u,v_{1})}  \label{307.3}
\end{equation}

is strictly decreasing on $I$. This can serve as an alternative definition
for $SRR_{2}.$
\end{remark}

\begin{flushleft}
\textbf{Preservation of }$RR_{2}$\textbf{\ }
\end{flushleft}

We will prove that the map from (\ref{5.1}) preserves the $RR_{2}$ property
both in weak and strong sense.

\begin{lemma}
\label{rr2} Let $h(u,v)$ be $RR_{2}$. If $\phi ,\psi :[0,1]\rightarrow
\lbrack 0,1]$ are non-decreasing, then 
\begin{equation}
f(x,y)=h[\phi (x),\,\psi (y)]  \label{323}
\end{equation}%
is $RR_{2}$\noindent
\end{lemma}

\begin{proof}
Take $x_{1}<x_{2}$, $y_{1}<y_{2}$. Since $\phi ,\psi $ are non-decreasing, $%
\phi (x_{1})\leq \phi (x_{2}),\;\psi (y_{1})\leq \psi (y_{2})$. By $RR_{2}$
of $h$, 
\begin{equation}
h[\phi (x_{1}),\psi (y_{1})]\,h[\phi (x_{2}),\psi (y_{2})]\;\leq \;h[\phi
(x_{1}),\psi (y_{2})]\,h[\phi (x_{2}),\psi (y_{1})].  \label{324}
\end{equation}%
Hence 
\begin{equation}
f(x_{1},y_{1})\,f(x_{2},y_{2})\;\leq \;f(x_{1},y_{2})\,f(x_{2},y_{1}).
\label{325}
\end{equation}%
If $\phi (x_{1})=\phi (x_{2})$ or $\psi (y_{1})=\psi (y_{2})$, equality
holds. Thus $f$ is $RR_{2}$. \hfill
\end{proof}

\begin{lemma}
\label{ker} If $f(x,y)$ is $RR_{2}$ and $k_{1}(x)\geq 0$, $k_{2}(y)\geq 0$,
then 
\begin{equation}
H(x,y)=k_{1}(x)\,k_{2}(y)\,f(x,y)  \label{326}
\end{equation}%
is $RR_{2}$.
\end{lemma}

\begin{proof}
\noindent For $x_{1}<x_{2}$, $y_{1}<y_{2}$, let 
\begin{equation}
D=H(x_{1},y_{1})\,H(x_{2},y_{2})\;-\;H(x_{1},y_{2})\,H(x_{2},y_{1}).
\label{327}
\end{equation}%
Substitute $H$ 
\begin{equation}
D=k_{1}(x_{1})\,k_{1}(x_{2})\,k_{2}(y_{1})\,k_{2}(y_{2})[f(x_{1},y_{1})%
\,f(x_{2},y_{2})\;-\;f(x_{1},y_{2})\,f(x_{2},y_{1})].  \label{328}
\end{equation}%
Since each $k_{i}\geq 0$ and $f$ is $RR_{2}$, the bracket is $\leq 0$. Thus $%
D\leq 0$, proving $H$ is $RR_{2}$. \hfill
\end{proof}

\begin{theorem}
\label{rr2-pres} Let $F$ be a continuous bivariate d.f. on $[0,1]^{2}$ whose
density $f_{F}$ is $RR_{2}$. Define $L_{n}$ by (\ref{5.1}). Then each
density $l_{n}$ of $L_{n}$ is $RR_{2}$.
\end{theorem}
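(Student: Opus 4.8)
The statement is Theorem 33 ($RR_2$ preservation): if $F$ has an $RR_2$ density $f_F$ on $[0,1]^2$, then every iterate $L_n$ defined by (\ref{5.1}) has an $RR_2$ density $l_n$. The natural approach is induction on $n$, exactly mirroring the structure already used in the $TP_2$ case (Theorem 31), but now invoking the two $RR_2$-specific lemmas (Lemma 32 on composition with nondecreasing functions, and the lemma on multiplication by nonnegative product kernels) rather than the single $TP_2$ preservation lemma. The base case $n=0$ and the inductive step are in fact structurally identical, since both rest on the same explicit differentiation formula (\ref{313}), so the real content is just assembling the right factorization.

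\begin{proof}
We argue by induction on $n$. For $n=0$ we have from Claim 2 (equation (\ref{3})) that
\begin{equation}
l_{12}^{0}(x_{1},x_{2})=\frac{1}{E(X_{1}X_{2})}\,F_{1}^{-1}(x_{1})\,F_{2}^{-1}(x_{2})\,f_{12}\bigl(F_{1}^{-1}(x_{1}),F_{2}^{-1}(x_{2})\bigr)\,\frac{1}{f_{1}(F_{1}^{-1}(x_{1}))\,f_{2}(F_{2}^{-1}(x_{2}))}. \notag
\end{equation}
Write this as $k_{1}(x_{1})\,k_{2}(x_{2})\,\tilde f(x_{1},x_{2})$ with $\tilde f(x_{1},x_{2})=f_{12}(F_{1}^{-1}(x_{1}),F_{2}^{-1}(x_{2}))$ and
\begin{equation}
k_{1}(x_{1})=\frac{F_{1}^{-1}(x_{1})}{f_{1}(F_{1}^{-1}(x_{1}))}\geq 0,\qquad k_{2}(x_{2})=\frac{F_{2}^{-1}(x_{2})}{E(X_{1}X_{2})\,f_{2}(F_{2}^{-1}(x_{2}))}\geq 0. \notag
\end{equation}
Since $F_{1}^{-1}$ and $F_{2}^{-1}$ are nondecreasing and $f_{12}$ is $RR_{2}$, Lemma 32 gives that $\tilde f$ is $RR_{2}$; then the lemma on multiplication by nonnegative product kernels gives that $l_{12}^{0}$ is $RR_{2}$. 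For the inductive step, assume $l_{n}=l_{12}^{n}$ is $RR_{2}$. Differentiating (\ref{5.1}) exactly as in (\ref{313}) yields
\begin{equation}
l_{n+1}(x_{1},x_{2})=\frac{L_{1}^{n,-1}(x_{1})\,L_{2}^{n,-1}(x_{2})}{\int_{0}^{1}\int_{0}^{1}u_{1}u_{2}\,dL_{n}(u_{1},u_{2})}\;l_{n}\bigl(L_{1}^{n,-1}(x_{1}),L_{2}^{n,-1}(x_{2})\bigr)\,(L_{1}^{n,-1})'(x_{1})\,(L_{2}^{n,-1})'(x_{2}). \notag
\end{equation}
The generalized inverses $L_{1}^{n,-1}$, $L_{2}^{n,-1}$ are nondecreasing bijections of $[0,1]$ (since $L_{n}$ is a bivariate d.f.\ with strictly increasing marginals, established in the proof of Theorem 3 and in Appendix C), so by Lemma 32 the composite $l_{n}(L_{1}^{n,-1}(x_{1}),L_{2}^{n,-1}(x_{2}))$ is $RR_{2}$. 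The remaining factors constitute a nonnegative product kernel $\bar k_{1}(x_{1})\,\bar k_{2}(x_{2})$ with $\bar k_{1}(x_{1})=L_{1}^{n,-1}(x_{1})\,(L_{1}^{n,-1})'(x_{1})\ge 0$ and $\bar k_{2}(x_{2})=L_{2}^{n,-1}(x_{2})\,(L_{2}^{n,-1})'(x_{2})/D_{n}\ge 0$, where $D_{n}=E[X_{1}^{L_n}X_{2}^{L_n}]>0$. Applying the lemma on multiplication by nonnegative product kernels, $l_{n+1}$ is $RR_{2}$, which closes the induction.
\end{proof}

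The only point requiring care — and the step I expect to be the main obstacle — is the differentiability and monotonicity of the inverse marginals $L_{i}^{n,-1}$ used in the chain rule: one needs the marginal densities $l_{i}^{n}$ to be strictly positive on $(0,1)$ so that $(L_{i}^{n,-1})'$ exists a.e.\ and is finite, and one needs $L_{i}^{n}$ to be a genuine (strictly increasing) distribution function so the generalized inverse is an honest nondecreasing bijection. Both facts are available from the density-existence hypothesis of Theorem 3 together with the rectangle-inequality verification at the start of that proof; if one wanted a fully self-contained argument here, one would insert a short lemma recording that $l_{i}^{n}>0$ on $(0,1)$ (which follows inductively from the explicit form (\ref{6b}) of $l_{i}^{n}$ and positivity of $f_{12}$ on its support). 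Everything else is a routine bookkeeping of nonnegative factors, identical in spirit to the $TP_{2}$ argument already carried out.
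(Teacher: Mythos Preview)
Your proof is correct and follows essentially the same route as the paper's own argument: induction on $n$, the explicit Jacobian formula for $l_{n+1}$ in terms of $l_n$ composed with the inverse marginals, then Lemma~28 (composition with nondecreasing maps) followed by Lemma~29 (multiplication by a nonnegative product kernel). The only cosmetic difference is that you write the base case using the form of equation~(\ref{3}) with the factors $1/f_i(F_i^{-1}(x_i))$, whereas the paper uses the equivalent form with $(F_i^{-1})'(x_i)$; your numbering of the lemmas is also off by a few, but the references are unambiguous.
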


\begin{proof}
\noindent We prove by induction.

\medskip \noindent \textbf{Base Case ($n=0$).} From \textit{Appendix D1}, we
have 
\begin{equation}
l_{0}(x_{1},x_{2})=\frac{F_{1}^{-1}(x_{1})F_{2}^{-1}(x_{2})}{%
\int_{0}^{1}\int_{0}^{1}u_{1}u_{2}dF(u_{1},u_{2})}%
f_{12}(F_{1}^{-1}(x_{1}),F_{2}^{-1}(x_{2}))(F_{1}^{-1})^{^{\prime
}}(x_{1})(F_{2}^{-1})^{^{\prime }}(x_{2}).  \label{329}
\end{equation}

Let $\phi _{i}(x_{i})=F_{i}^{-1}(x_{i})$ (non-decreasing, $%
(F_{i}^{-1})^{\prime }(x_{i})\geq 0$). Then 
\begin{equation}
f_{F}^{\ast }(x_{1},x_{2})=f_{F}(\phi _{1}(x_{1}),\,\phi _{2}(x_{2}))
\label{330}
\end{equation}%
is $RR_{2}$ by \textit{Lemma} {\footnotesize \ref{rr2}}. Also set 
\begin{equation}
k_{i}(x_{i})=\phi _{i}(x_{i})\,(F_{i}^{-1})^{\prime }(x_{i})\;\geq 0.
\label{331}
\end{equation}%
Thus 
\begin{equation}
l_{0}(x_{1},x_{2})=\frac{1}{I_{F}}\;k_{1}(x_{1})\,k_{2}(x_{2})\,f_{F}^{\ast
}(x_{1},x_{2})  \label{332}
\end{equation}%
is $RR_{2}$ by \textit{Lemma} {\footnotesize \ref{ker}}.

\medskip \noindent \textbf{Inductive Step.} Suppose $l_{n}$ is $RR_{2}$.
From \textit{Appendix D1}, we have:%
\begin{equation}
l_{n+1}(x_{1},x_{2})=\frac{L_{1}^{n,-1}(x_{1})L_{2}^{n,-1}(x_{2})}{%
\int_{0}^{1}\int_{0}^{1}u_{1}u_{2}dL_{n}(u_{1},u_{2})}%
l_{n}(L_{1}^{n,-1}(x_{1}),L_{2}^{n,-1}(x_{2}))(L_{1}^{n,-1})^{^{\prime
}}(x_{1})(L_{2}^{n,-1})^{^{\prime }}(x_{2}).  \label{333}
\end{equation}

Set 
\begin{equation}
l_{n}^{\ast }(x_{1},x_{2})=l_{n}[a_{n}(x_{1}),\,b_{n}(x_{2})].  \label{334}
\end{equation}%
Since $l_{n}$ is $RR_{2}$ and $a_{n}$, $b_{n}$ are non-decreasing, $%
l_{n}^{\ast }$ is $RR_{2}$ by \textit{Lemma} {\footnotesize \ref{rr2}}. Also
define 
\begin{equation}
K_{1}(x_{1})=a_{n}(x_{1})\,a_{n}^{\prime
}(x_{1})=L_{1}^{n,-1}(x_{1})\,[L_{1}^{n,-1}]^{\prime }(x_{1})\;\geq 0,
\label{335}
\end{equation}%
\begin{equation}
K_{2}(x_{2})=b_{n}(x_{2})\,b_{n}^{\prime
}(x_{2})=L_{2}^{n,-1}(x_{2})\,[L_{2}^{n,-1}]^{\prime }(x_{2})\;\geq 0.
\label{336}
\end{equation}%
Then 
\begin{equation}
l_{n+1}(x_{1},x_{2})=\frac{1}{I_{n}}\,K_{1}(x_{1})\,K_{2}(x_{2})\;l_{n}^{%
\ast }(x_{1},x_{2})  \label{337}
\end{equation}%
is $RR_{2}$ by \textit{Lemma} {\footnotesize \ref{ker}}. This completes the
induction.
\end{proof}

Taking strict signs in the proof above, gives directly that the $SRR_{2}$
property is preserved across the iterations as well. We will need also the
following sufficient condition for $SRR_{2}$:

\begin{lemma}
\label{aux-lem} Let $f(x,v)$ be a family of probability densities on $%
\mathcal{X}\times \mathcal{V}$, where $\mathcal{X}$ and $\mathcal{V}$ are
intervals in $R$. Assume the following:

($RR_{2}$ Property) For any $v_{1},v_{2}\in \mathcal{V}$ with $v_{1}<v_{2}$,
the likelihood ratio $L(x)=f(x,v_{2})/f(x,v_{1})$ is non-increasing in $x$.

(Regularity and Strict Log-Concavity) For each fixed $v\in \mathcal{V}$, the
log-density $g(x,v)=\log f(x,v)$ is a real-analytic and strictly log-concave
function of $x$ on the interior of its support.

(Non-degeneracy) For every pair $v_{1}<v_{2}$, the function $x\mapsto
g(x,v_{2})-g(x,v_{1})$ is not constant over the interior of the common
support.

Then the kernel $f(x,v)$ has the strict $RR_{2}$ property ($SRR_{2}$). That
is, for any $v_{1}<v_{2}$, the ratio $L(x)$ is strictly decreasing in $x$
over the interior of their common support.
\end{lemma}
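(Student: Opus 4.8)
The plan is to pass to logarithms and reduce the statement to the rigidity of real-analytic functions on a connected domain. I would set $\Delta(x):=g(x,v_{2})-g(x,v_{1})=\log L(x)$ for $x$ in $\mathcal{I}$, the interior of the common support $\operatorname{supp}f(\cdot,v_{1})\cap\operatorname{supp}f(\cdot,v_{2})$. The first steps are bookkeeping: strict log-concavity (indeed mere log-concavity) forces each $\operatorname{supp}f(\cdot,v)$ to be an interval, so $\mathcal{I}$ is an open interval, hence connected (if $\mathcal{I}=\varnothing$ there is nothing to prove); on $\mathcal{I}$ both densities are strictly positive, so $L>0$ and $\Delta$ is well defined and finite there; and, being the difference of the real-analytic functions $g(\cdot,v_{1})$ and $g(\cdot,v_{2})$, $\Delta$ is real-analytic on $\mathcal{I}$. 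The $RR_{2}$ hypothesis says precisely that $L$, hence $\Delta$, is non-increasing on $\mathcal{I}$.

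Next I would run a contradiction argument. Suppose $L$ is not strictly decreasing on $\mathcal{I}$; then $\Delta$ is not strictly decreasing, so there exist $x_{1}<x_{2}$ in $\mathcal{I}$ with $\Delta(x_{1})=\Delta(x_{2})$, and since $\Delta$ is non-increasing it is forced to be constant on the whole segment $[x_{1},x_{2}]$. Because $\Delta$ is real-analytic and $\mathcal{I}$ is connected, the identity theorem for real-analytic functions then propagates this to all of $\mathcal{I}$: $\Delta$ is constant on $\mathcal{I}$. This contradicts the non-degeneracy hypothesis, which asserts that $x\mapsto g(x,v_{2})-g(x,v_{1})$ is not constant on $\mathcal{I}$. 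Hence $\Delta$ is strictly decreasing on $\mathcal{I}$, and since $\exp$ is strictly increasing, $L(x)=e^{\Delta(x)}$ is strictly decreasing on $\mathcal{I}$, which is the claimed $SRR_{2}$ property.

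The load-bearing step — and the one I would be most careful about — is the passage from "constant on a subinterval" to "constant on all of $\mathcal{I}$", which is exactly where connectedness of the domain is essential; this is the true role of the log-concavity hypothesis, since without it the interior of the common support could be a disjoint union of intervals on which $\Delta$ behaves differently and the strict-monotonicity conclusion would genuinely fail. The strict concavity of $g(\cdot,v)$ beyond plain concavity is not used in this particular argument; it is carried along because it is the regularity available for the iterated densities to which the lemma is later applied, and because it is the strict version of the likelihood-ratio monotonicity that is exploited downstream. I would also verify the small but necessary facts that $\operatorname{int}\!\big(\operatorname{supp}f(\cdot,v_{1})\cap\operatorname{supp}f(\cdot,v_{2})\big)=\operatorname{int}\operatorname{supp}f(\cdot,v_{1})\cap\operatorname{int}\operatorname{supp}f(\cdot,v_{2})$, so that $\Delta$ is indeed analytic on all of $\mathcal{I}$, and that neither density vanishes on $\mathcal{I}$, so that $\log L$ is finite there.
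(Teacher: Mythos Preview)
Your proposal is correct and follows essentially the same route as the paper's proof: pass to $\Delta(x)=\log L(x)$, use non-increasingness to get constancy on a subinterval, then invoke the identity theorem for real-analytic functions to contradict non-degeneracy. If anything, you are more careful than the paper about the bookkeeping (connectedness of the common support from log-concavity, positivity of the densities on the interior), and your observation that strict concavity is not actually used in this step is accurate.
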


\begin{proof}
Let $g(x,v)=\log f(x,v)$. The proof proceeds by contradiction. Assume the
conclusion is false. Then for some $v_{1}<v_{2}$, the likelihood ratio $L(x)$
is not strictly decreasing. By condition (i), $L(x)$ is continuous and
non-increasing. The failure to be strictly decreasing implies there must
exist a non-degenerate interval $I\subset \mathcal{X}$ on which $L(x)$ is
constant. We may write: 
\begin{equation}
L(x)=\frac{f(x,v_{2})}{f(x,v_{1})}\equiv C\quad \text{for all }x\in I.
\label{338}
\end{equation}%
Taking the logarithm gives $d(x)\equiv \log C$, where $%
d(x)=g(x,v_{2})-g(x,v_{1})$. The critical step is to extend this local
constancy to a global one. By the regularity hypothesis, the functions $%
x\mapsto g(x,v_{1})$ and $x\mapsto g(x,v_{2})$ are real-analytic. Their
difference, $d(x)$, is therefore also a real-analytic function of $x$. We
have established that $d(x)$ is constant on the interval $I$. The \textit{%
Identity Theorem} for real-analytic functions states that if an analytic
function is constant on any open subset of its connected domain, it must be
constant over the entire domain. Therefore, the function $d(x)$ must be
constant for all $x$ in the interior of the common support. This, however,
is a direct contradiction of the non-degeneracy assumption. The initial
assumption that $L(x)$ is not strictly decreasing must be false. Thus, $L(x)$
is strictly decreasing over the interior of the common support.
\end{proof}

All above allows to conclude that the $SRR_{2}$ property is preserved across
iterations if the initial density $f_{F}$ is either $SRR_{2}$ or is an $%
RR_{2}$ density that also satisfies the regularity conditions of
log-concavity and non-degeneracy.\hfill

\begin{flushleft}
\textbf{Single-crossing property of }$L_{i}^{n}$ for $i=1,2$
\end{flushleft}

We first prove a claim and then the main lemma of the section.

\begin{claim}
\label{single-sol} The equation 
\begin{equation}
L_{1}^{n}(y)=\frac{1}{I_{n}}\int_{0}^{y}\int_{0}^{1}u_{1}u_{2}%
\,l_{n}(u_{1},u_{2})\,du_{2}\,du_{1}  \label{339}
\end{equation}%
has at most one solution for $y\in (0,1)$.
\end{claim}

\begin{proof}
\vspace{0pt} Let's define the function $H_{n}(y)$ as the difference between
the two sides of the equation (\ref{339}):%
\begin{equation}
H_{n}(y)=G_{n}(y)-L_{1}^{n}(y),  \label{339a}
\end{equation}

where: 
\begin{equation}
G_{n}(y)=\frac{1}{I_{n}}\int_{0}^{y}\int_{0}^{1}u_{1}u_{2}%
\,l_{n}(u_{1},u_{2})\,du_{2}\,du_{1}.  \label{344}
\end{equation}

We know $H_{n}(0)=H_{n}(1)=0$. If $H_{n}(y)$ had two or more roots in $(0,1)$%
, then by \textit{Rolle's Theorem}, its derivative $H_{n}(y)$ would have at
least two roots in $(0,1)$. Differentiating with respect to $y$ gives 
\begin{equation}
H_{n}^{\prime }(y)=\left( \frac{1}{I_{n}}\int_{0}^{1}yu_{2}\,l_{n}(y,u_{2})%
\,du_{2}\right) -\left( \int_{0}^{1}l_{n}(y,u_{2})\,du_{2}\right) .
\label{340}
\end{equation}%
The roots of $H_{n}^{\prime }(y)$ occur where the ratio of the two terms is $%
1$. This ratio is 
\begin{equation}
\lambda _{n}(y)=\frac{\frac{y}{I_{n}}\int_{0}^{1}u_{2}l_{n}(y,u_{2})du_{2}}{%
l_{1}^{n}(y)}=\frac{y}{I_{n}}E[X_{2}^{L_{n}}\mid X_{1}^{L_{n}}=y].
\label{341}
\end{equation}%
Here $I_{n}=E[X_{1}^{L_{n}}X_{2}^{L_{n}}]\in (0,1]$. The function $\lambda
_{n}(y)$ is a product of two continuous, positive functions:

\begin{itemize}
\item $f_{1}(y)=\frac{y}{I_{n}}$, which is strictly increasing;

\item $f_{2}(y)=E[X_{2}^{L_{n}}\mid X_{1}^{L_{n}}=y]$. If $l_{n}$ is $RR_{2}$%
, this conditional expectation is a non-increasing function of $y$.
\end{itemize}

The product of a strictly positive, strictly increasing function and a
positive, non-increasing function is unimodal. This implies $\lambda _{n}(y)$
is such as well.

A unimodal function can cross any horizontal line (like $y=1$) at most
twice. Thus, the equation $\lambda _{n}(y)=1$ has at most two solutions,
meaning $H_{n}^{\prime }(y)$ has at most two roots in $(0,1)$. Suppose for
contradiction that $H_{n}(y)$ has two interior roots, $y_{1}<y_{2}$. By 
\textit{Rolle's Theorem}, this would imply that $H_{n}^{\prime }(y)$ must
have at least three roots in $(0,1)$, a contradiction. Therefore, $H_{n}(y)$
can have at most one root in $(0,1)$. The proof is analogous for the other
marginal.
\end{proof}

\begin{lemma}
\label{one-sol} For each $n\geq 0$, the marginal equation $L_{i}^{n}(x)=x$
has at most one solution $c_{i}^{n}$ in $(0,1)$ for $i=1,2$.
\end{lemma}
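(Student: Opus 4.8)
\noindent The plan is to deduce the Lemma from the preceding claim by one monotone substitution, after first recognizing what the right-hand side of that claim really is. Write $D_{m}=\int_{0}^{1}\int_{0}^{1}u_{1}u_{2}\,dL_{m}(u_{1},u_{2})$ and, fixing the marginal index to be $i=1$ (the case $i=2$ is symmetric), set $M_{m}(y)=\frac{1}{D_{m}}\int_{0}^{y}\int_{0}^{1}u_{1}u_{2}\,l_{m}(u_{1},u_{2})\,du_{2}\,du_{1}$. First I would check, by differentiating the iteration (\ref{5.1}) and substituting $v=L_{2}^{m,-1}(x_{2})$ in the inner integral, that $M_{m}(0)=0$, $M_{m}(1)=1$, and $M_{m}(y)=L_{1}^{m+1}\bigl(L_{1}^{m}(y)\bigr)$ for every $y\in[0,1]$ (equivalently $M_{m}'(y)=l_{1}^{m}(y)\,(L_{1}^{m+1})'\bigl(L_{1}^{m}(y)\bigr)$). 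Thus the preceding claim, applied at index $m$, asserts exactly that $L_{1}^{m}(y)=L_{1}^{m+1}\bigl(L_{1}^{m}(y)\bigr)$ has at most one solution $y\in(0,1)$.

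\noindent For $n\ge 1$ I would then apply the claim at index $m=n-1$ --- legitimate because $l_{n-1}$ is $RR_{2}$ by the $RR_{2}$-preservation theorem above --- obtaining that $L_{1}^{n-1}(y)=L_{1}^{n}\bigl(L_{1}^{n-1}(y)\bigr)$ has at most one solution in $(0,1)$. Since, under the nondegeneracy assumed throughout this appendix, $L_{1}^{n-1}$ is a continuous strictly increasing bijection of $(0,1)$ onto itself, the change of variable $x=L_{1}^{n-1}(y)$ turns this into $x=L_{1}^{n}(x)$ and is a bijection between the two solution sets; hence $L_{1}^{n}(x)=x$ has at most one solution $x\in(0,1)$. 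Exchanging coordinates handles $i=2$, and any diagonal crossing of $L_{i}^{n}$ necessarily lies in the interior of its support (outside it $L_{i}^{n}$ is constant, equal to $0$ or $1$), so nothing is lost by restricting to that interior.

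\noindent The base case $n=0$ is handled by the same device one step earlier: repeat the argument of the preceding claim with the parent density $f_{12}$ in place of $l_{n-1}$ (permissible since $f_{12}$ is assumed $RR_{2}$). Concretely, with $M_{F}(y)=\frac{1}{D_{F}}\int_{0}^{y}\int_{0}^{1}u_{1}u_{2}f_{12}(u_{1},u_{2})\,du_{2}\,du_{1}=L_{1}^{0}\bigl(F_{1}(y)\bigr)$ and $H(y)=M_{F}(y)-F_{1}(y)$, one has $H(0)=H(1)=0$ and $H'(y)=f_{1}(y)\bigl(\lambda_{F}(y)-1\bigr)$, where $\lambda_{F}(y)=\frac{y}{D_{F}}\,\mathbb{E}[X_{2}^{F}\mid X_{1}^{F}=y]$; the $RR_{2}$ property makes $y\mapsto\mathbb{E}[X_{2}^{F}\mid X_{1}^{F}=y]$ nonincreasing, so $(\log\lambda_{F})'(y)=\tfrac1y+\tfrac{d}{dy}\log\mathbb{E}[X_{2}^{F}\mid X_{1}^{F}=y]$ is strictly decreasing, $\lambda_{F}$ is unimodal, $H'$ changes sign at most twice, and Rolle's theorem forces $H$ to have at most one interior zero. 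Substituting $x=F_{1}(y)$ gives that $L_{1}^{0}(x)=x$ has at most one solution.

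\noindent I do not expect a serious obstacle. The whole analytic core --- unimodality of $\lambda$ from the $RR_{2}$ sign condition, together with the Rolle count --- already lives in the preceding claim. The only points that need care for the Lemma are the bookkeeping identity $M_{m}=L_{i}^{m+1}\circ L_{i}^{m}$ and the observation that the monotone substitution $x=L_{i}^{m}(y)$ is a genuine bijection of $(0,1)$, which is precisely the density-positivity already built into the appendix's standing hypotheses.
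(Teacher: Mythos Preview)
Your proposal is correct and takes essentially the same approach as the paper. Both arguments reduce the diagonal-crossing equation $L_{i}^{n}(x)=x$ to the preceding claim via the identity $M_{m}(y)=L_{i}^{m+1}(L_{i}^{m}(y))$ together with the monotone substitution $x=L_{i}^{m}(y)$ (the paper writes it as $y=L_{i}^{n,-1}(x)$ and works at index $m=n$ to conclude for $L_{i}^{n+1}$, you work at $m=n-1$ to conclude for $L_{i}^{n}$; this is a pure index shift), and both handle the base case by rerunning the claim's unimodality/Rolle argument with the parent density $f_{12}$ in place of $l_{m}$.
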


\begin{proof}
\vspace{0pt} \noindent Let's fix a step $n$ and analyze the crossing
equation for the next iterate, $L_{1}^{n+1}(x)=x$. From the iterative
definition, with $x_{1}=x$ and $x_{2}=1$, we have $L_{2}^{n,-1}(1)=1$. The
equation becomes 
\begin{equation}
\frac{1}{I_{n}}\int_{0}^{L_{1}^{n,-1}(x)}\left(
\int_{0}^{1}u_{1}u_{2}\,l_{n}(u_{1},u_{2})\,du_{2}\right) \,du_{1}=x.
\label{342}
\end{equation}%
We perform a change of variables. Let $y=L_{1}^{n,-1}(x)$, which implies $%
x=L_{1}^{n}(y)$. Since $L_{1}^{n}$ is strictly increasing, this map is a
bijection from $(0,1)$ to $(0,1)$. Substituting $x=L_{1}^{n}(y)$ into the
equation gives 
\begin{equation}
\frac{1}{I_{n}}\int_{0}^{y}\int_{0}^{1}u_{1}u_{2}\,l_{n}(u_{1},u_{2})%
\,du_{2}\,du_{1}=L_{1}^{n}(y).  \label{343}
\end{equation}%
This equation is now entirely in terms of the distribution $L_{n}$. The
problem has been reduced to finding the number of solutions to this new
equation for $y\in (0,1)$. They are at most one due to the previous claim
and the $RR_{2}$ property preservation at each step of the iteration. The
proof is analogous for the other marginal.
\end{proof}

We check directly from the iterative equation (\ref{5.1}) that 
\begin{equation}
L_{1}^{n+1}[L_{1}^{n}(y)]=L_{1}^{n}(y)\quad \Longleftrightarrow \quad
G_{n}(y)=L_{1}^{n}(y).  \label{345}
\end{equation}%
Hence the next crossing is of the form $c_{1}^{n+1}=L_{1}^{n}(y_{n}^{\ast })$%
, where $y_{n}^{\ast }$ solves the above equation.

In the case where the equation $\lambda _{n}(y)=1$ has two solutions in $%
(0,1)$, which we denote by $y_{n}^{-}$ and $y_{n}^{+}$ , it follows that $%
y_{n}^{\ast }$ lies in the interval $(y_{n}^{-},y_{n}^{+})$. Consequently, $%
\lambda _{n}(y_{n}^{\ast })>1$, and the function's behavior is given by

\begin{equation}
\lambda _{n}<1\ \text{ on }(0,y_{n}^{-}),\quad \lambda _{n}>1\ \text{ on }%
(y_{n}^{-},y_{n}^{+}),\quad \lambda _{n}<1\ \text{ on }(y_{n}^{+},1).
\label{345a}
\end{equation}

In the case where the equation $\lambda _{n}(y)=1$ has a single solution in $%
(0,1)$, the point $y_{n}^{\ast }$ does not exist, and as a consequence, the
function $L_{1}^{n}(y)$ is subdiagonal.

\begin{flushleft}
\textbf{Compactness of }$L_{n}$
\end{flushleft}

Starting with several auxiliary results, we next provide \textit{Lipschitz
bounds} and establish the relative compactness of the family $(L_{n})_{n\geq
0}$. The first step is to prove that the normalizing denominators in the
bivariate iteration cannot collapse to zero.

The normalizing denominator of the $n$-th bivariate iterate is 
\begin{equation*}
I_{n}=\int_{0}^{1}\int_{0}^{1}u_{1}u_{2}\,dL_{n}(u_{1},u_{2})
\end{equation*}

We shall compare $I_{n}$ with the corresponding denominator along the
attainable \textit{Fr\'{e}chet--Hoeffding lower-bound} trajectory. Let, as
before, $L_{-}^{n}$ denote the $n$-th iterate generated by the lower
extremal initial law $F_{-}$. Thus $L_{-}^{n}$ is the $n$-th element of the
trajectory for which the dependence structure is the\textit{\ Fr\'{e}%
chet--Hoeffding lower-bound} at each step. We write its marginals as 
\begin{equation*}
L_{1-}^{n}(x)=L_{-}^{n}(x,1),\qquad L_{2-}^{n}(x)=L_{-}^{n}(1,x),
\end{equation*}%
and their generalized inverses as 
\begin{equation*}
T_{1-,n}=L_{1-}^{n,-1},\qquad T_{2-,n}=L_{2-}^{n,-1}.
\end{equation*}%
Define the lower-bound denominator 
\begin{equation}
I_{-,n}=\int_{0}^{1}\int_{0}^{1}u_{1}u_{2}\,dL_{-}^{n}(u_{1},u_{2}).
\label{345a23}
\end{equation}

We also need the following notation. Let $L_{W}^{n}$ denote the attainable 
\textit{Fr\'{e}chet--Hoeffding} lower element at level $n$. Equivalently, $%
L_{W}^{n}$ is obtained by replacing the copula at level $n$ by the lower 
\textit{Fr\'{e}chet--Hoeffding copula} $W$ inside the admissible \textit{%
Lorenz iteration}. Its denominator is 
\begin{equation}
I_{W,n}=\int_{0}^{1}\int_{0}^{1}u_{1}u_{2}\,dL_{W}^{n}(u_{1},u_{2}).
\label{345a24}
\end{equation}

The rigidity result for the \textit{Fr\'{e}chet--Hoeffding lower-bound}
case, \textit{Claim~\ref{fh-claim4}}, says that the \textit{Lorenz curve}
reaches its attainable \textit{Fr\'{e}chet--Hoeffding lower-bound} if and
only if the generating law itself is the lower extremal law. Consequently,
by induction along the iteration, the attainable lower element at level $n$
coincides with the lower extremal trajectory generated by $F_{-}$. In the
present notation, 
\begin{equation}
L_{W}^{n}=L_{-}^{n},\qquad n\geq 0.  \label{345a25}
\end{equation}%
Therefore the corresponding denominators are identical 
\begin{equation}
I_{W,n}=I_{-,n},\qquad n\geq 0.  \label{345a26}
\end{equation}

We first prove that the lower extremal denominators $I_{-,n}$ are uniformly
bounded away from zero.

\begin{lemma}
\noindent \textbf{\label{lbound}} The denominators of the \textit{Fr\'{e}%
chet--Hoeffding lower-bound} trajectory satisfy 
\begin{equation}
\inf_{n\geq 0}I_{-,n}>0.  \label{345a1}
\end{equation}%
\medskip
\end{lemma}

\begin{proof}
For the\textit{\ Fr\'{e}chet--Hoeffding lower-bound} trajectory we have,
from the countermonotone representation, 
\begin{equation}
I_{-,n}=\int_{0}^{1}T_{1-,n}(u)\,T_{2-,n}(1-u)\,du.  \label{345a27}
\end{equation}%
Moreover, the lower-bound identities established earlier give 
\begin{equation}
T_{1-,n}(u)+T_{2-,n}(1-u)=1,\qquad 0\leq u\leq 1.  \label{345a28}
\end{equation}%
Hence 
\begin{equation}
I_{-,n}=\int_{0}^{1}T_{1-,n}(u)(1-T_{1-,n}(u))\,du.  \label{345a29}
\end{equation}%
Equivalently, by using the second marginal, 
\begin{equation}
I_{-,n}=\int_{0}^{1}T_{2-,n}(u)(1-T_{2-,n}(u))\,du.  \label{345a30}
\end{equation}

Moreover, the first marginal of the next lower extremal iterate satisfies 
\begin{equation}
K_{n+1}(x)=\frac{\int_{0}^{x}T_{1-,n}(u)(1-T_{1-,n}(u))\,du}{I_{-,n}}.
\label{345a30f}
\end{equation}

Put 
\begin{equation}
w(x)=x(1-x).  \label{345a30g}
\end{equation}%
Then 
\begin{equation}
I_{-,n}=\int_{0}^{1}w(T_{n}(u))\,du.  \label{345a30h}
\end{equation}%
Since $K_{n+1}$ is absolutely continuous, its density is 
\begin{equation}
q_{n}(u)=T_{n+1}^{\prime }(u)=\frac{w(T_{n}(u))}{I_{-,n}}\quad \text{for
a.e. }u\in \lbrack 0,1].  \label{345a33}
\end{equation}%
The function $S_{n}$ is nondecreasing, while $w(x)=x(1-x)$ is increasing on $%
[0,\frac{1}{2}]$ and decreasing on $[\frac{1}{2},1]$. Therefore $q_{n}$ is a
unimodal probability density on $[0,1]$. Also, 
\begin{equation*}
0\leq w(x)\leq \frac{1}{4},
\end{equation*}%
so 
\begin{equation}
0\leq q_{n}(u)\leq \frac{1}{4I_{-,n}}.  \label{345a34}
\end{equation}%
Set 
\begin{equation}
M_{n}=\frac{1}{4I_{-,n}}.  \label{345a34a}
\end{equation}

We now use a simple layer-cake estimate for unimodal densities. Let $q$ be
any unimodal probability density on $[0,1]$ such that $0\leq q\leq M$. For $%
t\in \lbrack 0,M]$, define the superlevel set 
\begin{equation}
A_{t}=\{u\in \lbrack 0,1]:q(u)>t\}.  \label{345a35a}
\end{equation}%
Since $q$ is unimodal, each $A_{t}$ is an interval, possibly empty. Let 
\begin{equation}
\ell (t)=|A_{t}|.  \label{345a35b}
\end{equation}%
Then, by the layer-cake formula, 
\begin{equation}
\int_{0}^{M}\ell (t)\,dt=\int_{0}^{1}q(u)\,du=1.  \label{345a35}
\end{equation}%
For an interval $A\subset \lbrack 0,1]$ of length $r$, the quantity 
\begin{equation}
\int_{A}u(1-u)\,du  \label{345a35aa}
\end{equation}%
is minimized when $A$ is placed at one of the endpoints of $[0,1]$.
Therefore, with 
\begin{equation}
H(r)=\int_{0}^{r}u(1-u)\,du=\frac{r^{2}}{2}-\frac{r^{3}}{3},  \label{345a36}
\end{equation}%
we have 
\begin{equation}
\int_{A_{t}}u(1-u)\,du\geq H(\ell (t)).  \label{345a37}
\end{equation}%
Using again the layer-cake formula, 
\begin{eqnarray}
\int_{0}^{1}u(1-u)q(u)\,du &=&\int_{0}^{M}\int_{A_{t}}u(1-u)\,du\,dt  \notag
\\
&\geq &\int_{0}^{M}H(\ell (t))\,dt.  \label{345a38}
\end{eqnarray}%
Since 
\begin{equation}
H(r)=\frac{r^{2}}{2}-\frac{r^{3}}{3}\geq \frac{r^{2}}{6},\qquad 0\leq r\leq
1,  \label{345a38a}
\end{equation}%
we obtain 
\begin{eqnarray}
\int_{0}^{1}u(1-u)q(u)\,du &\geq &\frac{1}{6}\int_{0}^{M}\ell (t)^{2}\,dt 
\notag \\
&\geq &\frac{1}{6}\cdot \frac{\left( \int_{0}^{M}\ell (t)\,dt\right) ^{2}}{M}%
=\frac{1}{6M}  \label{345a39}
\end{eqnarray}%
where we used \textit{Cauchy's inequality} and (\ref{345a35}).

Applying this estimate to $q=q_{n}$ and $M=M_{n}=(4I_{-,n})^{-1}$, we get 
\begin{equation}
I_{-,n+1}=\int_{0}^{1}u(1-u)q_{n}(u)\,du\geq \frac{1}{6M_{n}}=\frac{2}{3}%
I_{-,n}.  \label{345a40}
\end{equation}

We also need the sharper estimate valid for small $I_{-,n}$. If $I_{-,n}\leq 
\frac{1}{16}$, then $M_{n}\geq 4$, hence $1/M_{n}\leq 1/4$. On the interval $%
[0,1/4]$, the function $H$ is convex and coincides with its lower convex
envelope on $[0,1]$. Therefore \textit{Jensen's inequality} applied to the
lower convex envelope of $H$, together with (\ref{345a35}), yields 
\begin{equation}
\int_{0}^{M}H(\ell (t))\,dt\geq MH\!\left( \frac{1}{M}\right) .
\label{345a41}
\end{equation}%
Consequently, for $I_{-,n}\leq \frac{1}{16}$, 
\begin{eqnarray}
I_{-,n+1} &\geq &M_{n}H\!\left( \frac{1}{M_{n}}\right) =\frac{1}{4I_{-,n}}%
\left[ \frac{(4I_{-,n})^{2}}{2}-\frac{(4I_{-,n})^{3}}{3}\right]  \notag \\
&=&2I_{-,n}-\frac{16}{3}I_{-,n}^{2}\geq \frac{5}{3}I_{-,n}.  \label{345a42}
\end{eqnarray}

We now prove that $I_{-,n}$ cannot approach zero. Suppose, by contradiction,
that 
\begin{equation}
\inf_{n\geq 0}I_{-,n}=0.  \label{345a42a}
\end{equation}%
Since $I_{-,0}>0$ under the standing non-degeneracy assumptions\footnote{$%
T_{0}${\footnotesize {} is continuous and nondegenerate, it cannot be
supported on }$\{0,1\}$ to make{\footnotesize .}}, we may choose $N\geq 1$
such that 
\begin{equation}
I_{-,N}<\min \left\{ \frac{1}{24},I_{-,0}\right\} .  \label{345a43}
\end{equation}%
From the rough estimate (\ref{345a40}), 
\begin{equation}
I_{-,N}\geq \frac{2}{3}I_{-,N-1},  \label{345a43a}
\end{equation}%
and hence 
\begin{equation}
I_{-,N-1}\leq \frac{3}{2}I_{-,N}<\frac{1}{16}.  \label{345a43b}
\end{equation}%
Therefore the sharper estimate (\ref{345a42}), applied at the index $N-1$,
gives 
\begin{equation}
I_{-,N}\geq \frac{5}{3}I_{-,N-1},  \label{345a43c}
\end{equation}%
or equivalently 
\begin{equation}
I_{-,N-1}\leq \frac{3}{5}I_{-,N}.  \label{345a43d}
\end{equation}%
In particular $I_{-,N-1}<1/16$, so the same argument can be repeated
backwards. Iterating, we get 
\begin{equation}
I_{-,0}\leq \left( \frac{3}{5}\right) ^{N}I_{-,N}<I_{-,N}.  \label{345a43e}
\end{equation}%
This contradicts $I_{-,N}<I_{-,0}$. Hence 
\begin{equation}
\inf_{n\geq 0}I_{-,n}>0.  \label{345a43f}
\end{equation}
\end{proof}

\begin{remark}
\noindent \textbf{\label{lbound-stronger-than-appendix-c}} \textit{Lemma~\ref%
{lbound}} gives a more general proof of the non-collapse of the \textit{Fr%
\'{e}chet--Hoeffding lower-bound} denominator than the argument in \textit{%
Appendix~C}. In \textit{Appendix~C}, the corresponding statement \textit{%
Claim~\ref{In-noncollapse-from-An}} was obtained under the additional
hypotheses used to prove the vanishing of the asymmetry functional $A_{n}$;
in particular, it relied on the assumptions entering \textit{Claim~\ref%
{An-vanishing}}. The proof above does not use those hypotheses. It does not
require a one-sign regime for the defect $L_{n}(x)+L_{n}(1-x)-1$.
\end{remark}

\begin{remark}
The assumption of strictly positive marginal densities---maintained in both
the main theorem and the extremal cases---is essential, as discussed in
detail in \textit{Appendix E}.\hfill
\end{remark}

We can now prove the desired denominator non-collapse for the original
bivariate iteration.

\begin{lemma}
\textbf{\label{denominator-noncollapse-2d}} The normalizing constants 
\begin{equation}
I_{n}=\int_{0}^{1}\int_{0}^{1}u_{1}u_{2}\,dL_{n}(u_{1},u_{2})
\label{345a43h}
\end{equation}%
are uniformly bounded away from zero. That is, 
\begin{equation}
\inf_{n\geq 0}I_{n}>0.  \label{345a222}
\end{equation}%
\medskip
\end{lemma}

\begin{proof}
Fix $n\geq 0$. Let $C_{n}$ be the copula of $L_{n}$. Following the notation
adopted in \textit{Appendix B}, let $F_{C_{n}}$ denote the bivariate
distribution associated with the $n$-th iterate and copula $C_{n}$. Then 
\begin{equation}
I_{n}=E^{F_{C_{n}}}(X_{1}X_{2}).  \label{345a3}
\end{equation}

Let $W$ denote the bivariate \textit{Fr\'{e}chet--Hoeffding lower-bound}
copula, 
\begin{equation}
W(u,v)=\max \{u+v-1,0\}.  \label{345a3a}
\end{equation}%
Let $F_{W}$ be the corresponding attainable lower-bound distribution at
level $n$, and let $L_{W}^{n}$ be its \textit{Lorenz curve}. By definition
of $I_{W,n}$, 
\begin{equation}
I_{W,n}=E^{F_{W}}(X_{1}X_{2}).  \label{345a44}
\end{equation}

Since $W\leq C_{n}\leq M$, \textit{Claim~\ref{com-claim1}}, applied with $%
C_{1}=W$ and $C_{2}=C_{n}$, gives 
\begin{equation}
E^{F_{W}}(X_{1}X_{2})\leq E^{F_{C_{n}}}(X_{1}X_{2}).  \label{345a4}
\end{equation}%
Using (\ref{345a3}) and (\ref{345a44}), this becomes 
\begin{equation}
I_{W,n}\leq I_{n}.  \label{345a45}
\end{equation}

By the rigidity of the attainable \textit{Fr\'{e}chet--Hoeffding lower-bound}
trajectory, see \textit{Claim~\ref{fh-claim4}}, we have 
\begin{equation}
L_{W}^{n}=L_{-}^{n}.  \label{345a45a}
\end{equation}%
Therefore 
\begin{equation}
I_{W,n}=I_{-,n}.  \label{345a46}
\end{equation}%
Combining (\ref{345a45}) and (\ref{345a46}), we obtain 
\begin{equation}
I_{n}\geq I_{-,n},\qquad n\geq 0.  \label{345a47}
\end{equation}

By \textit{Lemma~\ref{lbound}}, 
\begin{equation}
c_{-}=\inf_{n\geq 0}I_{-,n}>0.  \label{345a47a}
\end{equation}%
Hence 
\begin{equation}
I_{n}\geq I_{-,n}\geq c_{-},\qquad n\geq 0.  \label{345a47b}
\end{equation}%
Therefore 
\begin{equation}
\inf_{n\geq 0}I_{n}\geq c_{-}>0.  \label{345a47c}
\end{equation}%
This proves (\ref{345a222}).
\end{proof}

\begin{corollary}
\textbf{\label{srr2-denominator-noncollapse}} In particular, in the $SRR_{2}$
case the denominators of the bivariate iteration are uniformly bounded away
from zero 
\begin{equation}
\liminf_{n\rightarrow \infty }I_{n}>0.  \label{345a48}
\end{equation}%
\medskip
\end{corollary}

\begin{proof}
If the initial distribution is $SRR_{2}$, then the iterates considered in
the $SRR_{2}$ part of the proof are admissible bivariate distribution
functions, and their copulas $C_{n}$ satisfy the \textit{Fr\'{e}%
chet--Hoeffding bounds} 
\begin{equation}
W\leq C_{n}\leq M.  \label{345a48a}
\end{equation}%
Thus the comparison in \textit{Claim~\ref{com-claim1}} applies at every
step. The argument of \textit{Lemma~\ref{denominator-noncollapse-2d}} gives 
\begin{equation}
I_{n}\geq I_{W,n}=I_{-,n}.  \label{345a48b}
\end{equation}%
The lower extremal denominators satisfy 
\begin{equation}
\inf_{n\geq 0}I_{-,n}>0  \label{345a48c}
\end{equation}%
by \textit{Lemma~\ref{lbound}}. Therefore 
\begin{equation}
\liminf_{n\rightarrow \infty }I_{n}\geq \inf_{n\geq 0}I_{-,n}>0.
\label{345a48d}
\end{equation}
\end{proof}

\begin{lemma}
\textbf{\label{marginal-lipschitz-from-denominator} }\noindent Under the
conclusion of Lemma~\ref{denominator-noncollapse-2d}, the marginals of the
iterates $\{L_{n}^{F}\}_{n\geq 1}$ are uniformly Lipschitz. More precisely,
for $i=1,2$, 
\begin{equation}
L_{i}^{n+1}(b)-L_{i}^{n+1}(a)\leq \frac{1}{c_{F}}(b-a),\qquad 0\leq a\leq
b\leq 1,\quad n\geq 0.  \label{345a99}
\end{equation}
\end{lemma}

\begin{proof}
\noindent We prove the assertion for $i=1$; the proof for $i=2$ is
identical. Let $0\leq a\leq b\leq 1$, and set 
\begin{equation}
\alpha =L_{1}^{n,}{}^{-1}(a),\qquad \beta =L_{1}^{n,}{}^{-1}(b).
\label{345a10}
\end{equation}%
Using the definition of the \textit{Lorenz iteration} and then taking the
second coordinate equal to $1$, we have 
\begin{equation}
L_{1}^{n+1}(x)=L_{n+1}^{F}(x,1)=\frac{\int_{0}^{L_{1}^{n,}{}^{-1}(x)}%
\int_{0}^{1}u_{1}u_{2}\,dL_{n}(u_{1},u_{2})}{I_{n}}.  \label{345a11}
\end{equation}%
Therefore 
\begin{equation}
L_{1}^{n+1}(b)-L_{1}^{n+1}(a)=\frac{\int_{\alpha }^{\beta
}\int_{0}^{1}u_{1}u_{2}\,dL_{n}(u_{1},u_{2})}{I_{n}}.  \label{345a12}
\end{equation}%
Since $0\leq u_{1}u_{2}\leq 1$ on $[0,1]^{2}$, it follows that 
\begin{equation}
L_{1}^{n+1}(b)-L_{1}^{n+1}(a)\leq \frac{\int_{\alpha }^{\beta
}\int_{0}^{1}dL_{n}(u_{1},u_{2})}{I_{n}}.  \label{345a13}
\end{equation}%
The last integral is exactly the marginal mass assigned by $L_{1}^{n}$ to
the interval $[\alpha ,\beta ]$. Under the standing regularity assumptions
used in the paper, the marginals are continuous and strictly increasing, so
that 
\begin{equation}
L_{1}^{n}(\alpha )=a,\qquad L_{1}^{n}(\beta )=b.  \label{345a14}
\end{equation}%
Consequently, 
\begin{equation}
\int_{\alpha }^{\beta }\int_{0}^{1}dL_{n}(u_{1},u_{2})=L_{1}^{n}(\beta
)-L_{1}^{n}(\alpha )=b-a.  \label{345a15}
\end{equation}%
Combining (\ref{345a13}), (\ref{345a15}), and $I_{n}\geq c_{F}$, we obtain 
\begin{equation}
L_{1}^{n+1}(b)-L_{1}^{n+1}(a)\leq \frac{b-a}{I_{n}}\leq \frac{1}{c_{F}}(b-a).
\label{345a16}
\end{equation}%
This proves (\ref{345a99}) for $i=1$, and the same argument proves it for $%
i=2$.
\end{proof}

\begin{lemma}
\noindent \label{L-Lip} For each $n\geq 1$, the map $L_{n}:[0,1]^{2}%
\rightarrow \lbrack 0,1]$ is uniformly Lipschitz on $[0,1]^{2}$.
Consequently, since under the standing density assumption $L_{0}\in
C([0,1]^{2})$, the full family $(L_{n})_{n\geq 0}$ is relatively compact in $%
C([0,1]^{2})$ with the uniform topology.

In particular, every sequence $\{L_{n_{k}}\}_{k=0}^{\infty }$ with $%
n_{k}\rightarrow \infty $ admits a subsequence, again denoted by $%
\{L_{n_{k}}\}$, and a limit $L^{\ast }:[0,1]^{2}\rightarrow \lbrack 0,1]$
such that 
\begin{equation}
\sup_{(x_{1},x_{2})\in \lbrack 0,1]^{2}}\left\vert
L_{n_{k}}(x_{1},x_{2})-L^{\ast }(x_{1},x_{2})\right\vert \rightarrow 0\quad 
\text{as }k\rightarrow \infty .  \label{900}
\end{equation}%
Moreover, each such $L^{\ast }$ is a continuous distribution function on $%
[0,1]^{2}$, and its marginals 
\begin{equation}
L_{1}^{\ast }(x)=L^{\ast }(x,1),\qquad L_{2}^{\ast }(x)=L^{\ast }(1,x)
\label{5000}
\end{equation}%
are continuous strictly increasing distribution functions on $[0,1]$.
\end{lemma}

\begin{proof}
By \textit{Lemma~\ref{marginal-lipschitz-from-denominator}}, for $i=1,2$, 
\begin{equation}
L_{i}^{n+1}(b)-L_{i}^{n+1}(a)\leq \frac{1}{c_{F}}(b-a),\qquad 0\leq a\leq
b\leq 1,\quad n\geq 0.  \label{5001}
\end{equation}%
Equivalently, for every $n\geq 1$, $i=1,2$, and $x,y\in \lbrack 0,1]$, 
\begin{equation}
|L_{i}^{n}(x)-L_{i}^{n}(y)|\leq \frac{1}{c_{F}}|x-y|.  \label{5002}
\end{equation}

\noindent We now prove that the same estimate gives a \textit{uniform
Lipschitz bound} for the bivariate distribution functions $L_{n}$, $n\geq 1$%
. Let 
\begin{equation}
x=(x_{1},x_{2}),\qquad y=(y_{1},y_{2})  \label{5003}
\end{equation}%
be two arbitrary points of $[0,1]^{2}$. By adding and subtracting $%
L_{n}(y_{1},x_{2})$, we have 
\begin{equation}
|L_{n}(x_{1},x_{2})-L_{n}(y_{1},y_{2})|\leq
|L_{n}(x_{1},x_{2})-L_{n}(y_{1},x_{2})|+|L_{n}(y_{1},x_{2})-L_{n}(y_{1},y_{2})|.
\label{5004}
\end{equation}%
We estimate the two terms separately. Suppose first that $x_{1}\leq y_{1}$,
and let $(U_{n},V_{n})$ be a random vector with distribution function $L_{n}$%
. Then 
\begin{equation}
L_{n}(y_{1},x_{2})-L_{n}(x_{1},x_{2})=P(x_{1}<U_{n}\leq y_{1},\ V_{n}\leq
x_{2}).  \label{5005}
\end{equation}%
Hence 
\begin{equation}
0\leq L_{n}(y_{1},x_{2})-L_{n}(x_{1},x_{2})\leq P(x_{1}<U_{n}\leq
y_{1})=L_{1}^{n}(y_{1})-L_{1}^{n}(x_{1}).  \label{5006}
\end{equation}%
If $y_{1}\leq x_{1}$, the same argument with $x_{1}$ and $y_{1}$
interchanged gives 
\begin{equation}
|L_{n}(x_{1},x_{2})-L_{n}(y_{1},x_{2})|\leq
|L_{1}^{n}(x_{1})-L_{1}^{n}(y_{1})|.  \label{5007}
\end{equation}%
Therefore, in all cases, 
\begin{equation}
|L_{n}(x_{1},x_{2})-L_{n}(y_{1},x_{2})|\leq
|L_{1}^{n}(x_{1})-L_{1}^{n}(y_{1})|.  \label{5008}
\end{equation}%
Similarly, 
\begin{equation}
|L_{n}(y_{1},x_{2})-L_{n}(y_{1},y_{2})|\leq
|L_{2}^{n}(x_{2})-L_{2}^{n}(y_{2})|.  \label{5009}
\end{equation}%
Combining (\ref{5004}), (\ref{5008}), and (\ref{5009}), we obtain 
\begin{equation}
|L_{n}(x_{1},x_{2})-L_{n}(y_{1},y_{2})|\leq
|L_{1}^{n}(x_{1})-L_{1}^{n}(y_{1})|+|L_{2}^{n}(x_{2})-L_{2}^{n}(y_{2})|.
\label{5010}
\end{equation}%
Using (\ref{5002}), it follows that, for every $n\geq 1$, 
\begin{equation}
|L_{n}(x_{1},x_{2})-L_{n}(y_{1},y_{2})|\leq \frac{1}{c_{F}}\left(
|x_{1}-y_{1}|+|x_{2}-y_{2}|\right) .  \label{5011}
\end{equation}%
Thus the family $(L_{n})_{n\geq 1}^{\infty }$ is \textit{uniformly Lipschitz}
on $[0,1]^{2}$, and hence \textit{equicontinuous}.

Moreover, 
\begin{equation}
0\leq L_{n}(x_{1},x_{2})\leq 1\qquad \text{for all }(x_{1},x_{2})\in \lbrack
0,1]^{2}  \label{5012}
\end{equation}%
and all $n\geq 1$. Therefore the family $(L_{n})_{n\geq 1}$ is \textit{%
uniformly bounded} and equicontinuous. By the \textit{Arzel\`{a}--Ascoli's
theorem}, it is relatively compact in $C([0,1]^{2})$ with the uniform
topology.

\noindent It remains only to include $L_{0}$. Under the standing assumption
that the starting distribution $F$ admits a density and that $%
0<E(X_{1}X_{2})<\infty $, the \textit{tilted measure} 
\begin{equation}
dQ_{F}(x_{1},x_{2})=\frac{x_{1}x_{2}}{E(X_{1}X_{2})}\,dF(x_{1},x_{2})
\label{5013}
\end{equation}%
is absolutely continuous with respect to \textit{Lebesgue measure} and hence
has no atoms. Therefore the initial \textit{Lorenz curve} $L_{0}=L_{F}$ is
continuous on $[0,1]^{2}$. Adding the single continuous function $L_{0}$ to
the relatively compact tail family does not change relative compactness.
Hence the full sequence $\{L_{n}\}_{n\geq 0}$ is relatively compact in $%
C([0,1]^{2})$.

\noindent Now let $\{L_{n_{k}}\}_{k=0}^{\infty }$ be any sequence with $%
n_{k}\rightarrow \infty $. Since the family $\{L_{n}\}_{n\geq 0}$ is
relatively compact in $C([0,1]^{2})$, there exists a subsequence, again
denoted by $\{L_{n_{k}}\}$, and a function $L^{\ast }\in C([0,1]^{2})$ such
that (\ref{900}) holds.

\noindent Finally, $L^{\ast }$ is a distribution function. Indeed, each $%
L_{n_{k}}$ is coordinatewise nondecreasing, satisfies the boundary
conditions of a distribution function on $[0,1]^{2}$, and satisfies the
rectangle inequality 
\begin{equation}
L_{n_{k}}(b_{1},b_{2})-L_{n_{k}}(a_{1},b_{2})-L_{n_{k}}(b_{1},a_{2})+L_{n_{k}}(a_{1},a_{2})\geq 0
\label{5014}
\end{equation}%
whenever $0\leq a_{i}\leq b_{i}\leq 1$, $i=1,2$. Passing to the uniform
limit preserves these inequalities and boundary conditions. Hence $L^{\ast }$
is a bivariate distribution function on $[0,1]^{2}$. Standard arguments
(monotonicity and continuity are preserved under uniform convergence) show
that $L^{\ast }$ is also a distribution function with strictly increasing
continuous marginals $L_{i}^{\ast }.$\hfill
\end{proof}

\begin{flushleft}
\textbf{Equicontinuity and subsequential limits of }$\Phi _{n}^{i}$
\end{flushleft}

We now turn to the compound maps $\Phi _{n}^{i}$. The key point is that, for
each fixed $n$, the maps $L_{i}^{n}$ have the single-crossing property: they
have at most one crossing with the diagonal $x\mapsto x$ in $(0,1)$ or are
strictly subdiagonal. The following lemma, identical to \textit{Lemma}~%
\texttt{\textup{\ref{Fejer}}} in \textit{Appendix~C.1}, applies to any such
map.

\begin{lemma}
\label{FejerNew} Let $f:[0,1]\rightarrow \lbrack 0,1]$ be continuous and
strictly increasing, with a unique fixed point $p\in (0,1)$ and
single--crossing pattern 
\begin{equation}
f(x)>x\;\text{for }x<p,\qquad f(x)<x\;\text{for }x>p.  \label{915}
\end{equation}%
Then

\begin{enumerate}
\item[(i)] For every $x\in \lbrack 0,1]$, 
\begin{equation}
|f(x)-p|\leq |x-p|,  \label{916}
\end{equation}%
with strict inequality if $x\neq p$;

\item[(ii)] If $x,y$ lie on the same side of $p$ (both $\leq p$ or both $%
\geq p$), then 
\begin{equation}
|f(x)-f(y)|\leq |x-y|.  \label{917}
\end{equation}
\end{enumerate}
\end{lemma}

\begin{proof}
This is exactly the argument in \textit{Lemma}~\texttt{\textup{\ref{Fejer}}}
of \textit{Appendix~C.1} and does not depend on the special \textit{Fr\'{e}%
chet--Hoeffding lower--bound} kernel.
\end{proof}

From \textit{Lemma}~\ref{FejerNew}, we obtain equicontinuity of the family $%
\{\Phi _{n}^{i}\}$ on compact subintervals of $(0,1)$, as in \textit{Lemma}~%
\texttt{\textup{\ref{Phi-1Lip}}} of \textit{Appendix~C}.

\begin{lemma}
\label{Phi-equicont} Fix $0<\delta <\eta <1$ and $i\in \{1,2\}$. Then there
exists $N$ such that for all $n\geq N$ and all $x,y\in \lbrack \delta ,\eta
] $, 
\begin{equation}
|\Phi _{n}^{i}(x)-\Phi _{n}^{i}(y)|\leq |x-y|.  \label{918}
\end{equation}%
In particular, the tail family $\{\Phi _{n}^{i}\}_{n\geq N}$ is
equicontinuous and uniformly bounded on $[\delta ,\eta ]$.
\end{lemma}

\begin{proof}
The proof is identical to \textit{Lemma}~\texttt{\textup{\ref{Phi-1Lip}}} in 
\textit{Appendix~C.1}: once the unique fixed points of the compound maps are
close enough to their limit, the interval $[\delta ,\eta ]$ either lies on
one side of the fixed point or is split into two pieces that can be handled
separately using \textit{Lemma}~\ref{FejerNew} and the continuity of $\Phi
_{n}^{i}$ at its fixed point. The details are omitted.
\end{proof}

As a consequence, we have subsequential uniform convergence of $\Phi
_{n}^{i} $ on compact intervals.

\begin{lemma}
\label{Phi-subseq} Fix $i\in \{1,2\}$ and $0<\delta <\eta <1$. Then, for any
sequence of indices $\{n_{k}\}_{k\geq 0}$ with $n_{k}\rightarrow \infty $,
there exists a subsequence (still denoted $n_{k}$) and a continuous map $%
G_{i}:[\delta ,\eta ]\rightarrow \lbrack 0,1]$ such that 
\begin{equation}
\sup_{x\in \lbrack \delta ,\eta ]}|\Phi _{n_{k}}^{i}(x)-G_{i}(x)|\rightarrow
0\quad \text{as }k\rightarrow \infty .  \label{919}
\end{equation}
\end{lemma}

\begin{proof}
By \textit{Lemma}~\ref{Phi-equicont}, the tail family $\{\Phi
_{n}^{i}\}_{n\geq N}$ is equicontinuous and uniformly bounded on $[\delta
,\eta ]$ for some $N$. By the \textit{Arzel\`{a}--Ascoli's theorem}, any
sequence $\{\Phi _{n_{k}}^{i}\}_{k\geq 0}$ with $n_{k}\rightarrow \infty $
admits a subsequence converging uniformly on $[\delta ,\eta ]$ to a
continuous limit $G_{i}$.\hfill
\end{proof}

\begin{flushleft}
\textbf{An overview of the next steps}
\end{flushleft}

In \textit{Appendix~C.1}, after obtaining subsequential limits we could use
symmetry arguments to force all diagonal-crossing limits to coincide at $%
\frac{1}{2}$, and then deduce constancy of the limiting compound map. In the 
\textit{RR2 case} that symmetry is unavailable. A strong obstruction pops
up: even when $f_{n}\rightarrow f$ subsequentially, one-step relations $%
g_{n+1}=g_{n}\circ f_{n+1}$ do not pass to limits unless the indices
synchronize, i.e., we cannot take subsequential limits in (\ref{526.1}): $%
\Phi _{n+1}(x)=\Phi _{n}(T_{n+1}(x)).$

\medskip \noindent We take the following strategy:

\begin{enumerate}
\item From \emph{$\{\Phi _{n}^{i}\}$ }being relatively compact on\emph{\ $%
[\delta ,\eta ]$}, we know that it has cluster points;

\item Use the $m$--step identity 
\begin{equation}
\Phi _{n+m}^{i}=\Phi _{n}^{i}\circ T_{n+1}^{i}\circ \cdots \circ
T_{n+m}^{i}\qquad (m\geq 1)  \label{1000}
\end{equation}%
instead of the one-step identity (\ref{526.1});

\item Engineer a \textit{return-shift} index set: infinitely many $n$ and a
fixed $m$ such that \emph{both} $\Phi _{n}^{i}$ and $\Phi _{n+m}^{i}$
converge to the same limit $G$;

\item Extract a limit block map $Z_{m}^{i}$ for the composition $%
T_{n+1}^{i}\circ \cdots \circ T_{n+m}^{i}$. Then the limit of the block
identity becomes the invariance 
\begin{equation}
G_{i}=G_{i}\circ Z_{m}^{i};  \label{1001}
\end{equation}

\item Use the single-crossing dynamics of $Z_{m}^{i}$ (same qualitative type
as $T_{n}^{i}$) to force the only continuous solutions of $G=G\circ Z_{m}$
to be constants;

\item The formal goal is: For fixed $i\in \{1,2\}$ and $0<\delta <\eta <1$
to prove that each cluster point of $\{\Phi _{n}^{i}\}$ in $C([\delta ,\eta
])$ is a constant function on $[\delta ,\eta ]$. This weaker condition is
sufficient for our primary objective: proving the convergence of (\ref{5.1})
using (\ref{16})--(\ref{16.2}).
\end{enumerate}

\medskip Finally it is worth mentioning a \noindent conceptual link\textbf{.}
The recursion $\Phi _{n+1}^{i}(x)=\Phi _{n}^{i}(T_{n+1}^{i}(x))$ is an inner
composition (also called \textit{right composition}) of a moving family of
maps, in the sense of the \textit{Iterated Function System (IFS)} literature
(see, e.g., \cite{[52]}, \cite{[54]}, and \cite{[53]}). The viewpoint of
infinite compositions typically tracks orbit convergence for a fixed map or
a special set of maps. While this approach could be adapted to our problem,
we instead track orbit convergence for a time-inhomogeneous sequence of
maps, a method that necessitates a 'synchronization tool'. This tool is
provided by a recurrence principle from \textit{Ramsey theory} and
topological dynamics that pertains to \textit{infinite-dimensional
parallelepipeds (IP)} and \textit{finite sums (FS)} (see, e.g., \cite{[57]}
and \cite{[58]}; for more recent work, see \cite{[55]} and \cite{[56]}%
).\hfill

\begin{flushleft}
\textbf{FS/IP sets: the algebraic pairing mechanism for fixed shifts}
\end{flushleft}

To pass to limits in the $m$--step identity (\ref{1000}), we must compare
time $n$ with time $n+m$. Thus we want infinitely many paired indices $%
(n,n+m)$ that both lie inside the same convergence set. Finite-sums sets $%
\mathrm{FS}(\cdot )$ are designed precisely to provide such pairs: if $m$ is
chosen as the first generator, then adding $m$ toggles membership of that
generator and keeps us in the same $\mathrm{FS}$-set.

\noindent The formal goals of this section are:

\begin{enumerate}
\item Define $\mathrm{FS}(q_{1},q_{2},\dots )$ and fix the topological
conventions about 'convergence along a set';

\item Prove the deterministic pairing property: 
\begin{equation}
n\in \mathrm{FS}(q_{2},q_{3},\dots )\ \Longrightarrow \ n\in \mathrm{FS}%
(q_{1},q_{2},\dots )\ \text{and}\ n+q_{1}\in \mathrm{FS}(q_{1},q_{2},\dots );
\label{1002}
\end{equation}

\item Explain why we must require $n$ itself to lie in the convergence set.
\end{enumerate}

\begin{definition}
Let $X$ be a topological space and let $D=\{q_{1}<q_{2}<\cdots \}\subset N$
be infinite.. Define 
\begin{equation}
\mathrm{FS}(D)=\left\{ \sum_{j\in F}q_{j}:\ \emptyset \neq F\subset N\ \text{%
finite}\right\} .  \label{1003}
\end{equation}%
Every element is a finite sum of distinct generators, so there are no
infinite series.
\end{definition}

Since $\mathrm{FS}(q_{1},q_{2},\dots )$ is a set, not a sequence, we
interpret 
\begin{equation}
a_{n}\rightarrow a^{\ast }\quad \text{as }n\rightarrow \infty ,\ n\in 
\mathrm{FS}(q_{1},q_{2},\dots )  \label{1004}
\end{equation}%
to mean: if $\{i_{k}\}_{k\geq 1}$ is the increasing enumeration of $\mathrm{%
FS}(q_{1},q_{2},\dots )$, then $a_{i_{k}}\rightarrow a^{\ast }$ as $%
k\rightarrow \infty $.

\begin{lemma}
\medskip Define 
\begin{equation}
I=\mathrm{FS}(q_{1},q_{2},\dots ),\qquad I^{\prime }=\mathrm{FS}%
(q_{2},q_{3},\dots ),\qquad m=q_{1}.  \label{1005}
\end{equation}%
Then $I^{\prime }\subset I$ and for every $n\in I^{\prime }$ we have $n+m\in
I$.
\end{lemma}

\begin{proof}
\smallskip \noindent If $n\in I^{\prime }$, then $n=\sum_{j\in F}q_{j}$ for
some finite nonempty $F\subset \{2,3,\dots \}$. This already implies $n\in I$%
. Moreover $n+m=n+q_{1}=q_{1}+\sum_{j\in F}q_{j}=\sum_{j\in F\cup
\{1\}}q_{j}\in I$.
\end{proof}

\begin{remark}
\label{need-both-sides} In our RR2 case, we will only have convergence of $%
\Phi _{n}^{i}$ along some special index set $I$. To take limits in the block
identity 
\begin{equation}
\Phi _{n+m}^{i}=\Phi _{n}^{i}\circ (\cdots )  \label{1006}
\end{equation}%
we must control \emph{both} $\Phi _{n+m}^{i}$ (left side) and $\Phi _{n}^{i}$
(outer map on the right side). Thus we must ensure both indices $n$ and $n+m$
lie in the same convergence set. This is why we restrict $n$ to $I^{\prime }=%
\mathrm{FS}(q_{2},q_{3},\dots )$: then $n\in I$ and $n+m\in I$ automatically
for the fixed shift $m=q_{1}$.\hfill
\end{remark}

\begin{flushleft}
\textbf{The return--shift lemma: IP-limits in compact metric spaces}
\end{flushleft}

We cannot take subsequential limits in $\Phi _{n+1}(x)=\Phi _{n}(T_{n+1}(x))$
since by no means are $n_{k}$ and $n_{k}+1$ from the same subsequence $%
\{n_{k}\}_{k\geq 0}$. The correct principle is to upgrade the compactness by
leveraging \textit{Ramsey theory}---more concretely,\textit{\ }the work of 
\textit{Furstenberg--Weiss} \cite{[58]} and \textit{Hindman }\cite{[57]}%
)---to show that in a compact metric space, every sequence admits a
structured \textit{FS-subsequence} that converges in the strong \textit{IP}
sense.

The formal goals of this section are:

\begin{enumerate}
\item Introduce \textit{IP-convergence} and the \textit{Hindman space}
property;

\item State (and cite) the theorem: every compact metric space is \textit{%
Hindman space};

\item Restate it in the concrete metric form we need (the \textit{IP-limit}
existence theorem);

\item Deduce the specific \textit{return--shift lemma} used later.
\end{enumerate}

We use \cite{[55]} on \textit{Hindman spaces} for the basic definitions.

\begin{definition}
\label{IP-convergence1} A sequence $\{x_{n}\}_{n\in \mathrm{FS}(D)}$ is said
to be \textit{IP-convergent} to $x^{\ast }\in X$ if for every open
neighborhood $U\ni x^{\ast }$ there exists $L$ such that 
\begin{equation}
x_{n}\in U\qquad \text{for all }n\in \mathrm{FS}(q_{L},q_{L+1},q_{L+2},\dots
).  \label{1007.1}
\end{equation}
\end{definition}

If $(X,d)$ is a metric space, then (\ref{1007.1}) is equivalent to the
following $\varepsilon $-form:

\begin{definition}
\label{IP-convergence2}The sequence $\{x_{n}\}_{n\geq 1}$ \textit{%
IP-converges} to $x^{\ast }$ along $\mathrm{FS}(D)$ if and only if 
\begin{equation}
\forall \,\varepsilon >0\ \ \exists L\in N\ \ \text{s.t.}\ \ d(x_{n},x^{\ast
})<\varepsilon \quad \forall n\in \mathrm{FS}(q_{L},q_{L+1},q_{L+2},\dots ),
\label{1007}
\end{equation}%
because the open balls $B(x^{\ast },\varepsilon )$ form a neighborhood basis
at $x^{\ast }$.
\end{definition}

\begin{remark}
After discarding finitely many generators, all finite sums of the remaining
generators land inside a neighborhood $U$. This tail stability is the key
feature that makes \textit{IP-convergence} powerful for fixed-shift
arguments.
\end{remark}

\begin{definition}
\label{Hindman} A topological space $X$ is called \textit{Hindman} if for
every sequence $\{x_{n}\}_{n\geq 1}$ in $X$ there exists an infinite $%
D\subset N$ such that the restricted sequence $\{x_{n}\}_{n\in \mathrm{FS}%
(D)}$ is \textit{IP-convergent} in the sense of (\ref{1007.1}).
\end{definition}

\begin{remark}
\label{Lit} A foundational result of \textit{Furstenberg--Weiss} (recorded
as standard background in later works) states that every compact metric
space is \textit{Hindman}. The article \cite{[55]} explicitly notes this
fact and points to the relevant literature, e.g., \cite{[59]}.
\end{remark}

Next, we need to state the exact metric form we use. This is the form has to
be convenient and operational so that we can apply it to the function-valued
sequence $a_{n}=\Phi _{n}^{i}|_{[\delta ,\eta ]}$. It is simply (\ref{1007})
stated in metric terms.

\begin{theorem}
\label{IP-limit} Let $(X,d)$ be a compact metric space and $\{a_{n}\}_{n\geq
1}$ a sequence in $X$. Then there exist a strictly increasing sequence of
integers $q_{1}<q_{2}<\cdots $ and a point $a^{\ast }\in X$ such that for
every $\varepsilon >0$ there exists $L$ with 
\begin{equation}
d(a_{n},a^{\ast })<\varepsilon \qquad \text{for every }n\in \mathrm{FS}%
(q_{L},q_{L+1},q_{L+2},\dots ).  \label{1008}
\end{equation}%
Equivalently, the restricted sequence $\{a_{n}\}_{n\in \mathrm{FS}%
(q_{1},q_{2},\dots )}$ IP-converges to $a^{\ast }$ (in either the
neighborhood form (\ref{1007.1}) or the metric form (\ref{1007}).
\end{theorem}

\begin{proof}
This is a classical consequence of \textit{Hindman's finite sums theorem}
and its topological-dynamical refinements; see \cite{[55]} in survey context
and the cited results that all compact metric spaces are \textit{Hindman }as
well as \cite{[59]} for a focused attention. A comprehensive treatment of 
\textit{IP/FS }methods is given in the \textit{Hindman--Strauss} monograph 
\cite{[58]}.
\end{proof}

\noindent The \textit{Theorem~\ref{IP-limit}} is of service to \textit{RR2}
because it produces:

\begin{itemize}
\item a structured set of indices $I=\mathrm{FS}(q_1,q_2,\dots)$;

\item convergence along $I$ in the strong tail form (\ref{1008});

\item via the algebra of finite sums, a canonical fixed shift $m=q_{1}$ for
which there are infinitely many pairs $(n,n+m)$ with both indices in $I$.
\end{itemize}

This is exactly the synchronization we need to make (\ref{1006}) survive
taking limits.

\medskip We can move next with the following 'return-shift' lemma.

\begin{lemma}
\label{return-shift} Under the assumptions of Theorem~\ref{IP-limit}, define 
\begin{equation}
I=\mathrm{FS}(q_{1},q_{2},\dots ),\qquad I^{\prime }=\mathrm{FS}%
(q_{2},q_{3},\dots ),\qquad m=q_{1}.  \label{1009}
\end{equation}%
Then:

\begin{enumerate}
\item $a_{n}\rightarrow a^{\ast }$ as $n\rightarrow \infty $ along $n\in I$;

\item $a_{n+m}\rightarrow a^{\ast }$ as $n\rightarrow \infty $ along $n\in
I^{\prime }$;

\item for every $n\in I^{\prime }$, the pair $(n,n+m)$ lies in $I\times I$.
\end{enumerate}
\end{lemma}

\begin{proof}
\smallskip Item (3) is the deterministic algebra in the \textit{FS/IP} sets
section. Item (1): fix $\varepsilon >0$. By (\ref{1008}), choose $L$ such
that $d(a_{n},a^{\ast })<\varepsilon $ for every $n\in \mathrm{FS}%
(q_{L},q_{L+1},\dots )$. Now take $n\rightarrow \infty $ with $n\in I$.
Since $\mathrm{FS}(q_{1},\dots ,q_{L-1})$ is finite, all sufficiently large $%
n\in I$ must lie in the tail $\mathrm{FS}(q_{L},q_{L+1},\dots )$, hence $%
d(a_{n},a^{\ast })<\varepsilon $. This is $a_{n}\rightarrow a^{\ast }$ along 
$I$. Item (2): if $n\in I^{\prime }$, then $n+m\in I$ by item (3). Thus the
sequence $\{a_{n+m}\}_{n\in I^{\prime }}$ is a subsequence of $%
\{a_{k}\}_{k\in I}$, and by item (1) it converges to the same limit $a^{\ast
}$.\hfill
\end{proof}

\begin{flushleft}
\textbf{Application of the return--shift lemma to the function sequence }$%
\Phi _{n}^{i}|_{[\delta ,\eta ]}$
\end{flushleft}

We now specialize $\{a_{n}\}_{n\geq 1}$ to the function-valued sequence 
\begin{equation}
a_{n}=\Phi _{n}^{i}|_{[\delta ,\eta ]}\in C([\delta ,\eta ]).  \label{1010}
\end{equation}%
By \textit{Lemma~\ref{Phi-subseq}}, the family $\{\Phi _{n}^{i}\}_{n\geq 0}$
is relatively compact in $C([\delta ,\eta ])$ (uniform topology). Hence its
closure is a compact metric space. We may therefore apply \textit{Theorem~%
\ref{IP-limit}} to obtain an \textit{FS-indexed IP-limit} $G_{i}$ and a
fixed shift $m$ such that \emph{both} $\Phi _{n}^{i}$ and $\Phi _{n+m}^{i}$
converge to $G_{i}$ along the same tail \textit{FS-set}.

\medskip \noindent Our formal goal is to produce $m\geq 1$, a continuous
function $G_{i}$, and an infinite index set of the form $I^{\prime }=\mathrm{%
FS}(q_{2},q_{3},\dots )$ such that 
\begin{equation}
\Phi _{n}^{i}\rightarrow G_{i}\ \text{and}\ \Phi _{n+m}^{i}\rightarrow
G_{i}\quad \text{uniformly on }[\delta ,\eta ]\ \text{as }n\rightarrow
\infty ,\ n\in I^{\prime }.  \label{1011}
\end{equation}

\begin{theorem}
\label{Phi-returnshift} Fix $i\in \{1,2\}$ and $0<\delta <\eta <1$. There
exist integers $q_{1}<q_{2}<\cdots $, a shift $m=q_{1}$, an FS-set 
\begin{equation}
I=\mathrm{FS}(q_{1},q_{2},\dots ),\qquad I^{\prime }=\mathrm{FS}%
(q_{2},q_{3},\dots ),  \label{1012}
\end{equation}%
and a continuous function $G_{i}\in C([\delta ,\eta ])$ such that: 
\begin{equation}
\sup_{x\in \lbrack \delta ,\eta ]}|\Phi _{n}^{i}(x)-G_{i}(x)|\rightarrow
0\qquad \text{as }n\rightarrow \infty ,\ n\in I,  \label{1013}
\end{equation}%
and 
\begin{equation}
\sup_{x\in \lbrack \delta ,\eta ]}|\Phi _{n+m}^{i}(x)-G_{i}(x)|\rightarrow
0\qquad \text{as }n\rightarrow \infty ,\ n\in I^{\prime }.  \label{1014}
\end{equation}%
Moreover, for every $n\in I^{\prime }$, we have $(n,n+m)\in I\times I$.
\end{theorem}

\begin{proof}
\smallskip Let $X$ be the closure of $\{\Phi _{n}^{i}|_{[\delta ,\eta
]}:n\geq 0\}$ in $C([\delta ,\eta ])$ with the uniform metric $d_{\infty
}(f,g)=\sup_{x\in \lbrack \delta ,\eta ]}|f(x)-g(x)|$. By \textit{Lemma~\ref%
{Phi-subseq}}, $X$ is compact. Apply \textit{Theorem~\ref{IP-limit}} to the
sequence $a_{n}=\Phi _{n}^{i}|_{[\delta ,\eta ]}\in X$. We obtain $%
q_{1}<q_{2}<\cdots $ and $G_{i}\in X$ such that (\ref{1008}) holds in the
metric $d_{\infty }$. Then \textit{Lemma~\ref{return-shift}} yields (\ref%
{1013})--(\ref{1014}) with $m=q_{1}$.
\end{proof}

\begin{flushleft}
\textbf{The invariance }$G^{i}=G^{i}\circ Z^{i}_{m}$
\end{flushleft}

We now combine two ingredients:

\begin{itemize}
\item the return--shift convergence of $\Phi _{n}^{i}$ and $\Phi _{n+m}^{i}$
to the same $G^{i}$ along the same indices $n\in I^{\prime }$;

\item and the \textit{Arzel\`{a}--Ascoli's} compactness of the inverse
marginals $T_{n}^{i}$.
\end{itemize}

Because $m$ is fixed, we only need limits of finitely many shifts $%
T_{n+1}^{i},\dots ,T_{n+m}^{i}$, and therefore a standard (finite) diagonal
extraction provides a limit block map 
\begin{equation}
Z_{m}^{i}=\lim_{n\rightarrow \infty ,\,n\in I^{\prime }}T_{n+1}^{i}\circ
\cdots \circ T_{n+m}^{i}.  \label{1015}
\end{equation}%
Then the limit of the block identity becomes the invariance $%
G_{i}=G_{i}\circ Z_{m}^{i}$.

\medskip We aim at:

\begin{enumerate}
\item Extract (along the same indices $n\in I^{\prime }$) uniform limits of $%
T_{n+r}^{i}$ for $r=1,\dots ,m$;

\item Deduce uniform convergence of the block compositions $%
Z_{m,n}^{i}=T_{n+1}^{i}\circ \cdots \circ T_{n+m}^{i}$ to $Z_{m}^{i}$;

\item Pass to limits in the exact $m$--step identity (\ref{1006}) and obtain 
$G^{i}=G^{i}\circ Z_{m}^{i}$ on $[\delta ,\eta ]$.
\end{enumerate}

\medskip \noindent \textbf{Step 1: }Here we give compatible limits for
shifted inverse marginals. Consider the index sequence given by the
increasing enumeration of $I^{\prime }$. By the same compactness/diagonal
principles already used for $L_{n}$ and its marginals, we may pass to a
further subsequence of $I^{\prime }$ (still denoted $I^{\prime }$) such that
for each fixed $r\in \{1,2,\dots ,m\}$ there exists a continuous increasing
limit map $T_{r}^{i}$ with 
\begin{equation}
\sup_{x\in \lbrack 0,1]}\left\vert T_{n+r}^{i}(x)-T_{r}^{i}(x)\right\vert
\rightarrow 0\qquad \text{as }n\rightarrow \infty ,\ n\in I^{\prime }.
\label{1016}
\end{equation}

\medskip \noindent \textbf{Step 2: }We prove convergence of the finite block
composition\textbf{.} Define $Z_{m,n}^{i}=T_{n+1}^{i}\circ T_{n+2}^{i}\circ
\cdots \circ T_{n+m}^{i}$. Because $m$ is fixed and each factor converges
uniformly by (\ref{1016}), standard stability of finite compositions gives: 
\begin{equation}
\sup_{x\in \lbrack 0,1]}|Z_{m,n}^{i}(x)-Z_{m}^{i}(x)|\rightarrow 0\qquad
(n\rightarrow \infty ,\ n\in I^{\prime }),  \label{1017}
\end{equation}%
where we define the limit block map $Z_{m}^{i}=T_{1}^{i}\circ T_{2}^{i}\circ
\cdots \circ T_{m}^{i}$.

\medskip \noindent \textbf{Step 3:} Finally, we pass to the limit in the
block identity. For each $n\in I^{\prime }$ and each $x\in \lbrack \delta
,\eta ]$, the exact $m$--step identity (\ref{1017}) gives $\Phi
_{n+m}^{i}(x)=\Phi _{n}^{i}\!(Z_{m,n}^{i}(x))$. Let $n\rightarrow \infty $
along $n\in I^{\prime }$. By \textit{Theorem~\ref{Phi-returnshift}}, the
left-hand side converges uniformly on $[\delta ,\eta ]$ to $G^{i}(x)$, and
the outer function $\Phi _{n}^{i}$ converges uniformly to $G^{i}$ on $%
[\delta ,\eta ]$. By (\ref{1017}), the inner argument $Z_{m,n}(x)$ converges
uniformly to $Z_{m}^{i}(x)$. Hence we obtain, for all $x\in \lbrack \delta
,\eta ]$, 
\begin{equation}
G^{i}(x)=G\!^{i}(Z_{m}^{i}(x)).  \label{1018}
\end{equation}

So we get the following:

\begin{theorem}
\label{G-invariance} Along the return--shift extraction above, the
subsequential limit $G^{i}$ satisfies $G^{i}=G^{i}\circ Z_{m}^{i}$ on $%
[\delta ,\eta ]$.
\end{theorem}

\begin{flushleft}
\textbf{Single-crossing dynamics of }$Z_{m}$\textbf{\ and constancy of }$G$
\end{flushleft}

Iterating (\ref{1018}) yields 
\begin{equation}
G^{i}(x)=G^{i}(Z_{m}^{i,\circ k}(x))\qquad \text{for every }k\geq 1\text{
and }x\in \lbrack \delta ,\eta ].  \label{1019}
\end{equation}%
Hence, if the forward orbit $Z_{m}^{i,\circ k}(x)$ converges to a limit
point (an attractor), then continuity of $G^{i}$ forces $G^{i}(x)$ to equal $%
G^{i}$ evaluated at that attractor, independently of $x$. This is the
standard mechanism that invariant functions are constant along orbits from
one-dimensional dynamics (see\ also the orbit-collapse argument used in 
\textit{Appendix~C.1} based on (\ref{18})).

\medskip \noindent Our next goals are:

\begin{enumerate}
\item State the correct single-crossing pattern for $Z_{m}^{i}$ (the same as
the inverse marginals $T_{n}^{i}$);

\item Prove orbit convergence under that pattern (including
boundary/degenerate cases);

\item Combine orbit convergence with (\ref{1018}) to show $G^{i}$ is
constant on $[\delta ,\eta ]$.
\end{enumerate}

We start with some intuition. The map $Z_{m}^{i}$ arises as a subsequential
uniform limit of finite compositions of inverse marginals 
\begin{equation}
Z_{m,n}^{i}=T_{n+1}^{i}\circ \cdots \circ T_{n+m}^{i},  \label{1020}
\end{equation}%
and thus inherits the same qualitative monotone single-crossing pattern as
the $T_{n}^{i}$ (for large $n$) whenever those maps have the single-crossing
property. Now we record that pattern in a form suitable for a complete orbit
analysis.

\begin{corollary}
\label{Zm-singlecross} The map $Z_{m}^{i}:[0,1]\rightarrow \lbrack 0,1]$ is
continuous and strictly increasing, and exactly one of the following holds:

\begin{enumerate}
\item (\emph{Single-crossing (SC)}) There exists a unique fixed point $%
p_{m}^{i}\in (0,1)$ such that 
\begin{equation}
Z_{m}^{i}(x)>x\quad \text{for }x<p_{m}^{i},\qquad Z_{m}^{i}(x)<x\quad \text{%
for }x>p_{m}^{i}.  \label{1021}
\end{equation}

\item (\emph{Strictly overdiagonal (SO)}) $Z_{m}^{i}(x)>x$ for all $x\in
(0,1)$.
\end{enumerate}
\end{corollary}

\begin{proof}
The result is a direct consequence of \textit{Lemma~\ref{Phi}} from \textit{%
Appendix C.1.1.}
\end{proof}

We now formally prove that the orbits collapse to a constant. This contrasts
with the argument in (\ref{18}), which was purely intuitive and geometric
and lacked a rigorous algebraic basis. The analysis that follows provides
this formal proof.

\begin{lemma}
\label{Zm-orbits} Let $x\in (0,1)$ and set $x_{k}=Z_{m}^{i,\circ k}(x)$.

\begin{enumerate}
\item If (SC) holds, then 
\begin{equation}
\lim_{k\rightarrow \infty }Z_{m}^{i,\circ k}(x)=p_{m}^{i}\qquad \text{for
every }x\in (0,1),  \label{1022}
\end{equation}%
and the convergence is uniform on every compact subinterval $[\delta ,\eta
]\subset (0,1)$;

\item If (SO) holds, then $Z_{m}^{i,\circ k}(x)\uparrow 1$ as $k\rightarrow
\infty $ for every $x\in (0,1)$, and the convergence is uniform on every
compact $[\delta ,\eta ]\subset (0,1)$.
\end{enumerate}
\end{lemma}

\begin{proof}
Fix $x\in (0,1)$ and write $x_{k}=Z_{m}^{i,\circ k}(x)$.

\smallskip \emph{Case (SC).} We first show that the orbit is monotone and
trapped on the appropriate side of $p_{m}^{i}$.

\emph{\smallskip \noindent }\underline{Subcase (i): $x<p_{m}^{i}$.} By (\ref%
{1021}), $x_{1}=Z_{m}^{i}(x)>x=x_{0}$. Since $Z_{m}^{i}$ is increasing, if $%
x_{k}<p_{m}^{i}$ then $x_{k+1}=Z_{m}^{i}(x_{k})>x_{k}$ and also 
\begin{equation}
x_{k+1}=Z_{m}^{i}(x_{k})<Z_{m}^{i}(p_{m}^{i})=p_{m}^{i},  \label{1023}
\end{equation}%
so $x_{k+1}<p_{m}^{i}$. By induction, $(x_{k})$ is strictly increasing and
bounded above by $p_{m}^{i}$. Therefore $x_{k}\uparrow \ell $ for some $\ell
\leq p_{m}^{i}$. By continuity, 
\begin{equation}
Z_{m}^{i}(\ell )=Z_{m}^{i}\!\left( \lim_{k\rightarrow \infty }x_{k}\right)
=\lim_{k\rightarrow \infty }Z_{m}^{i}(x_{k})=\lim_{k\rightarrow \infty
}x_{k+1}=\ell ,  \label{1024}
\end{equation}%
so $\ell $ is a fixed point of $Z_{m}^{i}$. Under (SC) the fixed point in $%
(0,1)$ is unique and equals $p_{m}^{i}$, hence $\ell =p_{m}^{i}$. Thus $%
x_{k}\rightarrow p_{m}^{i}$ for all $x<p_{m}^{i}$.

\smallskip \noindent \underline{Subcase (ii): $x>p_{m}^{i}$.} By (\ref{1021}%
), $x_{1}=Z_{m}^{i}(x)<x=x_{0}$. If $x_{k}>p_{m}^{i}$, then $%
x_{k+1}=Z_{m}^{i}(x_{k})<x_{k}$ and also 
\begin{equation}
x_{k+1}=Z_{m}^{i}(x_{k})>Z_{m}^{i}(p_{m}^{i})=p_{m}^{i},  \label{1025}
\end{equation}%
so $x_{k+1}>p_{m}^{i}$. Thus $(x_{k})$ is strictly decreasing and bounded
below by $p_{m}$, hence $x_{k}\downarrow \ell $ for some $\ell \geq
p_{m}^{i} $. Continuity again implies $\ell $ is a fixed point, so $\ell
=p_{m}$. Hence $x_{k}\rightarrow p_{m}^{i}$ for all $x>p_{m}^{i}$.

Combining the two subcases, we conclude $Z_{m}^{i,\circ k}(x)\rightarrow
p_{m}^{i}$ for all $x\in (0,1)$.

\smallskip \noindent \underline{Uniformity on compact $[\delta ,\eta
]\subset (0,1)$.} Fix $0<\delta <\eta <1$. The iterates $f_{k}(x)=Z_{m}^{i,%
\circ k}(x)$ are continuous on $[\delta ,\eta ]$. Moreover, for each fixed $%
x<p_{m}^{i}$ the sequence $\{f_{k}(x)\}_{k\geq 0}$ is increasing in $k$, and
for each fixed $x>p_{m}^{i}$ it is decreasing in $k$. If $p_{m}\notin
\lbrack \delta ,\eta ]$, then the monotonicity direction is the same for all 
$x\in \lbrack \delta ,\eta ]$ and \textit{Dini's Theorem} yields uniform
convergence to the continuous limit $p_{m}^{i}$. If $p_{m}^{i}\in (\delta
,\eta )$, apply \textit{Dini} separately on $[\delta ,p_{m}^{i}]$ and on $%
[p_{m}^{i},\eta ]$, and use continuity at $p_{m}$ to glue the two uniform
limits (both equal $p_{m}^{i}$). This yields uniform convergence on $[\delta
,\eta ]$.

\smallskip \noindent \emph{Case (SO).} $Z_{m}^{i}(x)>x$ implies $%
\{x_{k}\}_{k\geq 0}$ is increasing, bounded above by $1$, hence converges to
a fixed point $\ell $. Under (SO) there is no fixed point in $(0,1)$, hence $%
\ell =1$. Uniformity follows by \textit{Dini}. \hfill
\end{proof}

Let $G^{i}$ be any subsequential limit obtained via the return--shift
extraction. We already proved in \textit{Theorem~\ref{G-invariance}} that $%
G^{i}$ satisfies $G^{i}=G^{i}\circ Z_{m}^{i}$ on $[\delta ,\eta ]$. By 
\textit{Lemma~\ref{Zm-orbits}}, every orbit $Z_{m}^{i,\circ k}(x)$ converges
to a limit point ($p_{m}^{i}$ in case (SC), or $1$ in the degenerate case).
Then (\ref{1012}) plus continuity forces $G^{i}$ to take the same value for
all $x$, hence $G^{i}$ is constant.

\begin{theorem}
\label{Phi-cluster-constant} Fix $i\in \{1,2\}$ and $0<\delta <\eta <1$. Let 
$G^{i}\in C([\delta ,\eta ])$ be any cluster point of $\{\Phi _{n}^{i}\}$
obtained along the return--shift extraction, so that $G^{i}$ satisfies $%
G^{i}(x)=G^{i}(Z_{m}^{i}(x))$ for the corresponding block limit map $Z_{m}$.
Then $G^{i}$ is constant on $[\delta ,\eta ]$.
\end{theorem}

\begin{proof}
From (\ref{1019}) we have $G^{i}(x)=G^{i}(Z_{m}^{i,\circ k}(x))$ for all $k$
and all $x\in \lbrack \delta ,\eta ]$. Fix $x\in \lbrack \delta ,\eta ]$. By 
\textit{Lemma~\ref{Zm-orbits}}, the orbit $Z_{m}^{i,\circ k}(x)$ converges
to a limit point $a^{i}\in \{p_{m}^{i},1\}$ (depending on whether we are in
(SC) or (OD)). Taking $k\rightarrow \infty $ in (\ref{1019}) and using
continuity of $G^{i}$ yields 
\begin{equation}
G^{i}(x)=\lim_{k\rightarrow \infty }G^{i}(Z_{m}^{i,\circ k}(x))=G^{i}(a^{i}).
\label{1026}
\end{equation}%
In case (SC), the limit point $a^{i}=p_{m}^{i}$ is the same for every $x\in
(0,1)$, hence $G^{i}(x)=G^{i}(p_{m}^{i})$ for all $x$ and $G^{i}$ is
constant. In case (OD), $a^{i}=1$ for every $x\in (0,1)$, hence $%
G^{i}(x)=G^{i}(1)$ for all $x$. In all cases, $G^{i}$ is constant on $%
[\delta ,\eta ]$. \hfill
\end{proof}

\begin{remark}
\label{convergence} Theorem~\ref{Phi-cluster-constant} provides exactly the
input required in the RR2 argument: every subsequential limit of $\Phi
_{n}^{i}$ on $[\delta ,\eta ]$ is a constant. When inserted into the bounds
following (\ref{84})-(\ref{87}) (where the dependence term cancels), this
forces the limiting copula to be the independence copula. A more formal
formulation is given in \textit{Appendix D.3}.
\end{remark}

\begin{flushleft}
\textbf{Further remarks}
\end{flushleft}

Having provided the supporting arguments for the main theorem, we conclude
with several further remarks.

As demonstrated throughout this work, our solution is fundamentally based on
proving that every subsequential limit of the sequence $\Phi _{n}^{i}$ on a
compact interval $[\delta ,\eta ]$ is constant. The key to establishing this
was the crossing pattern of the maps $T_{n}^{i}$ (and consequently of $%
Z_{m}^{i,\circ k}$). This structure, in turn, allowed us to leverage the
integral form of equation (\ref{5.1}) to construct a transport mechanism
that facilitates the necessary cancellations.

Furthermore, our approach not only establishes the convergence of the
iteration (\ref{5.1}) but also allows us to directly identify the limits:
the independence copula with power-law marginals. This distinction is
critical because a naive attempt to pass to the limit---even
subsequentially---in either (\ref{1000}) or (\ref{5.1}) would have failed,
as convergence was not guaranteed. Such a limit transition would have only
identified the fixed points of the iterations. These fixed points are merely
candidate limits; they are not guaranteed to be limit points or even members
of the cluster set, and the fundamental question of convergence would have
remained unresolved.

In light of the preceding discussion, it is worth noting a key distinction:
the problem of finding the fixed point of (\ref{5.1})---or equivalently, of (%
\ref{13}), assuming the existence of a density---is significantly simpler
than proving the convergence of the iteration itself. This fixed point may
serve as a candidate for the limit or be of interest as a standalone
mathematical problem. The claims presented below make this distinction
explicit.

\begin{claim}
\label{limit-FE} Suppose that $L_{n}\rightarrow L^{\ast }$ uniformly on $%
[0,1]^{2}$, with marginals $L_{i}^{n}\rightarrow L_{i}^{\ast }$ and inverse
marginals $T_{i}^{n}=(L_{i}^{n})^{-1}\rightarrow T_{i}^{\ast }=(L_{i}^{\ast
})^{-1}$ uniformly on $[0,1]$. Then $L^{\ast }$ satisfies 
\begin{equation}
L^{\ast }(x_{1},x_{2})=\frac{\int_{0}^{L_{1}^{\ast
,-1}(x_{1})}\int_{0}^{L_{2}^{\ast ,-1}(x_{2})}u_{1}u_{2}\,dL^{\ast
}(u_{1},u_{2})}{\int_{0}^{1}\int_{0}^{1}u_{1}u_{2}\,dL^{\ast }(u_{1},u_{2})}%
,\qquad (x_{1},x_{2})\in \lbrack 0,1]^{2}.  \label{902}
\end{equation}%
In particular, each marginal $L_{i}^{\ast }$ satisfies the one--dimensional
functional equation 
\begin{equation}
L_{i}^{\ast }(x)=\frac{1}{I^{\ast }}\int_{0}^{L_{i}^{\ast
,-1}(x)}k_{i}^{\ast }(u)\,du,\qquad 0\leq x\leq 1,  \label{903}
\end{equation}%
where 
\begin{equation}
I^{\ast }=\int_{0}^{1}\int_{0}^{1}u_{1}u_{2}\,dL^{\ast }(u_{1},u_{2}),\qquad
k_{i}^{\ast }(u)=u\,E[X_{j}^{L^{\ast }}\mid X_{i}^{L^{\ast
}}=u]l_{i}(u),\quad j\neq i.  \label{904}
\end{equation}
\end{claim}

\begin{proof}
The proof follows the scheme in \textit{Appendix~C}. For each $n$, we can
express 
\begin{equation}
L_{n+1}(x_{1},x_{2})=\frac{\int_{0}^{T_{1}^{n}(x_{1})}%
\int_{0}^{T_{2}^{n}(x_{2})}u_{1}u_{2}\,dL_{n}(u_{1},u_{2})}{%
\int_{0}^{1}\int_{0}^{1}u_{1}u_{2}\,dL_{n}(u_{1},u_{2})}.  \label{905}
\end{equation}%
By uniform convergence $L_{n}\rightarrow L^{\ast }$ and $T_{i}^{n}%
\rightarrow T_{i}^{\ast }$, and dominated convergence, we obtain 
\begin{equation}
\int_{0}^{T_{i}^{n}(x_{i})}\int_{0}^{T_{j}^{n}(x_{j})}u_{1}u_{2}%
\,dL_{n}(u_{1},u_{2})\rightarrow \int_{0}^{T_{i}^{\ast
}(x_{i})}\int_{0}^{T_{j}^{\ast }(x_{j})}u_{1}u_{2}\,dL^{\ast }(u_{1},u_{2}),
\label{906}
\end{equation}%
and similarly $I_{n}\rightarrow I^{\ast }$. Passing to the limit in the
identity for $L_{n+1}$ gives (\texttt{\textup{\ref{902})}}. The marginal
equation (\texttt{\textup{\ref{903})}} follows by integrating out the other
coordinate.
\end{proof}

Next, let $C^{\ast }$ be the copula associated with $L^{\ast }$, and $%
c^{\ast }$ its density. The functional equation (\texttt{\textup{\ref{902})}}%
can be rewritten in copula form. A direct differentiation (as in the main
text) yields the following.

\begin{claim}
\label{copula-FP} Let $L^{\ast }$ and $c^{\ast }$ be as above, and let $%
l_{i}^{\ast }$ be the marginal densities of $L^{\ast }$. Then the copula
density satisfies 
\begin{equation}
c^{\ast }(x_{1},x_{2})=\frac{1}{I^{\ast }}\frac{L_{1}^{\ast ,-1}(L_{1}^{\ast
,-1}(x_{1}))L_{2}^{\ast ,-1}(L_{2}^{\ast ,-1}(x_{2}))}{\,l_{1}^{\ast
}(L_{1}^{\ast ,-1}(x_{1}))\,l_{2}^{\ast }(L_{2}^{\ast ,-1}(x_{2}))}\,c^{\ast
}(L_{1}^{\ast ,-1}(x_{1}),L_{2}^{\ast ,-1}(x_{2})),\quad (x_{1},x_{2})\in
(0,1)^{2}.  \label{907}
\end{equation}%
Equivalently, $c^{\ast }$ is a fixed point of the implicit nonlinear
operator $\mathcal{T}$ on copulas defined in (\ref{13}) of the main
text.\hfill
\end{claim}

\begin{proof}
The derivation is exactly the same algebraic manipulation as in the
iterative case (equation (\ref{13}) of the main text), but applied to the
functional equation (\texttt{\textup{\ref{902})}}rather than to a single
step of the iteration. We omit the straightforward details.
\end{proof}

Iterating (\ref{907}) gives a compounding representation analogous to
equation (\ref{16}) in the main text, but with all step indices replaced by
a star.

\begin{claim}
\label{star-compounding} Let $T_{i}^{\ast }=L_{i}^{\ast ,-1}$. For each $%
n\geq 0$ there exist positive factors $\mathcal{I}^{\ast ,n}$, $\mathcal{P}%
_{i}^{\ast ,n}$, $\mathcal{Q}_{i}^{\ast ,n}$ such that 
\begin{equation}
c^{\ast }(x_{1},x_{2})=\mathcal{I}^{\ast ,n}\,\mathcal{D}^{\ast
,n}(x_{1},x_{2})\,\frac{\mathcal{P}_{1}^{\ast ,n}(x_{1})\,\mathcal{P}%
_{2}^{\ast ,n}(x_{2})}{Q\mathcal{\ }_{1}^{\ast ,n}(x_{1})\,Q\mathcal{\ }%
_{2}^{\ast ,n}(x_{2})},  \label{908}
\end{equation}%
where 
\begin{equation}
\mathcal{D}^{\ast ,n}(x_{1},x_{2})=c^{F}(T_{1}^{\ast \circ
(n+1)}(x_{1}),\,T_{2}^{\ast \circ (n+1)}(x_{2})),  \label{909}
\end{equation}%
and $\mathcal{P}_{i}^{\ast ,n},\mathcal{Q}_{i}^{\ast ,n}$ are products of
the form 
\begin{equation}
\mathcal{P}_{1}^{\ast ,n}(x_{1})=F_{1}^{-1}(T_{1}^{\ast \circ
(n+1)}(x_{1}))\prod_{k=0}^{n}T_{1}^{\ast }(T_{1}^{\ast \circ k}(x_{1})),Q%
\mathcal{\ }_{1}^{\ast ,n}(x_{1})=\prod_{k=0}^{n}l_{1}^{\ast }(T_{1}^{\ast
\circ k}(x_{1})),  \label{910}
\end{equation}%
and similarly for coordinate~2. The precise form of these factors is as in (%
\ref{16})--(\ref{16.2}) of the main text, with all iteration indices
replaced by~$\ast $.
\end{claim}

\begin{remark}
The representation (\ref{908}) shows that the dependence structure of $%
L^{\ast }$ is entirely encoded in the orbits of the inverse marginals $%
T_{i}^{\ast }$ under the copula $c^{F}$ of the original distribution~$F$ and
multiplicative factors built from $L_{i}^{\ast }$ and $l_{i}^{\ast }$.
\end{remark}

\begin{remark}
From a conceptual standpoint, the functional equation and the iteration are
two sides of the same coin: the iteration constructs approximations $L_{n}$
to any fixed point of the functional equation, while the functional equation
captures the limit shape seen by any convergent subsequence of $%
\{L_{n}\}_{n=0}^{\infty }$. Under $RR_{2}$ and suitable regularity, the
existence of a unique solution of the functional equation with good
dynamical properties is equivalent to the convergence of the full iteration
and the collapse of the compound maps $\Phi _{n}^{i}$ to constants.
Iterative approximations constitute a powerful method for solving functional
equations, as detailed in \cite{[47]} among others.
\end{remark}

\begin{flushleft}
{\large Appendix D.3}
\end{flushleft}

The results of \textit{Appendix C.1,} \textit{Appendix D.1} and \textit{%
Appendix D.2} allow us to formulate the following result:

\begin{theorem}
\label{phi-conv} If the density $f_{12}$ is $TP_{2}$ (Totally Positive of
order 2), then the following hold for $i=1,2$:

\begin{enumerate}
\item The sequence of functions $\Phi _{n}^{i}$ converges pointwise to $1$
on $[0,1]$, that is, 
\begin{equation}
\lim_{n\rightarrow +\infty }\Phi _{n}^{i}(x)=\left\{ 
\begin{array}{c}
1,\text{ }x>0 \\ 
0,\text{ }x=0%
\end{array}%
\right.  \label{400}
\end{equation}

\item Furthermore, for any $\delta $ $\in (0,1)$, the convergence is uniform
on the interval $[\delta ,1].$
\end{enumerate}

If the density $f_{12}$ is $SRR_{2}$ under the additional assumptions of
log-concavity and non-degeneracy, then the following hold for $i=1,2$:

\begin{enumerate}
\item For any sequence of indices $\{n_{k}\}_{k\geq 0}$ with $%
n_{k}\rightarrow \infty $ the sequence of functions $\Phi _{n_{k}}^{i}$
converges subsequentially to $\phi _{i}$ on $[0,1]$, that is, 
\begin{equation}
\lim_{k\rightarrow +\infty }\Phi _{n_{k}}^{i}(x)=\left\{ 
\begin{array}{c}
\phi _{i},\text{ }x\in (0,1) \\ 
0,\text{ }x=0 \\ 
1,\text{ }x=1%
\end{array}%
\right. ,  \label{401}
\end{equation}

where $\phi _{i}=c_{i}$ is the limit of the crossings of $L_{i}^{n}(x)$ with
the diagonal.

\item Furthermore, for every fixed $(\delta ,\eta )$ $\subset $ $(0,1)$, the
convergence to $\phi _{i}$ is uniform on the interval $[\delta ,\eta ]$.
\end{enumerate}
\end{theorem}

\begin{remark}
The analysis in the main body of the paper establishes that \textit{Theorem} %
\ref{main-th} is a direct consequence of \textit{Theorem} \ref{phi-conv}.
This relationship yields a stronger a posteriori conclusion. If the initial
density is either: (i) $SRR_{2},$ or (ii) $RR_{2}$ under the additional
assumptions of log-concavity and non-degeneracy, for $i=1,2$, the pointwise
limit is:%
\begin{equation}
\lim_{n\rightarrow +\infty }\Phi _{n}^{i}(x)=\left\{ 
\begin{array}{c}
1,\text{ }x>0 \\ 
0,\text{ }x=0%
\end{array}%
\right.  \label{401a}
\end{equation}

Furthermore, for any $\delta $ $\in (0,1)$, the convergence is uniform on
the interval $[\delta ,1].$
\end{remark}

\begin{flushleft}
{\large Appendix D.4}
\end{flushleft}

In this appendix, we give further conceptual details on \textit{Section 5}
and the proof of \textit{Theorem~\ref{Dd-main-long}. }We will first prove
that the following sequences of compound functions 
\begin{eqnarray}
\Phi _{n}^{1}(x) &=&(L_{1}^{0,-1}\circ L_{1}^{1,-1}\circ \dots \circ
L_{1}^{n-1,-1}\circ L_{1}^{n,-1})(x)  \label{942a} \\
\Phi _{n}^{2}(x) &=&(L_{2}^{0,-1}\circ L_{2}^{1,-1}\circ \dots \circ
L_{2}^{n-1,-1}\circ L_{2}^{n,-1})(x)  \notag
\end{eqnarray}

are uniformly subsequentially convergent to constants for $x\in \lbrack 0,1]$%
, which is enough to prove the theorem. Then we will point out the complete
uniform convergence.

\begin{flushleft}
\textbf{Multivariate }$TP_{2}$\textbf{\ and }$RR_{2}$\textbf{\ structures}
\end{flushleft}

In dimension $d>2$ there are several inequivalent ways to generalise the 2D
notions of $TP_{2}$ and $RR_{2}$. It is most natural to follow the
conditional--expectation viewpoint used in the 2D sections, and require
monotonicity in each coordinate of the full conditional mean of the
remaining coordinates.

\begin{definition}
\noindent \label{d-tp2-rr2} Let $H$ be a probability distribution on $%
[0,1]^{d}$ with density $h$. For $i\in \{1,\dots ,d\}$ define the
conditional mean 
\begin{equation}
m_{i}^{H}(u)\;=\;E(\prod_{j\neq i}X_{j}^{H}\,\mid X_{i}^{H}=u),\qquad u\in
(0,1),  \label{942}
\end{equation}%
where $X^{H}=(X_{1}^{H},\dots ,X_{d}^{H})$ has law $H$.

We say that $H$ is

\begin{itemize}
\item coordinatewise $TP_{2}$ if, for each $i$, the function $u\mapsto
m_{i}^{H}(u)$ is non--decreasing on $(0,1)$;

\item coordinatewise $RR_{2}$ if, for each $i$, the function $u\mapsto
m_{i}^{H}(u)$ is non--increasing on $(0,1)$.
\end{itemize}

If the inequalities are strict on every compact subinterval of $(0,1)$ where
the univariate density $h_{i}(u)$ is bounded away from $0$, we speak of
strict $TP_{2}$ or strict $RR_{2}$.
\end{definition}

In dimension $d=2$ we recover the familiar situation. There, for $H$ with
density $h$ we have 
\begin{equation}
m_{1}^{H}(u)\;=\;E[X_{2}^{H}\mid X_{1}^{H}=u],\qquad
m_{2}^{H}(u)\;=\;E[X_{1}^{H}\mid X_{2}^{H}=u],  \label{943}
\end{equation}%
and coordinatewise $TP_{2}$/$RR_{2}$ coincides with the usual $TP_{2}$/$%
RR_{2}$ conditions of \textit{Appendix~D.1--D.2} via the standard
characterisations in terms of conditional means and \textit{%
likelihood--ratio ordering} (see, e.g., \cite{[32]},\cite{[28]}, and \cite%
{[24]}).

In higher dimension there exist stronger notions such as \textit{%
multivariate total positivity of order two} ($MTP_{2}$) and its reverse form
($MRR_{2}$), defined in terms of the determinant sign of all $2\times 2$
marginals or via \textit{log--supermodularity} / \textit{log--submodularity}
of $h\footnote{{\footnotesize See\textit{\ Definition~\ref{MTP2-SMRR2}} from
the main text. For }$MTP_{2}${\footnotesize \ and the lattice inequality (%
\ref{2000a}) (log--supermodularity), see the foundational work of \cite{[32]}%
; see also \cite{[63]} for modern structural properties and support
conditions. The reverse inequality (\ref{2000b}) (log--submodularity), used
to define }$MRR_{2}${\footnotesize , is the natural sign-reversed analogue.}
\par
{}}$. Under suitable regularity\footnote{{\footnotesize Here
\textquotedblleft regularity\textquotedblright\ refers to assumptions
ensuring that the conditional laws }$Law(X_{-i}^{H}\mid X_{i}^{H}=u)$%
{\footnotesize \ admit a version depending measurably on }$u${\footnotesize %
\ and that the conditional expectations }$E[\varphi (X_{-i}^{H})\mid
X_{i}^{H}=u]${\footnotesize \ are well defined (for all }$u${\footnotesize \
in a set of full }$H_{i}${\footnotesize --measure) and finite for the class
of coordinatewise non--decreasing test functions }$\varphi ${\footnotesize \
under consideration. In the present paper it is sufficient to work in the
absolutely continuous setting with }$h>0${\footnotesize \ on }$(0,1)^{d}$%
{\footnotesize \ (so that regular conditional densities exist and the above
monotonicity statements can be interpreted pointwise for a.e.\ }$u$%
{\footnotesize ).}}, $MTP_{2}$ implies that for each $i$ and each
non--decreasing function $\varphi $ of the remaining coordinates, the map $%
u\mapsto E[\varphi (X_{-i}^{H})\mid X_{i}^{H}=u]$ is non--decreasing;
similarly, $MRR_{2}$ implies the reverse monotonicity. In other words, under
these regularity assumptions, $MTP_{2}$ (resp.\ $MRR_{2}$) yields a positive
(resp.\ negative) regression--type monotonicity property (akin to $PRD/NRD$%
\textit{\footnote{{\footnotesize The quantification \textquotedblleft for
every coordinatewise non--decreasing }$\varphi ${\footnotesize %
\textquotedblright\ is the hallmark of \textit{positive regression dependence%
} (PRD), also called \textit{stochastic monotonicity} or \textit{stochastic
increasingness}; the reverse inequality corresponds to its negative analogue
(NRD). See, e.g., the discussion of \textit{stochastic monotonicity} and its
links to regression dependence in \cite{[60]} and \cite{[61]} and more
recent treatments such as \cite{[62]}.}}}). In particular, $MTP_{2}$ (resp.\ 
$MRR_{2}$) implies the coordinatewise $TP_{2}$ (resp.\ $RR_{2}$) property in
the sense of \textit{Definition~\ref{d-tp2-rr2}} when we take $\varphi
(x_{-i})=\prod_{j\neq i}x_{j}$. We have all the notions in a strict sense
when the respective inequalities or monotonicity are strict, and we put an
"S" in front the abbreviation to denote that.

For the remainder we aim results under the weaker, but more directly usable,
hypothesis of coordinatewise $TP_{2}$/$SRR_{2}$. Whenever, it is necessary
to work with the classical $MTP_{2}$/$SMRR_{2}$ or $PRD/SNRD$ notions, it
suffices to note that they imply \textit{Definition~\ref{d-tp2-rr2}}.Yet,
similarly to the bivariate case, we can demonstrate the decorrelation
properties of our \textit{Lorenz map} if we start with a strong dependence
concept. \textit{Theorem~\ref{Dd-main-long} }from the main text is defined
in such a way.

\begin{flushleft}
\textbf{Transferability of }$MTP_{2}$\textbf{/}$SMRR_{2}$
\end{flushleft}

In this subsection we prove that $MTP_{2}$ and $SMRR_{2}$ are preserved by
the iteration (\ref{941b}). This is the first genuinely $d$--dimensional
ingredient. We have the following closure lemmas:

\begin{lemma}
\label{Dd-closure-map} Let $l$ be a strictly positive $MTP_{2}$ (resp.\ $%
MRR_{2}$) density on $[0,1]^{d}$. Let $\psi :[0,1]^{d}\rightarrow \lbrack
0,1]^{d}$ be coordinatewise non-decreasing. Then $l\circ \psi $ is $MTP_{2}$
(resp.\ $MRR_{2}$).
\end{lemma}

\begin{proof}
\medskip \noindent Let $x,y\in \lbrack 0,1]^{d}$. Since $\psi $ is
coordinatewise non-decreasing, $\psi (x\wedge y)=\psi (x)\wedge \psi (y)$
and $\psi (x\vee y)=\psi (x)\vee \psi (y)$. Therefore, 
\begin{equation}
(l\circ \psi )(x\wedge y)\,(l\circ \psi )(x\vee y)=l(\psi (x)\wedge \psi
(y))\,l(\psi (x)\vee \psi (y))\geq (\leq )\ l(\psi (x))\,l(\psi (y))=(l\circ
\psi )(x)\,(l\circ \psi )(y),  \label{2006}
\end{equation}%
which is exactly (\ref{2000a}) (resp.\ (\ref{2000b})) for $l\circ \psi $.
\medskip
\end{proof}

\begin{lemma}
\label{Dd-closure-prod} If $g$ and $h$ are strictly positive $MTP_{2}$
(resp.\ $MRR_{2}$) functions on $[0,1]^{d}$, then $gh$ is $MTP_{2}$ (resp.\ $%
MRR_{2}$). Moreover, any strictly positive function of the form 
\begin{equation}
a(x)=\prod_{i=1}^{d}a_{i}(x_{i})  \label{2007}
\end{equation}%
(with $a_{i}:(0,1)\rightarrow (0,\infty )$) is both $MTP_{2}$ and $MRR_{2}$
(with equality in (\ref{2000a})/(\ref{2000b})).
\end{lemma}

\begin{proof}
\medskip For the product: multiply the lattice inequalities for $g$ and $h$
and use positivity. For the coordinatewise factor $a$, note that 
\begin{equation}
a(x\wedge y)\,a(x\vee y)=\prod_{i=1}^{d}a_{i}(\min
\{x_{i},y_{i}\})\,a_{i}(\max
\{x_{i},y_{i}\})=\prod_{i=1}^{d}a_{i}(x_{i})\,a_{i}(y_{i})=a(x)\,a(y),
\label{2008}
\end{equation}%
so the lattice inequality holds with equality. \medskip
\end{proof}

\begin{lemma}
\label{Dd-closure-strict} Let $g$ be $SMRR_{2}$ on $(0,1)^{d}$ and let $h$
be strictly positive and $MRR_{2}$ on $(0,1)^{d}$. Then $gh$ is $SMRR_{2}$.
Moreover, if $\psi $ is coordinatewise increasing and continuous, then $%
g\circ \psi $ is $SMRR_{2}$ on compact subsets of $(0,1)^{d}$ on which $\psi 
$ stays in a compact subset of $(0,1)^{d}$.
\end{lemma}

\begin{proof}
\medskip The (non-strict) $MRR_{2}$ inequality is preserved under products
and monotone compositions by \textit{Lemmas~\ref{Dd-closure-prod} and~\ref%
{Dd-closure-map}}. For strictness, fix a compact $K\subset (0,1)^{d}$ with $%
g $ bounded away from $0$ on $K$, and take incomparable $x,y\in K$. By $%
SMRR_{2}$ of $g$ we have $g(x\wedge y)g(x\vee y)<g(x)g(y)$. Since $h$ is $%
MRR_{2}$, $h(x\wedge y)h(x\vee y)\leq h(x)h(y)$. Multiplying yields strict
inequality for $gh$. The composition statement follows because incomparable $%
x,y$ imply incomparable $\psi (x),\psi (y)$ for coordinatewise increasing $%
\psi $, and $\psi (x)\wedge \psi (y)=\psi (x\wedge y)$, $\psi (x)\vee \psi
(y)=\psi (x\vee y)$. \medskip
\end{proof}

The above lemmas allow to prove the transferability result.

\begin{claim}
\label{Dd-transfer} Assume $L_{n}$ has strictly positive density $l_{n}$ on $%
(0,1)^{d}$. If $l_{n}$ is $MTP_{2}$, then $L_{n+1}$ has strictly positive
density $l_{n+1}$ which is $MTP_{2}$. If $l_{n}$ is $SMRR_{2}$, then $%
L_{n+1} $ has strictly positive density $l_{n+1}$ which is $SMRR_{2}$.
\end{claim}

\begin{proof}
We use the density recursion (\ref{941c}) from the main text. Writing it
explicitly (and suppressing the superscript $n$ on $T_{i}^{n}$ for
readability), there exists a strictly positive normalizing constant $%
c_{n}=1/I_{n}$ such that 
\begin{equation}
l_{n+1}(x)=c_{n}\;(\prod_{i=1}^{d}T_{i}(x_{i}))\;l_{n}(T(x))\;%
\prod_{i=1}^{d}(T_{i})^{\prime }(x_{i}),\qquad x\in (0,1)^{d},  \label{2009}
\end{equation}%
where $T(x)=(T_{1}(x_{1}),\dots ,T_{d}(x_{d}))$.

Set $a(x)=\prod_{i=1}^{d}T_{i}(x_{i})$ and $b(x)=\prod_{i=1}^{d}(T_{i})^{%
\prime }(x_{i})$ so that $l_{n+1}=c_{n}\cdot a\cdot b\cdot (l_{n}\circ T)$.
By \textit{Lemma~\ref{Dd-closure-prod}}, $a$ and $b$ are coordinatewise
factors and hence both $MTP_{2}$ and $MRR_{2}$ (with equality in the lattice
inequality). By \textit{Lemma~\ref{Dd-closure-map}}, $(l_{n}\circ T)$ is $%
MTP_{2}$ (resp.\ $MRR_{2}$) whenever $l_{n}$ is. Hence, by \textit{Lemma~\ref%
{Dd-closure-prod}}, $l_{n+1}$ is $MTP_{2}$ if $l_{n}$ is $MTP_{2}$.

For the strict reverse case, assume $l_{n}$ is $SMRR_{2}$. Then $(l_{n}\circ
T)$ is $SMRR_{2}$ on compact sets by \textit{Lemma~\ref{Dd-closure-strict}}
(composition part), and multiplying by $a$ and $b$ preserves strictness by 
\textit{Lemma~\ref{Dd-closure-strict}} (product part), since $a,b$ are $%
MRR_{2}$. Therefore $l_{n+1}$ is $SMRR_{2}$. \medskip
\end{proof}

\begin{flushleft}
\textbf{Crossing patterns in dimension }$d$\textbf{\ (}$TP$\textbf{-route
and }$RR$\textbf{-route)}
\end{flushleft}

This subsection provides the second genuinely $d$--dimensional ingredient:
the multivariate crossing patterns. In the $TP$-route we need the same
subdiagonality input as in \textit{Appendix~D.1} (formulated on
two-dimensional coordinates). In the $RR$-route,
overdiagonality/single-crossing is a property of the inverse-marginal maps $%
T_{i}^{n}$ (and hence of the compounds $\Phi _{n}$), exactly as in \textit{%
Appendix~D.2.}

Fix indices $1\leq i<j\leq d$ and fix $z\in (0,1)^{d-2}$ for the remaining
coordinates. Define the two-dimensional slice density 
\begin{equation}
l_{z}^{(i,j)}(u_{i},u_{j})=l(u_{1},\dots ,u_{d})\mid _{u_{k}=z_{k},\ k\notin
\{i,j\}},\qquad (u_{i},u_{j})\in (0,1)^{2}.  \label{2010}
\end{equation}

\begin{lemma}
\label{Dd-slice-TP} If $l$ is $MTP_{2}$ on $(0,1)^{d}$, then for each $i<j$
and each fixed $z$, the bivariate slice $l_{z}^{(i,j)}$ is $TP_{2}$ on $%
(0,1)^{2}$.
\end{lemma}

\begin{proof}
\medskip Take two points $(u_{i},u_{j})$ and $(v_{i},v_{j})$ in $(0,1)^{2}$
and embed them in $R^{d}$ as $x$ and $y$ by setting $x_{k}=y_{k}=z_{k}$ for $%
k\notin \{i,j\}$ and $(x_{i},x_{j})=(u_{i},u_{j})$, $%
(y_{i},y_{j})=(v_{i},v_{j})$. Then $x\wedge y$ and $x\vee y$ coincide with
taking coordinatewise min/max in the $(i,j)$ coordinates and leaving the
others fixed at $z$. Applying (\ref{2000a}) to $(x,y)$ yields precisely the $%
TP_{2}$ inequality for $l_{z}^{(i,j)}$. \medskip
\end{proof}

\begin{lemma}
\label{Dd-slice-SRR} If $l$ is $SMRR_{2}$ on $(0,1)^{d}$, then for each $i<j$
and each fixed $z$, the bivariate slice $l_{z}^{(i,j)}$ is strictly $RR_{2}$
($SRR_{2}$) on compact subsets of $(0,1)^{2}$ where it is bounded away from $%
0$.
\end{lemma}

\begin{proof}
\medskip Let $K\subset (0,1)^{2}$ be compact and assume $l_{z}^{(i,j)}$ is
bounded away from $0$ on $K$. Embed $(u_{i},u_{j}),(v_{i},v_{j})\in K$ into $%
x,y\in (0,1)^{d}$ as in \textit{Lemma~\ref{Dd-slice-TP}}. If the two points
in $K$ are incomparable in $R^{2}$, then the embedded points $x,y$ are
incomparable in $R^{d}$ (only coordinates $i,j$ vary). Hence, by $SMRR_{2}$
of $l$, 
\begin{equation}
l(x\wedge y)\,l(x\vee y)\ <\ l(x)\,l(y).  \label{2011}
\end{equation}%
Restricting the fixed coordinates to $z$ shows that this is exactly the
strict $RR_{2}$ inequality for the slice density $l_{z}^{(i,j)}$ on $K$.
\medskip
\end{proof}

The above lemmas allow us to provide consecutively results for the crossing
pattern of $L_{n}$ frozen to two coordinates.

\begin{claim}
\label{Dd-subdiag} Assume that for some $n\geq 0$, the density $l_{n}$ of $%
L_{n}$ is $MTP_{2}$ on $(0,1)^{d}$. Then, on every two-dimensional
coordinate slice (obtained by freezing $d-2$ coordinates), the restriction
of $L_{n}$ satisfies the same subdiagonality (one-sided crossing) property
as in Appendix~D.1. Consequently, the multivariate subdiagonality hypothesis
used in the multivariate section is satisfied.
\end{claim}

\begin{proof}
Fix a pair $i<j$ and freeze the remaining coordinates at $z\in (0,1)^{d-2}$.
By \textit{Lemma~\ref{Dd-slice-TP}}, the induced bivariate slice density is $%
TP_{2}$. Therefore the bivariate subdiagonality lemma from \textit{%
Appendix~D.1} applies on that slice. Since the multivariate subdiagonality
notion is defined via these coordinate slices/paths, the conclusion follows.
\medskip
\end{proof}

For each $n\geq 0$ and $i\in \{1,\dots ,d\}$ let $T_{i}^{n}=(L_{i}^{n})^{-1}$
be the inverse marginal. Define the compound maps $\Phi _{n}^{\,i}$ exactly
as in \textit{Appendix~D.2} (with the same conventions).

\begin{claim}
\label{Dd-overdiag-T} Assume that for some $n\geq 0$, the density $l_{n}$ of 
$L_{n}$ is $SMRR_{2}$ on $(0,1)^{d}$. Then, on every two-dimensional
coordinate slice (obtained by freezing $d-2$ coordinates), the corresponding
bivariate inverse-marginal maps satisfy the same overdiagonality and
(single) crossing properties as in Appendix~D.2 (in particular, the strict
version needed in the return--shift argument). Consequently, the bivariate
D.2 crossing input holds for the maps $T_{i}^{n}$ on each slice.
\end{claim}

\begin{proof}
Fix $i<j$ and freeze the remaining coordinates at $z\in (0,1)^{d-2}$. By 
\textit{Lemma~\ref{Dd-slice-SRR}}, the induced bivariate slice density is $%
SRR_{2}$ on compact subsets where it is bounded away from $0$. Therefore the
bivariate overdiagonality and single-crossing lemmas of \textit{Appendix~D.2}
apply on that slice, and those lemmas are stated in terms of the
inverse-marginal maps used to build $\Phi _{n}$. This yields the desired
slice-level properties for $T_{i}^{n}$. \medskip
\end{proof}

\begin{corollary}
\label{Dd-phi-crossing} Assume the hypotheses of Claim~\ref{Dd-overdiag-T}
hold along the iteration (i.e.\ for all $n$). Then for each $i$, the
sequence of compound maps $\Phi _{n}^{\,i}$ inherits the same
single-crossing restrictions as in Appendix~D.2 (with the strict form on
compact subsets).
\end{corollary}

\begin{proof}
\medskip \textit{Appendix~D.2} transfers crossings to $\Phi _{n}$ by
monotone composition.
\end{proof}

\begin{remark}
\label{alt-route} Besides the slice reduction used above, there is a second
(conceptually more \textquotedblleft one--dimensional\textquotedblright )
way to obtain the same shape (crossing) properties needed in \textit{%
Appendices~D.1--D.2}. It is based on the marginal representation 
\begin{equation}
L_{i}^{n+1}(x)=\frac{1}{I_{n}}\int_{0}^{T_{i}^{n}(x)}u\,m_{i}^{L_{n}}(u)%
\,l_{i}^{n}(u)\,du,\qquad x\in \lbrack 0,1],  \label{3000}
\end{equation}%
where $T_{i}^{n}=(L_{i}^{n})^{-1}$ and 
\begin{equation}
m_{i}^{L_{n}}(u)=E\![\prod_{k\neq i}X_{k}^{L_{n}}\,\mid
\,X_{i}^{L_{n}}=u],\qquad u\in (0,1).  \label{3001}
\end{equation}

Formula (\ref{3000}) follows by integrating out $X_{-i}$ in the defining
identity for $L_{n+1}$ (equivalently, by applying the tower property to the
product tilt by $\prod_{k}X_{k}$). The key point is that $MTP_{2}$/$MRR_{2}$
assumptions allow we to control the monotonicity of the conditional
expectation $m_{i}^{L_{n}}$ in (\ref{3001}), which in turn yields the same
one--dimensional crossing restrictions for $T_{i}^{n}$ (and hence for the
compound maps $\Phi _{n}$) as those obtained above.
\end{remark}

\begin{remark}
\label{mtp2-prd} Under the standing absolutely continuous setting with
strictly positive density on $(0,1)^{d}$, $MTP_{2}$ implies a positive
regression dependence property ($PRD$): for each $i$ and each coordinatewise
non--decreasing test function $\varphi $ of the remaining coordinates, 
\begin{equation}
u\longmapsto E[\varphi (X_{-i}^{H})\mid X_{i}^{H}=u]\quad \text{is
non--decreasing,}  \label{3001a}
\end{equation}%
and similarly $MRR_{2}$ implies the reverse monotonicity ($NRD$). In
particular, choosing $\varphi (x_{-i})=\prod_{k\neq i}x_{k}$ yields that,
for each $n$, 
\begin{equation}
u\longmapsto m_{i}^{L_{n}}(u)=E\![\prod_{k\neq i}X_{k}^{L_{n}}\,\mid
\,X_{i}^{L_{n}}=u]\quad \text{is non--decreasing under }MTP\text{$_{2}$ and
non--increasing under }MRR\text{$_{2}$,}  \label{3002}
\end{equation}%
with the strict form on compact subintervals in the $SMRR_{2}$ case (as in
the bivariate $SRR_{2}$ route). This provides exactly the monotonicity input
needed to run the one--dimensional arguments in the next remark. (For
background on the implication \textquotedblleft $MTP_{2}$ $\Rightarrow $ $%
PRD $\textquotedblright\ and its reverse form, see the references cited in
the main text.)
\end{remark}

\begin{remark}
\label{alt-proof-outline} We briefly outline how (\ref{3000}) together with (%
\ref{3002}) yields the same one--dimensional crossing properties as those
used in \textit{Appendices~D.1--D.2}.

\smallskip \noindent \textbf{Step 1 (reduction to a one--dimensional kernel).%
} For fixed $i$ and $n$, define the (unnormalized) kernel 
\begin{equation}
w_{i}^{n}(u)=u\,m_{i}^{L_{n}}(u)\,l_{i}^{n}(u),\qquad u\in (0,1),
\label{3003}
\end{equation}%
so that (\ref{3000}) reads 
\begin{equation}
L_{i}^{n+1}(x)=\frac{1}{I_{n}}\int_{0}^{T_{i}^{n}(x)}w_{i}^{n}(u)\,du.
\label{3004}
\end{equation}%
Since $T_{i}^{n}$ is increasing, the shape of $L_{i}^{n+1}$ and the relative
position of the inverse map $T_{i}^{n+1}$ are governed by the relative shape
of $w_{i}^{n}$ and the normalization $I_{n}$.

\smallskip \noindent \textbf{Step 2 (monotone likelihood ratio type
comparison).} Under $MTP_{2}$, $m_{i}^{L_{n}}$ is non--decreasing, hence $%
u\mapsto u\,m_{i}^{L_{n}}(u)$ is non--decreasing as well. Under $SMRR_{2}$, $%
m_{i}^{L_{n}}$ is strictly non--increasing on compact subintervals, hence $%
u\,m_{i}^{L_{n}}(u)$ is unimodal/one--crossing in the precise sense used in 
\textit{Appendix~D.2}. Combining this with the positivity of $l_{i}^{n}$
yields that ratios of the form 
\begin{equation}
\frac{\int_{0}^{t}w_{i}^{n}(u)\,du}{\int_{0}^{1}w_{i}^{n}(u)\,du}\quad \text{%
as functions of $t$}  \label{3005}
\end{equation}%
inherit the same subdiagonal/one--crossing behavior as in the bivariate case.

\smallskip \noindent \textbf{Step 3 (translation to $T_{i}^{n}$ and to $\Phi
_{n}$).} The arguments in \textit{Appendix~D.1} and \textit{Appendix~D.2}
that turn the above one--dimensional shape information into (i)
subdiagonality in the $TP$-route and (ii) the \textit{D.2} dichotomy (either
subdiagonal or exactly one-crossing) in the $RR$-route use only:

\begin{itemize}
\item monotonicity of inverse marginals $T_{i}^{n}$;

\item stability of crossing counts under composition with increasing maps;

\item and (in the $RR$-route) strictness on compact subintervals to exclude
flat segments.
\end{itemize}

Therefore, replacing the bivariate $TP_{2}$/$SRR_{2}$ input by the
regression monotonicity (\ref{3002}) yields the same conclusions for the $%
T_{i}^{n}$ and hence for the compound maps $\Phi _{n}$. In other words, the
slice reduction and the conditional-expectation route lead to the same
one--dimensional crossing restrictions needed for the compactness and
return--shift/invariance arguments of \textit{Appendices~D.1--D.2}.
\end{remark}

\begin{flushleft}
\textbf{Denominator non-collapse}
\end{flushleft}

We start with core technical results. Although \textit{Claim \ref{com-claim1}%
}, as noted therein, is also valid in the $d$-dimensional case, we need to
formally prove similarly to the bivariate case that the normalizing
constants 
\begin{equation}
I_{n}=\int_{[0,1]^{d}}\left( \prod_{j=1}^{d}u_{j}\right)
\,dL_{n}(u_{1},\ldots ,u_{d})  \label{4000}
\end{equation}%
satisfy 
\begin{equation}
\inf_{n\geq 0}I_{n}\geq c_{d}>0.  \label{4001}
\end{equation}

Consider a "bad" subsequence $\{n_{k}\}_{k\geq 0}$ such that 
\begin{equation}
I_{n_{k}}\longrightarrow 0.  \label{4001a}
\end{equation}%
If, along a further subsequence, 
\begin{equation}
L_{n_{k}}\Rightarrow L^{\ast },  \label{4001b}
\end{equation}%
then 
\begin{equation}
\int_{\lbrack 0,1]^{d}}x_{1}\cdots x_{d}\,dL^{\ast }(x)=0.  \label{4001c}
\end{equation}%
Hence 
\begin{equation}
L^{\ast }\{x_{1}\cdots x_{d}=0\}=1.  \label{4001d}
\end{equation}%
Thus a bad limit is supported on the union of the coordinate hyperplanes. In
particular, the \textit{Lorenz map} cannot be applied to $L^{\ast }$,
because the product denominator at $L^{\ast }$ is zero. The problem is
therefore to prevent a valid finite orbit from approaching such a
zero-denominator boundary law.

This boundary condition has a useful marginal consequence. Let $%
Y=(Y_{1},\ldots ,Y_{d})\sim L^{\ast }$, and put 
\begin{equation}
A_{i}=\{Y_{i}=0\},\qquad a_{i}=L_{i}^{\ast }(0)=P(Y_{i}=0),\qquad 1\leq
i\leq d.  \label{4001e}
\end{equation}%
Since $Y_{i}\in \lbrack 0,1]$, the condition (\ref{4001d}) is equivalent to 
\begin{equation}
P\left( \bigcup_{i=1}^{d}A_{i}\right) =1.  \label{4001f}
\end{equation}%
Consequently, 
\begin{equation}
1=P\left( \bigcup_{i=1}^{d}A_{i}\right) \leq
\sum_{i=1}^{d}P(A_{i})=\sum_{i=1}^{d}a_{i}.  \label{4001g}
\end{equation}%
In particular, at least one marginal of $L^{\ast }$ has a strictly positive
atom at zero. Thus every bad limit is not merely supported on the union of
the coordinate hyperplanes; it also forces a genuine zero-atom in at least
one marginal.

Equivalently, if 
\begin{equation}
\mathcal{A}^{\ast }=\{i:\,a_{i}>0\},  \label{4001h}
\end{equation}%
then $\mathcal{A}^{\ast }\neq \varnothing $, and the bad limit places all
its mass on 
\begin{equation}
\bigcup_{i\in \mathcal{A}^{\ast }}\{x_{i}=0\}.  \label{4001j}
\end{equation}%
The numbers $a_{i}$ record the marginal amount of mass carried by the
corresponding coordinate hyperplanes. More precisely, by
inclusion--exclusion, 
\begin{equation}
1=\sum_{i}P(A_{i})-\sum_{i<j}P(A_{i}\cap A_{j})+\cdots
+(-1)^{d+1}P(A_{1}\cap \cdots \cap A_{d}).  \label{4001k}
\end{equation}%
Thus the case $\sum_{i}a_{i}=1$ corresponds to zero-coordinate events which
are essentially disjoint, while $\sum_{i}a_{i}>1$ means that the bad limit
has positive mass on intersections of coordinate hyperplanes.

This observation also links denominator collapse to the marginal crossing
structure. Suppose $a_{i}>0$. Then, for every $x\in (0,a_{i})$ which is a
continuity point of the limiting marginal $L_{i}^{\ast }$, 
\begin{equation}
L_{i}^{\ast }(x)\geq L_{i}^{\ast }(0)=a_{i}>x.  \label{4001l}
\end{equation}%
Hence, along the subsequence, 
\begin{equation}
L_{i}^{n_{k}}(x)\longrightarrow L_{i}^{\ast }(x)>x.  \label{4001m}
\end{equation}%
Therefore, whenever the usual one-crossing property for the marginal $%
L_{i}^{n}$ is available, the corresponding crossing point satisfies 
\begin{equation*}
c_{i,n_{k}}\leq x
\end{equation*}%
for all sufficiently large $k$. Since $x>0$ can be chosen arbitrarily small,
we obtain 
\begin{equation}
c_{i,n_{k}}\longrightarrow 0\qquad \text{for every }i\text{ with }a_{i}>0.
\label{4001n}
\end{equation}%
Thus a bad subsequential limit necessarily manifests itself through the
collapse of at least one marginal crossing point to the left endpoint.

Assume that, for the current marginals, the \textit{Fr\'{e}chet--Hoeffding
lower-bound }is attainable. Denote the corresponding lower extremal
distribution by $L_{n}^{-}$, and denote its copula by $C_{n}^{-}$. If $C_{n}$
is the copula of $L_{n}$, then from \textit{Appendix~B} we have 
\begin{equation}
C_{n}^{-}\leq _{c}C_{n}.  \label{4001aa}
\end{equation}%
Since 
\begin{equation}
(x_{1},\ldots ,x_{d})\mapsto x_{1}\cdots x_{d}  \label{4001bb}
\end{equation}%
is increasing and supermodular on $[0,1]^{d}$, the comparison methodology of 
\textit{Claim~\ref{com-claim1}} yields 
\begin{equation}
\int x_{1}\cdots x_{d}\,dL_{n}^{-}(x)\leq \int x_{1}\cdots
x_{d}\,dL_{n}(x)=I_{n}.  \label{4001cc}
\end{equation}%
Consequently, whenever a genuine \textit{Fr\'{e}chet--Hoeffding lower-bound} 
$L_{n}^{-}$ exists and its denominator is bounded away from zero, the
denominator $I_{n}$ of the actual iterate is also bounded away from zero.
Thus the only potentially dangerous branches are those exceptional cases in
which the $d$-dimensional \textit{lower-bound} is attainable and the
corresponding \textit{Lorenz} \textit{trajectory} can approach the
zero-product boundary.

For $d=2$, the \textit{Fr\'{e}chet--Hoeffding lower-bound} is always
attainable: it is the distribution function of the countermonotone coupling
as we have already discussed in \textit{Appendix A.}

For $d\geq 3$, the situation is fundamentally different. The formal \textit{%
lower-bound} expression 
\begin{equation}
F^{\wedge }(x_{1},\ldots ,x_{d})=\left(
\sum_{i=1}^{d}F_{i}(x_{i})-d+1\right) _{+}  \label{4002a}
\end{equation}%
is not, in general, a $d$-dimensional distribution function. Dall'Aglio's
analysis of \textit{Fr\'{e}chet classes} and compatibility of distribution
functions shows that, apart from lower-dimensional degeneracies,
attainability of (\ref{4002a}) is possible only in exceptional large-jump
situations; see \cite{[76]} and the discussion in \cite[Chapter 6]{[5]}.

We shall only need the following consequence of that classification. Let 
\begin{equation}
\ell _{i}=\func{ess}X_{i},\qquad r_{i}=\func{ess}X_{i},\qquad i=1,\ldots ,d.
\label{4002b}
\end{equation}%
Then the attainable \textit{lower-bound} cases relevant here fall into the
following three alternatives.

\noindent \textbf{Case 0: effectively bivariate.} At most two marginals are
nondegenerate. Then the \textit{lower-bound} coupling is the ordinary
bivariate countermonotone coupling, embedded into $d$ dimensions by
deterministic coordinates. This case can occur even with continuous
nondegenerate marginals in the two genuinely random coordinates. It is
therefore not excluded by continuity alone. However, $\inf_{n\geq
0}I_{-,n}=0 $ can be ruled out from the analysis in \textit{Appendix D.2}.

\noindent \textbf{Case 1: lower-endpoint jump case.} Set 
\begin{equation}
q_{i}=P(X_{i}>\ell _{i}).  \label{4002c}
\end{equation}%
The \textit{lower-bound} case is characterized by 
\begin{equation}
\sum_{i=1}^{d}q_{i}\leq 1.  \label{4002d}
\end{equation}%
Equivalently, the events $\{X_{i}>\ell _{i}\}$ are disjoint under the 
\textit{lower-bound} law. Thus almost surely at most one coordinate is
strictly above its lower endpoint. This forces large atoms at the lower
endpoints. Indeed, if at least two marginals are nondegenerate and
continuous, then for each such marginal 
\begin{equation}
P(X_{i}>\ell _{i})=1,  \label{4002e}
\end{equation}%
so the sum in (\ref{4002e}) is already at least $2$, and (\ref{4002e})
cannot hold.

\noindent \textbf{Case 2: upper-endpoint jump case.} Set 
\begin{equation}
p_{i}=P(X_{i}<r_{i}).  \label{4002f}
\end{equation}%
The upper-endpoint case is characterized by 
\begin{equation}
\sum_{i=1}^{d}p_{i}\leq 1.  \label{4002g}
\end{equation}%
Equivalently, the events $\{X_{i}<r_{i}\}$ are disjoint under the \textit{%
lower-bound} law. Thus almost surely at most one coordinate is strictly
below its upper endpoint. This forces large atoms at the upper endpoints.
Indeed, if at least two marginals are nondegenerate and continuous, then for
each such marginal 
\begin{equation}
P(X_{i}<r_{i})=1,  \label{4002h}
\end{equation}%
so the sum in (\ref{4002g}) is already at least $2$, and (\ref{4002g})
cannot hold.

\smallskip

Consequently, if $F_{i}$ is continuous and nondegenerate for every $%
i=1,\ldots ,d,d\geq 3$, then none of the three exceptional attainability
cases above can occur which was exactly our assumption from the very
beginning regarding the extramal cases. Cases 1 and 2 are excluded because
continuous nondegenerate marginals have no endpoint atoms. Case 0 is
excluded because all $d\geq 3$ coordinates are genuinely random as well as
the $\inf_{n\geq 0}I_{-,n}>0$ argument.

\begin{flushleft}
\textbf{Lipschitz bounds and compactness in }$d$\textbf{\ dimensions}
\end{flushleft}

Following the non-collapse result established in the previous section, the
next lemma provides the $d$-dimensional analogue of \textit{Lemma~\ref{L-Lip}%
}. This result is central to establishing the compactness of the family.

\begin{lemma}
\noindent \label{L-Lip-d} For each $n\geq 1$, the map $L_{n}:[0,1]^{d}%
\rightarrow \lbrack 0,1]$ is uniformly Lipschitz with respect to the $\ell
^{1}$--norm. More precisely, 
\begin{equation}
|L_{n}(x)-L_{n}(y)|\leq \frac{1}{c_{d}}\Vert x-y\Vert _{1},\qquad x,y\in
\lbrack 0,1]^{d}.  \label{4002}
\end{equation}

Consequently, the family $(L_{n})_{n\geq 1}$ is $\sqrt{d}/c_{d}$-Lipschitz
with respect to the Euclidean norm. Moreover, since under the standing
density assumptions $L_{0}\in C([0,1]^{d})$, the full family $(L_{n})_{n\geq
0}$ is relatively compact in $C([0,1]^{d})$ with the uniform topology.

\noindent In particular, for any sequence $n_{k}\rightarrow \infty $ there
exists a subsequence $n_{k_{\ell }}$ and a continuous distribution function $%
L^{\ast }$ on $[0,1]^{d}$ such that 
\begin{equation}
\sup_{x\in \lbrack 0,1]^{d}}|L_{n_{k_{\ell }}}(x)-L^{\ast }(x)|\rightarrow
0\qquad \text{as }\ell \rightarrow \infty .  \label{4003}
\end{equation}%
The marginals $L_{i}^{n_{k_{\ell }}}$ converge uniformly on $[0,1]$ to the
marginals $L_{i}^{\ast }$ of $L^{\ast }$. Each $L_{i}^{\ast }$ is a
continuous distribution function on $[0,1]$.
\end{lemma}

\begin{proof}
We first prove \textit{uniform Lipschitz bounds} for the one-dimensional
marginals. Recall that 
\begin{equation}
L_{i}^{n}(x)=L_{n}(1,\ldots ,1,x,1,\ldots ,1),  \label{4004}
\end{equation}%
where $x$ appears in the $i$-th coordinate. Fix $i\in \{1,\ldots ,d\}$, $%
n\geq 0$, and $0\leq a\leq b\leq 1$. Put 
\begin{equation}
\alpha =L_{i}^{n,-1}(a),\qquad \beta =L_{i}^{n,-1}(b).  \label{4005}
\end{equation}%
Taking all coordinates except the $i$-th one equal to $1$ in the iteration
formula (\ref{941b}), we obtain 
\begin{equation}
L_{i}^{n+1}(x)=\frac{\int_{[0,1]^{i-1}}\int_{0}^{L_{i}^{n,-1}(x)}%
\int_{[0,1]^{d-i}}\left( \prod_{j=1}^{d}u_{j}\right) \,dL_{n}(u_{1},\ldots
,u_{d})}{I_{n}}.  \label{4006}
\end{equation}%
Therefore 
\begin{equation}
L_{i}^{n+1}(b)-L_{i}^{n+1}(a)=\frac{\int_{[0,1]^{i-1}}\int_{\alpha }^{\beta
}\int_{[0,1]^{d-i}}\left( \prod_{j=1}^{d}u_{j}\right) \,dL_{n}(u_{1},\ldots
,u_{d})}{I_{n}}.  \label{4007}
\end{equation}%
Since $0\leq \prod_{j=1}^{d}u_{j}\leq 1$ on $[0,1]^{d}$, it follows that 
\begin{equation}
L_{i}^{n+1}(b)-L_{i}^{n+1}(a)\leq \frac{\int_{\lbrack
0,1]^{i-1}}\int_{\alpha }^{\beta }\int_{[0,1]^{d-i}}\,dL_{n}(u_{1},\ldots
,u_{d})}{I_{n}}.  \label{4008}
\end{equation}%
The numerator in (\ref{4008}) is exactly the $i$-th marginal mass assigned
by $L_{i}^{n}$ to the interval $[\alpha ,\beta ]$. Under the standing
regularity assumptions used in the paper, the marginals are continuous and
strictly increasing, and hence 
\begin{equation}
L_{i}^{n}(\alpha )=a,\qquad L_{i}^{n}(\beta )=b.  \label{4009}
\end{equation}%
Thus 
\begin{equation}
\int_{\lbrack 0,1]^{i-1}}\int_{\alpha }^{\beta
}\int_{[0,1]^{d-i}}\,dL_{n}(u_{1},\ldots ,u_{d})=L_{i}^{n}(\beta
)-L_{i}^{n}(\alpha )=b-a.  \label{4010}
\end{equation}%
Combining (\ref{4008}), (\ref{4010}), and (\ref{4001}), we get 
\begin{equation}
L_{i}^{n+1}(b)-L_{i}^{n+1}(a)\leq \frac{b-a}{I_{n}}\leq \frac{1}{c_{d}}(b-a).
\label{4011}
\end{equation}%
Since $i$ was arbitrary, the marginals $\{L_{i}^{n}\}_{n\geq 1}$ are \textit{%
uniformly Lipschitz} on $[0,1]$, with common \textit{Lipschitz constant} $%
1/c_{d}$.

\noindent We now pass from the marginal estimates to a \textit{Lipschitz
estimate} for the joint distribution functions. Let $x,y\in \lbrack 0,1]^{d}$%
. Define the intermediate points 
\begin{equation}
z^{(0)}=x,\qquad z^{(i)}=(y_{1},\ldots ,y_{i},x_{i+1},\ldots ,x_{d}),\quad
i=1,\ldots ,d.  \label{4012}
\end{equation}%
Then $z^{(d)}=y$, and by the triangle inequality, 
\begin{equation}
|L_{n}(x)-L_{n}(y)|\leq \sum_{i=1}^{d}|L_{n}(z^{(i)})-L_{n}(z^{(i-1)})|.
\label{4013}
\end{equation}%
For each $i$, the two points $z^{(i-1)}$ and $z^{(i)}$ differ only in their $%
i$-th coordinate. Suppose first that $x_{i}\leq y_{i}$. Then, if $%
U^{(n)}=(U_{1}^{(n)},\ldots ,U_{d}^{(n)})$ has distribution function $L_{n}$%
, the corresponding difference is the probability of an event contained in 
\begin{equation}
\{x_{i}<U_{i}^{(n)}\leq y_{i}\}.  \label{4014}
\end{equation}%
Consequently, 
\begin{equation}
0\leq L_{n}(z^{(i)})-L_{n}(z^{(i-1)})\leq L_{i}^{n}(y_{i})-L_{i}^{n}(x_{i}).
\label{4015}
\end{equation}%
If $y_{i}\leq x_{i}$, the same argument with $x_{i}$ and $y_{i}$
interchanged gives 
\begin{equation}
|L_{n}(z^{(i)})-L_{n}(z^{(i-1)})|\leq |L_{i}^{n}(y_{i})-L_{i}^{n}(x_{i})|.
\label{4016}
\end{equation}%
Thus, for all $i=1,\ldots ,d$, 
\begin{equation}
|L_{n}(z^{(i)})-L_{n}(z^{(i-1)})|\leq |L_{i}^{n}(y_{i})-L_{i}^{n}(x_{i})|.
\label{4017}
\end{equation}%
Using the marginal Lipschitz estimate (\ref{4011}), we obtain, for every $%
n\geq 1$, 
\begin{equation}
|L_{n}(x)-L_{n}(y)|\leq
\sum_{i=1}^{d}|L_{i}^{n}(y_{i})-L_{i}^{n}(x_{i})|\leq \frac{1}{c_{d}}%
\sum_{i=1}^{d}|y_{i}-x_{i}|=\frac{1}{c_{d}}\Vert y-x\Vert _{1}.  \label{4018}
\end{equation}%
This proves (\ref{4002}). Since 
\begin{equation}
\Vert y-x\Vert _{1}\leq \sqrt{d}\,\Vert y-x\Vert _{2},  \label{4019}
\end{equation}%
the same estimate gives a \textit{Euclidean} \textit{Lipschitz constant} $%
\sqrt{d}/c_{d}$.

\noindent The family $(L_{n})_{n\geq 1}$ is therefore equicontinuous. It is
also uniformly bounded, since 
\begin{equation}
0\leq L_{n}(x)\leq 1,\qquad x\in \lbrack 0,1]^{d}.  \label{4020}
\end{equation}%
By the \textit{Arzel\`{a}--Ascoli's theorem}, the family $(L_{n})_{n\geq 1}$
is relatively compact in $C([0,1]^{d})$ with the uniform topology.

\noindent It remains to include $L_{0}$. Under the standing assumption that
the starting distribution admits a density and that $0<E(%
\prod_{j=1}^{d}X_{j})<\infty $, the \textit{tilted measure} 
\begin{equation}
dQ_{F}(x_{1},\ldots ,x_{d})=\frac{\prod_{j=1}^{d}x_{j}}{E(%
\prod_{j=1}^{d}X_{j})}\,dF(x_{1},\ldots ,x_{d})  \label{4021}
\end{equation}%
is absolutely continuous with respect to \textit{Lebesgue measure}. Hence it
has no atoms, and the initial \textit{Lorenz curve} $L_{0}=L_{F}$ is
continuous on $[0,1]^{d}$. Adding the single continuous function $L_{0}$ to
a relatively compact subset of $C([0,1]^{d})$ does not affect relative
compactness. Therefore the full family $(L_{n})_{n\geq 0}$ is relatively
compact in $C([0,1]^{d})$.

\noindent Now let $n_{k}\rightarrow \infty $. Relative compactness gives a
subsequence $n_{k_{\ell }}$ and a continuous function $L^{\ast }\in
C([0,1]^{d})$ such that (\ref{4003}) holds. The uniform limit $L^{\ast }$ is
again a distribution function: coordinatewise monotonicity, the boundary
conditions, and the $d$-dimensional rectangle inequalities are all preserved
under uniform limits. Thus $L^{\ast }$ is a continuous distribution function
on $[0,1]^{d}$.
\end{proof}

We have also a result for the marginals.

\begin{corollary}
\textbf{\label{AA-d} }Let $n_{k}\rightarrow \infty $ be a subsequence such
that 
\begin{equation}
\sup_{x\in \lbrack 0,1]^{d}}|L_{n_{k}}(x)-L^{\ast }(x)|\rightarrow 0.
\label{4025}
\end{equation}%
Then, for each $i\in \{1,\ldots ,d\}$, 
\begin{equation}
\sup_{x\in \lbrack 0,1]}|L_{i}^{n_{k}}(x)-L_{i}^{\ast }(x)|\rightarrow 0,
\label{4026}
\end{equation}%
where 
\begin{equation}
L_{i}^{\ast }(x)=L^{\ast }(1,\ldots ,1,x,1,\ldots ,1).  \label{4027}
\end{equation}%
If, in addition, $L_{i}^{\ast }$ is strictly increasing, then 
\begin{equation}
T_{i}^{n_{k}}=L_{i}^{n_{k},-1}\rightarrow T_{i}^{\ast }=L_{i}^{\ast ,-1}
\label{4028}
\end{equation}%
uniformly on $[0,1]$.
\end{corollary}

\begin{proof}
\noindent Since each $L_{n}$ is a distribution function with strictly
increasing continuous marginals, standard arguments (monotonicity and
continuity are preserved under uniform convergence) show that $L^{\ast }$ is
also a distribution function on $[0,1]^{d}$ with strictly increasing
continuous marginals $L_{i}^{\ast }$, and $L_{i}^{n_{k_{\ell }}}\rightarrow
L_{i}^{\ast }$ uniformly on $[0,1]$. As $L_{i}^{\ast }$ is strictly
increasing and continuous, the uniform convergence of the generalized
inverses follows from \textit{Lemma~\ref{inverse}}.
\end{proof}

Combining \textit{Lemma}~\ref{L-Lip-d} and \textit{Corollary~\ref{AA-d}} and
passing to subsequences as needed, we obtain:

\begin{lemma}
\label{Lstar-d} Let $\{L_{n}\}_{n\geq 0}$ be the sequence generated by (\ref%
{941b}). Then for every sequence of indices $n_{k}\rightarrow \infty $ there
exists a subsequence (still denoted $n_{k}$) and a distribution function $%
L^{\ast }$ on $[0,1]^{d}$ such that 
\begin{equation}
\sup_{x\in \lbrack 0,1]^{d}}\left\vert L_{n_{k}}(x)-L^{\ast }(x)\right\vert
\rightarrow 0\quad \text{as }k\rightarrow \infty ,  \label{954}
\end{equation}%
and the marginals $L_{i}^{n_{k}}$ converge uniformly to the marginals $%
L_{i}^{\ast }$ of $L^{\ast }$, with inverses $T_{i}^{n_{k}}\rightarrow
T_{i}^{\ast }=(L_{i}^{\ast })^{-1}$ uniformly on $[0,1]$.\hfill\ 
\end{lemma}

\begin{flushleft}
\textbf{Convergence of }$\Phi _{n}$\textbf{: reduction to Appendices D.1 and
D.2}
\end{flushleft}

Assume $f$ is $MTP_{2}$. By \textit{Claim~\ref{Dd-transfer}}, each $l_{n}$
is $MTP_{2}$. By \textit{Claim~\ref{Dd-subdiag}}, the subdiagonality
hypothesis needed for the $TP$-route holds (on the coordinate slices/paths
used in the multivariate section). Together with the
equicontinuity/compactness results established earlier in the multivariate
section (e.g.\ \textit{Lemma~\ref{L-Lip-d}} and its \textit{Arzel\`{a}%
--Ascoli} consequence), this is exactly the input used in \textit{%
Appendix~D.1}. Hence:

\begin{conclusion}
\label{Dd-TP-cite} Under $MTP_{2}$, the proof of Appendix~D.1 applies
verbatim (with $d$ replacing $2$), yielding the same structural convergence
input required by the final cancellation step.
\end{conclusion}

Assume $f$ is $SMRR_{2}$. By \textit{Claim~\ref{Dd-transfer}}, each $l_{n}$
is $SMRR_{2}$. By \textit{Claim~\ref{Dd-overdiag-T}} and \textit{Corollary~%
\ref{Dd-phi-crossing}}, the single-crossing hypothesis for the $\Phi _{n}$
maps is exactly as in \textit{Appendix~D.2} (including the strictness on
compact sets). Compactness is again provided by \textit{Lemma~\ref{L-Lip-d}}
and its consequences. Hence \textit{Appendix~D.2} applies verbatim and
yields:

\begin{theorem}
\label{Dd-phi-constant} Assume $f$ is $SMRR_{2}$. Then every subsequential
limit of the corresponding $\Phi _{n}$--maps (on compact subintervals of $%
(0,1)$) is a constant.
\end{theorem}

\begin{proof}
\medskip \noindent This is precisely the conclusion of \textit{Appendix~D.2}
once strict single-crossing and compactness hold. \medskip
\end{proof}

\newpage

\begin{flushleft}
{\Large Appendix E}
\end{flushleft}

The main body of the paper works in the regular, atom-free setting. This
appendix explains why this assumption is essential for the \textit{Fr\'{e}%
chet--Hoeffding lower-bound} comparison argument used in \textit{Lemma~\ref%
{lbound}}, and then studies the genuinely new dynamics that appear when
atoms are allowed. In particular, endpoint atoms may produce valid finite
iterates whose normalizing denominators converge to zero.

Throughout this appendix, atoms are treated according to the same
generalized-inverse convention as in the rest of the paper. Thus an atom is
not split over its rank interval\footnote{{\footnotesize If }$K(a-)=p$%
{\footnotesize \ and }$K(a)=p+m${\footnotesize , then the atom at }$a$%
{\footnotesize \ corresponds to the quantile plateau }$(p,p+m]$%
{\footnotesize , on which }$K^{-1}(u)=a${\footnotesize . Randomizing inside
the atom would spread its rank over this interval. With the
generalized-inverse convention used here, no such randomization is made: the
whole atom is assigned to one edge of the jump. }}; it is assigned to the
left edge of the corresponding marginal jump. This convention is exactly
where the continuous proofs and the atomic dynamics diverge.

It is interesting to note that one-dimensional \textit{Lorenz curve} studied
in \cite{[3]} is defined by the quantile integral 
\begin{equation*}
L_{F}(x)=\frac{\int_{0}^{x}F^{-1}(u)\,du}{\int_{0}^{1}F^{-1}(u)\,du},
\end{equation*}%
which automatically spreads an atom over its whole quantile interval. Hence
no separate atomic case is needed in the one-dimensional theorem. The
alternative threshold formula sometimes used in the economic literature 
\begin{equation*}
\frac{\int_{0}^{F^{-1}(x)}t\,dF(t)}{\int_{0}^{1}t\,dF(t)}
\end{equation*}%
coincides with the quantile formula when $F$ is continuous, but not in
general. Thus the continuous main body is consistent with \cite{[3]}, while
the present appendix studies the additional atomic dynamics created by the
generalized-inverse convention.

\begin{flushleft}
\textbf{Failure of Lemma~\ref{lbound} in the atomic case}
\end{flushleft}

We first identify the exact point at which the proof of \textit{Lemma~\ref%
{lbound}} uses atom-freeness. Introduce for convenience the notation 
\begin{equation*}
K_{n}=L_{1-}^{\,n},S_{n}=K_{n}^{-1}=T_{1-,n},J_{n}=I_{-,n}.
\end{equation*}%
In the \textit{Fr\'{e}chet--Hoeffding lower-bound} trajectory, the
normalizing denominator is 
\begin{equation*}
I_{-,n}=\int_{0}^{1}T_{1-,n}(u)T_{2-,n}(1-u)\,du.
\end{equation*}%
Under the complementary identity 
\begin{equation*}
T_{2-,n}(1-u)=1-S_{n}(u),
\end{equation*}%
this becomes 
\begin{equation*}
I_{-,n}=\int_{0}^{1}S_{n}(u)(1-S_{n}(u))\,du.
\end{equation*}

We now derive the formula for the next first marginal directly from the 
\textit{Lorenz update}. The bivariate \textit{lower-bound} iterate satisfies 
\begin{equation*}
L_{-}^{\,n+1}(x_{1},x_{2})=\frac{\int_{0}^{T_{1-,n}(x_{1})}%
\int_{0}^{T_{2-,n}(x_{2})}u_{1}u_{2}\,dL_{-}^{\,n}(u_{1},u_{2})}{I_{-,n}}.
\end{equation*}%
Setting $x_{2}=1$ gives the first marginal 
\begin{equation*}
K_{n+1}(x)=L_{-}^{\,n+1}(x,1)=\frac{\int_{0}^{T_{1-,n}(x)}%
\int_{0}^{1}u_{1}u_{2}\,dL_{-}^{\,n}(u_{1},u_{2})}{I_{-,n}}.
\end{equation*}%
Equivalently, if $(Y_{1},Y_{2})\sim L_{-}^{\,n}$, then 
\begin{equation}
K_{n+1}(x)=\frac{E\!\left[ Y_{1}Y_{2}1_{\{Y_{1}\leq T_{1-,n}(x)\}}\right] }{%
E[Y_{1}Y_{2}]}.  \label{eq:Knplus1-expectation}
\end{equation}%
Since $T_{1-,n}=S_{n}$, the event in the numerator is 
\begin{equation*}
\{Y_{1}\leq S_{n}(x)\}.
\end{equation*}%
For the \textit{Fr\'{e}chet--Hoeffding lower-bound} \textit{coupling} we may
write 
\begin{equation*}
Y_{1}=S_{n}(U),\qquad Y_{2}=T_{2-,n}(1-U),\qquad U\sim U(0,1).
\end{equation*}%
This gives 
\begin{equation*}
Y_{1}Y_{2}=S_{n}(U)(1-S_{n}(U)),
\end{equation*}%
and the event $\{Y_{1}\leq S_{n}(x)\}$ becomes 
\begin{equation*}
\{S_{n}(U)\leq S_{n}(x)\}.
\end{equation*}%
Substituting this into (\ref{eq:Knplus1-expectation}), we obtain the exact
generalized-inverse formula 
\begin{equation}
K_{n+1}(x)=\frac{\int_{\{u:\,S_{n}(u)\leq S_{n}(x)\}}S_{n}(u)(1-S_{n}(u))\,du%
}{\int_{0}^{1}S_{n}(u)(1-S_{n}(u))\,du}.  \label{eq:atomic-exact-K-update}
\end{equation}

In the proof of \textit{Lemma~\ref{lbound}}, formula (\ref%
{eq:atomic-exact-K-update}) was replaced by 
\begin{equation*}
K_{n+1}(x)=\frac{\int_{0}^{x}S_{n}(u)(1-S_{n}(u))\,du}{J_{n}}.
\end{equation*}%
This replacement is valid only if 
\begin{equation}
\{u:\,S_{n}(u)\leq S_{n}(x)\}=[0,x]\quad \text{up to Lebesgue-null sets}.
\label{eq:atomic-no-plateau-condition}
\end{equation}%
Equivalently, 
\begin{equation}
K_{n}(S_{n}(x))=x.  \label{eq:atomic-KS-condition}
\end{equation}%
Condition~(\ref{eq:atomic-KS-condition}) holds when $K_{n}$ is continuous.
It may fail when $K_{n}$ has atoms, because then $S_{n}$ has flat pieces.

In general, since $S_{n}$ is nondecreasing, we have 
\begin{equation*}
\{u:\,S_{n}(u)\leq S_{n}(x)\}=[0,\alpha _{n}(x)]\quad \text{up to null sets},
\end{equation*}%
where 
\begin{equation}
\alpha _{n}(x)=K_{n}(S_{n}(x)).  \label{eq:alpha-n-def}
\end{equation}%
Thus the correct atomic formula is 
\begin{equation}
K_{n+1}(x)=\frac{\int_{0}^{\alpha _{n}(x)}S_{n}(u)(1-S_{n}(u))\,du}{J_{n}}%
,\qquad \alpha _{n}(x)=K_{n}(S_{n}(x)).  \label{eq:atomic-corrected-K-update}
\end{equation}%
Formula~(\ref{345a30f}) is precisely the special case of (\ref%
{eq:atomic-corrected-K-update}) in which $\alpha _{n}(x)=x$.

This is not a harmless distinction. Suppose that $K_{n}$ has an atom at its
upper endpoint $1$, so that 
\begin{equation*}
K_{n}(1-)=p<1,\qquad K_{n}(1)=1.
\end{equation*}%
Then 
\begin{equation*}
S_{n}(x)=1,\qquad p<x\leq 1.
\end{equation*}%
Hence, for $p<x\leq 1$, 
\begin{equation*}
\alpha _{n}(x)=K_{n}(S_{n}(x))=K_{n}(1)=1.
\end{equation*}%
The correct formula~(\ref{eq:atomic-corrected-K-update}) gives 
\begin{equation*}
K_{n+1}(x)=\frac{\int_{0}^{1}S_{n}(u)(1-S_{n}(u))\,du}{J_{n}}=1,\qquad
p<x\leq 1.
\end{equation*}%
Thus the whole upper-endpoint atom is assigned to the left edge of the jump.
By contrast, formula~(\ref{345a30f}) would give 
\begin{equation*}
K_{n+1}(x)=\frac{\int_{0}^{x}S_{n}(u)(1-S_{n}(u))\,du}{J_{n}},
\end{equation*}%
which is generally strictly smaller than $1$ for $x<1$.

Therefore the first failure of the proof of \textit{Lemma~\ref{lbound}} in
the atomic case is the identity (\ref{345a30f}). Consequently the next step, 
\begin{equation*}
q_{n}(u)=K_{n+1}^{\prime }(u)=\frac{S_{n}(u)(1-S_{n}(u))}{J_{n}},
\end{equation*}%
is not justified either. If $K_{n}$ has an atom, then $K_{n+1}$ may have a
jump, so $K_{n+1}$ need not be absolutely continuous. The layer-cake
argument for unimodal densities leading to (\ref{345a40}) and the sharper
estimate (\ref{345a42}) therefore does not apply. Hence \textit{Lemma~\ref%
{lbound}}, \textit{Lemma~\ref{denominator-noncollapse-2d}}, and \textit{%
Corollary~\ref{srr2-denominator-noncollapse}} must be read under the
atom-free hypotheses used in the main body.

\begin{flushleft}
\bigskip \textbf{The atom-free condition expressed in terms of the initial
law}
\end{flushleft}

We now translate the previous condition into the initial law $F$. Let $%
F_{1},F_{2}$ be the one-dimensional marginals of $F$, and write 
\begin{equation*}
Q_{i}=F_{i}^{-1},\qquad i=1,2.
\end{equation*}%
The \textit{Fr\'{e}chet--Hoeffding lower-bound} coupling associated with $%
F_{1},F_{2}$ is 
\begin{equation*}
X_{1}=Q_{1}(U),\qquad X_{2}=Q_{2}(1-U),\qquad U\sim U(0,1).
\end{equation*}%
Its initial \textit{lower-bound} normalizer is 
\begin{equation}
J^{Q}=\int_{0}^{1}Q_{1}(u)Q_{2}(1-u)\,du.  \label{eq:atomic-J0}
\end{equation}%
Assume $J^{Q}>0$. The first marginal of the initial lower extremal \textit{%
Lorenz curve} is 
\begin{equation}
L_{1-}^{0}(x)=\frac{\int_{\{u:\,Q_{1}(u)\leq
Q_{1}(x)\}}Q_{1}(u)Q_{2}(1-u)\,du}{J^{Q}}.  \label{eq:atomic-L10-F}
\end{equation}%
Similarly, 
\begin{equation}
L_{2-}^{0}(x)=\frac{\int_{\{u:\,Q_{2}(1-u)\leq
Q_{2}(x)\}}Q_{1}(u)Q_{2}(1-u)\,du}{J^{Q}}.  \label{eq:atomic-L20-F}
\end{equation}

Let $y$ be an atom of $F_{1}$. Put 
\begin{equation*}
a_{y}=F_{1}(y-),\qquad b_{y}=F_{1}(y).
\end{equation*}%
Then $Q_{1}(u)=y$ for $u\in (a_{y},b_{y}]$. The jump of $L_{1-}^{0}$ created
by this atom has size 
\begin{equation}
\Delta L_{1-}^{0}(y)=\frac{y\int_{a_{y}}^{b_{y}}Q_{2}(1-u)\,du}{J^{Q}}.
\label{eq:atomic-L10-jump}
\end{equation}%
Therefore 
\begin{equation}
L_{1-}^{0}\ \text{is continuous}\quad \Longleftrightarrow \quad
y\int_{F_{1}(y-)}^{F_{1}(y)}Q_{2}(1-u)\,du=0\quad \text{for every atom }y%
\text{ of }F_{1}.  \label{eq:atomic-L10-continuity-equivalence}
\end{equation}

Likewise, if $z$ is an atom of $F_{2}$, then the jump of $L_{2-}^{0}$
created by this atom has size 
\begin{equation}
\Delta L_{2-}^{0}(z)=\frac{z\int_{F_{2}(z-)}^{F_{2}(z)}Q_{1}(1-v)\,dv}{J^{Q}}%
.  \label{eq:atomic-L20-jump}
\end{equation}%
Consequently 
\begin{equation}
L_{2-}^{0}\ \text{is continuous}\quad \Longleftrightarrow \quad
z\int_{F_{2}(z-)}^{F_{2}(z)}Q_{1}(1-v)\,dv=0\quad \text{for every atom }z%
\text{ of }F_{2}.  \label{eq:atomic-L20-continuity-equivalence}
\end{equation}

Thus the exact minimal condition on the initial law $F$, for the \textit{%
lower-bound} marginals to be atom-free at the initial step, is (\ref%
{eq:atomic-L10-continuity-equivalence}) and (\ref%
{eq:atomic-L20-continuity-equivalence}), together with $J^{Q}>0$.

A simpler sufficient assumption is 
\begin{equation}
F_{1}\ \text{and}\ F_{2}\ \text{are continuous and nondegenerate},\qquad
J^{Q}>0.  \label{ass:atomic-simple-continuity}
\end{equation}%
Here nondegenerate means that the marginal is not a \textit{Dirac mass}.
Under (\ref{ass:atomic-simple-continuity}), the atom conditions above hold
automatically, $L_{1-}^{0}$ and $L_{2-}^{0}$ are continuous, and the proof
of \textit{Lemma~\ref{lbound}} applies as written.

\begin{flushleft}
\textbf{Connection with the Dall'Aglio endpoint cases}
\end{flushleft}

We do not repeat here the full Dall'Aglio classification discussed earlier
in \textit{Appendix D.4}. We only record the point that is needed for the
present appendix.

For $d\geq 3$, the formal \textit{Fr\'{e}chet--Hoeffding lower-bound}
expression 
\begin{equation*}
F^{\wedge }(x_{1},\ldots ,x_{d})=\left(
\sum_{i=1}^{d}F_{i}(x_{i})-d+1\right) _{+}
\end{equation*}%
is a genuine $d$-dimensional distribution only in exceptional cases. As
recalled in \textit{Appendix D.4}, apart from the effectively bivariate
case, these are endpoint-jump cases. In the notation used there, 
\begin{equation*}
q_{i}=P(X_{i}>\ell _{i}),\qquad p_{i}=P(X_{i}<r_{i}),
\end{equation*}%
the lower-endpoint case is characterized by 
\begin{equation*}
\sum_{i=1}^{d}q_{i}\leq 1,
\end{equation*}%
and the upper-endpoint case is characterized by 
\begin{equation*}
\sum_{i=1}^{d}p_{i}\leq 1.
\end{equation*}%
Thus the endpoint cases are precisely the atomic cases in which the
generalized-inverse convention can create rank jumps. The examples below
show that these endpoint atoms can lead to denominator collapse.

\begin{flushleft}
\textbf{A bivariate endpoint-atomic lower-bound family}
\end{flushleft}

We now give a bivariate example for which the iteration is closed and
explicit. Let $\mu _{n}$ denote the probability measure whose distribution
function is the $n$-th \textit{Lorenz iterate}. Fix $a_{0}\in (0,1)$, set 
\begin{equation*}
s_{0}=1-a_{0},
\end{equation*}%
and let $W_{0}$ be a random variable on $[0,1]$. Define 
\begin{equation}
\mu _{0}=a_{0}\delta _{(1,1)}+\frac{s_{0}}{2}\mathrm{Law}(W_{0},1)+\frac{%
s_{0}}{2}\mathrm{Law}(1,W_{0}).  \label{eq:atomic-biv-star-initial}
\end{equation}%
This is a bivariate \textit{Fr\'{e}chet--Hoeffding lower-bound} law with
endpoint atoms. Indeed, if $p_{0}=\frac{s_{0}}{2}$ and $H_{0}$ is the
distribution function of $W_{0}$, then the common marginal quantile is 
\begin{equation*}
Q_{0}(u)=%
\begin{cases}
H_{0}^{-1}(u/p_{0}), & 0<u\leq p_{0}, \\ 
1, & p_{0}<u\leq 1.%
\end{cases}%
\end{equation*}%
The countermonotone coupling 
\begin{equation*}
(Q_{0}(U),Q_{0}(1-U)),\qquad U\sim U(0,1),
\end{equation*}%
has precisely the law (\ref{eq:atomic-biv-star-initial}).

Suppose that at step $n$ the law has the form 
\begin{equation}
\mu _{n}=a_{n}\delta _{(r_{n},r_{n})}+\frac{s_{n}}{2}\mathrm{Law}%
(r_{n}W_{n},r_{n})+\frac{s_{n}}{2}\mathrm{Law}(r_{n},r_{n}W_{n}),\qquad
s_{n}=1-a_{n}.  \label{eq:atomic-biv-star-n}
\end{equation}%
Let 
\begin{equation*}
\theta _{n}=E[W_{n}],\qquad D_{n}=a_{n}+s_{n}\theta _{n}.
\end{equation*}%
Then the denominator of the \textit{Lorenz iteration} is 
\begin{equation}
I_{n}=\int x_{1}x_{2}\,d\mu _{n}(x_{1},x_{2})=r_{n}^{2}(a_{n}+s_{n}\theta
_{n})=r_{n}^{2}D_{n}.  \label{eq:atomic-biv-In}
\end{equation}

The product tilt multiplies the top atom by $r_n^2$ and each arm by $%
r_n^2W_n $. Therefore the new top mass and total arm mass are 
\begin{equation}
a_{n+1} = \frac{a_n}{a_n+s_n\theta_n}, \qquad s_{n+1} = \frac{s_n\theta_n}{%
a_n+s_n\theta_n}.  \label{eq:atomic-biv-mass-recursion}
\end{equation}
For each coordinate, the mass below the old upper endpoint $r_n$ is $s_n/2$.
Since the generalized-inverse convention sends the upper endpoint atom to
the left edge of its jump, the next upper endpoint is 
\begin{equation}
r_{n+1}=\frac{s_n}{2}.  \label{eq:atomic-biv-r-recursion}
\end{equation}

It remains to describe the law of the new arm variable. Let $H_{n}$ be the
distribution function of $W_{n}$. Let $\widehat{W}_{n}$ be the size-biased
version of $W_{n}$: 
\begin{equation}
P(\widehat{W}_{n}\in dw)=\frac{w}{\theta _{n}}P(W_{n}\in dw).
\label{eq:atomic-size-biased}
\end{equation}%
On an arm, after product tilting, the varying coordinate is governed by $%
\widehat{W}_{n}$. Its old marginal rank inside the arm is $H_{n}(\widehat{W}%
_{n})$. After division by the new endpoint $r_{n+1}=s_{n}/2$, the new arm
variable is therefore 
\begin{equation}
W_{n+1}=H_{n}(\widehat{W}_{n}).  \label{eq:atomic-biv-W-recursion}
\end{equation}%
Thus the class (\ref{eq:atomic-biv-star-n}) is invariant.

The scalar mass ratio has the particularly simple recursion 
\begin{equation}
\frac{s_{n+1}}{a_{n+1}}=\theta _{n}\frac{s_{n}}{a_{n}}.
\label{eq:atomic-biv-ratio}
\end{equation}%
Writing 
\begin{equation*}
R_{n}=\frac{s_{n}}{a_{n}},
\end{equation*}%
we obtain 
\begin{equation}
R_{n}=R_{0}\prod_{k=0}^{n-1}\theta _{k},\qquad a_{n}=\frac{1}{1+R_{n}}%
,\qquad s_{n}=\frac{R_{n}}{1+R_{n}}.  \label{eq:atomic-biv-closed-masses}
\end{equation}%
Moreover, 
\begin{equation}
r_{n+1}=\frac{s_{n}}{2}=\frac{R_{n}}{2(1+R_{n})}.
\label{eq:atomic-biv-closed-r}
\end{equation}

\begin{flushleft}
\textbf{The beta-arm subclass and explicit collapse}
\end{flushleft}

Assume now that 
\begin{equation*}
W_{0}\sim \func{Beta}(\kappa _{0},1),\qquad \kappa _{0}>0.
\end{equation*}%
If 
\begin{equation*}
W_{n}\sim \func{Beta}(\kappa _{n},1),
\end{equation*}%
then 
\begin{equation*}
H_{n}(w)=w^{\kappa _{n}},\qquad \theta _{n}=E[W_{n}]=\frac{\kappa _{n}}{%
\kappa _{n}+1}.
\end{equation*}%
The size-biased version satisfies 
\begin{equation*}
\widehat{W}_{n}\sim \func{Beta}(\kappa _{n}+1,1).
\end{equation*}%
Hence 
\begin{equation*}
W_{n+1}=H_{n}(\widehat{W}_{n})=\widehat{W}_{n}^{\kappa _{n}}\sim \func{Beta}%
\left( 1+\frac{1}{\kappa _{n}},1\right) .
\end{equation*}%
Therefore the beta family is invariant and 
\begin{equation}
\kappa _{n+1}=1+\frac{1}{\kappa _{n}}.  \label{eq:atomic-kappa-recursion}
\end{equation}%
The sequence $(\kappa _{n})$ converges to 
\begin{equation*}
\kappa _{\ast }=\frac{1+\sqrt{5}}{2}.
\end{equation*}%
Consequently 
\begin{equation*}
\theta _{n}=\frac{\kappa _{n}}{\kappa _{n}+1}\longrightarrow \frac{\kappa
_{\ast }}{\kappa _{\ast }+1}=\frac{1}{\kappa _{\ast }}<1.
\end{equation*}%
Thus the product in (\ref{eq:atomic-biv-closed-masses}) converges to zero
exponentially, and 
\begin{equation*}
R_{n}\rightarrow 0,\qquad s_{n}\rightarrow 0,\qquad a_{n}\rightarrow
1,\qquad r_{n}\rightarrow 0.
\end{equation*}%
Using (\ref{eq:atomic-biv-In}), we conclude 
\begin{equation}
I_{n}=r_{n}^{2}(a_{n}+s_{n}\theta _{n})\longrightarrow 0.
\label{eq:atomic-biv-In-collapse}
\end{equation}

In the special case $W_{0}\sim U(0,1)$, one has $\kappa _{0}=1$, and 
\begin{equation*}
\kappa _{n}=\frac{F_{n+2}}{F_{n+1}},
\end{equation*}%
where $F_{1}=F_{2}=1$ are the \textit{Fibonacci numbers}. Hence 
\begin{equation*}
\theta _{n}=\frac{\kappa _{n}}{\kappa _{n}+1}=\frac{F_{n+2}}{F_{n+3}},
\end{equation*}%
and therefore 
\begin{equation*}
R_{n}=R_{0}\prod_{k=0}^{n-1}\frac{F_{k+2}}{F_{k+3}}=\frac{R_{0}}{F_{n+2}}.
\end{equation*}%
Thus, in the uniform-arm case, 
\begin{equation}
a_{n}=\frac{F_{n+2}}{F_{n+2}+R_{0}},\qquad s_{n}=\frac{R_{0}}{F_{n+2}+R_{0}}%
,\qquad r_{n+1}=\frac{R_{0}}{2(F_{n+2}+R_{0})}.
\label{eq:atomic-uniform-closed-form}
\end{equation}%
This gives a fully explicit collapsing orbit.

This example shows that the statement 
\begin{equation*}
\inf_{n}I_{-,n}>0
\end{equation*}%
is false for arbitrary endpoint-atomic lower-bound trajectories under the
literal generalized-inverse convention. The correct statement is that 
\textit{Lemma~\ref{lbound}} proves noncollapse for the atom-free lower-bound
trajectory.

\begin{flushleft}
\textbf{The }$d$\textbf{-dimensional upper-endpoint atomic family}
\end{flushleft}

The same mechanism works in all dimensions. Let $d\geq 2$. Fix $a_{0}\in
(0,1)$, set $s_{0}=1-a_{0}$, and let $W_{0}$ be a random variable on $[0,1]$%
. Define 
\begin{equation}
\mu _{0}=a_{0}\delta _{\mathbf{1}}+\frac{s_{0}}{d}\sum_{i=1}^{d}\mathrm{Law}%
(1,\ldots ,1,W_{0},1,\ldots ,1),  \label{eq:atomic-d-star-initial}
\end{equation}%
where $W_{0}$ appears in the $i$-th coordinate and 
\begin{equation*}
\mathbf{1}=(1,\ldots ,1).
\end{equation*}%
For each coordinate, 
\begin{equation*}
P_{\mu _{0}}(X_{i}<1)=\frac{s_{0}}{d},
\end{equation*}%
and so 
\begin{equation*}
\sum_{i=1}^{d}P_{\mu _{0}}(X_{i}<1)=s_{0}\leq 1.
\end{equation*}%
Thus (\ref{eq:atomic-d-star-initial}) belongs to the upper-endpoint
Dall'Aglio case.

Assume that at step $n$ 
\begin{equation}
\mu_n = a_n\delta_{r_n\mathbf{1}} + \frac{s_n}{d}\sum_{i=1}^d \mathrm{Law}%
(r_n,\ldots,r_n,r_nW_n,r_n,\ldots,r_n), \qquad s_n=1-a_n.
\label{eq:atomic-d-star-n}
\end{equation}
Let 
\begin{equation*}
\theta_n=E[W_n], \qquad D_n=a_n+s_n\theta_n.
\end{equation*}
Then 
\begin{equation}
I_n = \int \prod_{i=1}^d x_i\,d\mu_n(x) = r_n^d(a_n+s_n\theta_n) = r_n^dD_n.
\label{eq:atomic-d-star-In}
\end{equation}
Exactly as in the bivariate case, product tilting gives 
\begin{equation}
a_{n+1} = \frac{a_n}{a_n+s_n\theta_n}, \qquad s_{n+1} = \frac{s_n\theta_n}{%
a_n+s_n\theta_n}.  \label{eq:atomic-d-star-mass-recursion}
\end{equation}
For each coordinate, the mass below the old upper endpoint $r_n$ is $s_n/d$.
Hence 
\begin{equation}
r_{n+1}=\frac{s_n}{d}.  \label{eq:atomic-d-star-r-recursion}
\end{equation}
The arm variable evolves by the same size-biased rank recursion: 
\begin{equation}
W_{n+1}=H_n(\widehat W_n), \qquad P(\widehat W_n\in dw) = \frac{w}{\theta_n}%
P(W_n\in dw).  \label{eq:atomic-d-star-W-recursion}
\end{equation}
Therefore 
\begin{equation}
\frac{s_{n+1}}{a_{n+1}} = \theta_n\frac{s_n}{a_n}.
\label{eq:atomic-d-star-ratio}
\end{equation}

If $W_{0}\sim \func{Beta}(\kappa _{0},1)$, then again 
\begin{equation*}
\kappa _{n+1}=1+\frac{1}{\kappa _{n}}.
\end{equation*}%
Consequently $s_{n}/a_{n}\rightarrow 0$, so 
\begin{equation*}
s_{n}\rightarrow 0,\qquad a_{n}\rightarrow 1,\qquad r_{n}\rightarrow 0.
\end{equation*}%
By (\ref{eq:atomic-d-star-In}), 
\begin{equation*}
I_{n}=r_{n}^{d}(a_{n}+s_{n}\theta _{n})\longrightarrow 0.
\end{equation*}%
Thus the $d$-dimensional upper-endpoint star family gives a closed-form
atomic bad orbit for every $d\geq 2$.

\begin{flushleft}
\textbf{A nonsymmetric atomic family}
\end{flushleft}

The symmetry in the preceding example is not essential. Let 
\begin{equation*}
\mu _{n}=a_{n}\delta _{(r_{1,n},\ldots ,r_{d,n})}+\sum_{i=1}^{d}p_{i,n}%
\mathrm{Law}(r_{1,n},\ldots ,r_{i-1,n},r_{i,n}W_{i,n},r_{i+1,n},\ldots
,r_{d,n}),
\end{equation*}%
where 
\begin{equation*}
a_{n}+\sum_{i=1}^{d}p_{i,n}=1.
\end{equation*}%
Set 
\begin{equation*}
\theta _{i,n}=E[W_{i,n}],\qquad D_{n}=a_{n}+\sum_{i=1}^{d}p_{i,n}\theta
_{i,n}.
\end{equation*}%
Then 
\begin{equation*}
I_{n}=\left( \prod_{j=1}^{d}r_{j,n}\right) D_{n}.
\end{equation*}%
The mass recursion is 
\begin{equation}
a_{n+1}=\frac{a_{n}}{D_{n}},\qquad p_{i,n+1}=\frac{p_{i,n}\theta _{i,n}}{%
D_{n}}.  \label{eq:atomic-nonsymmetric-mass}
\end{equation}%
The new upper endpoint in coordinate $i$ is the old mass below the old upper
endpoint in that coordinate: 
\begin{equation}
r_{i,n+1}=p_{i,n}.  \label{eq:atomic-nonsymmetric-r}
\end{equation}%
Hence 
\begin{equation}
\frac{p_{i,n+1}}{a_{n+1}}=\theta _{i,n}\frac{p_{i,n}}{a_{n}}.
\label{eq:atomic-nonsymmetric-ratio}
\end{equation}%
If each $\theta _{i,n}$ is eventually bounded above by a constant strictly
smaller than $1$, then 
\begin{equation*}
p_{i,n}\rightarrow 0,\qquad r_{i,n}\rightarrow 0,\qquad I_{n}\rightarrow 0.
\end{equation*}%
Thus denominator collapse is not an artefact of exchangeability.

\begin{flushleft}
\textbf{Atomic paired countermonotonicity}
\end{flushleft}

We next record the corresponding correction for \textit{paired
countermonotonicity}. Let $d=2m$, and pair the coordinates as 
\begin{equation*}
(1,2),(3,4),\ldots ,(2m-1,2m).
\end{equation*}%
The continuous PCM formulas in the main text remain valid when the block
marginals are continuous. With atoms, the block factorization still holds,
but the one-dimensional limits in the block integrals must be corrected.

Consider one block $(i,j)$. At step $n$, let the block marginals be $%
L_{i}^{n},L_{j}^{n}$, with generalized inverses 
\begin{equation*}
T_{i}^{n}=(L_{i}^{n})^{-1},\qquad T_{j}^{n}=(L_{j}^{n})^{-1}.
\end{equation*}%
The countermonotone representation in this block is 
\begin{equation*}
(X_{i},X_{j})=(T_{i}^{n}(U),T_{j}^{n}(1-U)),\qquad U\sim U(0,1).
\end{equation*}%
Set 
\begin{equation*}
\mu _{ij,n}=\int_{0}^{1}T_{i}^{n}(u)T_{j}^{n}(1-u)\,du.
\end{equation*}%
Define the plateau endpoints 
\begin{equation*}
\alpha _{i}^{n}(x)=L_{i}^{n}(T_{i}^{n}(x)),\qquad \alpha
_{j}^{n}(x)=L_{j}^{n}(T_{j}^{n}(x)).
\end{equation*}%
Then the corrected atomic block marginal updates are 
\begin{equation}
L_{i}^{n+1}(x)=\frac{\int_{0}^{\alpha
_{i}^{n}(x)}T_{i}^{n}(u)T_{j}^{n}(1-u)\,du}{\mu _{ij,n}},
\label{eq:atomic-PCM-i}
\end{equation}%
and 
\begin{equation}
L_{j}^{n+1}(x)=\frac{\int_{1-\alpha
_{j}^{n}(x)}^{1}T_{i}^{n}(u)T_{j}^{n}(1-u)\,du}{\mu _{ij,n}}.
\label{eq:atomic-PCM-j}
\end{equation}%
Similarly, the corrected bivariate block \textit{Lorenz factor} is 
\begin{equation}
L_{ij}^{n+1}(x,y)=\frac{\int_{1-\alpha _{j}^{n}(y)}^{\alpha
_{i}^{n}(x)}T_{i}^{n}(u)T_{j}^{n}(1-u)\,du}{\mu _{ij,n}},
\label{eq:atomic-PCM-block}
\end{equation}%
with the convention that the numerator is zero if 
\begin{equation*}
\alpha _{i}^{n}(x)<1-\alpha _{j}^{n}(y).
\end{equation*}%
If the block marginals are continuous, then 
\begin{equation*}
\alpha _{i}^{n}(x)=x,\qquad \alpha _{j}^{n}(y)=y,
\end{equation*}%
and (\ref{eq:atomic-PCM-i})--(\ref{eq:atomic-PCM-block}) reduce to the
continuous PCM formulas.

\begin{flushleft}
\textbf{\bigskip Finite-support laws and the atomic Lorenz map}
\end{flushleft}

We now record the finite-support form of the atomic \textit{Lorenz map}. Let 
\begin{equation*}
\mu =\sum_{k=1}^{N}m_{k}\delta _{x^{(k)}},\qquad m_{k}>0,\qquad
\sum_{k=1}^{N}m_{k}=1,
\end{equation*}%
where 
\begin{equation*}
x^{(k)}=(x_{1}^{(k)},\ldots ,x_{d}^{(k)})\in \lbrack 0,1]^{d}.
\end{equation*}%
Assume 
\begin{equation*}
I(\mu )=\sum_{k=1}^{N}m_{k}\prod_{i=1}^{d}x_{i}^{(k)}>0.
\end{equation*}%
Define 
\begin{equation*}
w_{k}=\prod_{i=1}^{d}x_{i}^{(k)}.
\end{equation*}%
The product-tilted weights are 
\begin{equation*}
\widetilde{m}_{k}=\frac{m_{k}w_{k}}{I(\mu )}.
\end{equation*}%
Under the generalized-inverse convention, the rank image of the support
point $x^{(k)}$ is 
\begin{equation*}
\rho ^{(k)}=(\rho _{1}^{(k)},\ldots ,\rho _{d}^{(k)}),
\end{equation*}%
where 
\begin{equation}
\rho _{i}^{(k)}=\mu \{x:\,x_{i}<x_{i}^{(k)}\}.  \label{eq:atomic-left-rank}
\end{equation}%
Thus the atomic \textit{Lorenz image} is 
\begin{equation}
\mathcal{L}(\mu )=\sum_{k=1}^{N}\widetilde{m}_{k}\delta _{\rho ^{(k)}},
\label{eq:atomic-finite-map}
\end{equation}%
with atoms having the same rank vector merged. Formula (\ref%
{eq:atomic-finite-map}) is the finite-dimensional version of (\ref%
{eq:atomic-corrected-K-update}).

\begin{flushleft}
\textbf{Finite-support fixed points and boundary attractors}
\end{flushleft}

\bigskip Suppose that 
\begin{equation*}
\mu =\sum_{k=1}^{N}m_{k}\delta _{x^{(k)}}
\end{equation*}%
is an atomic fixed point and that no merging of rank images occurs. Then
there exists a permutation $\pi $ of $\{1,\ldots ,N\}$ such that 
\begin{equation*}
x^{(\pi (k))}=\rho ^{(k)}
\end{equation*}%
and 
\begin{equation}
m_{\pi (k)}=\frac{m_{k}w_{k}}{I(\mu )},\qquad
w_{k}=\prod_{i=1}^{d}x_{i}^{(k)}.  \label{eq:atomic-fixed-balance}
\end{equation}%
Following a cycle 
\begin{equation*}
k,\pi (k),\ldots ,\pi ^{\ell -1}(k),
\end{equation*}%
we obtain 
\begin{equation}
I(\mu )^{\ell }=\prod_{r=0}^{\ell -1}w_{\pi ^{r}(k)}.
\label{eq:atomic-cycle-condition}
\end{equation}%
Thus every cycle of the rank permutation must have product-weight geometric
mean equal to the same normalizer $I(\mu )$.

In the special case where the rank image fixes each atom individually, 
\begin{equation*}
\pi (k)=k,
\end{equation*}%
condition~(\ref{eq:atomic-fixed-balance}) gives 
\begin{equation*}
w_{k}=I(\mu )\qquad \text{for every support point }x^{(k)}.
\end{equation*}%
Therefore every atom must lie on the same product-level surface 
\begin{equation*}
\prod_{i=1}^{d}x_{i}=I(\mu ).
\end{equation*}%
This is a strong restriction.

The endpoint-star examples above do not converge to genuine interior fixed
points. For the bivariate family, 
\begin{equation*}
\mu _{n}=a_{n}\delta _{(r_{n},r_{n})}+\frac{s_{n}}{2}\mathrm{Law}%
(r_{n}W_{n},r_{n})+\frac{s_{n}}{2}\mathrm{Law}(r_{n},r_{n}W_{n}),
\end{equation*}%
the beta-arm computation gives 
\begin{equation*}
a_{n}\rightarrow 1,\qquad s_{n}\rightarrow 0,\qquad r_{n}\rightarrow 0.
\end{equation*}%
Hence 
\begin{equation*}
\mu _{n}\Rightarrow \delta _{(0,0)}.
\end{equation*}%
But 
\begin{equation*}
\int x_{1}x_{2}\,d\delta _{(0,0)}=0.
\end{equation*}%
Thus the limit is not a fixed point of the \textit{Lorenz map}; it is a
boundary attractor where the next normalizing denominator is zero. The same
conclusion holds in the $d$-dimensional star family: 
\begin{equation*}
\mu _{n}\Rightarrow \delta _{\mathbf{0}},\qquad I_{n}\rightarrow 0.
\end{equation*}

\begin{flushleft}
\textbf{Discrete complete mixability}
\end{flushleft}

Atomic examples also arise from discrete complete mixability. Suppose that
at step $n$ the support is indexed by $r=1,\ldots ,N$, with masses $%
m_{r}^{(n)}$, and that the coordinates are arranged by permutations $\sigma
_{i}\in S_{N}$: 
\begin{equation*}
x^{(r,n)}=(x_{1,\sigma _{1}(r)}^{(n)},\ldots ,x_{d,\sigma _{d}(r)}^{(n)}).
\end{equation*}%
A discrete complete mix satisfies 
\begin{equation}
\sum_{i=1}^{d}x_{i,\sigma _{i}(r)}^{(n)}=k_{n},\qquad r=1,\ldots ,N.
\label{eq:atomic-discrete-CM}
\end{equation}%
The \textit{Lorenz denominator} is 
\begin{equation*}
I_{n}=\sum_{r=1}^{N}m_{r}^{(n)}\prod_{i=1}^{d}x_{i,\sigma _{i}(r)}^{(n)}.
\end{equation*}%
The tilted masses are 
\begin{equation}
\widetilde{m}_{r}^{(n)}=\frac{m_{r}^{(n)}\prod_{i=1}^{d}x_{i,\sigma
_{i}(r)}^{(n)}}{I_{n}}.  \label{eq:atomic-discrete-tilt}
\end{equation}%
The next support is obtained by the left-rank map: 
\begin{equation}
x_{i,\sigma _{i}(r)}^{(n+1)}=\mu _{n}\{x:\,x_{i}<x_{i,\sigma
_{i}(r)}^{(n)}\},\qquad i=1,\ldots ,d.  \label{eq:atomic-discrete-rank}
\end{equation}%
Atoms with identical rank vectors are merged.

The complete-mixability constraint (\ref{eq:atomic-discrete-CM}) is not
automatically invariant under the coordinatewise rank map (\ref%
{eq:atomic-discrete-rank}). Therefore a closed all-$n$ discrete
complete-mixability theory requires an invariant subclass: after the tilt
and rank update, the new support must again be representable by permutations
satisfying a complete-mixability identity. In such subclasses, the iteration
is finite-dimensional and explicitly computable from (\ref%
{eq:atomic-discrete-tilt})--(\ref{eq:atomic-discrete-rank}).

\begin{flushleft}
\textbf{Atomic one-factor }$\Sigma $\textbf{-countermonotonicity}
\end{flushleft}

Finally, consider an atomic or one-factor model 
\begin{equation*}
X_{i}=q_{i}(U),\qquad i=1,\ldots ,d,\qquad U\sim U(0,1),
\end{equation*}%
and put 
\begin{equation*}
K(u)=\prod_{i=1}^{d}q_{i}(u),\qquad I=\int_{0}^{1}K(u)\,du.
\end{equation*}%
The one-step \textit{Lorenz marginal }is exactly 
\begin{equation}
L_{i}(x)=\frac{\int_{0}^{1}K(u)1_{\{q_{i}(u)\leq T_{i}(x)\}}\,du}{I},
\label{eq:atomic-one-factor-exact}
\end{equation}%
where $T_{i}$ is the marginal generalized inverse.

If $q_{i}=T_{i}\circ \pi _{i}$ for a measure-preserving map $\pi _{i}$, then
in the atom-free case one may simplify 
\begin{equation*}
\{q_{i}(u)\leq T_{i}(x)\}=\{\pi _{i}(u)\leq x\}\quad \text{up to null sets}.
\end{equation*}%
In the atomic case this simplification is not valid without a jump-splitting
convention. The correct formula is the event formula (\ref%
{eq:atomic-one-factor-exact}), or, in finite support, the left-rank map (\ref%
{eq:atomic-left-rank}).

Suppose the one-factor model is $\Sigma $\textit{-countermonotone} in the
sense that for every nonempty proper subset $A\subset \{1,\ldots ,d\}$ there
is a decreasing function $\varphi _{A}$ such that 
\begin{equation*}
\sum_{j\notin A}q_{j}(u)=\varphi _{A}\left( \sum_{i\in A}q_{i}(u)\right) .
\end{equation*}%
Product tilting preserves the support, but the subsequent coordinatewise
rank transformation need not preserve these functional relations. Hence, as
for discrete complete mixability, a closed all-$n$ atomic $\Sigma $\textit{%
-countermonotone} iteration requires an invariant subclass. Outside such
subclasses, $\Sigma $\textit{-countermonotonicity} gives correct one-step
formulas but does not by itself close the \textit{Lorenz dynamics}.

\newpage

\begin{flushleft}
{\Large Appendix F}
\end{flushleft}

\begin{figure}[tbph]
\centering\includegraphics[width=0.8\linewidth]{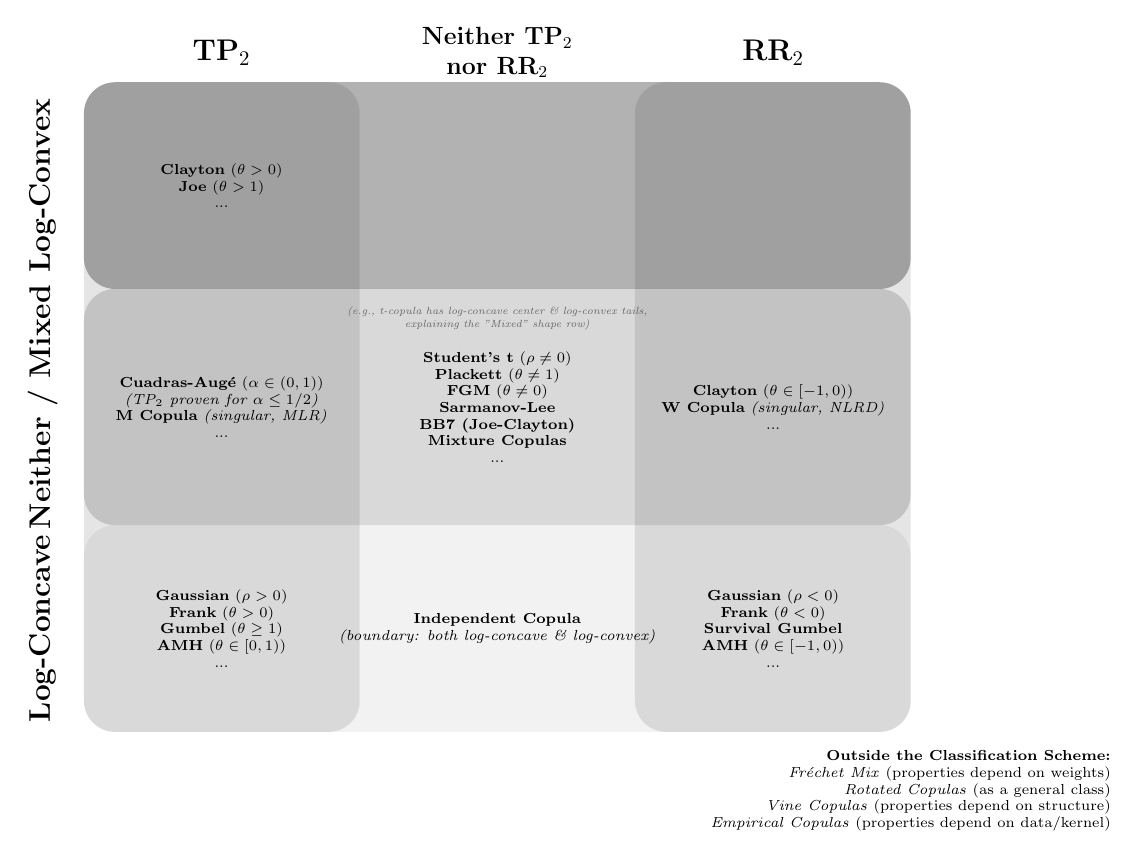}
\caption{Figure F1: Copulas properties}
\label{fig:figure15}
\end{figure}

\bigskip

\bigskip

\end{document}